\theoremstyle{plain}
    \newtheorem{theorem}{Theorem}
    \newtheorem{corollary}[theorem]{Corollary}
    \newtheorem{proposition}{Proposition}[section]
    \newtheorem{lemma}[proposition]{Lemma}
\newtheorem*{strangerlemma}{Lemma}
\newtheorem{claim}{Claim}
\newtheorem*{othertheorem}{Theorem}
\theoremstyle{definition}
    \newtheorem{definition}[proposition]{Definition}
\theoremstyle{remark}
    \newtheorem{remark}[proposition]{Remark}
 \def\NN{{\mathbb N}}  
 \def\RR{{\mathbb R}} \def\SS{{\mathbb S}} 
 \def\ZZ{{\mathbb Z}}
\def\cA{{\mathcal A}}    \def\cS{{\mathcal S}}
\def\cB{{\mathcal B}}    
\def\cC{{\mathcal C}}   \def\cO{{\mathcal O}} \def\cU{{\mathcal U}}
\def\cD{{\mathcal D}}   \def\cP{{\mathcal P}} \def\cV{{\mathcal V}}
\def\cE{{\mathcal E}}   \def\cQ{{\mathcal Q}} \def\cW{{\mathcal W}}
\def\cF{{\mathcal F}}   \def\cR{{\mathcal R}}
\def\Diff{\operatorname{Diff}}
\def\dist{\operatorname{dist}}
\def\Orb{\operatorname{Orb}}
\def\Int{\operatorname{int}}
\def\cl{\operatorname{cl}}
\def\Id{\operatorname{Id}}
\def\size{\operatorname{size}}
\title[An isotopic perturbation lemma along periodic orbits]
      {An isotopic perturbation lemma along periodic orbits}
\author[Nikolaz Gourmelon]{}
\subjclass{Primary:  37C25, 37C29; Secondary: 37C20, 37D10.}
 \keywords{Franks Lemma, periodic point, saddle point, linear cocycle, perturbation, stable/unstable manifold, dominated splitting, homoclinic tangency, small angles.}
 \email{ngourmel@math.u-bordeaux1.fr}
\thanks{The author was supported by the Institut de Math\'ematiques de Bourgogne (Universit\'e de Dijon) by IMPA (Rio de Janeiro) and by the Institut de Math\'ematiques de Bordeaux (Universit\'e de Bordeaux I).}
\begin{document}
\maketitle

\centerline{\scshape Nikolaz Gourmelon}
\medskip
{\footnotesize
 \centerline{Institut de Math\'ematiques de Bordeaux}
 \centerline{Universit\'e Bordeaux 1}
   \centerline{351, cours de la Lib\'eration}
   \centerline{ F 33405 TALENCE cedex, FRANCE}
} 

\bigskip


\renewcommand{\abstractname}{R\'esum\'e}
\begin{abstract} 
D'apr\`es un c\'el\`ebre lemme de John Franks, toute perturbation de la diff\'erentielle d'un diff\'eomorphisme $f$ le long d'une orbite p\'eriodique est r\'ealis\'ee par une $C^1$-perturbation $g$ du diff\'eomorphisme sur un petit voisinage de ladite orbite. On n'a cependant aucune information sur le comportement des vari\'et\'es invariantes de l'orbite p\'eriodique apr\`es perturbation.

Nous montrons que si la perturbation de la d\'eriv\'ee est obtenue par une isotopie le long de laquelle existent les vari\'et\'es stables/instables fortes de certaines dimensions, alors on peut faire la perturbation ci-dessus en pr\'eservant les vari\'et\'es stables/instables semi-locales correspondantes. Ce r\'esultat a de nombreuses applications en syst\`emes dynamiques de classes $C^1$. Nous en d\'emontrons quelques unes. \end{abstract}
\renewcommand{\abstractname}{Abstract}

\begin{abstract}A well-known lemma by John Franks asserts that one obtains any perturbation of the derivative of a diffeomorphism along a periodic orbit by a $C^1$-perturbation of the whole diffeomorphism on a small neighbourhood of the orbit. However, one does not control where the invariant manifolds of the orbit are, after perturbation.

We show that if the perturbated derivative is obtained by an isotopy along which some strong stable/unstable manifolds  of some dimensions exist, then the Franks perturbation can be done preserving the corresponding stable/unstable semi-local manifolds. This is a general perturbative tool in $C^1$-dynamics that has many consequences. We give simple examples of such consequences, for instance a generic dichotomy between dominated splitting and small stable/unstable angles inside homoclinic classes.

\end{abstract}

\maketitle

\section{Introduction}
This paper gives complete and detailed proofs of the results contained in the preprint~\cite{G:pre}. While the formalism used to state them here is different, the results of this paper are equivalent or slightly stronger than those of~\cite{G:pre}.\footnote{The formalism of regular/confined neighborhoods and "preservation of $(I,J)$-invariant manifolds outside a set, before first return" introduced \cite{G:pre} is omitted in this paper. We prefer to deal with the simpler and more familiar notion of local stable/unstable manifolds. 

Although it seems that the former formalism made a number of proofs shorter, that feeling is skewed by the level of detail of this paper and by the fact that most of the technical difficulties  in~\cite{G:pre} are omitted or concealed. 
Only \cref{s.reduction} and  the proofs of \cref{p.zpeij,p.equivdistance} would indeed be slightly shorter in the former formalism, other things being equal, as well as the statement of the main results.}

A few $C^1$-specific tools and ideas are fundamental in the study the dynamics of $C^1$-generic diffeomophisms  on compact manifolds, that is, diffeomorphisms of a residual subset of the set $\Diff^1(M)$ of $C^1$-diffeomorphisms on a Riemannian manifold $M$.

On the one hand, one relies on closing and connecting lemmas to create periodic points and to create homoclinic relations between them.
After the $C^1$-Closing Lemma of Pugh~\cite{Pu67}, a recurrent orbit can be closed by an arbitrarily small $C^1$-perturbation. 
The connecting lemma of Hayashi~\cite{Hay}, whose proof relies on ideas derived from that of the closing lemma, says that if the unstable manifold of a saddle point accumulates on a point of the stable manifold of another saddle, then a $C^1$-perturbation creates a transverse intersection between the two manifolds. That was further generalized by Wen, Xia and Arnaud in~\cite{WX,arn1} and Bonatti and Crovisier in~\cite{BoCro,Cr1}, where powerful generic consequences are obtained.

On the other hand, we have tools to create dynamical patterns by $C^1$-perturbations in small neighbourhoods of periodic orbit.
John Franks~\cite{Fr} introduced a lemma that allows to reach any perturbation of the derivative along a periodic orbit as a $C^1$-perturbation of the whole diffeomorphism on an arbitrarily small neighbourhood of that orbit. This allows to systematically reduce $C^1$-perturbations along periodic orbits to linear algebra. 

Other perturbation results are about generating homoclinic tangencies by $C^1$-perturbations near periodic saddle points. To prove the Palis $C^1$-density conjecture in dimension $2$ (there is a $C^1$-dense subset of diffeomorphisms of surfaces that are hyperbolic  or admit a homoclinic tangency), Pujals and Sambarino~\cite{PuSam} first show that if the dominated splitting between the stable and unstable directions of a saddle point is not strong enough, then a $C^1$-perturbation of the derivative along the orbit induces a small angle between the two eigendirections. They apply the Franks' Lemma and do another perturbation to obtain a tangency between the two manifolds. In~\cite{W1}, Wen gave a generalization of that first step in dimension greater than $2$ under similar non-domination hypothesis.

These perturbations results rely on the Franks' lemma which unfortunately fails to yield any information on the behaviour of the invariant manifolds of the periodic point. In particular, one does not control a priori what homoclinic class the periodic point will belong to, what strong connections it may have after perturbation, and it may not be possible to apply a connecting lemma in order to recreate a broken homoclinic relation.

\medskip
In~\cite{Gou}, a technique is found to preserve any fixed finite set in the invariant manifolds of a periodic point for particular types of perturbations along a periodic orbit. In particular it implies that one can create homoclinic tangencies inside homoclinic classes on which there is no stable/unstable uniform dominated splitting. This technique however is complex and difficult to adapt to other contexts. 

In this paper, we provide a simple setting in which the Franks' perturbation lemma can be tamed into preserving most of the invariant manifolds of the saddle point. Let us first state the Franks' Lemma:
\begin{strangerlemma}[Franks]
Let $f$ be a diffeomorphism. For all $\epsilon>0$, there is $\delta>0$ such that, for any periodic point $x$ of $f$, for any $\delta$-perturbation $(B_1,...,B_p)$ of the $p$-tuple $(A_1,...,A_p)$ of matrices that corresponds to the derivative $Df$ along the orbit of $x$, for any neighbourhood $\cU$ of the orbit of $x$, one finds a $C^1$ $\epsilon$-perturbation $g$ of $f$ on $\cU$ that preserves the orbit of $x$ and whose derivative along it corresponds to $(B_1,...,B_p)$.
\end{strangerlemma}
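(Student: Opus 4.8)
The plan is to realize the prescribed perturbation as a composition $g=\Theta\circ f$, where $\Theta$ is a diffeomorphism of $M$ that is $C^1$-close to the identity, supported in an arbitrarily small neighbourhood of the orbit $\cO=\{x_0,\dots,x_{p-1}\}$ with $x_j:=f^j(x)$, and built from purely linear data read in local charts. First I would fix once and for all a finite atlas of $M$ with uniformly bounded transition distortion, so that $C^1$-distances computed on $M$ and in these charts differ by at most a factor depending only on $f$; and set $K=\sup_{y\in M}(\|Df_y\|+\|Df_y^{-1}\|)$, which is finite by compactness and bounds $\|A_i^{\pm 1}\|$ for \emph{every} periodic orbit of $f$. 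With the convention $A_i=Df_{x_{i-1}}$ for $i=1,\dots,p$ (indices mod $p$, so $x_p:=x_0$), the correction to be applied near $x_i$ is the linear isomorphism $C_i:=B_iA_i^{-1}$, which satisfies $\|C_i-\Id\|=\|(B_i-A_i)A_i^{-1}\|<\delta K$ as soon as $\|B_i-A_i\|<\delta$.

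The key elementary ingredient I would then prove is a local interpolation statement: for every $\eta>0$ there is $\delta>0$ such that, for every linear isomorphism $C$ of $\RR^n$ with $\|C-\Id\|<\delta K$ and every radius $\rho>0$, there is a diffeomorphism $\theta$ of $\RR^n$ fixing $0$, equal to $C$ on $B(0,\rho)$, equal to the identity outside $B(0,2\rho)$, and with $\|\theta-\Id\|_{C^1}<\eta$. One takes $\theta(z)=z+\beta(z)(C-\Id)z$, where $\beta$ is a fixed smooth bump function equal to $1$ on $B(0,\rho)$, to $0$ off $B(0,2\rho)$, with $\|D\beta\|\le c/\rho$ for a universal constant $c$. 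The crucial point is the scale-invariant estimate on the a priori dangerous term in $D\theta(z)=\Id+\beta(z)(C-\Id)+\bigl((C-\Id)z\bigr)\otimes D\beta(z)$: on the transition annulus one has $\|z\|\le 2\rho$ while $\|D\beta(z)\|\le c/\rho$, so that last term has norm at most $2c\,\|C-\Id\|$, \emph{independently of $\rho$}. Hence $\|\theta-\Id\|_{C^1}\le(1+2c)\|C-\Id\|<(1+2c)\delta K$, and it suffices to pick $\delta<\eta/((1+2c)K)$; for such $\delta$ the map $\theta$ is automatically injective and proper, hence a diffeomorphism.

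To globalize: given $\epsilon$, choose $\eta>0$ small enough that composing $f$ with any diffeomorphism $\eta$-$C^1$-close to the identity produces a diffeomorphism $\epsilon$-$C^1$-close to $f$ — this uses only $K$ and the atlas distortion, since $D(\Theta\circ f)-Df=(D\Theta\circ f-\Id)Df$ and similarly for inverses — and let $\delta$ be the constant from the local step; note $\delta$ depends only on $\epsilon$ and $f$. Now let $(B_1,\dots,B_p)$ be a $\delta$-perturbation of $(A_1,\dots,A_p)$ and $\cU$ a neighbourhood of $\cO$. Pick $\rho>0$ so small that the sets $f^{-1}(B(x_i,2\rho))$, $i=1,\dots,p$ (each a small ball around $x_{i-1}$, in a chart), are pairwise disjoint and their union lies in $\cU$; this is where $\rho$ is allowed to depend on the orbit, and it is possible because $\cO$ is finite with distinct points and $f$ is a diffeomorphism. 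Let $\theta_i$ be the local diffeomorphism of the chart at $x_i$ given by the local step for $C=C_i$ and radius $\rho$, and let $\Theta$ be the diffeomorphism of $M$ equal to $\theta_i$ near $x_i$ (for $i=1,\dots,p$) and to the identity elsewhere. Set $g:=\Theta\circ f$. Then $g=f$ outside $\bigcup_i f^{-1}(B(x_i,2\rho))\subset\cU$; $g$ is a diffeomorphism as a composition of two diffeomorphisms; $g$ preserves $\cO$ since each $\theta_i$ fixes the relevant point; $g$ is $\epsilon$-$C^1$-close to $f$ by the choice of $\eta$; and its derivative at $x_{i-1}$ equals $D\theta_i(x_i)\cdot Df_{x_{i-1}}=C_iA_i=B_i$, so $Dg$ along $\cO$ corresponds to $(B_1,\dots,B_p)$.

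The heart of the matter — and the only genuine obstacle — is the uniformity of $\delta$: since the support of the perturbation must shrink to fit an arbitrary $\cU$, the interpolation between $A_i$ and $B_i$ must be carried out in balls of arbitrarily small radius $\rho$ \emph{without} the $C^1$-size deteriorating as $\rho\to 0$. The multiplicative formulation $g=\Theta\circ f$ (perturbing by $B_iA_i^{-1}$ rather than perturbing $f$ additively), together with the scale-invariant cancellation in $D\theta$ above, is precisely what achieves this; everything else is bookkeeping, made uniform over all periodic orbits by the compactness of $M$.
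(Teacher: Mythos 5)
Your proposal is correct, and it is essentially the classical proof of Franks' lemma: write $g=\Theta\circ f$ with $\Theta$ a near-identity correction supported in small balls around the orbit, locally $\theta(z)=z+\beta(z)(C_i-\Id)z$ with $C_i=B_iA_i^{-1}$, and exploit the scale-invariant cancellation $\|(C_i-\Id)z\|\,\|D\beta(z)\|\leq 2c\,\|C_i-\Id\|$ on the transition annulus so that $\delta$ depends only on $\epsilon$ and on $\sup(\|Df\|+\|Df^{-1}\|)$, not on the orbit, its period, or the size of $\cU$. The computation $Dg_{x_{i-1}}=D\theta_i(x_i)A_i=B_i$, the injectivity of $\theta_i$ from $\mathrm{Lip}(\theta_i-\Id)<1$, and the passage from $\eta$-closeness of $\Theta$ to $\epsilon$-closeness of $g$ (including inverses) are all handled correctly; the only cosmetic imprecision is that, to define $\Theta$, you should also require the balls $B(x_i,2\rho)$ themselves (not just their $f$-preimages) to be pairwise disjoint, which is automatic for small $\rho$. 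Note that the paper does not reprove this lemma — it cites Franks — and its closest in-house argument is the proof of Proposition $\cP_{\emptyset,\emptyset}$ in \cref{s.pertprop}, where the perturbation is obtained by a partition of unity $f_k=\theta h_k+\zeta g_k$ interpolating, in a local linear structure, between two diffeomorphisms converging to $f$; there the smallness comes for free from convergence, whereas your multiplicative construction carries the explicit quantitative estimate that makes the uniformity of $\delta$ transparent, which is exactly the point of the statement.
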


We introduce a perturbation theorem that extends the Franks' Lemma, controlling both the behaviour of the invariant manifolds of $X$, and the size of the $C^1$-perturbation needed to obtain the derivative $(B_1,...,B_p)$. Precisely, we prove that if the perturbation is done by an isotopy along a path of 'acceptable derivatives', that is, if the strong stable/unstable directions of some indices exist all along along that path, then the diffeomorphism $g$ can be chosen so that it preserves corresponding local strong stable/unstable manifolds outside of an arbitrarily small neighbourhood. Moreover, the size of the perturbation can be found arbitrarily close to the radius of the path. 

In order to prove our main theorem, we will rely on the fundamental $C^r$-perturbative Proposition~\ref{p.pertpropsimple} and the $C^1$-linearization \cref{c.linearisationC1}. These results are stated in \cref{s.mainpertpropos}.  
In \cref{s.isotopic}, we show that \cref{p.pertpropsimple} and its corollary induce the main theorem. A major difficulty of this paper is the proof of  \cref{p.pertpropsimple}, which occupies \cref{s.reduction,s.pertprop,s.induction}.

In section~\ref{s.consequences}, we give examples of a few isotopic perturbative results on linear cocycles, to show possible applications of our main theorem. For instance, we can turn the eigenvalues of a large period saddle point to have real eigenvalues, and preserve at the same time most of its strong stable/unstable manifolds. We also deduce a generic dichotomy inside homoclinic classes between dominated splittings and small angles. Another general isotopic perturbative result on periodic cocycles has been shown by 

This result has already allowed a number of new developments by Potrie~\cite{Po} and Bonatti, Crovisier, D\'iaz and Gourmelon~\cite{BCDG}). Some  impressive results have recently been announced by Bonatti and Shinohara, and by Bonatti, Crovisier and Shinohara. These are detailed in the next section.

\bigskip

\noindent{\em {\bf Remerciements :}
Je remercie chaleureusement Jairo Bochi, Christian Bonatti, Sylvain Crovisier, Lorenzo D\'iaz et Rafael Potrie pour de nombreuses discussions, suggestions et encouragements ainsi que Marcelo Viana, le CNPQ et l'IMPA (Rio de Janeiro). Enfin, un grand merci au rapporteur de cet article pour son travail consid\'erable et les pr\'ecieux conseils qu'il m'a donn\'es.} 
\medskip

\subsection{Statement of results}\label{s.statementresults}
Let $A$ be a linear map such that its eigenvalues $\lambda_1,\ldots,\lambda_d$, counted with multiplicity and ordered by increasing moduli, satisfy $|\lambda_i|<\min(|\lambda_{i+1}|,1)$. Then the {\em $i$-strong stable direction} of $A$ is defined as the $i$-dimensional invariant space corresponding to eigenvalues $\lambda_1,...,\lambda_i$.

If $P$ is a periodic point of period $p$ for a diffeomorphism $f$ and if the first return map $Df^p$ admits an $i$-strong stable direction, then there is inside the stable manifold of the orbit $\Orb_P$ of $P$ a unique boundaryless $i$-dimensional $f$-invariant manifold that is tangent to that direction at $P$. We call it the {\em $i$-strong stable manifold} of the orbit $\Orb_P$ for $f$, and denote it by  $W^{i,ss}(P,f)$. One defines symmetrically the {\em $i$-strong unstable manifolds}, replacing $f$ by $f^{-1}$, and denote them by $W^{i,uu}(P,f)$. 
We denote by $W^{s/u}(P,f)$ the {\em stable/unstable} manifold of the orbit $\Orb_P$ of $P$, that is the strong stable/unstable manifold of maximum dimension. 

For $\theta\in\{s,u\}$ or any  $\theta$ of the form $"i,ss"$ or $"i,uu"$ we denote by $W^\theta_\varrho(P,f)$ be  the set of points in $W^\theta(P,f)$ whose distance to the orbit of $x$ within $W^\theta(P,f)$ is less or equal to $\varrho$.  We call it  the ($i$-strong) (un)stable manifold {\em of size $\varrho$}.

Finally, if we have both $f=g$ and $f^{-1}=g^{-1}$ by restriction to (resp. outside) some set $K$, then we write "$f^{\pm 1}=g^{\pm 1}$ on $K$" (resp.  "$f^{\pm 1}=g^{\pm 1}$ outside $K$").

We are now ready to state the main theorem:

\begin{theorem}\label{t.mainsimplestatement}
Let $P$ be a $p$-periodic point for a diffeomorphism $f$ on a Riemannian manifold $(M,\|.\|)$. Fix a path $$\{\cA_{t}=(A_{1,t},\ldots,A_{p,t})\}_{t\in [0,1]}$$ where each $A_{n,t}$ is a linear map from $T_{f^{n-1}(P)}M$ to $T_{f^{n}(P)}M$, and the $p$-tuple $\cA_0=(A_{1,0},\ldots,A_{p,0})$ is the derivative of $f$ along $\Orb_P$. Let $I$ (resp. $J$) be the set of integers $i>0$ such that, for all $t\in[0,1]$, the linear endomorphism $B_t=A_{p,t}\circ ... \circ A_{1,t}$ admits an $i$-strong stable (resp. unstable) direction. Then, 
\begin{itemize}
\item for any $\delta$ greater than the radius of the path $\cA_t$, that is,
$$\delta>\max_{1\leq n\leq p\atop t\in[0,1]}\left\{\|A_{n,t}-A_{n,0}\|, \|A^{-1}_{n,t}-A^{-1}_{n,0}\|\right\}\footnote{$\|A\|$ is the operator norm of the morphism of Euclidean spaces $A\colon T_{f^{n-1}(P)}M \to T_{f^{n}(P)}M$.},$$
\item for any $\varrho>0$, and any families $\{K_i\}_{i\in I}$ and $\{L_j\}_{j\in J}$ of compact sets such that $K_i\subset W^{i,ss}_\varrho(P,f)\setminus \{\Orb_P\}$ and $L_j\subset W^{j,uu}_\varrho(P,f)\setminus\{\Orb_P\}$, 
\item for any neighborhood $U_P$ of $\Orb_P$,
\end{itemize}
there is a $\delta$-perturbation $g$ of $f$, for the $C^1$-topology, such that it holds: 
\begin{itemize}
\item $f^{\pm 1}=g^{\pm 1}$ throughout $\Orb_P$ and outside $U_P$,
\item the derivative of $g$ along $\Orb_P$ is the tuple $\cA_1=(A_{1,1},\ldots,A_{p,1})$,
\item  For all $(i,j)\in I\times J$, we have
 $$K_i\subset W^{i,ss}_\varrho(P,g)\mbox{ and }L_j\subset W^{j,uu}_\varrho(P,g).$$
\end{itemize}
\end{theorem}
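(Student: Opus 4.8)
The plan is to reduce \cref{t.mainsimplestatement} to the $C^r$-perturbative \cref{p.pertpropsimple} and the $C^1$-linearization \cref{c.linearisationC1} by a standard "localize, linearize, perturb, patch" argument, the new content being that the isotopy hypothesis lets us carry the strong stable/unstable manifolds through the perturbation. First I would observe that it suffices to prove the theorem when $f$ is linear on a small neighborhood of each point $f^n(P)$: indeed, by \cref{c.linearisationC1} one finds a $C^1$-small perturbation $f_0$ of $f$, supported in an arbitrarily small neighborhood of $\Orb_P$ and coinciding with $f$ on $\Orb_P$ and outside $U_P$, such that in suitable charts $f_0$ is linear (equal to $A_{n,0}$) near each $f^{n-1}(P)$; this perturbation can be absorbed into the error budget since $\delta$ is strictly larger than the radius of the path. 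Here one must be slightly careful that linearizing does not move the prescribed compact sets $K_i\subset W^{i,ss}_\varrho(P,f)\setminus\{\Orb_P\}$ and $L_j\subset W^{j,uu}_\varrho(P,f)$ out of the corresponding local strong manifolds; this is arranged by taking the linearizing neighborhood small enough that $K_i,L_j$ lie outside it, so that on the relevant semi-local pieces $f_0=f$ and the strong manifolds of $f_0$ still contain $K_i$ and $L_j$ (after possibly flowing them a bounded number of iterates, which keeps them in $W^\theta_\varrho$ since forward iteration contracts distances within a stable manifold).

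Next, with $f$ already linear near the orbit, I would feed the path $\{\cA_t\}$ and the sets $I,J$ into \cref{p.pertpropsimple}. The role of that proposition is to produce, by a $C^r$-perturbation localized near $\Orb_P$, a diffeomorphism $g$ whose derivative along $\Orb_P$ is $\cA_1$ and which, crucially, still admits the $i$-strong stable manifolds for $i\in I$ and $j$-strong unstable manifolds for $j\in J$ arranged so as to contain the respective compact sets; the hypothesis that the $i$-strong stable direction of $B_t=A_{p,t}\circ\cdots\circ A_{1,t}$ persists for every $t\in[0,1]$ (and likewise for $J$) is exactly what guarantees that the corresponding invariant subspaces can be continuously transported along the isotopy, which is what allows the semi-local manifolds to be dragged along rather than destroyed. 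The perturbation size delivered by \cref{p.pertpropsimple} is controlled by the radius of the path $\cA_t$, so by choosing the localization neighborhood and the $C^r$-approximation fine enough, and then smoothing the resulting $C^r$ map to a $C^1$-nearby genuine diffeomorphism supported in $U_P$, the total $C^1$-distance to $f$ stays below the prescribed $\delta$.

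Finally I would patch: the perturbation $f_0$ of the linearization step and the perturbation produced by \cref{p.pertpropsimple} are both supported in arbitrarily small neighborhoods of $\Orb_P$ contained in $U_P$, and both coincide with the identity (in charts) on $\Orb_P$ itself, so their composition $g$ satisfies $f^{\pm1}=g^{\pm1}$ on $\Orb_P$ and outside $U_P$, has derivative $\cA_1$ along $\Orb_P$, and inherits the containments $K_i\subset W^{i,ss}_\varrho(P,g)$, $L_j\subset W^{j,uu}_\varrho(P,g)$ from the semi-local control in \cref{p.pertpropsimple}, using that outside the perturbation support $g=f$ so the strong manifolds of $g$ extend to contain $K_i,L_j$ exactly as for $f$. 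The main obstacle is not this bookkeeping but rather invoking \cref{p.pertpropsimple} correctly: one has to make sure the hypotheses matching "acceptable derivatives along the path" line up with the index sets $I,J$, that the semi-local (as opposed to merely germ-level) statement of the proposition is strong enough to capture compact subsets of $W^\theta_\varrho$ reaching up to size $\varrho$, and that the passage between the $C^r$-category (in which \cref{p.pertpropsimple} is proved) and the $C^1$-category costs nothing in the estimates. The genuinely hard work — the construction behind \cref{p.pertpropsimple} — is deferred to \cref{s.reduction,s.pertprop,s.induction} and is not part of this reduction.
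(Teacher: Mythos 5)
There is a genuine gap at the central step, where you write that one can ``feed the path $\{\cA_t\}$ and the sets $I,J$ into \cref{p.pertpropsimple}'' and that this proposition produces a diffeomorphism with derivative $\cA_1$ along $\Orb_P$ whose size is ``controlled by the radius of the path $\cA_t$''. \cref{p.pertpropsimple} does nothing of the sort: it takes as input two \emph{sequences} $g_k,h_k$ converging to $f$ and returns a sequence $f_k$ converging to $f$, i.e.\ it is purely a statement about arbitrarily $C^r$-small perturbations, with no path, no prescription of a far-away derivative $\cA_1$, and no quantitative bound in terms of a path radius. The difficulty of the theorem is precisely that $\cA_1$ may be at distance close to $\delta$ from $\cA_0$, which is not small, while the manifold-preservation mechanism of \cref{p.pertpropsimple} is only available in the limit $f_k\to f$. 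Your reduction therefore invokes the proposition with hypotheses it does not have and a conclusion it does not give, and the argument collapses at exactly the point where the isotopy hypothesis is supposed to enter.

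What is missing is the intermediate machinery that converts the ``infinitesimal'' statement of \cref{p.pertpropsimple} into a finite-size one: the paper introduces $(I,J)$-connections between the linear models $\cA,\cB\in\mathfrak{A}_{I,J}$ (diffeomorphisms equal to $\cB$ near $\Orb_P$, to $\cA$ near the boundary, preserving the first fundamental domains of the relevant strong manifolds), proves via \cref{p.pertpropsimple} that nearby tuples admit connections of small size (\cref{p.equivdistance}), and then concatenates connections along a fine subdivision $t_1<\dots<t_\ell$ of the path, each rescaled by a homothety so as to be supported in a much smaller neighborhood than the previous one (\cref{p.zpeij}, \cref{l.machin}). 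The size estimate for such a nested concatenation is a \emph{maximum} over the steps of $\size(C_n)+\dist_{\mathfrak{A}}(\cA_0,\cA_{t_n})$, not a sum, and this is what makes the final perturbation have size arbitrarily close to the radius of the path; only after building this single connection from $\cA_0$ to $\cA_1$ does one linearize $f$ via \cref{c.linearisationC1} and paste the connection in, checking the size-$\varrho$ containment of the $K_i,L_j$ by a uniform-convergence argument on intrinsic distances as the pasting scale goes to $0$. Without this nested-scale concatenation (or an equivalent device), your outline proves at best the case where $\cA_1$ is $C^1$-close to $\cA_0$, not the full statement.
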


That is, for all $i\in I$, the "semilocal" $i$-strong stable manifold of $f$ can be made to be preserved inside the a local $i$-strong stable manifold, after the Franks' perturbation, and likewise for the $j$-strong unstable manifolds, for all $j\in J$.

\begin{figure}[hbt] \label{f.figlocal}
\ifx\JPicScale\undefined\def\JPicScale{1}\fi
\psset{unit=\JPicScale mm}
\psset{linewidth=0.2,dotsep=1,hatchwidth=0.3,hatchsep=1.5,shadowsize=1,dimen=middle}
\psset{dotsize=0.7 2.5,dotscale=1 1,fillcolor=black}
\psset{arrowsize=2 2,arrowlength=1,arrowinset=0.25,tbarsize=0.7 5,bracketlength=0.15,rbracketlength=0.15}
\psset{xunit=.5pt,yunit=.5pt,runit=.5pt}
\begin{pspicture}(0,250)(600,600)
\pspolygon[](120,480)(580,480)(480,300)(20,300)
\rput(75,315){$W^s(P,f)$}
{\pscustom[linewidth=1,linecolor=black,fillstyle=solid,fillcolor=gray,opacity=1]
{\newpath \moveto(370,450)
\curveto(320,462)(208,452)(160,440)\curveto(120,430)(100,390)(120,370)
\curveto(150,340)(337,306)(450,350)\curveto(580,400)(410,440)(370,450)
\closepath }}
\rput(210,360){\large $K_2$}
{\pscustom[linewidth=1,linecolor=black,fillstyle=solid,fillcolor=white]
{\newpath
\moveto(280,410)\curveto(258,402)(253,386)(260,380)
\curveto(270,370)(360,380)(360,400)\curveto(360,426)(296,415)(280,410)
\closepath}}

{\pscustom[linewidth=1,linecolor=black,linestyle=dashed,dash=2 4]
{\moveto(270,400)\curveto(270,420)(350,420)(350,400)}}
{\newrgbcolor{curcolor}{0 0 0}
\pscustom[linewidth=1,linecolor=curcolor,fillstyle=solid,fillcolor=white, opacity=0.5]
{\newpath
\moveto(270,400)\curveto(270,450)(350,450)(350,400)
\curveto(350,380)(270,380)(270,400)
\closepath}}
{\pscustom[linewidth=1,linecolor=black,linestyle=dashed,dash=2 4]
{\moveto(350,400)\curveto(350,350)(270,350)(270,400)}}
\rput(298,417){\small $U_P$}

\psline[linewidth=1](310,400)(310,450)
\psline[linewidth=3](310,450)(310,545)
\psline[linewidth=1]{->}(310,545)(310,580)
\psline[linewidth=1](310,580)(310,590)
\rput(330,505){$L_1$}
\psbezier[linewidth=1](280,410)(320,400)(320,390)(342,390)
\psbezier[linewidth=3](342,390)(395,388)(440,390)(490,430)
\psbezier[linewidth=1]{<<-}(490,430)(500,438)(508,448)(530,480)

\psbezier[linewidth=3](280,410)(240,420)(200,420)(176,433)
\rput(200,553){$K_1$}
\psline[linewidth=0.3](200,540)(220,430)
\psline[linewidth=0.3](205,540)(400,400)
\psline{>>-}(120,460)(176,433)
\psbezier[linewidth=1](120,460)(100,470)(119,460)(120,460)
\end{pspicture}
\caption{Illustration of \cref{t.mainsimplestatement}}

\bigskip
\small 
\centering\vspace*{\fill}
\begin{minipage}{10cm}
Assume that, for all times $0\leq t\leq 1$, the first return linear map $B_t$ admits an $1$-strong unstable, a $1$ and $2$-strong stable manifolds. 
The perturbation $g$ is such that $K_1,K_2$ and $L_1$ are left respectively in the $1$- and $2$-strong stable and $1$-strong unstable manifolds of size $\varrho$ for $g$.
\end{minipage}
\vspace*{\fill}

\end{figure}

\begin{remark}\label{r.localnonlocal}
One could take the compact sets $K_i\subset W^{i,ss}(P,f)\setminus \{\Orb_P\}$ and $L_j\subset W^{j,uu}(P,f)\setminus\{\Orb_P\}$ and replace  
$K_i\subset W^{i,ss}_\varrho(P,g)$ in the conclusions of the theorem by the simpler $$K_i\subset W^{i,ss}(P,g).$$ However this conclusion is strictly weaker, indeed it would give way to possibly annoying situations as depicted in Figure \ref{f.fignonlocal}. 
\end{remark}

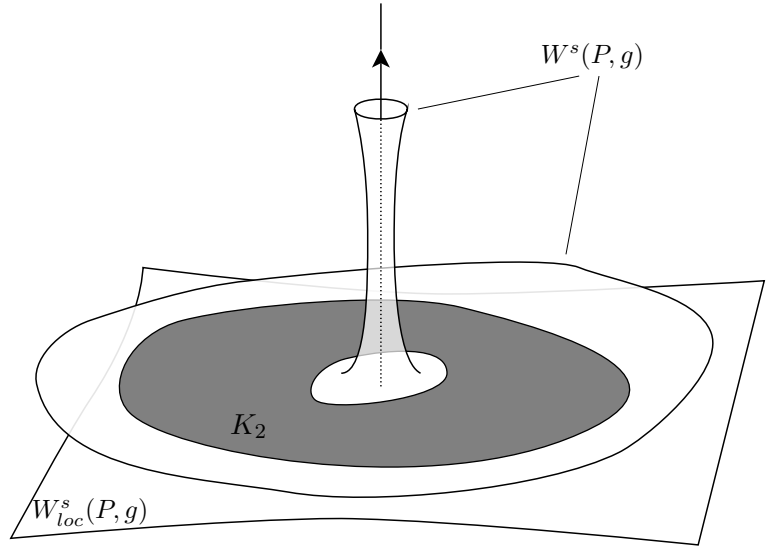
\begin{figure}[hbt] 
\label{f.fignonlocal}
\ifx\JPicScale\undefined\def\JPicScale{1}\fi
\psset{unit=\JPicScale mm}
\psset{linewidth=0.2,dotsep=1,hatchwidth=0.3,hatchsep=1.5,shadowsize=1,dimen=middle}
\psset{dotsize=0.7 2.5,dotscale=1 1,fillcolor=black}
\psset{arrowsize=2 2,arrowlength=1,arrowinset=0.25,tbarsize=0.7 5,bracketlength=0.15,rbracketlength=0.15}
\psset{xunit=.5pt,yunit=.5pt,runit=.5pt}
\begin{pspicture}(0,250)(600,670)

{\pscustom{\newpath \moveto(130,480)
\curveto(130,480)(130,440)(90,380)
\curveto(72,353)(30,275)(30,275)
\curveto(30,275)(196,290)(280,290)
\curveto(370,289)(550,270)(550,270)
\lineto(600,470)
\curveto(600,470)(440,460)(360,460)
\curveto(284,460)(130,480)(130,480)
\closepath}}

{\pscustom[fillstyle=solid,fillcolor=white,opacity=0.9]
{\newpath
\moveto(210,470)
\curveto(168,466)(120,450)(90,440)
\curveto(67,432)(44,410)(50,390)
\curveto(66,323)(173,321)(240,310)
\curveto(302,299)(434,312)(490,340)
\curveto(510,350)(570,400)(560,430)
\curveto(550,460)(485,470)(460,480)
\curveto(435,490)(293,480)(210,470)
\closepath}}
{\pscustom[linewidth=1,linecolor=black,fillstyle=solid,fillcolor=gray,opacity=1]
{\newpath \moveto(370,450)
\curveto(320,462)(208,452)(160,440)\curveto(120,430)(100,390)(120,370)
\curveto(150,340)(337,306)(450,350)\curveto(580,400)(410,440)(370,450)
\closepath }}
\rput(210,360){\large $K_2$}
{\pscustom[linewidth=1,linecolor=black,fillstyle=solid,fillcolor=white]
{\newpath
\moveto(280,410)\curveto(258,402)(253,386)(260,380)
\curveto(270,370)(360,380)(360,400)\curveto(360,426)(296,415)(280,410)
\closepath}}
{\pscustom[fillstyle=solid,fillcolor=white,opacity=0.7]
{\newpath
\moveto(280,400)
\curveto(300,400)(300,470)(300,500)
\curveto(300,520)(300,560)(290,600)
\curveto(290,590)(330,590)(330,600)
\curveto(320,570)(320,520)(320,500)
\curveto(320,490)(320,410)(340,400)
}}

\psbezier(290,600)(290,610)(330,610)(330,600)

\psline[linestyle=dashed,dash=1 2](310,390)(310,592)

\psline{->}(310,592)(310,645)
\psline(310,645)(310,680)
\rput(90,295){$W^s_{loc}(P,g)$}
\rput(470,640){$W^s(P,g)$}
\psline[linewidth=0.3](460,625)(338,600)
\psline[linewidth=0.3](475,625)(450,490)
\end{pspicture}
\caption{Under the same hypotheses as in Figure~\ref{f.figlocal}, the compact $K_2$ may stay in the stable manifold for a perturbation $g$ of $f$ without remaining in the local stable manifold.  Such picture is forbidden by the conclusions of our theorem.}
\end{figure}

Let us give examples of applications of Theorem~\ref{t.mainsimplestatement}. We already knew that the derivative along a saddle of large period may be perturbed in order to get real eigenvalues~\cite{BoCro, BGV}, or that the derivative along a long-period saddle with a weak stable/unstable dominated splitting may be perturbed in order to get a small stable/unstable angle~\cite{BDV}.  In \cref{s.pathcocycles} we show that these perturbations can be obtained following 'good' paths of cocycles, in the sense that one can apply Theorem~\ref{t.mainsimplestatement} to them. As a consequence, if the period of a saddle is large, then it is possible to perturb it to turn the eigenvalues of the first return map to be real, while preserving their moduli and the strong stable and unstable manifolds, outside of a small neighbourhood:

\begin{theorem}
\label{t.realeigen}
Let $f$ be a diffeomorphism of $M$ and $\epsilon>0$ be a real number. There exists an integer $N\in \NN$ such that for any
\begin{itemize}
\item periodic point $P$ of period $p\geq N$,
\item neighbourhood $U_P$ of the orbit $\Orb_P$ of $P$, 
\item number $\varrho>0$ and families of compact sets 
\begin{align*}K_i&\subset W^{i,ss}_\varrho(P,f)\setminus \{\Orb_P\}, \quad \mbox{ for all }  i\in I\\
 L_j&\subset W^{j,uu}_\varrho(P,f)\setminus\{\Orb_P\},  \quad  \mbox{ for all } j\in J,
 \end{align*} 
 where $I$ and $J$  are the sets of the strong stable and unstable dimensions,
 \end{itemize}
there is a $C^1$-$\epsilon$-perturbation $g$ of $f$ such that
\begin{itemize}
\item  $f^{\pm 1}=g^{\pm 1}$ throughout $\Orb_P$ and outside $U_P$, 
\item the eigenvalues of the first return map $Dg^p(P)$ are real and their moduli are the same as for $f$,
\item  for all $(i,j)\in I\times J$, we have
 $$K_i\subset W^{i,ss}_\varrho(P,g)\mbox{ and }L_j\subset W^{j,uu}_\varrho(P,g).$$
\end{itemize}
\end{theorem}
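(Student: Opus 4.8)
The plan is to deduce \cref{t.realeigen} from \cref{t.mainsimplestatement} by producing a suitable isotopy $\{\cA_t\}_{t\in[0,1]}$ of cocycles along $\Orb_P$ whose endpoint has real eigenvalues with the same moduli, whose radius is smaller than $\epsilon$ when $p$ is large, and along which the strong stable/unstable directions of the indices in $I$ and $J$ persist. Write $A=(A_1,\ldots,A_p)$ for the derivative cocycle of $f$ along $\Orb_P$, and $B=A_p\circ\cdots\circ A_1$ for the first return map. First I would reduce to the following linear-algebra statement about a single linear map, to be proved in \cref{s.pathcocycles}: given $\epsilon>0$ there is $N$ so that if $p\geq N$ and $B\in GL(d,\RR)$ is arbitrary, then there is a path $t\mapsto B_t$ from $B_0=B$ to a map $B_1$ with real spectrum and the same eigenvalue moduli as $B$, realized by a path of cocycles $\cA_t$ over the period-$p$ orbit of radius less than $\epsilon$, and such that for every index $i$ that is a strong stable dimension of $B$ (i.e. $i\in I$) the map $B_t$ keeps an $i$-strong stable direction for all $t$, and symmetrically for $J$.

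The key point making the period enter is the standard trick already used in \cite{BoCro,BGV}: a rotation (or any bounded-norm perturbation) applied once but "diluted" over $p$ steps costs only $O(1/p)$ per step in the $C^0$-distance on cocycles, hence the radius of the resulting path of cocycles is $O(1/p)$, which is $<\epsilon$ once $p\geq N$. Concretely, I would first conjugate so that $B$ is block-diagonal with blocks grouped by modulus; the blocks of modulus $<1$ whose total dimensions hit the values in $I$ should be handled so that the corresponding sums of dimensions remain dominated — equivalently, one works inside each "modulus packet" separately and never mixes packets, so the filtration by strong stable/unstable dimensions is preserved for the trivial reason that each $B_t$ keeps the same modulus-filtration as $B$. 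Within a single packet of equal modulus one is free to rotate a complex pair of eigenvalues down to a real pair of the same modulus (via a path of matrices keeping that modulus fixed), because a packet has no internal domination to respect. Spreading each such elementary rotation over the $p$ iterates and composing the finitely many elementary moves (there are at most $d/2$ complex pairs, a number independent of $p$) yields the path $\cA_t$ of radius $O(1/p)$; the indices in $I$ and $J$ are exactly the "cut points" of the modulus filtration below $1$, resp. above $1$, so they remain strong stable, resp. unstable, dimensions of $B_t$ along the whole path. At this point \cref{t.mainsimplestatement} applies with this path, with $\delta=\epsilon$ (legitimate once $N$ is large enough that the radius is $<\epsilon$), giving the perturbation $g$ with $f^{\pm1}=g^{\pm1}$ on $\Orb_P$ and outside $U_P$, with $Dg^p(P)=B_1$ having real spectrum of the prescribed moduli, and with $K_i\subset W^{i,ss}_\varrho(P,g)$, $L_j\subset W^{j,uu}_\varrho(P,g)$.

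One should check a couple of compatibility details: that "same moduli" is preserved not merely for the spectrum of $B$ but that $I$ and $J$ computed from $f$ agree with the $I,J$ of the path (true, since the path preserves the modulus filtration, so the strong stable/unstable dimensions are unchanged), and that the $K_i,L_j$ are taken in the strong stable/unstable manifolds of $f$, which are precisely the $W^{i,ss}_\varrho(P,f)$ appearing in the hypotheses of \cref{t.mainsimplestatement} — so the hypothesis transfers verbatim. The main obstacle I anticipate is the construction of the path with controlled radius: one must simultaneously (a) reach a fully real target, (b) keep all the prescribed strong stable/unstable directions alive at every $t$, and (c) keep the per-step size $O(1/p)$; the tension is between (b), which constrains how one is allowed to move eigenvalues across moduli, and (a). The resolution is exactly to never cross a modulus value used as a cut point: all eigenvalue motion is intra-packet, where the only freedom needed — turning conjugate complex pairs into real pairs of equal modulus — is available, and dilution over the $p$ steps handles (c). I would isolate this as a self-contained proposition on paths of periodic cocycles in \cref{s.pathcocycles} and then the deduction of \cref{t.realeigen} is a one-paragraph application of \cref{t.mainsimplestatement}.
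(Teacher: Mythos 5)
Your global architecture is exactly the paper's: isolate a proposition on paths of periodic cocycles (small radius once the period is large, endpoint with real spectrum, eigenvalue moduli preserved at every time $t$, hence the sets $I,J$ of strong stable/unstable dimensions persist automatically), and then apply \cref{t.mainsimplestatement} with $\delta=\epsilon$; this is precisely \cref{p.realeigen} combined with the main theorem. The gap is in your justification of that cocycle proposition. The claim that a rotation of the first-return map can be ``diluted'' over the $p$ steps at cost $O(1/p)$ per step is false as stated: to transport an elementary rotation $C_i$ inserted at time $i$ back to the return map you must conjugate by the partial products $A_{i-1}\cdots A_1$ (restricted to the relevant $2$-plane), whose condition number typically grows exponentially in $i$ even when the full return map is elliptic, so the per-step perturbation is of order $(\theta/p)\cdot\|P_{i-1}\|\,\|P_{i-1}^{-1}\|$, not $O(1/p)$. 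This non-commutativity problem is exactly the content that makes such statements nontrivial, and it is why the paper does not dilute anything but instead invokes Bonatti--Crovisier's lemma (\cref{l.dim2}, \cite{BoCro}): for $p\geq N(\varepsilon)$ there \emph{exist} rotations of angle $<\varepsilon$ at each step whose interleaved composition with the $A_i$ has real eigenvalues --- an existence statement whose proof is genuinely different from a formal dilution. Your plan treats this crux as a triviality, so as written the key quantitative step would fail.

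For completeness, the paper's route around the remaining points you raise is: pick a $2$-dimensional invariant subbundle $F$ of the cocycle associated to a complex conjugate pair, write the cocycle in upper-triangular block form with respect to $F$ and a complement (your ``block-diagonal by modulus packets'' cannot be achieved along the whole orbit, only an invariant flag plus a triangular form), rescale the restriction to $F$ into $SL(2,\RR)$, apply \cref{l.dim2}, follow the path $t\mapsto R_{t\alpha_i}\circ A_i$ and stop at the \emph{first} time the restricted product has real eigenvalues --- until then the pair is complex conjugate of fixed modulus $\sqrt{\det}$, and the complementary block is literally unchanged, so all moduli are preserved exactly; iterating at most $d/2$ times and concatenating gives \cref{p.realeigen}. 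If you replace your dilution argument by this (or prove an equivalent of \cref{l.dim2}), the rest of your deduction of \cref{t.realeigen} goes through as you describe.
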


We also prove a generic dichotomy between small stable/unstable angles and stable/unstable dominated splittings within homoclinic classes.
Finally, we prove a generic dichotomy between small stable/unstable angles and a weak form of hyperbolicity. Before stating it more precisely, we give  quick definitions:

A {\em residual} subset of a Baire space is a set that contains a countable intersection of open and dense subsets. 

A {\em saddle point} for a diffeomorphism is a hyperbolic periodic point that has non-trivial stable and unstable manifolds. The {\em index} of a saddle is the dimension of its stable manifold. The {\em stable (resp. unstable) direction} of a saddle $P$ is the tangent vector space to the stable (resp. unstable) manifold at $P$. The {\em minimum stable/unstable angle} of a saddle $P$ is the minimum of the angles between a vector of the stable direction of $P$ and a vector of the unstable direction. 

We say that a saddle point $P$ is {\em homoclinically related} to another saddle point $Q$ if and only if the unstable manifold $W^u(P)$ of the orbit of $P$ (resp. $W^u(Q)$) intersects transversally the stable manifold $W^s(Q)$  (resp. $W^s(P)$) . The {\em homoclinlic class} of a saddle point $P$ is the closure of the transverse intersections of $W^s(P)$ and $W^u(P)$. One easily shows that it also is the closure of the set of saddles homoclinically related to $P$.

A {\em dominated splitting} above a compact invariant set $K$ for a diffeomorphism $f$ is a splitting of the tangent bundle $TM_{|K}=E\oplus F$ into two vector subbundles such that the vectors of $E$ are uniformly exponentially more contracted or less expanded than the vectors of $F$ by the iterates of the dynamics (see definition~\ref{d.dominatedsplitting}). The {\em index} of that dominated splitting is the dimension of $E$. 

For all $1\leq r \leq \infty$, we denote by $\Diff^r(M)$ the space of $C^r$ diffeormorphisms.

\begin{theorem}
\label{t.dichotomysimple}
There exists a residual set $\cR\subset \Diff^1(M)$ of diffeomorphisms $f$ such that for any saddle point $P$ of $f$, we have the following dichotomy:
\begin{itemize}
\item either the homoclinic class  $H(P,f)$ of $P$ admits a dominated splitting of same index as $P$
\item or, for all $\epsilon>0$, there is a saddle point $Q_\epsilon$ homoclinically related to $P$ such that it holds:
\begin{itemize}
\item the minimum stable/unstable angle of
 $Q_\epsilon$ is less than $\epsilon$,
\item the eigenvalues of the derivative of the first return map at $Q_\epsilon$ are all real and pairwise distinct,
\item each of these eigenvalues has modulus less than $\epsilon$ or greater than $\epsilon^{-1}$.
\end{itemize}
\end{itemize}
\end{theorem}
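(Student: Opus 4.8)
The plan is to deduce Theorem~\ref{t.dichotomysimple} from Theorem~\ref{t.mainsimplestatement} together with classical $C^1$-generic tools, so I would first set up the generic hypotheses I intend to use. By standard Kupka--Smale-type and connecting-lemma arguments, on a residual set $\cR_0$ the following hold for every saddle $P$: the homoclinic class $H(P,f)$ varies lower-semicontinuously with $f$, it equals the closure of the set of saddles homoclinically related to $P$, and (by Bonatti--Crovisier--type results) the ``stabilization'' of dominated splittings --- if no dominated splitting of index $\dim W^s(P)$ exists on $H(P,f)$, then one detects this along a single periodic orbit in $H(P,f)$ with arbitrarily weak domination. I would then fix the countable basis needed to make the whole dichotomy residual: enumerate the index and a countable dense family of ``weakness thresholds'', and for each build an open-dense set where either a dominated splitting of the right index appears (robustly) or a bad saddle $Q_\epsilon$ with the three listed properties is produced; intersecting over the countable family gives $\cR$.

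The core of the argument is the second alternative. Suppose $H(P,f)$ has no dominated splitting of index $i=\dim W^s(P)$. Using Wen's generalization (\cite{W1}) of the Pujals--Sambarino step together with the homoclinic-relation genericity, I obtain, for any prescribed $\epsilon$, a saddle $R$ homoclinically related to $P$ and of index $i$ whose period is as large as we like and along whose orbit the stable/unstable dominated splitting is so weak that a suitable path $\cA_t$ of cocycles, of radius less than $\epsilon$, turns the first-return derivative into one with all eigenvalues real, pairwise distinct, and of modulus either $<\epsilon$ or $>\epsilon^{-1}$, while simultaneously creating a stable/unstable angle $<\epsilon$. That the required path can be chosen with these properties, and in particular that the strong stable/unstable directions of all ``surviving'' indices persist all along it, is exactly the content of the path-cocycle constructions of \cref{s.pathcocycles} (which also underlie Theorem~\ref{t.realeigen}); I would quote those. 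Now apply Theorem~\ref{t.mainsimplestatement} to the path $\cA_t$, with $\varrho$ small and with compact sets $K_i, L_j$ chosen inside the semilocal strong stable/unstable manifolds of $R$ large enough to pin down the branches of $W^s(R)$, $W^u(R)$ that carry the transverse homoclinic intersections with $W^u(P)$, $W^s(P)$: the resulting $\epsilon$-perturbation $g$ keeps those branches in place, hence keeps $R$ homoclinically related to $P$, while the first-return derivative of $g$ at $R$ is $\cA_1$, which has the desired real, distinct, extreme-modulus spectrum and the small angle. Setting $Q_\epsilon = R$ (for $g$) finishes this alternative.

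To assemble the residual set: for a fixed index $k$ and a fixed rational threshold, let $\cO$ be the set of $f$ such that \emph{either} some periodic saddle $Q$ of index $k$ homoclinically related to some saddle of index $k$ has minimum angle and spectral data beating the threshold and the homoclinic relation is \emph{robust} (an open condition, available because transverse intersections persist) \emph{or} $f$ admits, robustly, a dominated splitting of index $k$ over the relevant homoclinic class. The perturbation argument above shows $\cO$ is dense; it is open by construction. Intersecting the $\cO$'s over all $k$ and all thresholds in a countable dense set, and further intersecting with $\cR_0$ and with the residual set where homoclinic classes depend continuously and saddles are robustly indexed, yields $\cR$; chasing the quantifiers shows that for $f\in\cR$ and any saddle $P$, either $H(P,f)$ has a dominated splitting of index $\dim W^s(P)$, or for every $\epsilon>0$ a saddle $Q_\epsilon$ with the three bulleted properties exists, homoclinically related to $P$.

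The main obstacle, I expect, is the bookkeeping that makes the angle/spectrum-producing perturbation compatible with \emph{preserving the homoclinic relation to $P$}. One must check that the branches of $W^s(R),W^u(R)$ meeting $W^u(P),W^s(P)$ are indeed captured by strong stable/unstable manifolds of indices lying in the sets $I,J$ of Theorem~\ref{t.mainsimplestatement} (so that the theorem applies to them), and that shrinking $\varrho$ does not destroy the transverse intersection points --- this needs the intersections to be taken with $W^{s/u}(P)$, not merely with local pieces, and a continuity argument for the invariant manifolds of $R$ as the period grows. Getting the path $\cA_t$ to realize \emph{simultaneously} the small angle, the reality and distinctness of all eigenvalues, and the extreme moduli, all within radius $\epsilon$, is delicate but is precisely what \cref{s.pathcocycles} is designed to deliver, so I would lean on it rather than reprove it.
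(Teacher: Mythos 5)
The heart of your argument is the paper's own: the paper packages your core step as \cref{p.smallanglediffeo} --- given a homoclinically related saddle of large period with weak stable/unstable domination, the concatenated paths of \cref{s.pathcocycles} (first real eigenvalues, then pairwise distinct, then moduli pushed below $\epsilon$ or above $\epsilon^{-1}$, then a small angle) have radius less than $\epsilon$ and keep hyperbolicity of the same index at every time, so the stable and unstable dimensions stay in the sets $I,J$ and \cref{t.mainsimplestatement} applies; choosing $\varrho$ and the compacts $K_i,L_j$ to contain the pieces of $W^s(R),W^u(R)$ carrying the transverse intersections with the invariant manifolds of $P$ preserves the homoclinic relation, exactly as you propose. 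Two small remarks: you do not need Wen's theorem to produce the weakly dominated related saddle --- since $H(P,f)$ is the closure of the saddles homoclinically related to $P$ and dominated splittings pass to closures, absence of domination on the class directly gives related saddles of arbitrarily large period whose splitting is not $N$-dominated for any prescribed $N$; and the concerns you list at the end (indices in $I,J$, survival of the intersection points) are resolved just as you expect.

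The one place your write-up does not close is the Baire assembly. Your open set $\cO$ asks for \emph{some} saddle of index $k$ homoclinically related to \emph{some} saddle of index $k$, which does not pin the dichotomy to a given $P$: one index-$k$ saddle could carry the small-angle companion while another, whose class has no dominated splitting, carries none, and the quantifier chase then fails; moreover ``robustly admits a dominated splitting over the relevant homoclinic class'' is delicate (the class is only lower semicontinuous) and turns out to be unnecessary. The paper's bookkeeping (\cref{s.dichotomysimple}, \cref{l.smallangles2}) indexes instead by the period $p$ and by $\epsilon=1/(n+1)$: by Kupka--Smale a generic $f$ has finitely many period-$p$ saddles $P_1,\dots,P_l$, with continuations on a neighborhood $\cU_f$; for each continuation one takes $\Delta$ (no dominated splitting of the index of $P_k$ on its class) and the open set $\Delta_\epsilon$ (a related saddle with the three listed properties exists), proves $\Delta\subset\cl(\Delta_\epsilon)$ by exactly your perturbation step, and sets $\cV_k=\bigl[\cU_f\setminus\cl(\Delta_\epsilon)\bigr]\cup\Delta_\epsilon$, which is open and dense with no robustness hypotheses at all. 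Intersecting over $k$, over the neighborhoods $\cU_f$, and over $(p,n)$ gives $\cR$. With your $\cO$ replaced by this per-continuation construction, your proof coincides with the paper's.
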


That result parallels~\cite[Theorem 1.1]{Gou}. Indeed, if these three conditions are satisfied for small $\epsilon$, then there are fundamental domains of the stable and unstable manifolds of $Q$ that are big before the minimal distance that separates them, in such a way that these two manifolds can be intertwined by small perturbations. In particular, it is possible to create  tangencies between them by  small perturbations that keep $Q$ in the homoclinic class of $P$. 

We finally give a version of~\cite[Theorem 4.3]{Gou} where the derivative is preserved, that is, we show that if the stable/unstable dominated splitting along a saddle is weak and if the period of that saddle is large, then one obtains homoclinic tangency related to that saddle by a $C^1$-perturbation that preserves the orbit of the saddle and the derivative along it. Moreover, one may keep any preliminarily fixed finite set in the invariant manifolds of the saddle. 

\subsection{Further applications of \cref{t.mainsimplestatement}}
Using Theorem~\ref{t.mainsimplestatement}, Rafael Potrie~\cite{Po} got interesting results on generic Lyapunov stable and bi-stable homoclinic classes. In particular, he showed that, $C^1$-generically, if $H$ is a quasi-attractor containing a dissipative periodic point, then it admits a dominated splitting. 

The main theorem of this paper was followed by another result by Bonatti and Bochi~\cite{BoBo} that generalized previous results about perturbation of derivatives along periodic points in $C^1$-topology \cite{M1,BDP,BGV}. More precisely, given a tuple of matrices $\cA=(A_1,...,A_p)$, they give a full description of the tuples of moduli of eigenvalues of the product $B=A_p...A_1$ (equivalently, of Lyapunov exponents) that one can reach by small isotopic perturbations of $\cA$. Moreover, they prove that if strong stable or unstable direction of some dimensions exist at both the initial and final time, then the isotopy $\cA_t$ can be built so that at all times of the isotopy there are strong stable and unstable directions of those dimensions. In other words, the isotopy matches the hypotheses of \cref{t.mainsimplestatement}.

\cref{t.mainsimplestatement} and \cite[Theorem 4.1]{BoBo} thus give a very general method to perturb derivatives inside homoclinic classes, to preserve strong connections and to create new ones. This led recently to a number of developments in the study of $C^1$-generic dynamical systems. Let us detail the most important ones.

In~\cite{BCDG}, Bonatti, Crovisier, D\'iaz and Gourmelon showed a number of generic results on homoclinic classes and produced new examples of wild dynamics. 
In particular, they showed that if a homoclinic class has no dominated splitting and if $C^1$-robustly it contains two saddle points of different indices, then it induces a particular type of wild dynamics, called "viral". Indeed, such homoclinic class has a replication property: there exists an arbitrarily small $C^1$-perturbation of the dynamics such that there is a new homoclinic class Hausdorff close to the continuation of the first one, but not in the same chain-recurrent class,\footnote{An $\epsilon$-pseudo orbit is a sequence $x_1,...,x_n$ such that $\dist(f(x_i),x_{i+1})<\epsilon$, for all $i$. Two points $x\neq y$ are in the same chain-recurrent class, if for any $\epsilon>0$ there is an $\epsilon$-pseudo orbit that goes from $x$ to $y$ and another that goes from $y$ to $x$. }  and such that that new homoclinic class satisfies the same properties. 

In particular, this produces a locally residual set of diffeomorphisms that have uncountably many chain-recurrent classes. By Kupka-Smale's theorem, uncountably many of those chain-recurrent classes have no periodic orbits, that is, are {\em aperiodic}. 

An question since the first production of examples of locally generic dynamics with aperiodic chain-recurrent classes (by Bonatti and D\'iaz~\cite{BD:02}), was whether such aperiodic classes could generically have non-trivial dynamics. It was not known if there could exist locally generic dynamics were aperiodic classes were not all minimal, or had non-zero Lyapunov exponents.

Recently, using (among other ideas) an extension of \cref{t.mainsimplestatement} in dimension $3$, namely the result announced in \cref{s.furtherresults}, using and \cite[Theorem 4.1 and Proposition 3.1]{BoBo}, and pushing further the ideas of~\cite{BCDG} and~\cite{BLY}, Bonatti and Shinohara have announced that they can produce open sets of diffeomorphism, where  generic diffeomorphisms admit  uncountably many non-minimal chain-recurrent classes.

Moreover, Bonatti, Crovisier and Shinohara announced recently that those techniques can also be used to find a $C^1$-generic counter example to Pesin's theory, thus generalizing the result of Pugh~\cite{Pu84}: for $\dim(M)\geq 3$, there exists open sets of $\Diff^1(M)$ in which generic diffeomorphisms admit non-uniformly hyperbolic invariant measures supported by aperiodic chain-recurrent classes that have trivial stable and unstable manifolds.

\subsection{Statement of the Main Perturbation Proposition.}\label{s.mainpertpropos}
We state the main results that lead to \cref{t.mainsimplestatement}. These are perturbation results that hold in $C^r$-topology, for all $1\leq r \leq \infty$, although we only use their $C^1$-versions to prove \cref{t.mainsimplestatement}. The $C^r$ results may be of great interest in other contexts.

While the diffeomorphisms $f$ we will consider in the following may vary, they will all coincide along the orbit $\Orb_P$ of some common periodic point $P$, and all stable or unstable manifolds of this paper will be those of that orbit. Thus we can unambiguously denote the stable and unstable manifolds of the orbit $\Orb_P$ of $P$ for $f$ simply by $W^s(f)$ and $W^u(f)$. Likewise, we denote
 the $i$-strong stable/unstable manifolds of $\Orb_P$ for $f$ simply by $W^{ss,i}(f)$/$W^{uu,i}(f)$. 
 
Let $P$ be a $p$-periodic point for a diffeomorphism $f$ such that it admits an $i$-strong stable manifold. We follow the notations of~\cite{KH} for local strong stable/unstable manifolds:

\begin{definition}
A set $W^{+}(f)$ is a {\em local $i$-strong stable manifold} for $f$ if 
it is an $f$-invariant union of disjoint disks $\{D_{n}\}_{0\leq n<p}$, 
where each $D_{n}$ is a smooth ball inside the strong stable manifold $W^{ss,i}(f)$ and $f^n(P)$ is in the interior of $D_{n}$.
\end{definition}

We define symmetrically a set $W^{-}(f)$ to be a {\em local $j$-strong unstable manifold} for $f$ if it is a local $j$-strong stable manifold for $f^{-1}$.  Now, we can do the following:

\begin{remark}
Let $P$ be a periodic point for $f$ and $f_k$ be a sequence in $\Diff^r(M)$ that converges $C^r$ to $f$, where each $f_k$ coincides with $f$ throughout $\Orb_P$. Then, by the stable manifold theorem, for any strong stable manifold $W^+(f)$ there is a sequence of local strong stable manifolds  $W^+(f_k)$ that converges to it $C^r$-uniformly. And symmetrically for local strong unstable manifolds.
\end{remark}

%
%
%
%

\begin{proposition}[Main perturbation proposition]\label{p.pertpropsimple}
Fix $1\leq r \leq \infty$. Let $g_k$ and $h_k$ be two sequences in $\Diff^r(M)$ converging to a diffeomorphism $f$, such that $f$, $g_k$ and $h_k$ coincide throughout the orbit $\Orb_P$ of a periodic point $P$. Let $\{W^+(h_k)\}_{k\in\NN}$ be a sequence  of local strong stable manifolds of $\Orb_P$ for the diffeomorphisms $h_k$ that converges to a local strong stable manifold $W^+(f)$ for $f$,  $C^r$-uniformly. Define symmetrically local strong unstable manifolds $W^-(h_k)$ and $W^-(f)$.

For any neighborhood $U_P$ of the orbit $\Orb_P$, there exists:
\begin{itemize}
\item a neighborhood $V_P\subset U_P$ of $\Orb_P$,
\item a sequence $f_k$ of $\Diff^r(M)$ converging to $f$,
\item two sequences of local strong stable and unstable manifolds $W^+(f_k)$ and $W^-(f_k)$ of $\Orb_P$ that tend respectively to $W^+(f)$ and $W^-(f)$, in the $C^r$ topology,
\end{itemize}
such that it holds, for any $k$ greater than some $k_0\in \NN$:
\begin{itemize}
\item $f_k^{\pm 1}=g_k^{\pm 1}$ inside $V_P$
\item $f_k^{\pm 1}=h_k^{\pm 1}$ outside $U_P$,
\item For any integer $i>0$, if $\Orb_P$ has an $i$-strong stable manifold $W^{ss,i}(f)$ for $f$, then $W^{ss,i}(f_k)$ and $W^{ss,i}(h_k)$ also exist and coincide "semilocally outside $U_P$", i.e.
$$\left[W^+(f_k)\cap W^{ss,i}(f_k)\right]\setminus U_P=\left[W^+(h_k)\cap W^{ss,i}(h_k)\right]\setminus U_P,$$
and likewise, replacing stable manifolds by unstable ones. 
\end{itemize}
\end{proposition}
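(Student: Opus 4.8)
\emph{Strategy.} The plan is to obtain $f_k$ by a surgery inside the tubular neighbourhood $U_P$ that interpolates between $g_k$ (kept near $\Orb_P$) and $h_k$ (kept away from it), dragging the strong stable and strong unstable manifolds along with it. The starting observation is that, by the stable manifold theorem and the assumed $C^r$-convergences, all of $W^{ss,i}(g_k)$, $W^{ss,i}(h_k)$ and $W^{ss,i}(f)$ become mutually $C^r$-close as $k\to\infty$, and likewise for the strong unstable manifolds; hence the bending needed to turn one of these into another can be realized by a diffeomorphism that is $C^r$-close to the identity. The surgery will be done one step of the dynamics at a time, so that it is the \emph{forward invariance} of strong stable manifolds (and backward invariance of strong unstable ones) that propagates the coincidence "outside $U_P$'' from a single fundamental domain to all of $M\setminus U_P$; this is why, in the end, one only needs a "before first return to a small $V_P$'' analysis.

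\emph{Reduction (\cref{s.reduction}).} First I would straighten the picture. Choosing $C^r$ coordinates along $\Orb_P$ adapted to $f$, one turns the flag of strong stable manifolds $W^{ss,1}(f)\subset\cdots\subset W^{s}(f)$, the transverse flag of strong unstable manifolds of $f$, and the local manifolds $W^{+}(f)$, $W^{-}(f)$ into unions of coordinate disks; and up to a further coordinate change that tends $C^r$ to the identity as $k\to\infty$, one may also assume the corresponding objects attached to $h_k$ are flat. Replacing $g_k$ and $h_k$ by their expressions in these coordinates, it then suffices to treat a local model in which $f$ is (essentially) in a normal form, $h_k$ preserves a fixed flag of linear stable subspaces and a transverse flag of linear unstable subspaces near $\Orb_P$, and $W^{+}(h_k)$, $W^{-}(h_k)$ are flat disks. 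Verifying that the hypotheses survive this reduction, and that proving the model case suffices, is the content of \cref{s.reduction}.

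\emph{The perturbation (\cref{s.pertprop,s.induction}).} Now fix nested neighbourhoods $V_P\subset\subset V_P'\subset\subset U_P$ and a $C^r$ cut-off $\chi$ equal to $0$ on $V_P$ and to $1$ off $V_P'$. I would build $f_k$ as a composition of: (i) an interpolation "$(1-\chi)\,g_k+\chi\,h_k$'', read in the adapted coordinates, which equals $g_k$ on $V_P$, equals $h_k$ off $V_P'$, and tends to $f$; with (ii) a $C^r$-small, $\Orb_P$-fixing, identity-off-$U_P$ diffeomorphism whose role is to realign the local strong stable/unstable manifolds of $g_k$ with the flat ones of $h_k$ within the transition collar, before the relevant orbits reach $V_P$. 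Since (i) respects the product of the stable and unstable flags and (ii) is subspace-preserving outside $V_P$, the manifold $W^{ss,i}(h_k)$ — flat away from a small neighbourhood after the reduction — is left invariant by $f_k$ there and is still contracted onto $\Orb_P$ at the $i$-strong rate; thus $W^{ss,i}(f_k)$ exists, contains it, and choosing $W^{+}(f_k)$ to be the appropriate flat disk (which automatically tends $C^r$ to $W^{+}(f)$) one reads off $\left[W^{+}(f_k)\cap W^{ss,i}(f_k)\right]\setminus U_P=\left[W^{+}(h_k)\cap W^{ss,i}(h_k)\right]\setminus U_P$, and symmetrically for the unstable manifolds. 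The various indices $i$ (those for which $W^{ss,i}(f)$ exists) and $j$ (those for which $W^{uu,j}(f)$ exists) are handled at once by an induction processing the two flags from their innermost members outward, perturbing at each stage only in the "new'' directions so as not to disturb the lower-dimensional manifolds already fixed; this is \cref{s.induction}.

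\emph{Main obstacle.} The crux, and the reason \cref{s.pertprop,s.induction} are long, is to carry out steps (i)--(ii) while simultaneously: (a) keeping $f_k$ genuinely $C^r$-close to $f$, uniformly in $k$; (b) not destroying the estimates that guarantee the invariant manifold produced is tangent to the $i$-strong stable direction at $P$ and contracted at the corresponding rate, so that it really is $W^{ss,i}(f_k)$ and not some larger invariant manifold; and (c) coping with the fact that the strong stable manifold of $h_k$ re-enters $U_P$ infinitely often, so "coincidence outside $U_P$'' cannot be arranged collar by collar but must be deduced from forward invariance once a single return to $V_P$ is controlled. Making (a), (b) and (c) hold at the same time is the technical heart of the argument.
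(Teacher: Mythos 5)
Your overall shape (glue $g_k$ to $h_k$ by a cut-off inside $U_P$, then correct by a small auxiliary diffeomorphism, organizing the dimensions by induction) matches the paper at a coarse level, but the step you dispose of in one sentence is precisely the theorem. The claim that ``since (i) respects the product of the stable and unstable flags and (ii) is subspace-preserving outside $V_P$, the manifold $W^{ss,i}(h_k)$ is left invariant by $f_k$ there and is still contracted at the $i$-strong rate'' does not hold as stated. Even after straightening the invariant manifolds of $f$ and $h_k$ to flat disks, the blend $(1-\chi)g_k+\chi h_k$ has no reason to preserve those flat disks in the collar (only $h_k$ does), and, worse, on $V_P$ the glued map equals $g_k$, whose $i$-strong stable manifold is a \emph{different} disk, merely $C^r$-close to the flat one. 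So the forward $f_k$-orbit of a point $x\in W^{ss,i}(h_k)\setminus U_P$ leaves the flat disk once it enters the region governed by $g_k$, and there is no a priori reason that $x$ lies in $W^{ss,i}(f_k)$ rather than merely in some weaker stable set: membership in the $i$-strong manifold is decided by where the orbit lands relative to the \emph{local} $i$-strong manifold of $g_k$, not by rates alone. Your corrective diffeomorphism (ii) is exactly the object whose existence has to be proved: a $C^r$-small map, supported in the collar, sending the local $i$-strong stable manifold of $h_k$ onto that of $g_k$ \emph{simultaneously for every $i$ in the stable flag and every $j$ in the unstable flag}, without moving the manifolds already matched. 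In the paper this is the content of \cref{s.induction}: the straightening diffeomorphisms $\Phi_k$ (\cref{l.sequphi_k}), the box $\mathbf{T}$ thickening a fundamental domain of the $i_0$-strong stable manifold and chosen disjoint from $W^-(f)$, and the fibered ``pushing'' $\chi_k$ of \cref{l.chi}, which forces a fundamental domain of $W^{ss,i_0}(h_k^*)$ into the local $i_0$-manifold of the already-corrected map (\cref{p.starsequence}); none of this construction, nor any substitute for it, appears in your proposal.

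Two further structural points. First, your induction runs ``from the innermost members outward, not disturbing the lower-dimensional manifolds already fixed''; the paper runs it in the opposite order ($I=\{i_0\}\sqcup I^*$ with $i_0=\min I$ treated last), and this order is not cosmetic: the new, smallest-dimensional manifold can be pushed \emph{inside} the larger, already-matched boxes $\mathbf{T}^i$, whereas adjusting a larger manifold while fixing pointwise a smaller one contained in it is much harder to arrange with a small perturbation. Second, the paper's reduction in \cref{s.reduction} is to the fixed-point case $\tilde f=f^p$ (the coordinate straightening only happens later, and only for $h_k$'s manifolds via $\Phi_k$); your proposed simultaneous flattening of both flags of $f$ and $h_k$ is plausible but does not remove the difficulty above, since the flags of $g_k$ cannot be flattened by the same coordinates.
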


\begin{corollary}[$C^r$-linearization lemma]\label{c.linearisationCr}
Let $1\leq r\leq \infty$. Let $P$ be a periodic hyperbolic point of a diffeormophism  $f\in \Diff^r(M)$ and let $W^+(f)$ and $W^-(f)$ be respectively local strong stable and unstable manifolds of its orbit $\Orb_P$. Let $U_P$ be a neighborhood of $\Orb_P$. Then, there exists a sequence $f_k$ tending to $f$ in $\Diff^r(M)$ and two sequences of local strong stable and unstable manifolds $W^+(f_k)$ and $W^-(f_k)$ of $\Orb_P$ such that it holds, for all $k\in \NN$:
\begin{itemize}
\item $f^{\pm 1}=f_k^{\pm 1}$ throughout $\Orb_P$ and outside $U_P$,
\item $P$ is a hyperbolic point for $f_k$ and the linear part of $f_k^{p}$ at $P$ has no resonances, where $p$ is the period of $P$. In particular, $f_k$ is locally $C^r$-conjugate to its linear part along the orbit of $P$.
\item For any integer $i>0$, if $\Orb_P$ has an $i$-strong stable manifold $W^{ss,i}(f)$ for $f$, then $W^{ss,i}(f_k)$ also exists and
$$\left[W^+(f)\cap W^{ss,i}(f)\right]\setminus U_P=\left[W^+(f_k)\cap W^{ss,i}(f_k)\right]\setminus U_P,$$
and likewise, replacing stable manifolds by unstable ones. \end{itemize}
\end{corollary}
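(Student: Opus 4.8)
The plan is to deduce \cref{c.linearisationCr} from \cref{p.pertpropsimple} by feeding the latter the constant sequence $h_k\equiv f$ together with a sequence $g_k$ of non-resonant perturbations of $f$ supported near $\Orb_P$.

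First I would build $g_k$. Write $(A_1,\dots,A_p)$ for the derivative of $f$ along $\Orb_P$ and $B=A_p\circ\cdots\circ A_1$ for the linear part of $f^p$ at $P$. A resonance among the eigenvalues $\mu_1,\dots,\mu_d$ of $B$ is a relation $\mu_\ell=\mu_1^{a_1}\cdots\mu_d^{a_d}$ with $a_m\in\NN$ and $a_1+\cdots+a_d\ge 2$; for each fixed $(\ell,a)$ this is a proper algebraic condition on the matrix, so hyperbolic non-resonant matrices are dense in a neighbourhood of $B$. I pick hyperbolic non-resonant matrices $B_k\to B$, and note that since ``admitting an $i$-strong stable (resp. unstable) direction'' is an open condition on a linear map — it only involves strict inequalities between the moduli of its eigenvalues and $1$ — for $k$ large $B_k$ has an $i$-strong stable (resp. unstable) direction whenever $B$ does. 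It then remains to realize $B_k$ as the first-return derivative without moving $\Orb_P$: composing $f$ with a diffeomorphism of $M$ that fixes $P$, equals the identity outside a small ball around $P$ disjoint from $\{f(P),\dots,f^{p-1}(P)\}$, and has derivative $B_k\circ B^{-1}$ at $P$ (such a map is built in a chart by damping the linear map $B_k\circ B^{-1}$ with a bump function, and it tends to the identity in $C^r$ as $B_k\to B$) yields $g_k\in\Diff^r(M)$ with $g_k\to f$ in $C^r$, with $g_k=f$ on $\Orb_P$, and with $Dg_k^p(P)=B_k$.

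Next I would apply \cref{p.pertpropsimple} with the neighbourhood $U_P$, the sequences $g_k$ and $h_k\equiv f$ (both converging to $f$ and coinciding with $f$ on $\Orb_P$), and the constant sequences $W^+(h_k):=W^+(f)$ and $W^-(h_k):=W^-(f)$ of local strong stable/unstable manifolds (which converge $C^r$-uniformly, trivially). It returns a neighbourhood $V_P\subset U_P$, a sequence $f_k\to f$ in $\Diff^r(M)$, and local strong stable/unstable manifolds $W^{\pm}(f_k)\to W^{\pm}(f)$ such that, for all $k$ past some $k_0$ — hence, after discarding the first terms, for all $k$: $f_k^{\pm1}=g_k^{\pm1}$ on $V_P$; $f_k^{\pm1}=f^{\pm1}$ outside $U_P$; and $[W^+(f)\cap W^{ss,i}(f)]\setminus U_P=[W^+(f_k)\cap W^{ss,i}(f_k)]\setminus U_P$ for every $i$ for which $W^{ss,i}(f)$ is defined, and symmetrically for the unstable manifolds. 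The three conclusions of \cref{c.linearisationCr} then follow: $f_k^{\pm1}=f^{\pm1}$ on $\Orb_P$ because $\Orb_P\subset V_P$ where $f_k=g_k=f$, combined with $f_k^{\pm1}=f^{\pm1}$ outside $U_P$; the semilocal coincidence of strong (un)stable manifolds is literally the displayed equality; and, for $k$ large, $f_k^p$ coincides with $g_k^p$ on a small enough neighbourhood of $P$ (the relevant orbit segment staying in $V_P$, where $f_k=g_k$), so that $Df_k^p(P)=Dg_k^p(P)=B_k$ is hyperbolic and non-resonant, whence $f_k^p$ is locally $C^r$-conjugate to its linear part at $P$ by the classical linearization theorem for non-resonant hyperbolic germs — Sternberg's theorem for $r=\infty$, its finite-regularity counterpart otherwise (see \cite{KH}).

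Because \cref{p.pertpropsimple} does all the work, the only point needing care here is the last step of the construction of $g_k$: checking that the perturbation realizing $B_k$ is small in the $C^r$ topology (not merely $C^0$) while fixing $\Orb_P$ pointwise, and that by continuity of the moduli of eigenvalues it preserves every strong stable and unstable dimension already present for $f$ — so that the hypothesis ``$\Orb_P$ has an $i$-strong stable manifold for $f$'' transfers correctly through the statement of \cref{p.pertpropsimple}, and in particular $W^{ss,i}(f_k)$ is indeed defined.
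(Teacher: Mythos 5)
Your proposal is correct and follows essentially the same route as the paper, whose proof consists precisely of building, via a bump-function (partition of unity) perturbation supported near $\Orb_P$, a sequence $g_k\to f$ in $\Diff^r(M)$ whose first-return linear part at $P$ is hyperbolic and non-resonant, applying \cref{p.pertpropsimple} with $h_k=f$ and the constant local manifolds $W^{\pm}(f)$, and invoking the Sternberg linearization theorem (\cite[Theorem~6.6.6]{KH}) for the local $C^r$-conjugacy. Your additional checks (openness of the strong stable/unstable dimensions, $C^r$-smallness of the realizing perturbation, discarding finitely many $k$) are exactly the details the paper leaves implicit.
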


In the $C^1$ setting, we have a stronger statement:

\begin{corollary}[$C^1$-linearization lemma]\label{c.linearisationC1}
Let $P$ be a periodic point of a diffeormophism  $f\in \Diff^r(M)$ and let $W^+(f)$ and $W^-(f)$ be respectively local strong stable and unstable manifolds of $\Orb_P$ and fix a linear structure on a neighborhood of each point of $\Orb_P$. Let $U_P$ be a neighborhood of $\Orb_P$. Then, there exist a sequence $f_k$ tending to $f$ in $\Diff^r(M)$ and two sequences of local strong stable and unstable manifolds $W^+(f_k)$ and $W^-(f_k)$ such that it holds, for all $k\in \NN$:
\begin{itemize}
\item $f^{\pm 1}=f_k^{\pm 1}$ throughout $\Orb_P$ and outside $U_P$, 
\item $f_k$ coincides on a neighborhood of $\Orb_P$ with the linear part $L$ of $f$ along $\Orb_P$,
\item For any integer $i>0$, if $\Orb_P$ has an $i$-strong stable manifold $W^{ss,i}(f)$ for $f$, then $W^{ss,i}(f_k)$ also exists and
$$\left[W^+(f)\cap W^{ss,i}(f)\right]\setminus U_P=\left[W^+(f_k)\cap W^{ss,i}(f_k)\right]\setminus U_P,$$
and likewise, replacing stable manifolds by unstable ones. \end{itemize}
\end{corollary}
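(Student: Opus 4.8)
The plan is to deduce \cref{c.linearisationC1} from \cref{c.linearisationCr} together with \cref{p.pertpropsimple}, using the $C^1$-specific fact that a linear map has "enough room" to be made to coincide (not merely be conjugate) with its linear part near a periodic orbit. The structure of the argument mirrors the $C^r$ case but replaces the final conjugation-to-linear-part step by an actual local coincidence with the chosen linear structure.

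First I would fix the linear structures on neighborhoods of the points of $\Orb_P$ given in the hypothesis, and let $L$ denote the linear part of $f$ along $\Orb_P$ expressed in these charts (i.e. the tuple of derivatives $Df$ at the iterates of $P$, read as linear maps between the chart neighborhoods). The point is that in $C^1$-topology one does not need any non-resonance condition: any diffeomorphism whose derivative along $\Orb_P$ equals the tuple $L$ can be $C^1$-perturbed, supported in an arbitrarily small neighborhood of $\Orb_P$, so that it coincides \emph{exactly} with $L$ on a (smaller) neighborhood of $\Orb_P$ — this is the standard "$C^1$-linearization" step that underlies the Franks' Lemma, obtained by composing $f$ near each $f^n(P)$ with a bump-supported diffeomorphism interpolating between $f$ and its first-order Taylor expansion. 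Since $Df$ along $\Orb_P$ is already $L$ by definition, this perturbation is as small as we like in $C^1$; call the resulting diffeomorphism $\tilde f$ and note $\tilde f = f$ throughout $\Orb_P$, $\tilde f$ equals $L$ on some neighborhood $W_P\subset U_P$ of $\Orb_P$, and $\tilde f = f$ outside $U_P$.

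Next I would invoke \cref{p.pertpropsimple} to transport the strong stable/unstable manifold data from $f$ to this linear model while keeping the semilocal intersections with the chosen local manifolds fixed outside $U_P$. Concretely, apply \cref{p.pertpropsimple} with the constant sequence $g_k := \tilde f$ (so that $f_k$ will equal $\tilde f$, hence $L$, on an even smaller neighborhood $V_P$ of $\Orb_P$), with $h_k := f$ and with the given $W^\pm(f)$; by the Remark preceding \cref{p.pertpropsimple} one first picks local strong stable/unstable manifolds $W^\pm(\tilde f)$ close to $W^\pm(f)$ so the hypotheses of the proposition are met. The proposition then produces a sequence $f_k\to f$ with local strong (un)stable manifolds $W^\pm(f_k)\to W^\pm(f)$, coinciding with $g_k = \tilde f$ (so with $L$) inside $V_P$, coinciding with $h_k=f$ outside $U_P$, and satisfying
\[
\left[W^+(f)\cap W^{ss,i}(f)\right]\setminus U_P=\left[W^+(f_k)\cap W^{ss,i}(f_k)\right]\setminus U_P
\]
for every $i$ for which $W^{ss,i}(f)$ exists, and symmetrically for unstable manifolds — which is exactly the third bullet of \cref{c.linearisationC1}, since the displayed identity in the statement is precisely this. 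The first bullet ($f^{\pm1}=f_k^{\pm1}$ throughout $\Orb_P$ and outside $U_P$) holds because $f_k=\tilde f=f$ on $\Orb_P$ and $f_k=h_k=f$ outside $U_P$; the second bullet ($f_k$ equals $L$ near $\Orb_P$) holds because $f_k=g_k=\tilde f = L$ on $V_P$, which is a neighborhood of $\Orb_P$.

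\textbf{The main obstacle} I anticipate is bookkeeping rather than conceptual: one must make sure that the neighborhood $V_P$ delivered by \cref{p.pertpropsimple} is genuinely a neighborhood on which $\tilde f$ is already \emph{linear}, which forces choosing the linearizing neighborhood $W_P$ of $\tilde f$ first and then feeding $U_P' := W_P$ (or $U_P\cap W_P$) as the ambient neighborhood into \cref{p.pertpropsimple} so that the resulting $V_P\subset W_P$; and simultaneously one needs $U_P$ to stay large enough that the "outside $U_P$" conclusions about the semilocal (un)stable manifolds remain the ones we want. A secondary point to check is that the $C^1$-linearization step genuinely costs only a small $C^1$-perturbation: this is where the $C^1$ setting is essential — in $C^r$ for $r\geq 2$ one cannot in general kill the higher-order terms by a small perturbation, which is exactly why \cref{c.linearisationCr} settles for no-resonance-plus-smooth-conjugacy while \cref{c.linearisationC1} can demand literal equality with $L$. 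Once these neighborhood inclusions are arranged carefully, the three bullets of the corollary follow directly, as indicated above, with no further computation.
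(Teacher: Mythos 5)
Your overall strategy is the paper's: pre-linearize $f$ near $\Orb_P$ by interpolating, via bump functions, between $f$ and its linear part $L$ (a $C^1$-small modification, since $Df=L$ along $\Orb_P$), and then invoke \cref{p.pertpropsimple} with $h_k=f$ so that the semilocal strong stable/unstable manifolds outside $U_P$ are those of $f$. Indeed the paper's proof is exactly: build, by a partition of unity, a sequence $g_k$ tending to $f$ with $g_k^{\pm 1}=L^{\pm 1}$ on a neighborhood of $\Orb_P$, and apply \cref{p.pertpropsimple} with $h_k=f$. (Your aside that this literal linearization is only cheap in the $C^1$ topology, which is why \cref{c.linearisationCr} must settle for non-resonance plus Sternberg conjugacy, is also the right point.)

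The flaw is in the application step: you feed \cref{p.pertpropsimple} the \emph{constant} sequence $g_k:=\tilde f$ for a fixed pre-linearized $\tilde f\neq f$. The proposition requires $g_k\to f$, so its hypotheses are simply not met; and the conclusions you then quote are mutually inconsistent, since you claim both $f_k\to f$ and $f_k=g_k=\tilde f=L$ on the \emph{fixed} neighborhood $V_P$ for all large $k$ — uniform convergence on $V_P$ would force $f=L$ on $V_P$, which is false in general (only the $1$-jets of $f$ and $L$ agree along $\Orb_P$). The repair, which is precisely the paper's argument, is to let the pre-linearization depend on $k$: choose $g_k$ equal to $L^{\pm1}$ on a neighborhood of $\Orb_P$ that shrinks as $k\to\infty$, so that $g_k\to f$, then apply \cref{p.pertpropsimple} once with $h_k=f$. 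The conclusion $f_k^{\pm1}=g_k^{\pm1}$ on $V_P$ then gives $f_k=L$ on the intersection of $V_P$ with the ($k$-dependent) linearity region of $g_k$, which is still a neighborhood of $\Orb_P$ — and that is all the second bullet of \cref{c.linearisationC1} demands, since it does not ask for a neighborhood uniform in $k$. With that single change your proof coincides with the paper's.
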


\cref{p.pertpropsimple} is proved in \cref{s.reduction,s.pertprop,s.induction}. The linearization lemmas are straightforward consequences of \cref{p.pertpropsimple}: use a partition of unity to build a sequence $g_k$ of diffeomorphisms that tends $C^r$ to $f$, such that the linear part of $f_k^{p}$ at $P$ has no resonances (for the proof of \cref{c.linearisationCr}), or such that $f_k^{\pm 1}=L^{\pm 1}$ (for the proof of \cref{c.linearisationC1}) on a neighborhood of the orbit of $P$, where $L$ is the linear part of $f$ along $\Orb_P$, and apply \cref{p.pertpropsimple} with $h_k=f$. In \cref{c.linearisationCr}, the fact that $f_k$ is locally $C^r$-conjugate to its linear part along the orbit of $P$ comes from the Sternberg Linearization theorem (see~\cite[Theorem~6.6.6]{KH}).

\subsection{Structure of the paper}
In \cref{s.isotopic}, we prove \cref{t.mainsimplestatement} from \cref{p.pertpropsimple} and \cref{c.linearisationC1}. 
The main difficulty is the proof of  \cref{p.pertpropsimple}, it occupies \cref{s.reduction,s.pertprop,s.induction}. In \cref{s.reduction}, we prove that one can reduce it to the study of the case where $P$ is a fixed point. 

In \cref{s.pertprop,s.induction} we prove the fixed point case by induction on the sets of dimensions of strong stable and unstable manifolds that we want to preserve semi-locally. This is the main technical difficulty of the paper.

Finally in \cref{s.consequences} we prove a few of the many consequences of Theorem~\ref{t.mainsimplestatement} for perturbative dynamics of $C^1$ diffeomorphisms. In particular, we prove \cref{t.realeigen,t.dichotomysimple}.

\bigskip

For simplicity, in the rest of the paper, the sentences 
\begin{center}
"For large $k$, property $\cP_k$ holds."

"For small $\lambda>0$, property $\cQ_\lambda$ holds."
\end{center}
\noindent respectively stand for
\begin{center}
 "There exists $k_0\in \NN$ such that, for any integer $k\geq k_0$, property $\cP_k$ holds."

 "There exists $\lambda_0>0$ such that, for any real number $0<\lambda\leq \lambda_0$, property $\cQ_\lambda$ holds." 
\end{center}

\section{Proof of the Isotopic Franks' lemma.}\label{s.isotopic}

In this section, we prove \cref{t.mainsimplestatement} from \cref{p.pertpropsimple} and \cref{c.linearisationC1}.

\begin{proof}[Idea of the proof]
We first put a linear structure on a neighborhood of $\Orb_P$, so that any sequence $\cA_t$ of linear maps as is the statement of \cref{t.mainsimplestatement} identifies to a linear diffeomorphism from a neighborhood of $\Orb_P$ to another. 

Then we introduce the notion of "connection" from such a diffeomorphism $\cA$ to another $\cB$, that is, a diffeomorphism from a convex neighborhood of $\Orb_P$ to another that
\begin{itemize}
\item coincides with $\cB$ on a neighborhood of $\Orb_P$ and with $\cA$ outside a bigger neighborhood, 
\item "connects" the strong stable/unstable manifolds of $\cA$ with those of $\cB$, as represented in \cref{p.connection}.
\end{itemize}
Those connections may be concatenated as in \cref{p.concatenation} (we may however need to conjugate some of them by homothecies).

If $\cA_t$ is a path of such linear diffeomorphisms for which strong stable and strong unstable manifolds of some dimensions $i\in I$ and $j\in J$ exist, as a consequence of \cref{p.pertpropsimple} we will find a sequence $0=t_0<t_1<...<t_k=1$ of times such that there is a connection from each $\cA_{t_i}$ to $\cA_{t_{i+1}}$ of small size (that is, $C^1$ close to the linear diffeomorphism $\cA_{t_i}$) that connects the $I$-strong stable and $J$-strong unstable manifolds of $\cA_{t_i}$ to those $\cA_{t_{i+1}}$. Then a convenient concatenation of those connections will give a connection from $\cA_0$ to $\cA_1$ whose distance to $\cA_0$ will be arbitrarily close to the radius of the path $\cA_t$, as defined in  \cref{t.mainsimplestatement}.

We will end the proof by linearizing $f$ to $\cA_0$ on a neighborhood $U_P$ of $\Orb_P$ with \cref{c.linearisationC1}, and finally pasting in $U_P$ that connection from $\cA_0$ to $\cA_1$. 
\end{proof}

We now go into the details of the proof, starting with some preliminaries where we define precisely the metrics we deal with, and (re)define the notions of strong stable and unstable manifolds for diffeomorphisms from an open set of $M$ to another.

\subsection{Preliminaries}
Any Riemannian metric $\|.\|_*$ on the compact manifold $M$ induces a distance $d_{\|.\|_*}$ on $TM$ through the Levi-Civita connexion. 
Given two subsets ${\mathbf{\Gamma}},{{\mathbf{\Delta}}}\subset M$, we say that $g\colon {\mathbf{\Gamma}} \to \mathbf{\Delta}$ is a diffeomorphism if it extends to a diffeomorphism from an open set containing ${\mathbf{\Gamma}}$ to an open set containing $\mathbf{\Delta}$.
We define the $\|.\|_*$-distance between two diffeomorphisms $g,h\colon {\mathbf{\Gamma}} \to \mathbf{\Delta}$ as follows:
$$\dist_{\|.\|_*}(g,h)=\sup_{v\in TM_{|{\mathbf{\Gamma}}}\atop w\in TM_{|\mathbf{\Delta}}}\biggl\{d_{\|.\|_*}\bigl[Dg(v),Dh(v)\bigr],d_{\|.\|_*}\bigl[Dg^{-1}(w),Dh^{-1}(w)\bigr]\biggr\}.$$
In order to have this distance independent of the choice of an extension, we assume that $\Int(\mathbf{\Gamma})$ and $\Int(\mathbf{\Delta})$ are dense in ${\mathbf{\Gamma}}$ and $\mathbf{\Delta}$, respectively. We say that a diffeomorphism $g\colon {\mathbf{\Gamma}} \to \mathbf{\Delta}$ is {\em bounded by $C>1$ for $\|.\|_*$} if for all unit vector $v\in TM$, we have $C^{-1}\leq |Df(v)\|_*\leq C$. We recall without a proof the following folklore:

\begin{lemma}\label{l.folkloremetric}
Let $M$ be a manifold and $K\subset \Int({\mathbf{\Gamma}})$ a compact subset in the interior of ${\mathbf{\Gamma}}$.
Let $\|.\|_1$ and $\|.\|_2$ be two Riemannian metrics on $M$ such that they coincide on $T_KM$. 
For any $\epsilon>0$ and $C>1$, there exists a neighborhood $U$ of $K$ such that:
 
 if two diffeomorphisms $g,h\colon  {\mathbf{\Gamma}} \to \mathbf{\Delta}$ leave $K$ invariant,  coincide outside $U$ and are both bounded by $C$ for $\|.\|_1$, then 
\begin{align*}
\left|\dist_{\|.\|_1}(g,h)-\dist_{\|.\|_2}(g,h)\right|<\epsilon.
\end{align*}
\end{lemma}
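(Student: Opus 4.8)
The statement is a purely local comparison of two operator-norm-type distances for diffeomorphisms that agree on a compact set $K$ and off a small neighbourhood $U$ of $K$, under a uniform bound $C$ on the derivatives. The plan is to reduce everything to a pointwise estimate on the tangent bundle over $U$, exploiting that the two metrics $\|.\|_1,\|.\|_2$ coincide on $T_KM$ and hence are $C^0$-close on a small enough neighbourhood of $K$, and that on the complement of $U$ the two diffeomorphisms are literally equal so contribute identically to both distances.

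First I would unwind the definition: for $g,h\colon{\mathbf\Gamma}\to{\mathbf\Delta}$ agreeing outside $U$, the quantities $d_{\|.\|_i}[Dg(v),Dh(v)]$ and $d_{\|.\|_i}[Dg^{-1}(w),Dh^{-1}(w)]$ vanish for $v$ (resp. $w$) based outside $U$. So each $\dist_{\|.\|_i}(g,h)$ is a supremum over $v\in TM_{|\overline U}$ and $w\in TM_{|\overline U}$ of the same four expressions, differing only in which metric is used to measure the distance $d_{\|.\|_i}$ on $TM$ between the two image vectors. The images $Dg(v),Dh(v)$ lie in $TM_{|{\mathbf\Delta}}$, but since $g,h$ leave $K$ invariant and map $U$ into a controlled region, by shrinking $U$ we may assume the images also stay in an arbitrarily small neighbourhood $U'$ of $K$; the bound $C$ is exactly what makes this uniform — $Dg(v)$ has norm at most $C$, and continuity of $g$ forces the footpoint $g(x)$ close to $K$ when $x$ is close to $K$.

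Next, the metric comparison: since $\|.\|_1=\|.\|_2$ on the compact set $T_KM$ and both are continuous, for any $\eta>0$ there is a neighbourhood $U'$ of $K$ on which the Levi-Civita distances $d_{\|.\|_1}$ and $d_{\|.\|_2}$ on $TM_{|U'}$ differ by at most $\eta$ on pairs of vectors of norm bounded by, say, $2C$ lying over nearby footpoints (here one uses that the Christoffel symbols, hence the connection distance, depend continuously on the metric and reduce to the common value on $T_KM$). Applying this to each of the four expressions defining the two distances, every term in the supremum for $\|.\|_1$ is within $\eta$ of the corresponding term for $\|.\|_2$; taking suprema and choosing $\eta<\epsilon$ (then pulling back to the corresponding $U$ via the continuity of $g$) yields $|\dist_{\|.\|_1}(g,h)-\dist_{\|.\|_2}(g,h)|<\epsilon$, as required. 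Since this $U$ depends only on $K$, $C$, $\epsilon$ and the two metrics — not on the particular $g,h$ — the lemma follows.

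The main obstacle, such as it is, is the bookkeeping that makes the neighbourhood $U$ uniform in $g$ and $h$: one must check that a single $U$ works for all pairs $(g,h)$ leaving $K$ invariant and bounded by $C$, which is why the hypotheses "leave $K$ invariant" and "bounded by $C$" are present — they give an equicontinuity/compactness control forcing $g(U),h(U)$ into a prescribed neighbourhood of $K$ independently of the maps. The rest is the standard (and here explicitly labelled folklore) fact that the connection distance on $TM$ depends continuously on the Riemannian metric. I would therefore present this as a short argument rather than a detailed computation, consistent with the paper's choice to recall it "without a proof".
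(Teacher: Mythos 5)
The paper offers no proof to compare against here: \cref{l.folkloremetric} is explicitly recalled ``without a proof'' as folklore, and your overall route --- discard the terms of the supremum whose footpoint lies outside $U$ (where $g=h$), confine the remaining footpoints to a small neighbourhood of $K$, and compare the two induced distances term by term --- is certainly the natural, intended argument.

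Two steps are mis-justified as written, though, and one of them is a genuine mathematical misstatement. First, the claim that the Christoffel symbols of $\|.\|_1$ and $\|.\|_2$ ``reduce to the common value on $T_KM$'' is false: Christoffel symbols involve first derivatives of the metric tensor, and two metrics coinciding on $T_KM$ can have different Levi-Civita connections at every point of $K$ (already on $\RR$ with $K=\{0\}$, compare $dx^2$ with $(1+x)^2\,dx^2$). What coincidence on $T_KM$ plus continuity actually yields is only the zeroth-order fact that the two norms are $(1+o(1))$-equivalent on $T_{U'}M$ as $U'$ shrinks to $K$; the connections should enter only through parallel transport along the short path joining the footpoints $g(x)$ and $h(x)$, and there no closeness of the two connections is needed --- a uniform bound on both families of Christoffel symbols on a fixed compact neighbourhood makes both transports $(1+O(r))$-isometries, after which everything reduces to comparing the two norms at nearby points. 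Second, and relatedly, the argument requires the compared vectors $Dg(v)$ and $Dh(v)$ to sit over footpoints close \emph{to each other}, not merely both close to $K$: if $K$ is large or disconnected, ``both within $Cr$ of $K$'' still allows $g(x)$ and $h(x)$ to be far apart, and then the induced distances between the image vectors are governed by the metrics along paths that leave every small neighbourhood of $K$, where nothing compares $\|.\|_1$ and $\|.\|_2$. The proximity $d\bigl(g(x),h(x)\bigr)=O(r)$ is not a consequence of $K$-invariance and the bound $C$ alone; it uses that $g$ and $h$ coincide outside $U$ (hence near the boundary of the component of $U$ containing $x$) together with the $C$-Lipschitz control. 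With these two repairs, and using the connection-induced distance only in this local form (short base path plus fibre comparison), your termwise estimate and the passage to the suprema do give a neighbourhood $U$ that is uniform in $(g,h)$, which is all the paper uses.
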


In the following, the diffeomorphism $f$ and the $p$-periodic point $P$ of orbit $\Orb_P$ for $f$ are both fixed.
Let $\mathbf{\Gamma}\subset M$ contain $\Orb_P$ in its interior. Let $g\colon \mathbf{\Gamma} \to  g(\mathbf{\Gamma})$ be a diffeomorphism that coincides with $f$ throughout $\Orb_P$, and assume that the first return linear map $Dg^p$ on $T_PM$ admits an $i$-strong stable direction $E^i$. Then  a  {\em local $i$-strong stable manifold} $W^{+}(g)\subset \mathbf{\Gamma}$ is
a $g$-invariant union of disjoint disks $\{D_{n}\}_{0\leq n<p}$, 
where each $D_{n}$ is a smoothly embedded $i$-dimensional disk that contains $f^n(P)$ is in its interior, and such that $D_{0}$ is tangent to $E_i$.
Such $W^{+}(g)$ always exists.
The {\em $i$-strong stable manifold $W^{ss,i}(g)$ of $g$} is the set of points $x$ whose positive orbit $\{g^n(x)\}_{n\in\NN}$ is well-defined and falls after some iterate in $W^+(g)$ (it does not depend on the choice of $W^+(g)$). 

Note that $W^{ss,i}(g)$ is not necessarily an embedded manifold.

We say that the $g$-invariant set $W^{ss,i}(g)$ is {\em limited} if, for any (equivalently, for some) local $i$-strong stable manifold $W^{+}(g)$, there is an integer $n>0$ such that  $g^n\bigl[W^{ss,i}(g)\bigr]\subset W^{+}(g)$.
If $W^{ss,i}(g)$ is limited, then the set $$\cD^{ss,i}_g=W^{ss,i}(g)\setminus g\bigl[W^{ss,i}(g)\bigr]$$ is a fundamental domain of $W^{ss,i}(g)\setminus \{\Orb_P\}$ for the dynamics of $g$. We call it the {\em first fundamental domain of $W^{ss,i}(g)$}. Indeed, the positive images $g^n\bigl[\cD^{ss,i}_g\bigr]$ of $\cD^{ss,i}_g$ are pairwise disjoint and cover $W^{ss,i}(g)\setminus \{\Orb_P\}$.

We define symmetrically the {\em $j$-strong unstable manifold $W^{uu,j}(g)$} as the $j$-strong stable manifold of $g^{-1}\colon g(\mathbf{\Gamma}) \to \mathbf{\Gamma}$. We say that it is {\em limited} if $W^{ss,j}(g^{-1})$ is, and we define its first fundamental domain by $\cD^{uu,i}_g=\cD^{ss,i}_{g^{-1}}$.

\begin{remark}\label{r.earfr}
We have $\cD^{ss,i}_{g}=W^{ss,i}(g)\setminus g(\mathbf{\Gamma})$.
\end{remark}

The following remark extends a classical characterization for fundamental domains of stable/unstable manifolds.

\begin{remark}\label{r.erfoain}
If $W^{ss,i}(g)$ is a local $i$-strong stable manifold $W^+(g)$ and is strictly $g$-invariant (that is, its image by $g$  is included in $W^+(g)\setminus \partial W^+(g)$), then:
\begin{enumerate}
\item The first fundamental domain in $W^+(g)$ for the dynamics of $g$ is characterized as the unique $i$-dimensional submanifold (with boundary) $\cD\subset W^+(g)$ such that 
\begin{itemize}
\item $\cl \cD\setminus \Int \cD=\partial W^+(g) \cup g(\partial W^+(g)),$
\item $\cD$ contains $\partial W^+(g)$ and does not intersect $g\bigl[\partial W^+(g)\bigr]$.
\end{itemize}
\item \label{i.item2bqd} As a straightforward consequence, if a local $i$-strong stable manifolds $W^+(h)$ for a diffeomorphism $h$ satisfies:
\begin{itemize}
\item $\partial W^+(h)= \partial W^+(g)$,
\item the first fundamental domain $\cD$ of $W^+(g)$ for $g$ is included in $W^+(h)$,
\item $h=g$ by restriction to $\partial W^+(g)$,
\end{itemize}
then $\cD$ is also the first fundamental domain of $W^+(h)$ for the dynamics of $h$.
\end{enumerate}
\end{remark}

\subsection{Definition of a local linear structure.}
Recall that $M$ is a Riemannian manifold and that it is initially endowed with a Riemannian metric $\|.\|$. Fix a family of charts $\{\phi_n \colon U_n \to \RR^d\}_{0\leq n<p}$ such that it holds:
\begin{itemize}
\item the sets $U_n\subset M$ are open and their closures are pairwise disjoint,
\item for all $n$, $f^n(P)\in U_n$ and $\phi_n\left[f^n(P)\right]=0$,
\item for all $n$, the linear map $D\phi_n\colon (T_{f^n(P)}M,\|.\|)\to T_{0}\RR^d\equiv (\RR^d,\|.\|_{c})$ is an isometry, where $\|.\|_{c}$ is the canonical metric. 
\end{itemize}
Endow each $U_n$ with the pull-back by $\phi_n$ of the linear structure of $\RR^d$ and of the canonical Euclidean metric $\|.\|_c$. Endow $M$ with a Riemannian metric $\|.\|_{\!\mbox{\scriptsize Eucl.}}$ that extends that Euclidean metric. The two metrics $\|.\|$ and  $\|.\|_{\!\mbox{\scriptsize Eucl.}}$ coincide on the bundle $T_{\Orb_P}M$. Write
$$\mathbf{U}=U_0\sqcup ...\sqcup U_{p-1}.$$

Let $I_n$ be the set of isomorphisms from $T\!\!_{f^{n\!-\!1\!}(\!P)\!}M$ to $T\!\!_{f^{n\!}(\!P)\!}M$. Given  an isomorphism $A\in I_n$, let $\|A\|$ be its operator norm, for the Riemannian metric $\|.\|$ on $TM$. Define
\begin{align*}
 \mathfrak{A}=I_1\times ...\times I_p.
 \end{align*} 
We endow that space with the following distance: given $\cA=(A_{1},\ldots,A_{p})$ and $\cB=(B_{1},\ldots,B_{p})$ in $\mathfrak{A}$, let 
\begin{align*}
\dist_{\mathfrak{A}}(\cA,\cB)=\max_{1\leq n\leq p}\bigl\{\|A_n-B_n\|, \|A^{-1}_n-B^{-1}_n\|\bigr\}.
\end{align*}
Let $I$ and $J$ be two finite sets of strictly positive integers, and let 
\begin{align*}
\mathfrak{A}_{I,J}\subset \mathfrak{A}
\end{align*} be the subset of tuples $(A_{1},\ldots,A_{p})$ such that the endomorphism $B=A_{p}\circ ... \circ A_{1}$ has an $i$-strong stable direction and a $j$-strong unstable direction, for all $i\in I$ and $j\in J$. 

To any isomorphism $A_n\colon T\!\!_{f^{n\!-\!1\!}(\!P)\!}M \to T\!\!_{f^{n\!}(\!P)\!}M$, we associate the linear diffeomorphism tangent to $A_n$
$$L_{A_n}\colon V_{A_n}\subset U_{n-1}\to W_{A_n}\subset U_n,$$
where $V_{A_n}$ is chosen to be the maximal subset of $U_{n-1}$ on which such $L_{A_n}$ is well-defined. 
For each $\cA \in \mathfrak{A}$, we have now a canonically associated linear diffeomorphism
$$L_{\cA}\colon \mathbf{V}\!\!_\cA\!\subset \!\mathbf{U} \to  \mathbf{W}\!\!_\cA\!\subset\! \mathbf{U},$$ 
where $\mathbf{V}\!\!_\cA=\cup V_{A_n}$. Note that $\mathbf{V}\!\!_\cA$ contains $\Orb_P$ in its interior. For simplicity, we will accept the abuse of notations, and denote $L_{\cA}$ by $\cA$.

\subsection{Connections from an element of $\mathfrak{A}_{I,J}$ to another.}

Our definition of a "connection from $\cA$ to $\cB$" is quite long, but we will only be interested with a few of its properties and with the fact that it exists, when $\cA$ and $\cB$ are close enough. Let 
\begin{align*}
i_m&=\max I\\
j_m&=\max J.
\end{align*} 

\begin{definition}\label{d.connectionstable}
Given $\cA,\cB\in  \mathfrak{A}_{I,J}$, an {\em $(I,\emptyset)$-connection from $\cA$ to $\cB$} is a diffeomorphism $C_{\cA\cB}\colon \mathbf{\Gamma} \to \cA(\mathbf{\Gamma})$, where
\begin{itemize}
\item $\mathbf{\Gamma}=\mathit{\Gamma}_0\sqcup ...\sqcup\mathit{\Gamma}_{p-1}$ is a subset of $\mathbf{V}\!\!_\cA$ such that 
\begin{itemize}
\item each $\mathit{\Gamma}_n$ is a closed, convex subset of $U_n$ that contains $f^n(P)$ in its interior,
\item for all $i\in I$, the set $W^{ss,i}(\cA)\cap \mathbf{\Gamma}$ is $\cA$-invariant, and therefore is equal to $W^{ss,i}(\cA_{|\mathbf{\Gamma}})$,\end{itemize}
\item the diffeomorphism $C_{\cA\cB}$ coincides with $\cA$ on a neighborhood of the boundary $\partial \mathbf{\Gamma}$, and with $\cB$ on a neighborhood of $\Orb_P$,
\item the $I$-stable manifolds are semi-locally preserved: for all $i\in I$, the $i$-strong stable manifolds of $\Orb_P$ for $C_{\cA\cB}$ and $\cA_{|\mathbf{\Gamma}}$ are limited and their first fundamental domains coincide, that is,
\begin{align*}
\cD^{ss,i}_{\cA_{|\mathbf{\Gamma}}}&=\cD^{ss,i}_{C_{\cA\cB}}.
\end{align*}
\end{itemize}
\end{definition}

\begin{definition}\label{d.connectionglobale}
Given $\cA,\cB\in  \mathfrak{A}_{I,J}$ a diffeomorphism $C_{\cA\cB}\colon \mathbf{\Gamma} \to \cA(\mathbf{\Gamma})$ is an {\em $(I,J)$-connection from $\cA$ to $\cB$} if it is an $(I,\emptyset)$-connection from  $\cA$ to $\cB$ and if $C_{\cA\cB}^{-1}$ is a $(J,\emptyset)$-connection from  $\cA$ to $\cB$.
\end{definition}

\begin{figure}[hbt] 
\ifx\JPicScale\undefined\def\JPicScale{1}\fi
\psset{linewidth=0.011,dotsep=1,hatchwidth=0.3,hatchsep=1.5,shadowsize=0,dimen=middle}
\psset{dotsize=0.7 2.5,dotscale=1 1,fillcolor=black}
\psset{arrowsize=0.17 2,arrowlength=1,arrowinset=0.25,tbarsize=0.7 5,bracketlength=0.15,rbracketlength=0.15}
\psset{xunit=.425pt,yunit=.425pt,runit=.425pt}
\begin{pspicture}(200,550)(500,950)
{\pscustom[linewidth=1,linecolor=black]
{\newpath
\moveto(260,560)
\curveto(100,580)(30.375359,751.69501262)(100,860.00000262)
\curveto(306.30348,1180.91652262)(867.30026,558.89960262)(260,560.00000262)
\closepath}}

{\pscustom[linewidth=1,linecolor=black]
{\newpath
\moveto(260,730.00000262)
\lineto(280,750.00000262)}}

{\pscustom[linewidth=1,linecolor=black]
{\newpath
\moveto(260,750.00000262)
\lineto(280,730.00000262)}}

\newrgbcolor{lightgray}{0.9 0.9 0.9}

{\pscustom[linewidth=0.2,fillcolor=lightgray, fillstyle=solid, linecolor=black]
{\newpath
\moveto(180,820.00000262)
\curveto(161.07115,787.38220262)(101.25499,777.69147262)(100,740.00000262)
\curveto(97.641671,669.17161262)(170.46395,593.67333262)(240,580.00000262)
\curveto(283.88105,571.37138262)(318.68285,622.88587262)(360,640.00000262)
\curveto(398.95418,656.13535262)(460,620.00000262)(480,680.00000262)
\curveto(495.20234,725.60701262)(400,751.92598262)(400,800.00000262)
\curveto(400,900.00000262)(360.92501,880.50951262)(324.28571,901.42857262)
\curveto(289.76111,921.14025262)(238.56478,936.67643262)(205.71429,914.28571262)
\curveto(178.79594,895.93831262)(196.35099,848.17569262)(180,820.00000262)
\closepath}}

{\pscustom[linewidth=0.2,fillcolor=white,fillstyle=solid,linecolor=black]
{\newpath
\moveto(260,800.00000262)
\curveto(230.63599,790.21200262)(195.71429,805.71428262)(220,740.00000262)
\curveto(246.32101,668.77844262)(240,700.00000262)(280,700.00000262)
\curveto(320,700.00000262)(342.85714,714.28571262)(340,740.00000262)
\curveto(335.01676,784.84917262)(320,819.99999262)(260,800.00000262)
\closepath}}


{\pscustom[linewidth=0.2,linecolor=gray]
{\newpath
\moveto(80,820.00000262)
\lineto(511,638.00000262)}}

{\pscustom[linewidth=0.2,linecolor=gray]
{\newpath
\moveto(159,599.00000262)
\lineto(400,905.71428262)}}

{\pscustom[linewidth=0.5,linestyle=dashed,dash=3 6,linecolor=black]
{\newpath
\moveto(130,799.00000262)
\curveto(166,784.00000262)(128,800.00000262)(178,779.00000262)
\curveto(216.0143,763.03399262)(190,810.00000262)(200,830.00000262)
\curveto(210,850.00000262)(227.86406,868.46050262)(250,840.00000262)
\curveto(257,831.00000262)(260,810.00000262)(260,810.00000262)
\lineto(279,680.00000262)
\curveto(279,680.00000262)(282,660.00000262)(300,660.00000262)
\curveto(322.36068,660.00000262)(357.86093,719.85562262)(395,705.00000262)
\curveto(420,695.00000262)(408.35439,681.84407262)(433,671.00000262)
\curveto(458,660.00000262)(499,643.00000262)(511,638.00000262)}}

{\pscustom[linewidth=0.5,linestyle=dashed,dash=3 6,linecolor=black]
{\newpath
\moveto(200,651.00000262)
\curveto(210,664.00000262)(192.5,641.50000262)(215,670.00000262)
\curveto(237.5,698.50000262)(205.20712,707.94591262)(175,695.00000262)
\curveto(144.79288,682.05409262)(85,740.00000262)(130,755.00000262)
\curveto(175,770.00000262)(158.37722,740.00000262)(190,740.00000262)
\lineto(340,740.00000262)
\curveto(370,740.00000262)(400,730.00000262)(400,770.00000262)
\curveto(400,810.00000262)(290,810.00000262)(310,850.00000262)
\curveto(330,890.00000262)(340,830.00000262)(360,855.00000262)
\lineto(380,880.00000262)}}

{\pscustom[linewidth=2,linecolor=black]
{\newpath
\moveto(220,740.00000262)
\lineto(340,740.00000262)}}
\psline{>>-<<}(220,740.00000262)(340,740.00000262)

{\pscustom[linewidth=2,linecolor=black]
{\newpath
\moveto(276,700)
\lineto(261,800)}}
\psline{<<->>}(276,700)(261,800)

{\pscustom[linewidth=3,linecolor=black]
{\newpath
\moveto(159,599.00000262)
\curveto(199,649.99999262)(200,651.00000262)(200,651.00000262)}}

{\pscustom[linewidth=3,linecolor=black]
{\newpath
\moveto(400,905.00000262)
\lineto(380,880.00000262)}}

{\pscustom[linewidth=2,linecolor=black]
{\newpath
\moveto(80,820.00000262)
\lineto(140,795.00000262)}}

{\pscustom[linewidth=2,linecolor=black]
{\newpath
\moveto(464,658.00000262)
\lineto(511,638.00000262)}}

\rput(0,650){$\partial\mathbf{\Gamma}$}
\psline(20,655)(90,680)
\rput(148,858){\Small $C_{\!\cA,\cB}\!\!=\!\!\cA$}
\rput(310,770){$\cB$}
\rput(220,625){\small $\cD^{ss}_{C_{\cA\cB}}$}
\rput(410,872){\small $\cD^{ss}_{C_{\cA\cB}}$}
\end{pspicture}
\caption{A connection $C_{\cA\cB}$ from $\cA$ to $\cB$. The two linear regions are the white ones. It connects, through its strong stable/unstable manifolds, the strongs stable/unstable manifolds of $\cA$ to those of $\cB$}\label{p.connection}
\end{figure}
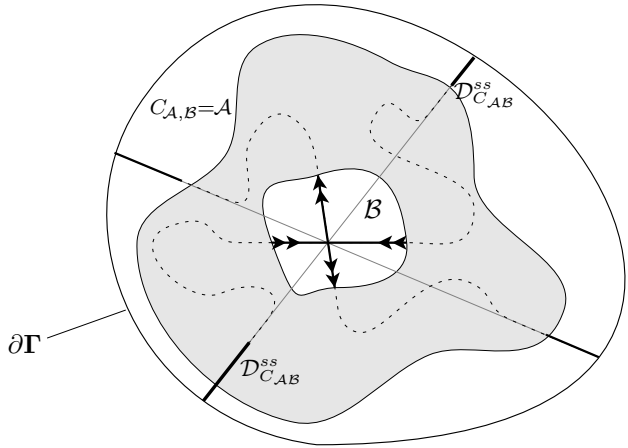

\bigskip
We define the {\em size}  of a connection $C_{\cA\cB}\colon\mathbf{\Gamma} \to \cA(\mathbf{\Gamma})$ as
\begin{align*}
\size(C_{\cA\cB})=\dist_{\|.\|_{\!\mbox{\scriptsize Eucl.}}}(\cA_{|\mathbf{\Gamma}},C_{\cA\cB}).
\end{align*}
\bigskip

For all $\mathbf{\Gamma}=\mathit{\Gamma}_0\sqcup ...\sqcup \mathit{\Gamma}_{p-1}$, where $\mathit{\Gamma}_n$ is a closed, convex subset of $U_n$ that contains $f^n(P)$ in its interior, and for any $0<\lambda\leq 1$, let $$\lambda \Id_{\mathbf{\Gamma}} \colon \mathbf{\Gamma}\to \mathbf{\Gamma}$$ be the map whose restriction to each $\mathit{\Gamma}_n$ is the homothety of ratio $\lambda$ and centered at  $f^n(P)$. Denote by $\lambda \cdot\mathbf{\Gamma}$ the image of $\mathbf{\Gamma}$ by $\lambda\Id$, and let 
$$\lambda^{-1} \Id_{\mathbf{\Gamma}} \colon \lambda\cdot \mathbf{\Gamma}\to \mathbf{\Gamma}$$
be the inverse map of $\lambda\Id$. 
Given a diffeomorphism $g\colon \Gamma \to g(\mathbf{\Gamma})$, 
\begin{align*}
g^\lambda=
\lambda\Id_{g(\mathbf{\Gamma})} \circ g \circ \lambda^{-1}\Id_{\mathbf{\Gamma}}
\end{align*} is a diffeomorphism from $\lambda\cdot \mathbf{\Gamma}$ on $\lambda\cdot g(\mathbf{\Gamma})$.

\begin{lemma}[Conjugation of connections by homotheties]
If $C_{\cA\cB}\colon  \mathbf{\Gamma}\to \cA(\mathbf{\Gamma})$ is an $(I,J)$-connection from $\cA$ to $\cB$ then, for all $0<\lambda \leq 1$, the diffeomorphism
$^{\lambda\!}C_{\cA\cB}\colon  \lambda\cdot\mathbf{\Gamma} \to \lambda\cdot\cA(\mathbf{\Gamma})$ is also an $(I,J)$-connection from $\cA$ to $\cB$. Moreover 
\begin{align*}
\size(^{\lambda\!}C_{\cA\cB})\leq \size(C_{\cA\cB}).
\end{align*}
\end{lemma}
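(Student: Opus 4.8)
The plan is to verify directly that each clause of \cref{d.connectionstable} (and, symmetrically, of \cref{d.connectionglobale}) is inherited by $^{\lambda\!}C_{\cA\cB}$ from $C_{\cA\cB}$, and then to estimate the size. First I would observe that $^{\lambda\!}C_{\cA\cB}$ is a diffeomorphism from $\lambda\cdot\mathbf{\Gamma}$ to $\lambda\cdot\cA(\mathbf{\Gamma})$, and that, since each $\Gamma_n$ is convex and contains $f^n(P)$ in its interior, so is $\lambda\cdot\Gamma_n$; hence $\lambda\cdot\mathbf{\Gamma}$ is a legitimate domain of the type required in \cref{d.connectionstable}, and it is contained in $\mathbf{\Gamma}\subset\mathbf{V}_{\!\cA}$. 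The key algebraic point, used repeatedly, is that the homothety $\lambda\Id_{\mathbf{\Gamma}}$ centred at $\Orb_P$ commutes with the linear map $\cA$: since each $L_{A_n}$ is linear and fixes the origin $f^{n-1}(P)\mapsto f^n(P)$, we have $\cA^\lambda=\cA$ on $\lambda\cdot\mathbf{\Gamma}$, and likewise $\cB^\lambda=\cB$ and $L^\lambda=L$ for any linear diffeomorphism $L$. In particular $^{\lambda\!}C_{\cA\cB}$ coincides with $\cA$ on a neighborhood of $\partial(\lambda\cdot\mathbf{\Gamma})=\lambda\cdot\partial\mathbf{\Gamma}$ (the image under $\lambda\Id$ of the neighborhood of $\partial\mathbf{\Gamma}$ on which $C_{\cA\cB}=\cA$) and with $\cB$ on a neighborhood of $\Orb_P$ (the image of the corresponding neighborhood for $C_{\cA\cB}$).

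Next I would transport the strong-manifold data by the conjugating homothety. Because $\lambda\Id_{\mathbf{\Gamma}}$ conjugates $C_{\cA\cB}$ to $^{\lambda\!}C_{\cA\cB}$ and $\cA_{|\mathbf{\Gamma}}$ to $\cA_{|\lambda\cdot\mathbf{\Gamma}}$, and since it sends $T_PM$-directions to themselves, for each $i\in I$ the $i$-strong stable direction is unchanged; hence $W^{ss,i}(\,^{\lambda\!}C_{\cA\cB})=\lambda\cdot W^{ss,i}(C_{\cA\cB})$ and $W^{ss,i}(\cA_{|\lambda\cdot\mathbf{\Gamma}})=\lambda\cdot W^{ss,i}(\cA_{|\mathbf{\Gamma}})$, both of which are therefore limited, with first fundamental domains the $\lambda\Id$-images of the original ones. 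Since $\lambda\Id$ is a bijection intertwining the dynamics, $\cD^{ss,i}_{C_{\cA\cB}}=\cD^{ss,i}_{\cA_{|\mathbf{\Gamma}}}$ yields $\cD^{ss,i}_{^{\lambda\!}C_{\cA\cB}}=\lambda\cdot\cD^{ss,i}_{C_{\cA\cB}}=\lambda\cdot\cD^{ss,i}_{\cA_{|\mathbf{\Gamma}}}=\cD^{ss,i}_{\cA_{|\lambda\cdot\mathbf{\Gamma}}}$, which is exactly the semi-local preservation clause. Also one checks that the $\cA$-invariance of $W^{ss,i}(\cA)\cap\mathbf{\Gamma}$ passes to $W^{ss,i}(\cA)\cap(\lambda\cdot\mathbf{\Gamma})$ by the same conjugacy. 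Applying all of this to $C_{\cA\cB}^{-1}$ in place of $C_{\cA\cB}$ and to $J$ in place of $I$ gives that $(^{\lambda\!}C_{\cA\cB})^{-1}=\,^{\lambda\!}(C_{\cA\cB}^{-1})$ is a $(J,\emptyset)$-connection from $\cA$ to $\cB$, so $^{\lambda\!}C_{\cA\cB}$ is an $(I,J)$-connection from $\cA$ to $\cB$.

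Finally, for the size estimate, by definition $\size(^{\lambda\!}C_{\cA\cB})=\dist_{\|.\|_{\mbox{\scriptsize Eucl.}}}(\cA_{|\lambda\cdot\mathbf{\Gamma}},\,^{\lambda\!}C_{\cA\cB})$, and since $\cA^\lambda=\cA$ on $\lambda\cdot\mathbf{\Gamma}$ this equals $\dist_{\|.\|_{\mbox{\scriptsize Eucl.}}}(\cA^\lambda,C_{\cA\cB}^\lambda)$ computed over $T_{\lambda\cdot\mathbf{\Gamma}}M$. The derivative of $g^\lambda=\lambda\Id\circ g\circ\lambda^{-1}\Id$ at a point $\lambda x$ equals $D g$ at $x$ read through the linear charts (the two homothety derivatives being the scalars $\lambda$ and $\lambda^{-1}$, which cancel), so in the local Euclidean charts $Dg^\lambda(\lambda x)=Dg(x)$ as linear maps; hence the supremum defining $\dist_{\|.\|_{\mbox{\scriptsize Eucl.}}}(\cA^\lambda,C_{\cA\cB}^\lambda)$ over $x\in\mathbf{\Gamma}$ (and the symmetric term for inverses) is the same set of numbers that defines $\dist_{\|.\|_{\mbox{\scriptsize Eucl.}}}(\cA_{|\mathbf{\Gamma}},C_{\cA\cB})=\size(C_{\cA\cB})$. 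This gives the equality $\size(^{\lambda\!}C_{\cA\cB})=\size(C_{\cA\cB})$, in particular the claimed inequality $\size(^{\lambda\!}C_{\cA\cB})\leq\size(C_{\cA\cB})$. I expect the only mildly delicate point to be bookkeeping the chart identifications so that the cancellation of the homothety factors is literally correct, and making sure the Euclidean metric $\|.\|_{\mbox{\scriptsize Eucl.}}$ is genuinely the flat one on all of $\mathbf{U}$ so that the homothety is an affine map there; everything else is a direct transport of structure along the conjugacy.
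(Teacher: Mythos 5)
Your verification is correct, and it is exactly the routine transport-of-structure argument the paper has in mind: the paper's own proof of this lemma is simply the word ``Trivial'', so your expansion (linearity of $\cA,\cB$ commuting with the homotheties, conjugacy carrying the strong stable manifolds and fundamental domains, derivative unchanged in the flat charts) is the intended reasoning spelled out. The only caveat is your final claim of \emph{equality} of sizes, which is slightly stronger than warranted if the distance on $TM$ weighs base-point displacement (the homothety contracts base points), but the inequality $\size(^{\lambda\!}C_{\cA\cB})\leq\size(C_{\cA\cB})$ — all the lemma asserts — follows regardless.
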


\begin{proof}
Trivial.
\end{proof}
 
 \begin{definition}[Concatenation]
The {\em concatenation} of two maps
\begin{align*}
g&\colon \mathbf{\Gamma}\subset M \to g(\mathbf{\Gamma})\subset M\\
h&\colon  \mathbf{\Delta}\subset M \to h(\mathbf{\Delta})\subset M.
\end{align*}
 is the map
\begin{align*}
g*h&\colon \mathbf{\Gamma} \cup \mathbf{\Delta}  \to  g(\mathbf{\Gamma})\cup h(\mathbf{\Delta})
\end{align*}
that coincides  with $g$ on $\mathbf{\Gamma}\setminus \mathbf{\Delta}$ and with $h$ on $\mathbf{\Delta}$.
\end{definition}

\begin{remark}\label{r.symmetry}
The concatenation operation is associative, but not commutative. Moreover, $(g*h)^{-1}=g^{-1}*h^{-1}$. This symmetry through inversion implies that the stable and unstable objects will have symmetric roles in the following results. 
\end{remark}

\begin{proposition}\label{p.zpeij}
Let $\cA,\cB\in \mathfrak{A}_{I,J}$. Let $\mathbf{\Gamma}$ be a neighborhood of $\Orb_P$ in $M$. Let $h\colon  \mathbf{\Gamma}\to h( \mathbf{\Gamma})\subset M$ be a diffeomorphism that coincides with the linear diffeomorphism $\cA$ on a neighborhood of $\Orb_P$, such that $W^{ss,i}(h)$ and $W^{uu,j}(h)$ are limited, for all $i,j$.
Let $C_{\cA\cB}\colon \mathbf{\Delta}\to \cA( \mathbf{\Delta})$ be an $(I,J)$-connection from $\cA$ to $\cB$. 

For all $\epsilon>0$, for small $\lambda>0$, it holds:
\begin{itemize}
\item the concatenation $g_\lambda=h*^{\lambda\!}C_{\cA\cB}$ is a diffeomorphism from $\mathbf{\Gamma}$ to $h(\mathbf{\Gamma})$,
\item for all $i\in I$ and $j\in J$, $W^{ss,i}(g_\lambda)$ and $W^{uu,j}(g_\lambda)$ are limited and it holds:
\begin{align}
W^{ss,i}(g_\lambda)&=\Bigl[W^{ss,i}(h)\setminus \lambda\cdot \mathbf{\Delta}\Bigr]\cup W^{ss,i}(^{\lambda\!}C_{\cA\cB})\label{e.regqerqq}\\
W^{uu,j}(g_\lambda)&=\Bigl[W^{uu,j}(h)\setminus   \cA(\lambda\cdot\mathbf{\Delta})\Bigr]\cup W^{uu,j}(^{\lambda\!}C_{\cA\cB}).\\
\cD^{ss,i}_{h}&=\cD^{ss,i}_{g_\lambda}\label{e.reoqi}\\
\cD^{uu,j}_{h}&=\cD^{uu,j}_{g_\lambda}\label{gnagna}
\end{align}
\item $\dist_{\|.\|}(h,g_\lambda)<\size(C_{\cA\cB})+\epsilon$.
\end{itemize}
\end{proposition}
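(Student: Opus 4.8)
The strategy is to exploit the fact that $h$ coincides with the linear map $\cA$ near $\Orb_P$, so that on a small enough neighborhood $\mathbf{V}$ of $\Orb_P$ one has $h = \cA$, and then to choose $\lambda$ small enough that $\lambda\cdot\mathbf{\Delta}$ sits inside $\mathbf{V}$. Once that is done, the concatenation $g_\lambda = h * {}^{\lambda}\!C_{\cA\cB}$ is well-defined and a diffeomorphism: outside $\lambda\cdot\mathbf{\Delta}$ it equals $h$, inside $\lambda\cdot\mathbf{\Delta}$ it equals ${}^{\lambda}\!C_{\cA\cB}$, and these agree on a neighborhood of $\partial(\lambda\cdot\mathbf{\Delta})$ because ${}^{\lambda}\!C_{\cA\cB}$ coincides with $\cA = h$ near that boundary (this is part of the definition of a connection, preserved under conjugation by homothety by the preceding lemma). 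The domain of $g_\lambda$ is $\mathbf{\Gamma}\cup(\lambda\cdot\mathbf{\Delta}) = \mathbf{\Gamma}$ since $\lambda\cdot\mathbf{\Delta}\subset\mathbf{V}\subset\mathbf{\Gamma}$, and similarly its image is $h(\mathbf{\Gamma})$; this gives the first bullet.

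For the second bullet, fix $i\in I$. The key point is that ${}^{\lambda}\!C_{\cA\cB}$ has the same $i$-strong stable direction at $P$ as $\cA$ (it coincides with $\cB$, hence with all linear diffeomorphisms in $\mathfrak{A}_{I,J}$ after rescaling, near $\Orb_P$ — actually it coincides with $\cB$ there, and $\cB\in\mathfrak{A}_{I,J}$, but what matters for $g_\lambda$ is the behavior near $\Orb_P$, where $g_\lambda = {}^{\lambda}\!C_{\cA\cB} = \cB$... wait, more carefully: $g_\lambda$ near $\Orb_P$ equals ${}^{\lambda}\!C_{\cA\cB}$ which near $\Orb_P$ equals $\cB$, so the strong stable direction of $g_\lambda^p$ at $P$ is that of $\cB$, which exists since $\cB\in\mathfrak{A}_{I,J}$). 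So $W^{ss,i}(g_\lambda)$ is defined. To identify it, trace a point: the positive $g_\lambda$-orbit of a point either stays forever outside $\lambda\cdot\mathbf{\Delta}$, in which case it is a $g_\lambda$-orbit that is also an $h$-orbit (since $g_\lambda = h$ there) and converges to $\Orb_P$ inside $W^{ss,i}$ iff it does so for $h$ with an $h$-orbit; or it eventually enters $\lambda\cdot\mathbf{\Delta}$, where $g_\lambda = {}^{\lambda}\!C_{\cA\cB}$, and from that moment its fate is governed by ${}^{\lambda}\!C_{\cA\cB}$. Using that a connection "semi-locally preserves" the strong stable manifolds (the first fundamental domains of $\cA_{|\mathbf{\Delta}}$ and $C_{\cA\cB}$ coincide, hence after rescaling those of $(\lambda\cdot\cA)_{|\lambda\cdot\mathbf{\Delta}}$ and ${}^{\lambda}\!C_{\cA\cB}$ coincide, but $\lambda\cdot\cA = \cA$ by linearity), one gets that $W^{ss,i}(h)$ and $W^{ss,i}(g_\lambda)$ differ exactly by the replacement of the part inside $\lambda\cdot\mathbf{\Delta}$: this yields \eqref{e.regqerqq}. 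The identity \eqref{e.reoqi} on fundamental domains follows from \cref{r.erfoain}\eqref{i.item2bqd}, since near $\partial\mathbf{\Delta}$ (scaled) $g_\lambda$ agrees with $h$, the fundamental domain $\cD^{ss,i}_h$ lies in the common region, and limitedness is inherited. The unstable statements \eqref{gnagna} and the second displayed equation follow by applying the stable statements to $g_\lambda^{-1} = h^{-1} * {}^{\lambda}\!(C_{\cA\cB}^{-1})$, using \cref{r.symmetry} and the fact that $C_{\cA\cB}^{-1}$ is a $(J,\emptyset)$-connection by \cref{d.connectionglobale}.

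For the third bullet, the distance estimate: outside $\lambda\cdot\mathbf{\Delta}$ one has $g_\lambda = h$, so the distance is realized on $\lambda\cdot\mathbf{\Delta}$ (and its image), where $g_\lambda = {}^{\lambda}\!C_{\cA\cB}$ and $h = \cA$. Thus $\dist_{\|.\|}(h,g_\lambda)$ is controlled by $\dist_{\|.\|}({}^{\lambda}\!C_{\cA\cB}, \cA)$ on $\lambda\cdot\mathbf{\Delta}$. By definition, $\size(C_{\cA\cB}) = \dist_{\|.\|_{\mathrm{Eucl.}}}(\cA_{|\mathbf{\Delta}}, C_{\cA\cB})$, and conjugation by homothety does not increase this Euclidean size (the preceding lemma). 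The issue is twofold: (i) the metric is $\|.\|$, not $\|.\|_{\mathrm{Eucl.}}$, and these only agree on $T_{\Orb_P}M$; (ii) one must pass from $\mathbf{\Delta}$ to $\lambda\cdot\mathbf{\Delta}$. Both are handled by shrinking $\lambda$: as $\lambda\to 0$, $\lambda\cdot\mathbf{\Delta}$ concentrates on $\Orb_P$ where the two metrics coincide, and moreover all the maps involved ($h$, ${}^{\lambda}\!C_{\cA\cB}$, $\cA$) are uniformly bounded (their derivatives lie in a compact family), so \cref{l.folkloremetric} applies: for $\lambda$ small, $\dist_{\|.\|}$ and $\dist_{\|.\|_{\mathrm{Eucl.}}}$ differ by at most $\epsilon$ on the relevant maps restricted to a small neighborhood of $\Orb_P$. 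Hence $\dist_{\|.\|}(h,g_\lambda) \le \dist_{\|.\|_{\mathrm{Eucl.}}}(h,g_\lambda) + \epsilon \le \size({}^{\lambda}\!C_{\cA\cB}) + \epsilon \le \size(C_{\cA\cB}) + \epsilon$.

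\textbf{Main obstacle.} The delicate point is the third bullet: reconciling the distance measured in the ambient Riemannian metric $\|.\|$ (in which "$\delta$-perturbation" is ultimately phrased) with the size of the connection, which is defined using the auxiliary Euclidean metric $\|.\|_{\mathrm{Eucl.}}$. The whole purpose of introducing $\lambda\Id_{\mathbf{\Delta}}$ and taking $\lambda$ small — rather than just concatenating directly — is to squeeze the support of the perturbation into an arbitrarily small neighborhood of $\Orb_P$, where the two metrics are indistinguishable, so that \cref{l.folkloremetric} can be invoked with its $\epsilon$. Care is needed that the uniform-boundedness hypothesis of that lemma holds: one must observe that ${}^{\lambda}\!C_{\cA\cB}$, for $\lambda\le 1$, together with $h$ and $\cA$, form a family bounded by some fixed $C>1$, which is true because conjugation by a homothety preserves the derivative and $C_{\cA\cB}$, $h$, $\cA$ each have bounded derivative on their (compact) domains.
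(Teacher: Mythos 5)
Your proposal is correct and follows essentially the same route as the paper's proof: well-definedness of the concatenation because $h=\cA$ near $\Orb_P$ and $\lambda\cdot\mathbf{\Delta}$ shrinks into that region, identification of the strong stable/unstable manifolds and of their first fundamental domains via the connection's fundamental-domain property (the unstable case by passing to inverses), and the distance estimate via $\dist_{\|.\|_{\mathrm{Eucl.}}}(h,g_\lambda)=\size({}^{\lambda}C_{\cA\cB})\leq\size(C_{\cA\cB})$ combined with \cref{l.folkloremetric}. The one step the paper spells out and you compress into a single sentence is the middle one: it uses the limitedness of $W^{ss,i}(h)$ to get that, for small $\lambda$, $W^{ss,i}(h)\cap\lambda\cdot\mathbf{\Delta}\subset W^{ss,i}(\cA_{|\lambda\cdot\mathbf{\Delta}})$, so that every point of $\cD^{ss,i}_{h}$ flows under $h$, staying outside $\lambda\cdot\mathbf{\Delta}$ and in uniformly bounded time, bijectively onto $\cD^{ss,i}_{{}^{\lambda}C_{\cA\cB}}$ --- which is exactly what yields \eqref{e.regqerqq}, \eqref{e.reoqi} and the limitedness of $W^{ss,i}(g_\lambda)$ at once, so you should make that use of the limitedness hypothesis explicit.
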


 \begin{proof}
For small $\lambda>0$, $\lambda\cdot \mathbf{\Delta}$ is in the interior of the domain on which $h=\cA$, hence $g_\lambda=h*^{\lambda\!}C_{\cA\cB}$ is a diffeomorphism from $\mathbf{\Gamma}$ to $h(\mathbf{\Gamma})$.

Fix $i\in I$. We show that for small $\lambda>0$, \cref{e.regqerqq} and \cref{e.reoqi} hold.

\begin{claim}
For small $\lambda>0$, there exists an integer $n_\lambda\in \NN$ such that it holds:
 for any $x\in \cD^{ss,i}_{h}$ there is an integer $0\leq n_x\leq n_\lambda$ such that 
\begin{align*}
h^{n}(x)&\notin \lambda\cdot \mathbf{\Delta}, \qquad \mbox{for all $0\leq n<n_x$,}\\
h^{n_x}(x)&\in \cD^{ss,i}_{\cA_{|\lambda\cdot\mathbf{\Delta}}}.
\end{align*}
Moreover the map 
\begin{align*}\phi_h\colon
\begin{cases}
\cD^{ss,i}_{h}&\to \cD^{ss,i}_{\cA_{|\lambda\cdot\mathbf{\Delta}}}\\
x&\mapsto h^{n_x}(x)
\end{cases}
\end{align*}
is a bijection.
\end{claim}

\begin{proof} As $W^{ss,i}(h)$ is limited and $W^{ss,i}(\cA)$ contains a local strong stable manifold, there exists an integer $n>0$ such that $h^{n}\bigl[W^{ss,i}(h)\bigr]$ is inside $W^{ss,i}(\cA)$.

As a consequence there exists a neighborhood of $\Orb_P$ in $M$ whose intersection with $W^{ss,i}(h)$ lies inside $W^{ss,i}(\cA)$. We deduce that, for small $\lambda>0$, for any $x\in W^{ss,i}(h)$, either $x\notin \lambda\cdot \mathbf{\Delta}$ or $x\in W^{ss,i}(\cA)\cap \lambda\cdot \mathbf{\Delta}$. 
As $^{\lambda}C_{\cA\cB}$ is a connection, by first item of \cref{d.connectionstable}, \begin{align*}W^{ss,i}(\cA_{|\lambda\cdot \mathbf{\Delta}})=W^{ss,i}(\cA)\cap \lambda\cdot \mathbf{\Delta},\end{align*} hence 
for any $x\in W^{ss,i}(h)$, either $x\notin \lambda\cdot \mathbf{\Delta}$, or $x\in W^{ss,i}(\cA_{|\lambda\cdot \mathbf{\Delta}})$. 

This gives the conclusions we are looking for since the positive $g$-orbit of $x$ ends up in $W^{ss,i}(\cA_{|\lambda\cdot \mathbf{\Delta}})$, at an iterate $n_x$ less than some $n_\lambda$ (use again that $W^{ss,i}(h)$ is limited). The fact that $\phi_{h}$ is an injection comes from the fact that $\cD^{ss,i}_{h}$ is a fundamental domain of $W^{ss,i}(h)$. Its image is then also a fundamental domain. As it is by construction inside $\cD^{ss,i}_{\cA_{|\lambda\cdot\mathbf{\Delta}}}$,  $\phi_{h}$ is a bijection. This ends the proof of the claim.
\end{proof} 

By definition, $\cD^{ss,i}_{\cA_{|\lambda\cdot \mathbf{\Delta}}}=\cD^{ss,i}_{^{\lambda}C_{\cA\cB}}$, and this is  a fundamental domain of $W^{ss,i}(g_\lambda)$, by construction of $g_\lambda$. Moreover $\phi_h(x)=g_\lambda^{n_x}(x)$, as $g_\lambda=h$ outside $\lambda\cdot \mathbf{\Delta}$. 
Therefore the claim implies that, for small $\lambda>0$, $\cD^{ss,i}_{h}$ is a fundamental domain of $W^{ss,i}(g_\lambda)$.

By \cref{r.earfr}, $\cD^{ss,i}_{h}$ does not intersect $h(\mathbf{\Gamma})=g_\lambda(\mathbf{\Gamma})$. Hence, it needs to be the first fundamental domain of $W^{ss,i}(g_\lambda)$. The claim implies that \cref{e.regqerqq} and \cref{e.reoqi} hold. Moreover, the fact that $n_x\leq n_\lambda$ for all $x$, implies that $W^{ss,i}(g_\lambda)$ is limited.

The same holds symmetrically for the strong unstable manifolds. This ends the proof of the second item of the proposition.

For the last item, note that $\dist_{\|.\|_{\!\mbox{\scriptsize Eucl.}}}(h,g_\lambda)=\size(^{\lambda}C_{\cA\cB})\leq \size(C_{\cA\cB})$,  and apply \cref{l.folkloremetric} taking $\lambda$ small enough.
\end{proof}

\begin{corollary}\label{l.machin}
Fix two $(I,J)$-connections from $\cA$ to $\cB$ and from $\cB$ to $\cC$:
\begin{align*}
C_{\cA\cB}&\colon  \mathbf{\Gamma} \to \cA(\mathbf{\Gamma})\\
C_{\cB\cC}&\colon  \mathbf{\Delta} \to \cB(\mathbf{\Delta}).
\end{align*}
For all $\epsilon>0$, for small $\lambda>0$ it holds: 
\begin{itemize}
\item the concatenation $C_{\cA\cB}*^{\lambda\!\!}C_{\cB\cC}$  is an $(I,J)$-connection from $\cA$ to $\cC$,
\item one has the following inequalities:
\begin{align}
\size(C_{\cA\cB}*^{\lambda\!}C_{\cB\cC})&\leq \max\bigl\{\size(C_{\cA\cB}),\size(C_{\cB\cC})+ \dist_{\mathfrak{A}}(\cA,\cB)+\epsilon\bigr\},\label{e.zvvve}\\
\size(C_{\cA\cB}*^{\lambda\!}C_{\cB\cC})&\leq \size(C_{\cA\cB})+\size(C_{\cB\cC}).\label{e.zve}
\end{align}
\end{itemize}
\end{corollary}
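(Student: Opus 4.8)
The plan is to recognise $g_\lambda:=C_{\cA\cB}*{}^{\lambda\!}C_{\cB\cC}$ as an instance of the concatenation treated in \cref{p.zpeij}, fed with $h=C_{\cA\cB}$ and with $C_{\cB\cC}$ in the role of the connection — that is, \cref{p.zpeij} applied after renaming its data $(\cA,\cB,h,C_{\cA\cB})$ to $(\cB,\cC,C_{\cA\cB},C_{\cB\cC})$. Once \cref{p.zpeij} has described the strong stable/unstable manifolds of $g_\lambda$, checking that $g_\lambda$ satisfies \cref{d.connectionstable,d.connectionglobale} is pure bookkeeping, and the two size inequalities follow from one triangle inequality, split according to the two regions $\mathbf{\Gamma}\setminus\lambda\cdot\mathbf{\Delta}$ and $\lambda\cdot\mathbf{\Delta}$, where I write $\mathbf{\Gamma},\mathbf{\Delta}$ for the domains of $C_{\cA\cB},C_{\cB\cC}$.

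First I would verify that \cref{p.zpeij} applies with $h=C_{\cA\cB}$: indeed $C_{\cA\cB}$ coincides with the linear diffeomorphism $\cB$ on a neighbourhood of $\Orb_P$ (an axiom of being a connection from $\cA$ to $\cB$), and $W^{ss,i}(C_{\cA\cB})$ and $W^{uu,j}(C_{\cA\cB})$ are limited — for $i\in I$, $j\in J$ this is again part of the definition of an $(I,J)$-connection, and for the remaining dimensions it holds because near $\Orb_P$ the dynamics is the linear map $\cB$, whose strong stable/unstable local leaves are linear and eventually swallow every strong leaf. Since $\cB$ is linear and fixes $\Orb_P$, conjugating it by a homothety centred at $\Orb_P$ leaves it unchanged, so ${}^{\lambda\!}C_{\cB\cC}$ coincides with $\cB$ near $\partial(\lambda\cdot\mathbf{\Delta})$ and with $\cC$ near $\Orb_P$; as $\lambda\cdot\mathbf{\Delta}$ shrinks to $\Orb_P$ when $\lambda\to0$, for small $\lambda$ it lies inside $\mathbf{\Gamma}$ and inside the region where $C_{\cA\cB}=\cB$. \cref{p.zpeij} then yields, for small $\lambda$, that $g_\lambda$ is a diffeomorphism from $\mathbf{\Gamma}$ onto $C_{\cA\cB}(\mathbf{\Gamma})=\cA(\mathbf{\Gamma})$, that $W^{ss,i}(g_\lambda)$ and $W^{uu,j}(g_\lambda)$ are limited, and that $\cD^{ss,i}_{C_{\cA\cB}}=\cD^{ss,i}_{g_\lambda}$ and $\cD^{uu,j}_{C_{\cA\cB}}=\cD^{uu,j}_{g_\lambda}$ for all $i\in I$, $j\in J$.

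Next I would read off from this that $g_\lambda$ is an $(I,J)$-connection from $\cA$ to $\cC$. The domain of $g_\lambda$ and the $\cA$-invariance of $W^{ss,i}(\cA)\cap\mathbf{\Gamma}$ are literally those of $C_{\cA\cB}$. For small $\lambda$ the set $\lambda\cdot\mathbf{\Delta}$ misses a neighbourhood of $\partial\mathbf{\Gamma}$, so $g_\lambda=C_{\cA\cB}=\cA$ near $\partial\mathbf{\Gamma}$, while near $\Orb_P$ one has $g_\lambda={}^{\lambda\!}C_{\cB\cC}=\cC$. Finally, $C_{\cA\cB}$ being an $(I,J)$-connection from $\cA$ to $\cB$ gives $\cD^{ss,i}_{\cA_{|\mathbf{\Gamma}}}=\cD^{ss,i}_{C_{\cA\cB}}$ ($i\in I$) and, by the symmetric (unstable) half of \cref{d.connectionglobale}, $\cD^{uu,j}_{\cA_{|\mathbf{\Gamma}}}=\cD^{uu,j}_{C_{\cA\cB}}$ ($j\in J$); chaining with the previous equalities gives $\cD^{ss,i}_{\cA_{|\mathbf{\Gamma}}}=\cD^{ss,i}_{g_\lambda}$ and $\cD^{uu,j}_{\cA_{|\mathbf{\Gamma}}}=\cD^{uu,j}_{g_\lambda}$, which, in view of \cref{d.connectionstable,d.connectionglobale} and of \cref{r.symmetry} (under which $g_\lambda^{-1}=C_{\cA\cB}^{-1}*{}^{\lambda\!}(C_{\cB\cC}^{-1})$ and the stable/unstable halves are mirror images), is exactly what it means for $g_\lambda$ to be an $(I,J)$-connection from $\cA$ to $\cC$.

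For the size, since $g_\lambda=C_{\cA\cB}$ off $\lambda\cdot\mathbf{\Delta}$ and $g_\lambda={}^{\lambda\!}C_{\cB\cC}$ on $\lambda\cdot\mathbf{\Delta}$, the number $\size(g_\lambda)=\dist_{\|.\|_{\!\mbox{\scriptsize Eucl.}}}(\cA_{|\mathbf{\Gamma}},g_\lambda)$ is the larger of its restriction to $\mathbf{\Gamma}\setminus\lambda\cdot\mathbf{\Delta}$, which is at most $\size(C_{\cA\cB})$, and its restriction to $\lambda\cdot\mathbf{\Delta}$, which the triangle inequality bounds by the Euclidean distance from $\cB$ to ${}^{\lambda\!}C_{\cB\cC}$ on $\lambda\cdot\mathbf{\Delta}$ — at most $\size({}^{\lambda\!}C_{\cB\cC})\leq\size(C_{\cB\cC})$ by the lemma on conjugation of connections by homotheties — plus the Euclidean distance from $\cA$ to $\cB$ on $\lambda\cdot\mathbf{\Delta}$. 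For \eqref{e.zve} I would bound this last quantity by $\size(C_{\cA\cB})$ using $C_{\cA\cB}=\cB$ on $\lambda\cdot\mathbf{\Delta}$; for \eqref{e.zvvve} I would use instead that $\|.\|_{\!\mbox{\scriptsize Eucl.}}$ is flat on $\mathbf{U}$ and agrees with $\|.\|$ along $\Orb_P$, so that this last quantity tends to $\dist_{\mathfrak{A}}(\cA,\cB)$ as $\lambda\to0$ and is hence $\leq\dist_{\mathfrak{A}}(\cA,\cB)+\epsilon$ for small $\lambda$ (the contributions of the inverse maps being symmetric, since $\lambda\cdot\cB(\mathbf{\Delta})$ too lies where $C_{\cA\cB}=\cB$). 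Taking the maximum over the two regions then yields \eqref{e.zvvve} and \eqref{e.zve}. The main obstacle I anticipate is precisely the control on the cap $\lambda\cdot\mathbf{\Delta}$ — where one must trade the crude bound $\size(C_{\cA\cB})$ for the sharp asymptotic bound $\dist_{\mathfrak{A}}(\cA,\cB)$ for the distance from $\cA$ to $\cB$ there — together with verifying that $C_{\cA\cB}$ meets the limitedness hypothesis of \cref{p.zpeij}; the rest is routine.
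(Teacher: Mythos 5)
Your proposal is correct and follows essentially the same route as the paper: the first item is obtained by applying \cref{p.zpeij} with $h=C_{\cA\cB}$ (the linear model near $\Orb_P$ being $\cB$ and the connection being $C_{\cB\cC}$), and the size estimates come from splitting $T_{\mathbf{\Gamma}}M$ into the regions over $\mathbf{\Gamma}\setminus\lambda\cdot\mathbf{\Delta}$ and $\lambda\cdot\mathbf{\Delta}$ and using the triangle inequality through $\cB$, bounding the $\cA$-to-$\cB$ term either by $\size(C_{\cA\cB})$ (since $C_{\cA\cB}=\cB$ on $\lambda\cdot\mathbf{\Delta}$) or by $\dist_{\mathfrak{A}}(\cA,\cB)$ up to a term vanishing as $\lambda\to 0$, exactly as in the paper's proof (including the symmetric treatment of the inverses). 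Your more detailed verification of the hypotheses of \cref{p.zpeij} and of \cref{d.connectionstable,d.connectionglobale} simply spells out what the paper calls a straightforward consequence.
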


\begin{proof}[Proof of \cref{l.machin}]
The first item is a straightforward consequence of \cref{p.zpeij}. 

For the second item, put $h=C_{\cA\cB}*^{\lambda\!}C_{\cB\cC}$. Note that
$\size(C_{\cA\cC})=\dist_{\|.\|_{\!\mbox{\scriptsize Eucl.}}}(h,\cA)$.
Let $v\in T_{\mathbf{\Gamma}}M$ be a unit vector. 
If $v$ is in $T_{\mathbf{\Gamma}\setminus \lambda\cdot \mathbf{\Delta}}M$, then 
\begin{align*}d_{\|.\|_{\!\mbox{\scriptsize Eucl.}}}(Dh(v),D\cA(v))&=d_{\|.\|_{\!\mbox{\scriptsize Eucl.}}}(DC_{\cA\cB}(v),D\cA(v))\\
&\leq \size(C_{\cA\cB}).
\end{align*}
 If $v$ is a unit vector in $T_{\lambda\cdot\mathbf{\Delta}}M$, then 
\begin{align*}
d_{\|.\|_{\!\mbox{\scriptsize Eucl.}}}(Dh(v),D\cA(v))&=d_{\|.\|_{\!\mbox{\scriptsize Eucl.}}}(DC_{\cB\cC}(v),D\cA(v))\\
&\leq d_{\|.\|_{\!\mbox{\scriptsize Eucl.}}}(DC_{\cB\cC}(v),D\cB(v))+d_{\|.\|_{\!\mbox{\scriptsize Eucl.}}}(D\cB(v),D\cA(v))\\
&\leq \size(C_{\cB\cC})+\dist_{\mathfrak{A}}(\cA,\cB)+d_\lambda,
\end{align*} 
where $d_\lambda$ is a quantity that depends on $\lambda\cdot\mathbf{\Delta}$ and tends to zero when $\lambda$ goes to zero. 
On the other hand, choosing $\lambda$ small enough so that $C_{\cA,\cB}=\cA$ on $\lambda\cdot\mathbf{\Delta}$, one has 
 If $v$ is a unit vector in $T_{\lambda\cdot\mathbf{\Delta}}M$, then 
\begin{align*}
d_{\|.\|_{\!\mbox{\scriptsize Eucl.}}}(Dh(v),D\cA(v))&\leq d_{\|.\|_{\!\mbox{\scriptsize Eucl.}}}(DC_{\cB\cC}(v),D\cB(v))+d_{\|.\|_{\!\mbox{\scriptsize Eucl.}}}(D\cB(v),DC_{\cA,\cB}(v))\\
&\leq \size(C_{\cB\cC})+\size(C_{\cA\cB}).
\end{align*}
Doing the same study, looking at the preimages of the unit vectors $w\in T_{C_{\cA\cC}(\mathbf{\Gamma})}M$, we finally get that for small $\lambda>0$, \cref{e.zvvve,e.zve} hold.
\end{proof}

As a straightforward consequence of \cref{l.machin}, we have

\begin{corollary}\label{c.concatŽnation}
Let $\cA_1,...,\cA_\ell$ be a sequence in $\mathfrak{A}_{I,J}$. Assume that for all $1\leq n<\ell$, there is a connection 
$$C_n\colon \mathbf{\Gamma}_n \to \cA_n(\mathbf{\Gamma}_n)$$
 from $\cA_n$ to $\cA_{n+1}$. Then, for all $\epsilon>0$, there exists a sequence $\lambda_2,...,\lambda_{\ell}$ in $(0,+\infty)$ such that 
 \begin{align*}
C_{\cA_1\cA_{\ell}}= C_1{*} ^{\lambda_2\!}C_2{*}.... {*}^{\lambda_\ell\!}C_{\ell}
 \end{align*}
is an $(I,J)$-connection from $\cA_1$ to $\cA_{\ell}$, and
\begin{align*}
\size(C_{\cA_1\cA_{\ell}})\leq \max_{1\leq n<\ell}\left\{\size(C_n)+ \dist_{\mathfrak{A}}(\cA_1,\cA_n)\right\}+\epsilon.
\end{align*}
\end{corollary}

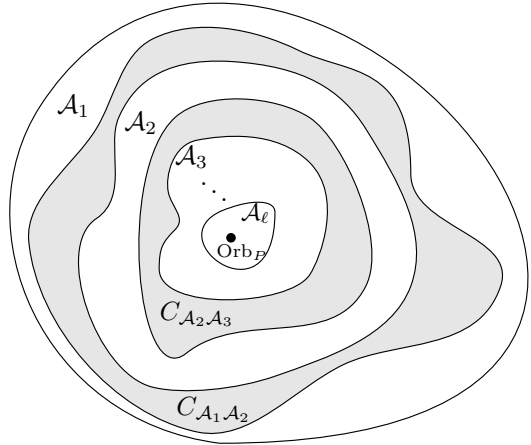
\begin{figure}[hbt] 
\ifx\JPicScale\undefined\def\JPicScale{1}\fi
\psset{linewidth=0.011,dotsep=1,hatchwidth=0.3,hatchsep=1.5,shadowsize=0,dimen=middle}
\psset{dotsize=0.1 2.5,dotscale=1 1,fillcolor=black}
\psset{arrowsize=0.17 2,arrowlength=1,arrowinset=0.25,tbarsize=0.7 5,bracketlength=0.15,rbracketlength=0.15}
\psset{xunit=.425pt,yunit=.425pt,runit=.425pt}
\begin{pspicture}(200,550)(500,950)
{\pscustom[linewidth=1,linecolor=black]
{\newpath
\moveto(260,560.00000262)
\curveto(100,580.00000262)(30.375359,751.69501262)(100,860.00000262)
\curveto(306.30348,1180.91652262)(867.30026,558.89960262)(260,560.00000262)
\closepath}}

{\pscustom[linewidth=1,linecolor=black,fillstyle=solid,fillcolor=lightgray]
{\newpath
\moveto(150,840.00000262)
\curveto(129.69783,806.84984262)(86.4566,778.71118262)(90,740.00000262)
\curveto(96.888622,664.74272262)(166.05239,585.58401262)(240,570.00000262)
\curveto(286.70029,560.15820262)(326.98434,609.32560262)(370,630.00000262)
\curveto(417.02542,652.60158262)(490,640.00000262)(510,700.00000262)
\curveto(525.20234,745.60701262)(430,751.92598262)(430,800.00000262)
\curveto(430,900.00000262)(374.51472,888.22857262)(330,910.00000262)
\curveto(290.95801,929.09480262)(238.79368,939.59436262)(200,920.00000262)
\curveto(171.9307,905.82243262)(166.42358,866.81706262)(150,840.00000262)
\closepath}}

{\pscustom[linewidth=1,linecolor=black,fillstyle=solid,fillcolor=white]
{\newpath
\moveto(430,700.00000262)
\curveto(413.41409,654.87773262)(355.33272,636.00175262)(310,620.00000262)
\curveto(275.28158,607.74493262)(227.44009,603.13997262)(200,610.00000262)
\curveto(149.74877,622.56280262)(111.44596,734.29730262)(150,760.00000262)
\curveto(180,780.00000262)(158.77659,824.50231262)(170,850.00000262)
\curveto(181.22341,875.49768262)(204.35539,903.98500262)(270,900.00000262)
\curveto(335.64461,896.01499262)(368.35148,874.63764262)(390,840.00000262)
\curveto(419.55466,792.71254262)(446.74463,745.55406262)(430,700.00000262)
\closepath}}

{\pscustom[linewidth=1,linecolor=black,fillstyle=solid,fillcolor=lightgray]
{\newpath
\moveto(220,850.00000262)
\curveto(167.57211,801.65443262)(185.76194,604.60955262)(230,640.00000262)
\curveto(280,680.00000262)(360.62308,641.24615262)(390,700.00000262)
\curveto(410,740.00000262)(370.60413,829.92333262)(340,850.00000262)
\curveto(309.39587,870.07666262)(246.51475,874.45016262)(220,850.00000262)
\closepath}}

{\pscustom[linewidth=1,linecolor=black,fillstyle=solid,fillcolor=white]
{\newpath
\moveto(240,830.00000262)
\curveto(261.82562,834.12821262)(310,840.00000262)(340,810.00000262)
\curveto(370,780.00000262)(349.8124,724.71859262)(340,710.00000262)
\curveto(321.04605,681.56908262)(206.07112,677.39343262)(206.07112,715.18800262)
\curveto(206.07112,734.15235262)(209.11379,739.11379262)(220,750.00000262)
\curveto(230.88621,760.88620262)(217.00658,769.60720262)(214.15234,783.87837262)
\curveto(211.2981,798.14954262)(218.17438,825.87179262)(240,830.00000262)
\closepath}}

{\pscustom[linewidth=1,linecolor=black,fillstyle=solid,fillcolor=white]
{\newpath
\moveto(270,770.00000262)
\curveto(193.77922,748.02821262)(309.10668,660.28003262)(309.10668,765.69563262)
\curveto(309.10668,778.19052262)(285.19928,775.74531262)(270,770.00000262)
\closepath}}

\psline{-*}(270,743)(270,743)

\rput(130,860){$\cA_1$}
\rput(190,845){$\cA_2$}
\rput(235,815){$\cA_3$}
\rput(255,791){$\ddots$}
\rput(292,765){\small $\cA_\ell$}
\rput(280,730){\SMALL $\Orb_P$}
\rput(240,675){$C_{\cA_2\cA_3}$}
\rput(255,590){$C_{\cA_1\cA_2}$}

\end{pspicture}
\caption{For good choices of connections $C_{\cA_{n-1}\cA_n}=^{\lambda_n\!\!}\!\!C_n$, the concatenation $C_{\cA_1\cA_\ell}=C_{\cA_1\cA_2}*C_{\cA_2\cA_3}*...*C_{\cA_{\ell-1}\cA_\ell}$ depicted here is a connection from $\cA_1$ to $\cA_\ell$.}
\label{p.concatenation}
\end{figure}

\subsection{Proof of \cref{t.mainsimplestatement}}
In the following, the sentence "for large $k$, property $\cP_k$ holds" stands for "there exists $k_0\in \NN$ such that, for all integer $k\geq k_0$, property $\cP_k$ holds."

For all $\cA,\cB \in  \mathfrak{A}_{I,J}$, let $d_{I,J}(\cA\to\cB)$ be the infimum of the sizes of the $(I,J)$-connections from $\cA$ to $\cB$, and $+\infty$ if there is none. Let $d_{I,J}(\cA,\cB)=\max\bigl\{d_{I,J}(\cA\to\cB),d_{I,J}(\cB\to\cA)\bigr\}$. This defines a distance on $\mathfrak{A}_{I,J}$: the triangle inequality comes from \cref{e.zve}, and  $d_{I,J}(\cA,\cA)=0$ since a restriction of the linear diffeomorphism $\cA$ gives a trivial connection from $\cA$ to $\cA$.

\begin{proposition}\label{p.equivdistance}
The topologies induced  on $ \mathfrak{A}_{I,J}$ by the distances $d_{I,J}$ and $\dist_\mathfrak{A}$ coincide. 
\end{proposition}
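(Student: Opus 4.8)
The plan is to show the identity map between $(\mathfrak{A}_{I,J}, \dist_\mathfrak{A})$ and $(\mathfrak{A}_{I,J}, d_{I,J})$ is a homeomorphism, i.e.\ that $\dist_\mathfrak{A}$-convergence is equivalent to $d_{I,J}$-convergence. One direction is essentially free: if $\dist_\mathfrak{A}(\cA,\cB)$ is small, then the linear diffeomorphism $\cB$ restricted to a suitable convex neighborhood $\mathbf{\Gamma}$ of $\Orb_P$ already coincides with $\cB$ everywhere on $\mathbf{\Gamma}$, and comparing it to $\cA$ gives a map whose size is controlled by $\dist_\mathfrak{A}(\cA,\cB)$; the subtlety is only that a \emph{genuine} $(I,J)$-connection must equal $\cA$ near $\partial\mathbf{\Gamma}$ and $\cB$ near $\Orb_P$, so one must interpolate between the two linear maps on an annular region while keeping the $I$-strong stable and $J$-strong unstable manifolds limited with matching first fundamental domains. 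For $\cA$ and $\cB$ close, the strong stable/unstable directions vary continuously, so such an interpolating connection exists with size $O(\dist_\mathfrak{A}(\cA,\cB))$; hence $d_{I,J}(\cA,\cB)\le \dist_\mathfrak{A}(\cA,\cB) + o(1)$, which gives continuity of $\mathrm{Id}\colon \dist_\mathfrak{A}\to d_{I,J}$.

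The reverse direction --- that $d_{I,J}(\cA_k,\cA)\to 0$ forces $\dist_\mathfrak{A}(\cA_k,\cA)\to 0$ --- is the substantive part. The key observation is that an $(I,J)$-connection $C_{\cA\cB}\colon \mathbf{\Gamma}\to\cA(\mathbf{\Gamma})$ from $\cA$ to $\cB$ coincides with $\cB$ on a neighborhood of $\Orb_P$; in particular $DC_{\cA\cB}$ at $P$ (along the orbit) recovers the tuple $\cB$ exactly, while $\dist_{\|.\|_{\!\mbox{\scriptsize Eucl.}}}(\cA_{|\mathbf{\Gamma}},C_{\cA\cB})\le \size(C_{\cA\cB})$ bounds $\|D\cB - D\cA\|$ and $\|D\cB^{-1}-D\cA^{-1}\|$ at the orbit points. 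Since the metrics $\|.\|$ and $\|.\|_{\!\mbox{\scriptsize Eucl.}}$ agree on $T_{\Orb_P}M$, this yields $\dist_\mathfrak{A}(\cA,\cB)\le \size(C_{\cA\cB})$, and taking the infimum over connections gives $\dist_\mathfrak{A}(\cA,\cB)\le d_{I,J}(\cA\to\cB)$, hence $\dist_\mathfrak{A}(\cA,\cB)\le d_{I,J}(\cA,\cB)$. This is the inequality that makes $\mathrm{Id}\colon d_{I,J}\to \dist_\mathfrak{A}$ continuous, and combined with the previous paragraph proves the two topologies coincide.

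The step I expect to be the main obstacle is the first direction: constructing, for $\cA,\cB\in\mathfrak{A}_{I,J}$ that are $\dist_\mathfrak{A}$-close, an actual $(I,J)$-connection of size close to $\dist_\mathfrak{A}(\cA,\cB)$. One must check all clauses of \cref{d.connectionstable} and \cref{d.connectionglobale}: choosing $\mathbf{\Gamma}=\mathit{\Gamma}_0\sqcup\cdots\sqcup\mathit{\Gamma}_{p-1}$ with each $\mathit{\Gamma}_n$ convex, arranging that $W^{ss,i}(\cA)\cap\mathbf{\Gamma}$ is $\cA$-invariant for $i\in I$ (and symmetrically for $W^{uu,j}$, $j\in J$), interpolating between $\cA$ and $\cB$ on an annular collar of $\partial\mathbf{\Gamma}$ via a partition of unity in a way that does not destroy the strong stable/unstable manifolds, and verifying that the first fundamental domains satisfy $\cD^{ss,i}_{\cA_{|\mathbf{\Gamma}}}=\cD^{ss,i}_{C_{\cA\cB}}$ (using \cref{r.erfoain}\eqref{i.item2bqd}). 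The continuity of strong stable/unstable subspaces as functions of the cocycle on the open set $\mathfrak{A}_{I,J}$ (from the usual domination/spectral-gap estimates) is what makes this work for $\cA,\cB$ close; one then estimates the size of the interpolation using \cref{l.folkloremetric} to pass between the ambient metric and $\|.\|_{\!\mbox{\scriptsize Eucl.}}$, absorbing the discrepancy into the arbitrary $\epsilon$.
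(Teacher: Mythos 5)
Your second paragraph is fine: the inequality $\dist_\mathfrak{A}(\cA,\cB)\le\size(C_{\cA\cB})$ does follow from the fact that a connection coincides with $\cB$ near $\Orb_P$ and that the two metrics agree on $T_{\Orb_P}M$, and this is the direction the paper dismisses as clear (so calling it ``the substantive part'' inverts the actual difficulty). The genuine gap is in the step you yourself flag as the main obstacle, namely producing, for $\dist_\mathfrak{A}$-close $\cA,\cB\in\mathfrak{A}_{I,J}$, an $(I,J)$-connection of small size. Your proposed mechanism --- interpolate between the two linear maps by a partition of unity on an annular collar, and appeal to the continuous dependence of the strong stable/unstable directions on the cocycle --- does not deliver what \cref{d.connectionstable} demands. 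The definition requires the \emph{exact} equality $\cD^{ss,i}_{\cA_{|\mathbf{\Gamma}}}=\cD^{ss,i}_{C_{\cA\cB}}$ for every $i\in I$ (and symmetrically for $J$). The strong stable manifold of the interpolated map is a global object: whether a point near $\partial\mathbf{\Gamma}$ belongs to it is decided by its entire forward orbit, which must cross the region where the interpolation differs from $\cA$. Continuity of the invariant subbundles, or a spectral-gap estimate, only gives you invariant manifolds $C^1$-close to those of $\cA$, not coinciding with them; a generic cut-off interpolation will move the fundamental domain slightly, and then $C_{\cA\cB}$ is simply not a connection. There is no soft argument that repairs this, which is precisely why the paper has to prove the Main Perturbation Proposition at all.

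The paper's proof of \cref{p.equivdistance} is structured around exactly this point: it globalizes $\cA_k$ and $\cB_k$ into sequences $h_k$, $g_k$ of diffeomorphisms converging to an $f$ that is linear near $\Orb_P$, invokes \cref{p.pertpropsimple} to obtain $f_k$ with $f_k^{\pm1}=g_k^{\pm1}$ near $\Orb_P$, $f_k^{\pm1}=h_k^{\pm1}$ outside $U_P$, and semilocal coincidence of the strong stable/unstable parts, and then uses \cref{r.earfr} and \cref{r.erfoain} (strict invariance, characterization of the first fundamental domain by its boundary behaviour) to check that the restriction ${f_k}_{|\mathbf{\Gamma}}$ really is an $(I,J)$-connection from $\cA_k$ to $\cB_k$ whose size tends to $0$. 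Your proposal never uses \cref{p.pertpropsimple}, and without it (or a reproof of its content) the existence of small connections between nearby cocycles --- the whole content of the proposition --- remains unestablished.
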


\begin{proof}
The fact that $d_{I,J}\geq \dist_\mathfrak{A}$ is clear. We are left to show that, if a sequence $\cA_k\in  \mathfrak{A}_{I,J}$ converges to $\cA\in  \mathfrak{A}_{I,J}$ for the distance $\dist_\mathfrak{A}$, then it also does for $d_{I,J}$, that is, $d_{I,J}(\cA_k\to \cA)$ and $d_{I,J}(\cA\to \cA_k)$ both tend to $0$.
Fix two sequences $\cA_k,\cB_k\in  \mathfrak{A}_{I,J}$ converging to $\cA\in  \mathfrak{A}_{I,J}$, it is enough to show that the sequence $d_{I,J}(\cA_k\to \cB_k)$ converges to zero.

By  a partition of unity, one builds a sequence of diffeomorphism $h_k\in \Diff^1(M)$ that converges to some  $f\in \Diff^1(M)$ such that, by restriction to some neighborhood  $\cO$ of $\Orb_P$, we have $f=\cA$ and $h_k=\cA_k$. One produces likewise another sequence $g_k$ converging to $f$ such that $g_k=\cB_k$ on some neighborhood of $\Orb_P$. 

\bigskip

Let $\mathbf{\Gamma}\subset \cO$ be a closed neighborhood of $\Orb_P$ that satisfies the first item of \cref{d.connectionstable} both for $\cA$ and $i\in I$ and $\cA^{-1}$ and $j\in J$, and such that
\begin{itemize}
\item the set $W^{ss,i_m}(\cA)\cap \mathbf{\Gamma}$ is  {\bf strictly} $\cA$-invariant. 
\item  the set $W^{uu,j_m}(\cA)\cap \mathbf{\Gamma}$ is  {\bf strictly} $\cA^{-1}$-invariant.
\end{itemize}
Moreover, it is easy to choose $\mathbf{\Gamma}$ so that its boundary is a union of smooth spheres that intersect $W^{ss,i_m}(\cA)$ and $W^{uu,j_m}(\cA)$ transversally. Fix a neighborhood $U_P\subset \mathbf{\Gamma}$ of $\Orb_P$ such that $\cl(U_P)\subset \Int \mathbf{\Gamma}\cap \Int f(\mathbf{\Gamma})$. 

Apply \cref{p.pertpropsimple} to find a sequence $f_k$ that converges to $g$, such that $f_k^{\pm 1}=h_k^{\pm 1}$ outside of $U_P$, $f_k^{\pm 1}=g_k^{\pm 1}$ on a neighborhood of $\Orb_P$, and a sequences $W^{+/-}(f_k)$ of local $i_m/j_m$-strong stable/unstable manifolds for $f_k$ converging to $W^{+/-}(f)$ such that 
\begin{align}
\left[W^{+/-}(f_k)\cap W^{ss/uu,i}(f_k)\right]\setminus U_P=\left[W^{+/-}(h_k)\cap W^{ss/uu,i}(h_k)\right]\setminus U_P,\label{e.antisatanazed}
\end{align}
for all $i\in I/J$, and large $k$.
\bigskip 

From now on, we only deal with the stable objects, as the unstable ones behave symmetrically.
By a simple geometric reasoning, one sees that for large $k$, $W^{ss,i_m}(\cA_k)\cap \mathbf{\Gamma}$ is also strictly $\cA_k$-invariant, and therefore the first item of \cref{d.connectionstable} is again satisfied, replacing $\cA$ by $\cA_k$.

This implies that $W^+(h_k)=W^{ss,i_m}({\cA_k}_{|\mathbf{\Gamma}})=W^{ss,i_m}(\cA_k)\cap \mathbf{\Gamma}$ is a sequence of local strong stable manifolds for $\cA_k$ (hence for $h_k$), that converges $C^1$-uniformly to the local strong stable manifold  $W^+(f)=W^{ss,i_m}({\cA}_{|\mathbf{\Gamma}})=W^{ss,i_m}(\cA)\cap \mathbf{\Gamma}$.

In particular that for large $k$, $W^{+}(f_k)=W^{+,i_m}(f_k)\subset \mathbf{\Gamma}$ and therefore, by $f_k$-invariance, $$W^{+}(f_k)\subset W^{ss,i_m}({f_k}_{|\mathbf{\Gamma}}).$$ 
We do not know a priori the other inclusion.

For large $k$, it holds:
the boundary $\partial W^{+}(h_k)$ of the union of $p$ disks $W^{+}(h_k)$ does not intersect the  closure of $U$, hence by \cref{e.antisatanazed} it is also the boundary $\partial W^{+}(f_k)$ of $W^{+}(f_k)$, and $f_k=h_k$ on it.
By \cref{r.earfr}, the first fundamental domain  $\cD^{ss,i_m}_{{\cA_k}_{|\mathbf{\Gamma}}}$ of $W^{+}(h_k)$ does not intersect $\cA_k(\mathbf{\Gamma})=h_k(\mathbf{\Gamma})$. Thus, for large $k$, it does not intersect $U$ and by \cref{e.antisatanazed}, $\cD^{ss,i_m}_{{\cA_k}_{|\mathbf{\Gamma}}}\subset W^{+}(f_k)$.

By strict $\cA_k$-invariance of $W^+(h_k)$ and by  \cref{r.erfoain} \cref{i.item2bqd}, the set $\cD^{ss,i_m}_{{\cA_k}_{|\mathbf{\Gamma}}}$ is also the first fundamental domain in $W^+(f_k)$ for $f_k$. As $h_k(\mathbf{\Gamma})=g_k(\mathbf{\Gamma})$ does not intersect $\cD^{ss,i_m}_{{\cA_k}_{|\mathbf{\Gamma}}}$, no $x\in \cD^{ss,i_m}_{{\cA_k}_{|\mathbf{\Gamma}}}$ has a preimage in $\mathbf{\Gamma}$. This means that that fundamental domain of $W^{ss,i}({f_k}_{|\mathbf{\Gamma}})$ is actually the first. We just obtained
$$\cD^{ss,i_m}_{{f_k}_{|\mathbf{\Gamma}}}=\cD^{ss,i_m}_{{\cA_k}_{|\mathbf{\Gamma}}}$$
It follows from \cref{e.antisatanazed} that 
$$\cD^{ss,i}_{{f_k}_{|\mathbf{\Gamma}}}=\cD^{ss,i}_{{\cA_k}_{|\mathbf{\Gamma}}},$$
for all $i\in I$.

Hence, for large $k$, the restriction ${f_k}_{|\mathbf{\Gamma}}$ is an $(I,J)$-connection from $\cA_k$ to $\cB_k$. As $f_k$ tends to $f$ for the $C^1$ topology, the size of that connection tends to $0$. 

Thus  $d_{I,J}(\cA_k\to \cB_k)$ converges to zero, which we saw was enough to end the proof of \cref{p.equivdistance}. 
\end{proof}

\begin{proof}[Proof of \cref{t.mainsimplestatement}]
Fix $f$ a path $\cA_t$ and denote its radius by
\begin{align*}
R=\max_{0\leq t\leq 1}\dist_{\mathfrak{A}}(\cA_0,\cA_t).
\end{align*} Let $\epsilon>0$ and $\delta =R+\epsilon$. By \cref{p.equivdistance}, there is a sequence $$t_1=0<t_2<... <t_{\ell-1}<t_\ell=1$$ such that $d_{I,J}(\cA_{t_n},\cA_{t_{n+1}})<\epsilon/4$, for all $1\leq n<\ell$. Thus one finds for each such $n$ an $(I,J)$-connection $C_{\cA_{t_n}\cA_{t_{n+1}}}$ from $\cA_{t_n}$ to $\cA_{t_{n+1}}$ whose size is less than $\epsilon/3$. By \cref{c.concatŽnation}, there is an $(I,J)$-connection $C_{\cA_0\cA_1}$ from $\cA_0$ to $\cA_1$ such that 
\begin{align*}
\size(C_{\cA_0\cA_1})&<\max\dist_{\mathfrak{A}}(\cA_{t_n},\cA_{t_{n+1}})+\epsilon/3\\
&< R+\epsilon/3.
\end{align*}

Applying \cref{c.linearisationC1}, we may assume that $f$ coincides with its linear part $\cA_0$ along $\Orb_P$ on a neighborhood of $\Orb_P$.
Take a neighborhood $\mathbf{\Gamma}$ of $\Orb_P$ such that for all $i\in I$ and $j\in J$
\begin{itemize}
\item $W^{ss,i}(f_{|\mathbf{\Gamma}})$ is limited and contains the compact set $K_i$,
\item$W^{uu,j}(f_{|\mathbf{\Gamma}})$ is limited and contains the compact set $L_j$.
\end{itemize}
Let $h=f_{|\mathbf{\Gamma}}$.

 \cref{p.zpeij} provides then a diffeomorphism $g_\lambda=h*^{\lambda\!}C_{\cA\cB}$. For small $\lambda>0$, the diffeomorphism $g\in \Diff(M)$ such that ${g}=g_\lambda$ on $\mathbf{\Gamma}$ and ${g}=f$ outside $\mathbf{\Gamma}$  will clearly satisfy almost all of the required conclusions. The only thing left to check is that the compact sets $K_i$ and $L_j$ remain in the strong stable and unstable manifolds {\em of sizes $\varrho$}.
 
Fix $\lambda_1>0$ such that $\lambda_1\cdot \mathbf{\Delta}$ is included in $U_P$ and does not intersect the compact sets $K_i,L_j$. A picture will convince the reader that the map 
\begin{align*}
\begin{cases}
W^{ss,i}(g_\lambda)\setminus (\lambda_1\cdot \mathbf{\Delta})=W^{ss,i}(h)\setminus (\lambda_1\cdot \mathbf{\Delta})& \to ]0,+\infty[\\
x&\mapsto \dist_{W^{ss,i}(g_\lambda)}(x,\Orb_P)
\end{cases}
\end{align*}
converges uniformly, as $\lambda\to 0$ to the map 
 \begin{align*}
\begin{cases}
W^{ss,i}(h)\setminus (\lambda_1\cdot \mathbf{\Delta})&\to ]0,+\infty[\\
x&\mapsto \dist_{W^{ss,i}(h)}(x,\Orb_P),
\end{cases}
\end{align*}
where $\dist_{W^{ss,i}(\psi)}$ is the distance along the $i$-strong stable manifold for the diffeomorphism $\psi$. In particular, as 
\begin{align*}
K_i&\subset W^{ss,i}_\varrho(h)\setminus(\lambda_1\cdot \mathbf{\Delta})\\
L_j&\subset W^{uu,j}_\varrho(h)\setminus(\lambda_1\cdot \mathbf{\Delta}),
\end{align*}
for small $\lambda>0$, for all $i\in I$ and $j\in J$, it also holds: 
\begin{align*}
 K_i&\subset W^{ss,i}_\varrho(g_\lambda)\setminus(\lambda_1\cdot \mathbf{\Delta})\subset W^{ss,i}_\varrho({g})\setminus U_P\\
 L_j&\subset W^{uu,j}_\varrho(g_\lambda)\setminus(\lambda_1\cdot \mathbf{\Delta})\subset W^{uu,j}_\varrho({g})\setminus U_P.
 \end{align*}
This ends the proof of \cref{t.mainsimplestatement}.
\end{proof}

\section{Reduction of \cref{p.pertpropsimple} to the fixed point case.}\label{s.reduction}
This section and  \cref{s.pertprop,s.induction} deal with the proof of  \cref{p.pertpropsimple}. The following notation will be useful:
\begin{definition}\label{d.partstrongstable}
Given a diffeomorphism $\xi$ and a local (strong) stable manifold $W^+(\xi)$ of a periodic orbit of $\xi$, if the $i$-dimensional strong stable manifold $W^{ss,i}(\xi)$ exists, then the {\em $i$-strong stable part of $W^+(\xi)$} is the set 
$$W^{+,i}(\xi)=W^+(\xi)\cap W^{ss,i}(\xi).$$ 
The {\em $i$-strong unstable part} $W^{-,i}(\xi)$ of $W^{-}(\xi)$ is defined symmetrically.
\end{definition}

In this section we show that we can reduce the proof of  \cref{p.pertpropsimple} to the case where $P$ is a fixed point: we show that if \cref{p.pertpropsimple} is true when $P$ is a fixed point for $f$, then the proposition is true in the general case. 
We first prove in \cref{s.augmentation} a result that allows us, during the proofs, to augment the size of the local strong stable manifold $W^+(f)$, without loss of generality.

\subsection{Augmentations of $W^+(f)$.}\label{s.augmentation}

Fix $1\leq r \leq \infty$. Let $g_k$ and $h_k$ be two sequences in $\Diff^r(M)$ converging to a diffeomorphism $f$, such that $f$, $g_k$ and $h_k$ coincide throughout the orbit $\Orb_P$ of a periodic point $P$, and let $i_m\in \NN$ be such that $f$ has an $i_m$-strong stable manifold. Let $\{W^+(h_k)\}_{k\in\NN}$ be a sequence  of local $i_m$-strong stable manifolds of $\Orb_P$ for the diffeomorphisms $h_k$ that converges to a local $i_m$-strong stable manifold $W^+(f)$ for $f$,  $C^r$-uniformly. 

\begin{lemma}[Augmentation lemma]\label{l.augment}
Let $\hat{W}^+(f)$ be a local $i_m$-strong stable manifold for $f$ that contains $W^+(f)$ in its interior\footnote{The lemma is still true if we just ask that $W^+(f)\subset \hat{W}^+(f)$, however the proof gets more intricate.}. By the stable manifold theorem, there is a sequence $\hat{W}^+(h_k)$ of local strong stable manifolds that converges to $\hat{W}^+(f)$ and such that $W^+(h_k)\subset \hat{W}^+(h_k)$. 

If the conclusions of \cref{p.pertpropsimple} hold when $W^+(f)$ is replaced by $\hat{W}^+(f)$ and $\{W^+(h_k)\}_{k\in\NN}$ is replaced by $\{\hat{W}^+(h_k)\}_{k\in\NN}$, then they also hold for $W^+(f)$ and $\{W^+(h_k)\}_{k\in\NN}$.
\end{lemma}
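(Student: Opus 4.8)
\noindent\emph{Plan of the proof.} The idea is to feed the augmented data into the hypothesis and then ``shrink back'' the strong stable manifold it produces. First I would reduce to the case where $U_P$ is small: if the conclusion of \cref{p.pertpropsimple} is granted for a neighborhood $U_P'\subset U_P$ of $\Orb_P$ with $\overline{U_P'}\cap\partial W^+(f)=\emptyset$, then intersecting the resulting sets with $M\setminus U_P$ (and keeping $V_P\subset U_P'\subset U_P$) gives the conclusion for $U_P$. So assume $\overline{U_P}\cap\partial W^+(f)=\emptyset$; as $W^+(h_k)\to W^+(f)$ in $C^r$, this forces $\partial W^+(h_k)\subset M\setminus U_P$ for large $k$. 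Now apply the hypothesis to the augmented data $\hat W^+(f)$, $\{\hat W^+(h_k)\}$ (the strong unstable data being left unchanged): this produces a neighborhood $V_P\subset U_P$, a sequence $f_k\to f$ in $\Diff^r(M)$, and sequences $\hat W^+(f_k)\to\hat W^+(f)$ and $W^-(f_k)\to W^-(f)$ of local strong (un)stable manifolds of $\Orb_P$, such that for large $k$ one has $f_k^{\pm1}=g_k^{\pm1}$ in $V_P$, $f_k^{\pm1}=h_k^{\pm1}$ outside $U_P$, and, for every $i$ with $W^{ss,i}(f)$ existing,
$$\bigl[\hat W^+(f_k)\cap W^{ss,i}(f_k)\bigr]\setminus U_P=\bigl[\hat W^+(h_k)\cap W^{ss,i}(h_k)\bigr]\setminus U_P,$$
and symmetrically for the strong unstable manifolds. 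I would keep $f_k$, $V_P$ and $W^-(f_k)$ as the output for the original data, so that only the strong stable side must be adjusted.

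Taking $i=i_m=\dim W^+(f)$ above and using that a local $i_m$-strong stable manifold sits inside the full manifold $W^{ss,i_m}(\cdot)$, I get $\hat W^+(f_k)\setminus U_P=\hat W^+(h_k)\setminus U_P$ for large $k$, hence
$$\partial W^+(h_k)\subset W^+(h_k)\setminus U_P\subset\hat W^+(h_k)\setminus U_P=\hat W^+(f_k)\setminus U_P\subset W^{ss,i_m}(f_k).$$
Thus the smooth $(i_m-1)$-sphere family $\partial W^+(h_k)$ lies inside the genuine $i_m$-strong stable manifold of $f_k$; being $C^r$-close to $\partial W^+(f)$, and since $W^{ss,i_m}(f_k)$ converges to $W^{ss,i_m}(f)$ on compact subsets, it bounds in $W^{ss,i_m}(f_k)$, for large $k$, a compact disk family $W^+(f_k)\ni\Orb_P$ that is $C^r$-close to $W^+(f)$. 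I claim $W^+(f_k)$ is the desired local $i_m$-strong stable manifold for $f_k$: its disks are disjoint with $f_k^n(P)$ in their interiors; since $W^+(f_k)\subset W^{ss,i_m}(f_k)$, the disk through $P$ is tangent to the $i_m$-strong stable direction of $Df_k^p(P)$; one has $W^+(f_k)\subset\hat W^+(f_k)$ for large $k$ because $\partial W^+(h_k)\subset\Int\hat W^+(f_k)$ (recall $W^+(f)\subset\Int\hat W^+(f)$ and $\hat W^+(f_k)\to\hat W^+(f)$); and outside $U_P$, where $\hat W^+(f_k)=\hat W^+(h_k)$, the part of the disk family $\hat W^+(h_k)$ bounded by $\partial W^+(h_k)$ and containing $\Orb_P$ is exactly $W^+(h_k)$, whence $W^+(f_k)\setminus U_P=W^+(h_k)\setminus U_P$.

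The point that genuinely requires care — the main obstacle — is the $f_k$-invariance of this re-filled disk family $W^+(f_k)$. I would argue it as follows: since $\partial W^+(h_k)\subset M\setminus U_P$, one has $f_k(\partial W^+(f_k))=f_k(\partial W^+(h_k))=h_k(\partial W^+(h_k))$, and the latter is a sphere family that lies in $W^{ss,i_m}(f_k)$ (being the $f_k$-image of a subset of $W^{ss,i_m}(f_k)$) and lies strictly on the $\Orb_P$-side of $\partial W^+(h_k)$ inside $W^+(h_k)$ (the strong stable dynamics being contracting towards $\Orb_P$); therefore $f_k$ maps the disk family $W^+(f_k)$ into itself. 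Establishing this cleanly — i.e. that the family of disks in $W^{ss,i_m}(f_k)$ bounded by $\partial W^+(h_k)$ is both $f_k$-invariant and still agrees with $W^+(h_k)$ away from $U_P$ — is the technical heart of the argument, and it rests on the inclusion $\partial W^+(h_k)\subset W^{ss,i_m}(f_k)$ (which is a consequence of the hypothesis applied with $i=i_m$) together with the identity $f_k=h_k$ outside $U_P$.

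Finally I would verify the conclusion of \cref{p.pertpropsimple} for $(V_P,\{f_k\},\{W^+(f_k)\},\{W^-(f_k)\})$ and the original data. The identities $f_k^{\pm1}=g_k^{\pm1}$ in $V_P$ and $f_k^{\pm1}=h_k^{\pm1}$ outside $U_P$, the convergences $f_k\to f$, $W^+(f_k)\to W^+(f)$, $W^-(f_k)\to W^-(f)$, and the semi-local coincidence of the strong unstable manifolds come straight from the hypothesis, the unstable side having never been modified. For the strong stable manifolds, fix $i$ with $W^{ss,i}(f)$ existing and intersect the displayed identity with the set $W^+(h_k)\setminus U_P$. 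On the left, since $W^+(f_k)\setminus U_P=W^+(h_k)\setminus U_P\subset\hat W^+(f_k)$, one obtains $\bigl[W^+(f_k)\cap W^{ss,i}(f_k)\bigr]\setminus U_P$; on the right, since $W^+(h_k)\subset\hat W^+(h_k)$, one obtains $\bigl[W^+(h_k)\cap W^{ss,i}(h_k)\bigr]\setminus U_P$; so these are equal, which is precisely the remaining conclusion. This proves the lemma.
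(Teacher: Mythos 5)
Your overall route is the paper's: apply the hypothesis to the augmented data, keep $f_k$, $V_P$ and the unstable side unchanged, use $i=i_m$ to get $\hat W^+(f_k)\setminus U_P=\hat W^+(h_k)\setminus U_P$, and recover $W^+(f_k)$ by cutting $\hat W^+(f_k)$ along the spheres $\partial W^+(h_k)$; the final verification by intersecting with $W^+(h_k)\setminus U_P$ is also exactly how the paper concludes. But the step you yourself single out as the heart — the $f_k$-invariance of the re-filled disk family — is not actually established, and the sketch you give for it would not work as stated. First, you invoke that $h_k(\partial W^+(h_k))$ lies \emph{strictly} on the $\Orb_P$-side of $\partial W^+(h_k)$ because ``the strong stable dynamics is contracting towards $\Orb_P$''; but the local strong stable manifolds in \cref{p.pertpropsimple} are only assumed $h_k$-invariant, not strictly invariant (strict invariance is the ``regularity'' introduced only in \cref{s.induction} and is not available here), so one only gets $h_k(\partial W^+(h_k))\subset W^+(h_k)$. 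Second, and more seriously, ``inside $W^+(h_k)$'' does not transfer to ``inside $W^+(f_k)$'': inside $U_P$ the dynamics of $f_k$ and $h_k$ differ, so $W^+(h_k)\cap U_P$ need not even meet $W^{ss,i_m}(f_k)$, and knowing the image sphere lies in $W^{ss,i_m}(f_k)\cap W^+(h_k)$ does not place it in the $\Orb_P$-side of $\partial W^+(h_k)$ \emph{within} $\hat W^+(f_k)$. Third, even granting control of the boundary, the jump ``boundary maps to the $\Orb_P$-side, therefore the whole disk maps into itself'' needs an argument (a point of $W^+(f_k)$ could be sent by $f_k$ into the collar without its boundary being, a priori, an obstruction unless one rules out points of $W^+(f_k)$ mapping onto $\partial W^+(h_k)$, which again would use strictness or a limit argument).

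The paper closes this gap differently: it first shrinks $U_P$ so that the whole collar $\cl\bigl[\hat W^+(f)\setminus W^+(f)\bigr]$ avoids $\cl(U_P)$ (your reduction, which only keeps $\partial W^+(f)$ away from $\overline{U_P}$, is too weak: $U_P$ may swallow part of the collar, and then neither the identification of $\hat W^+(f_k)\setminus W^+(f_k)$ with $\hat W^+(h_k)\setminus W^+(h_k)$ nor your identity $W^+(f_k)\setminus U_P=W^+(h_k)\setminus U_P$ is immediate). Then invariance is proved by contradiction: if $x_k\in W^+(f_k)$ had $f_k(x_k)\notin W^+(f_k)$, then $f_k(x_k)$ lies in the collar, hence outside $U_P$, hence equals $h_k(x_k)$; $h_k$-invariance of $W^+(h_k)$ and of $\hat W^+(h_k)$ forces $x_k\in h_k^{-1}\bigl[\hat W^+(h_k)\bigr]\setminus \Int\bigl[\hat W^+(h_k)\bigr]$, and passing to the limit contradicts $W^+(f_k)\to W^+(f)\subset \Int\hat W^+(f)$. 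Note that this is where the hypothesis $W^+(f)\subset\Int\hat W^+(f)$ is genuinely used (the footnote warns the proof gets more intricate without it); your invariance sketch never uses it, which is a further sign that the actual mechanism is missing. So: right construction, but the invariance claim needs the stronger shrinking of $U_P$ and a compactness/convergence argument of the above type, not the contraction heuristic.
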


In other words, in the course of the proof of \cref{p.pertpropsimple}, we may freely augment $W^+(f)$ (and $W^+(h_k)$ accordingly), without loss of generality, into a local strong stable manifold of same dimension that contains $W^+(f)$ in its interior.

\begin{proof}
For large $k$,  $\hat{W}^+(h_k)\setminus W^+(h_k)$ is a $C^1$-manifold with boundary, and the sequence 
 $\hat{W}^+(h_k)\setminus W^+(h_k)$ converges for the $C^r$-topology  to $\hat{W}^+(f)\setminus W^+(f)$.
We may reduce the size of the neighborhood $U_P$ of $\Orb_P$ and assume that
\begin{align}
\cl\bigl[\hat{W}^+(f)\setminus W^+(f)\bigr]\cap \cl(U_P)=\emptyset.\label{e.ofihe}
\end{align}
Assume that the conclusions of \cref{p.pertpropsimple} hold replacing $W^+(f)$ by $\hat{W}^+(f)$ and $\{W^+(h_k)\}_{k\in\NN}$ by $\{\hat{W}^+(h_k)\}_{k\in\NN}$. Let a neighborhood $V_P$ of $\Orb_P$, and sequences $f_k$, $\hat{W}^+(f_k)$ and $\hat{W}^-(f_k)$ be given by those conclusions. In particular, it holds:
\begin{itemize}
\item $f_k^{\pm 1}=h_k^{\pm 1}$ outside $U_P$,\label{e.earoiher}
\item For any integer $i>0$, if $\Orb_P$ has an $i$-strong stable manifold $W^{ss,i}(f)$ for $f$, then $W^{ss,i}(f_k)$ and $W^{ss,i}(h_k)$ also exist and 
$$\left[\hat{W}^+(f_k)\cap W^{ss,i}(f_k)\right]\setminus U_P=\left[\hat{W}^+(h_k)\cap W^{ss,i}(h_k)\right]\setminus U_P.$$
\end{itemize}
As $\partial\hat{W}^+(f)$ does not intersect the closure of $U_P$, we have $\partial\hat{W}^+(f_k)=\partial\hat{W}^+(h_k)$, for large $k$. 
Moreover, by \cref{e.ofihe}, for large $k$, $\hat{W}^+(h_k)\setminus W^+(h_k)$ is inside $\hat{W}^+(f_k)$, more precisely, it is the region in $\hat{W}^+(f_k)$ delimited by $\partial\hat{W}^+(h_k)=\partial\hat{W}^+(f_k)$ and $\partial W^+(h_k)$. 

Define $W^+(f_k)=\hat{W}^+(f_k)\setminus \bigl[\hat{W}^+(h_k)\setminus W^+(h_k)\bigr]$. This is a compact $C^1$-manifold in $W^{ss,i_m}(f)$ with boundary $\partial W^+(h_k)$. As it contains $\Orb_P$ in its interior, and as $\partial W^+(h_k)$ is a disjoint union of $p$ smooth $(i_m\!-\!1)$- spheres, $W^+(f_k)$ is a union of $p$ closed disks, and the sequence  $W^+(f_k)$ converges to $W^+(f)$. 

\begin{claim}
For large $k$, $W^+(f_k)$ is $f_k$-invariant.
\end{claim}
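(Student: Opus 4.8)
The plan is to establish the claim in the form $f_k(D_n)\subset D_{n+1}$ for every disk of $W^+(f_k)$, where I write $\hat D_0,\dots,\hat D_{p-1}$ for the disks of $\hat W^+(f_k)$ ($f^n(P)\in\hat D_n$) and $D_0,\dots,D_{p-1}$ for those of $W^+(f_k)=\hat W^+(f_k)\setminus[\hat W^+(h_k)\setminus W^+(h_k)]$, so that $D_n=\hat D_n\setminus\mathcal C_n$ with $\mathcal C_n\subset\hat D_n$ a collar of $\partial\hat D_n$ (indices mod $p$). I would first record what \eqref{e.ofihe} and the convergences give for large $k$: the removed collars and their inner boundary $\partial W^+(h_k)=\partial W^+(f_k)$ lie outside $\cl(U_P)$, so on $M\setminus U_P$ one has $f_k=h_k$, $f_k^{-1}=h_k^{-1}$ and $\hat W^+(f_k)=\hat W^+(h_k)$; hence $D_n\cap U_P=\hat D_n\cap U_P$, $D_n\setminus U_P=D^h_n\setminus U_P$ (with $D^h_n\subset W^+(h_k)$ the $n$-th disk of $W^+(h_k)$), $\partial D_n=\partial D^h_n$, and $\Int W^+(f_k)$ agrees with $\Int W^+(h_k)$ outside $\cl(U_P)$.

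The one extra ingredient I would add is to shrink $U_P$ a little further at the reduction step that opens this proof, requiring besides \eqref{e.ofihe} that $f^{-1}(\partial W^+(f))\cap\cl(U_P)=\emptyset$ — legitimate because $f^{-1}(\partial W^+(f))$ is compact and disjoint from $\Orb_P$. Then for large $k$ the set $h_k^{-1}(\partial W^+(h_k))$ is disjoint from $\cl(U_P)$ too. I would also use, as usual for local strong stable manifolds, that $W^+(h_k)$ is strictly invariant: $h_k(W^+(h_k))\subset\Int W^+(h_k)$.

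With this set up, for a fixed $n$ the argument is short. Since $f_k(\hat D_n)\subset\hat D_{n+1}$ and $D_n\subset\hat D_n$, $f_k(D_n)$ is a connected subset of $\hat D_{n+1}$ that contains $f^{n+1}(P)=f_k(f^n(P))\in\Int D_{n+1}$ (because $\Orb_P\subset\Int W^+(f_k)$). If $f_k(D_n)$ were not contained in $D_{n+1}$, it would meet the open set $\hat D_{n+1}\setminus D_{n+1}$, hence, being connected and also meeting $\Int D_{n+1}$, it would meet $\partial D_{n+1}$; choose $w\in f_k(D_n)\cap\partial D_{n+1}$ and put $v=f_k^{-1}(w)\in D_n$. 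As $w\in\partial D_{n+1}=\partial W^+(h_k)$ lies outside $\cl(U_P)$, we get $v=h_k^{-1}(w)$, i.e.\ $h_k(v)=w\in\partial W^+(h_k)$, so strict invariance of $W^+(h_k)$ forces $v\notin W^+(h_k)$. But $v\in h_k^{-1}(\partial W^+(h_k))$, so the extra shrinking gives $v\notin\cl(U_P)$, whence $v\in D_n\setminus U_P=D^h_n\setminus U_P\subset W^+(h_k)$ — a contradiction. Therefore $f_k(D_n)\subset D_{n+1}$ for all $n$, which is the claim.

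The step I expect to be the crux is precisely this last dichotomy: a priori the $f_k$-orbit of a point of $W^+(f_k)$ could exit $W^+(f_k)$ by re-entering the removed collar, and inside $U_P$ there is no link between $W^{ss,i_m}(f_k)$ and $W^{ss,i_m}(h_k)$; the extra shrinking is what confines any such exit to the region outside $\cl(U_P)$, where $f_k$, $h_k$ and their strong stable manifolds coincide and the contradiction above applies. (No symmetric statement for $W^-(f_k)$ is required here, since $W^-(f_k)$ is given directly by the assumed conclusion of \cref{p.pertpropsimple} for the augmented data and is therefore already a local strong unstable manifold, hence $f_k^{-1}$-invariant.)
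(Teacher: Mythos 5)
Your route is genuinely different from the paper's and, apart from one step, sound. The paper argues by contradiction on a sequence $x_k\in W^+(f_k)$ with $f_k(x_k)\notin W^+(f_k)$ and disposes of it with a compactness/adherence-value argument; you instead shrink $U_P$ once more so that $f^{-1}\bigl(\partial W^+(f)\bigr)\cap\cl(U_P)=\emptyset$ (a legitimate reduction, for the same reason as the paper's reduction to \eqref{e.ofihe}) and then give a direct per-disk argument. Everything up to the final step is correct: the identities outside $U_P$, the equality $\partial W^+(f_k)=\partial W^+(h_k)$, the connectedness argument producing $w\in f_k(D_n)\cap\partial D_{n+1}$, and the deduction $v=f_k^{-1}(w)=h_k^{-1}(w)\notin\cl(U_P)$ from your extra shrinking.

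The gap is the appeal to strict invariance of $W^+(h_k)$. In this paper a local strong stable manifold is only required to be invariant (strict invariance is introduced later as an additional property, e.g.\ in the definition of regular local manifolds), and the $W^+(h_k)$ of the Augmentation Lemma are given data that reappear in the conclusions, so you may neither assume them strictly invariant nor replace them by better ones; nor does strictness follow from the convergence hypotheses, since the limit $W^+(f)$ is itself only invariant, and an invariant disk can send a boundary point to a boundary point. So at the point where you have $v\in D_n\setminus U_P\subset W^+(h_k)$ and $h_k(v)=w\in\partial W^+(h_k)$ there is no contradiction: the surviving case is exactly $v\in\partial W^+(h_k)$ mapped to $\partial W^+(h_k)$, which mere invariance allows (for $v\in\Int W^+(h_k)$ the contradiction is automatic, since $h_k$ restricts to a diffeomorphism of $W^{ss,i_m}(h_k)$ and open sets of $W^+(h_k)$ cannot reach its boundary). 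Moreover, since $\partial D_{n+1}\subset D_{n+1}$, such a $w$ is not by itself a failure of invariance, so in that case your reductio proves nothing. The repair stays inside your framework: instead of an arbitrary point of $f_k(D_n)\cap\partial D_{n+1}$, take $v$ in the frontier, inside $D_n$, of the nonempty open set $E=\{z\in D_n:\ f_k(z)\in\hat D_{n+1}\setminus D_{n+1}\}$ (note $f^n(P)\notin E$, so the frontier is nonempty by connectedness); then $f_k(v)\in\partial D_{n+1}$, your extra shrinking puts $v$ outside $\cl(U_P)$, and points $v''\in E$ close to $v$ are also outside $U_P$, hence $v''\in D^h_n\setminus U_P\subset W^+(h_k)$ while $h_k(v'')=f_k(v'')\in\hat W^+(h_k)\setminus W^+(h_k)$, contradicting plain invariance of $W^+(h_k)$.
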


\begin{proof}
Assume by contradiction that for any $k_0\in \NN$, there is $k\geq k_0$ and  $x_k\in W^+(f_k)$ such that $f_k(x_k)\notin W^+(f_k)$. As $x_k$ is in $\hat{W}^+(f_k)$, $f(x_k)$ also is. Hence $f_k(x_k)\in \hat{W}^+(h_k)\setminus W^+(h_k)$. If $k$ is great enough, by \cref{e.ofihe} this implies that $f_k(x_k)\notin U_P$, and $f_k^{\pm 1}=h_k^{\pm 1}$ outside $U_P$, we get $h_k(x_k)=f_k(x_k)$. As 
\begin{align}
h_k(x_k)\in \hat{W}^+(h_k)\setminus W^+(h_k),\label{e.ergq}
\end{align} we get $x_k\notin W^+(h_k)$ and by definition of $W^+(f_k)$, this implies that 
\begin{align}
x_k\notin \hat{W}^+(h_k).\label{e.qergq}
\end{align}
\cref{e.ergq,e.qergq} imply that $x_k$ belongs to the closed set $h_k^{-1}\bigl[\hat{W}^+(h_k)\bigr]\setminus \Int\bigl[\hat{W}^+(h_k)\bigr]$. By uniform $C^1$-convergence of the manifold $\hat{W}^+(h_k)$ to $\hat{W}^+(f_k)$, any adherence value of a sequence of such $x_k$, with $k\to \infty$, belongs to $f^{-1}\bigl[\hat{W}^+(f)\bigr]\setminus \Int\bigl[\hat{W}^+(f)\bigr]$.
This contradicts the fact that  $x_k\in W^+(f_k)$ and $W^+(f_k)$ converges to the closed manifold $W^+(f)\subset \Int \hat{W}^+(f)$. This ends the proof of the claim.
\end{proof}
Thus, $W^+(f_k)$ is a sequence of local strong stable manifolds that converges to $W^+(f)$. It clearly satisfies the other conclusions of  \cref{p.pertpropsimple}, since $W^+(f_k)\subset \hat{W}^+(f_k)$.
\end{proof}

\subsection{Reduction to the fixed point case.}
We assume throughout this section that \cref{p.pertpropsimple} is true when $P$ is a fixed point for $f$. We show that this implies the proposition in the general case. The proof bears no difficulty.

We put ourselves under the hypotheses of \cref{p.pertpropsimple}. By \cref{l.augment}, without loss of generality, we may augment the sizes of $W^+(f)$, $W^+(h_k)$, $W^-(f)$, $W^-(h_k)$ and assume the following, for all $k\in \NN$:
\begin{align}
W^+(f)&=\bigcup_{0\leq n<p}f^n\bigl(W^+_P(f)\bigr) \label{e.def1}\\
W^-(f)&=\bigcup_{0\leq n<p}f^{n}\bigl(W^-_P(f)\bigr)\label{e.def2} \\
W^+(h_k)&=\bigcup_{0\leq n<p}h_k^n\bigl(W^+_P(h_k) \bigr)\label{e.h}\\
W^-(h_k)&=\bigcup_{0\leq n<p}h_k^{n}\bigl(W^-_P(h_k)\bigr),
\end{align}
where $W^{\pm}_P(\xi)$ is the connected component of $W^{\pm}(\xi)$ that contains $P$.

The point $P$ is fixed for the diffeomorphisms $\tilde{f}=f^p$, $\tilde{h}_k=h_k^p$ and $\tilde{g}_k=g_k^p$.
The sets 
\begin{align*}W^+(\tilde{f})&=W^+_P(f)\\
W^+(\tilde{h}_k)&=W^+_P(h_k)
\end{align*} 
are local stable manifold of the point $P$ for the diffeomorphisms $\tilde{f}$ and $\tilde{h}_k$, respectively.
The sequence $W^+(\tilde{h}_k)$ converges to $W^+(\tilde{f})$ for the $C^r$ topology.

Choose a neighborhood $\tilde{U}_P$ of $P$ such that, for all $0\leq n\leq p$,
\begin{align}
\cl\bigl[f^n(\tilde{U}_P)\bigr]\subset \Int(U_P)\label{e.dans U}
\end{align}
and for all $0\leq i,j<p$, with $i\neq j$,
\begin{align}
\cl\bigl[f^i(\tilde{U}_P)\bigr]\cap \cl\bigl[f^j(\tilde{U}_P)\bigr]=\emptyset\label{i.disjoint}\\
\cl\bigl[f^i(\tilde{U}_P)\bigr]\cap f^j\bigl[W^+(\tilde{f})\bigr]=\emptyset\label{i.disjoint2}\\
\cl\bigl[f^i(\tilde{U}_P)\bigr]\cap f^j\bigl[W^-(\tilde{f})\bigr]=\emptyset\label{i.disjoint3}
\end{align}
We apply \cref{p.pertpropsimple} to the fixed point $P$ and find
\begin{itemize}
\item a neighborhood $\tilde{V}_P\subset U_P$ of $P$,
\item a sequence $\tilde{f}_k$ of $\Diff^r(M)$ converging to $\tilde{f}$,
\item two sequences of local strong stable and unstable manifolds $W^+(\tilde{f}_k)$ and $W^-(\tilde{f}_k)$ of $P$ that tend respectively to $W^+(\tilde{f})$ and $W^-(\tilde{f})$, in the $C^r$ topology,
\end{itemize}
such that it holds, for any $k$ greater than some $k_0\in \NN$:
\begin{itemize}
\item $\tilde{f}_k^{\pm 1}=\tilde{g}_k^{\pm 1}$ inside $\tilde{V}_P$
\item $\tilde{f}_k^{\pm 1}=\tilde{h}_k^{\pm 1}$ outside $\tilde{U}_P$,
\item For any integer $i>0$, if $\Orb_P$ has an $i$-strong stable manifold $W^{ss,i}(\tilde{f})$, then $W^{ss,i}(\tilde{f}_k)$ and $W^{ss,i}(\tilde{h}_k)$ also exist and $$W^{+,i}(\tilde{f}_k)\setminus U_P=W^{+,i}(\tilde{h}_k)\setminus U_P,$$
where 
$W^{+,i}(\xi_k)=W^{+}(\xi_k)\cap W^{ss,i}(\xi_k)$,
and likewise, replacing stable manifolds by unstable ones. 
\end{itemize}

Choose a neighborhood $V_P=V_0\sqcup ... \sqcup V_{p-1}$ of $\Orb_P$ such that
\begin{align}
\cl(V_0)&\subset \Int(\tilde{V}_P)\subset \tilde{U}_P,\label{e.subset2}\\
\cl(V_n)&\subset \Int\bigl[f(V_{n-1})\bigr],\quad \mbox{ for all $0<n<p$}.\label{e.subset}
\end{align}
In particular, $\cl(V_n)\subset \Int f^n(\tilde{U}_P)$, for all $0\leq n<p$ and $\cl(V_{p-1})\subset \Int f^{p-1}(\tilde{V}_P)$.
 By a partition of unity, one builds a sequence $\phi_k$ in $\Diff^r(M)$ that converges to $f$ and such that for large $k$:
\begin{itemize}
\item $\phi_k=h_k$ outside $\tilde{U}_P\sqcup h_k(\tilde{U}_P) \sqcup ... \sqcup h_k^{p-1}(\tilde{U}_P),$\footnote{This is a disjoint union for large $k$, by \cref{i.disjoint}.}
\item for all $0\leq n<p-1$, $\phi_k=g_k$ on $V_n$.\footnote{By \cref{e.subset2,e.subset}, for large $k$, we have $\cl(V_n)\subset h_k^{n}(\tilde{U}_P)$.}
\end{itemize}
One finally builds a sequence of diffeomorphisms $f_k\in \Diff^r(M)$ by changing $\phi_k$ to $\tilde{f}_k\circ \phi_k^{-(p-1)}$ by restriction to $h_k^{p-1}(\tilde{U}_P)$.\footnote{Note that $\tilde{f}_k\circ \phi_k^{-(p-1)}$ glues indeed with $h_k$ outside $h_k^{p-1}(\tilde{U}_P)$ to form a local $C^r$ diffeomorphism.} 
By \cref{e.subset}, $\phi_k^{-(p-1)}=g_k^{-(p-1)}$ by restriction to $V_{p-1}$, for large $k$, which implies $f_k=g_k$ on $V_{p-1}$.
Hence $f_k=g_k$ on the neighborhood $V_P$ of $\Orb_P$. Up to reducing the neighborhood $V_P$ to a neighborhood whose closure is contained int the interior of $V_P\cup f(V_P)$, one has 
$$f_k^{\pm 1}=g_k^{\pm 1} \mbox{ inside $V_P$}.$$ 
By construction, $f_k=h_k$ outside $\tilde{U}_P\sqcup ... \sqcup h_k^{p-1}(\tilde{U}_P)$. This, with \cref{e.dans U}, gives on the one hand
\begin{align*}
f_k^{\pm 1}=h_k^{\pm 1} \mbox{ outside $U_P$, for large $k$.} 
\end{align*}
On the second hand, \cref{i.disjoint2,i.disjoint3} lead to: for large $k$, for all $0\leq i,j<p$, with $i\neq j$,
\begin{align*}
\cl\bigl[f_k^i(\tilde{U}_P)\bigr]\cap f^j\bigl[W^+(\tilde{f_k})\bigr]&=\emptyset\\
\cl\bigl[f_k^i(\tilde{U}_P)\bigr]\cap f^j\bigl[W^-(\tilde{f_k})\bigr]&=\emptyset,
\end{align*}
thus, for all $0\leq n\leq p$, on a neighborhood  of $W^+(\tilde{f}_k)\cup W^-(\tilde{f}_k)\setminus \Int(\tilde{U}_P)$ in $M\setminus  \tilde{U}_P$, it holds:
\begin{align}
f_k^n=h_k^n=\tilde{f}_k.\label{e.onaneigh}
\end{align} 
Moreover $f_k^p=\tilde{f}_k$ on $\tilde{U}_P$, by construction. This implies that 
\begin{align}
f_k^{p}=\tilde{f}_k \quad \mbox{ on a neighborhood of $W^+(\tilde{f}_k)\cup W^-(\tilde{f}_k)$}.\label{e.egalityonneigh}
\end{align}
Let 
\begin{align*}
W^+(f_k)&=\bigcup_{0\leq n<p}f_k^n\bigl(W^+(\tilde{f}_k) \bigr)\\
W^-(f_k)&=\bigcup_{0\leq n<p}f_k^n\bigl(W^-(\tilde{f}_k) \bigr)
\end{align*}
By \cref{e.egalityonneigh}, these are local (strong) stable and unstable manifolds of the periodic orbit $\Orb_P$ for $f_k$, and by \cref{e.def1,e.def2}, they converge for the $C^r$ topology to $W^+(f)$ and $W^-(f)$, respectively. 
Besides, for $\xi=f, h$ (recall \cref{e.h}), and for any $i$ such that $W^{ss,i}(f)$ exists, for large $k$, it holds:
\begin{align}
W^{+,i}(\xi_k)&=\bigcup_{0\leq n<p}\left[\xi_k^n\bigl(W^+(\tilde{\xi}_k) \bigr)\cap W^{ss,i}(\xi_k)\right]\nonumber\\
&=\bigcup_{0\leq n<p}\xi_k^n\left[\bigl(W^+(\tilde{\xi}_k) \bigr)\cap W^{ss,i}(\tilde{\xi}_k)\right]\nonumber\\
&=\bigcup_{0\leq n<p}\xi_k^n\bigl(W^{+,i}(\tilde{\xi}_k)\bigr).\label{e.varstab}
\end{align}
For all $0\leq n<p$ and all $k\in \NN$ great enough,
\begin{align*}
&\quad W^{+,i}(\tilde{f}_k)\setminus \tilde{U}_P=W^{+,i}(\tilde{h}_k)\setminus \tilde{U}_P\\
\Rightarrow &\quad  f_k^n(W^{+,i}(\tilde{f}_k)\setminus \tilde{U}_P)=h_k^n(W^{+,i}(\tilde{h}_k)\setminus \tilde{U}_P), &\quad \mbox{ by \cref{e.onaneigh},}\\
\Rightarrow &\quad f_k^n\bigl[W^{+,i}(\tilde{f}_k)\bigr]\setminus U_P=h_k^n\bigl[W^{+,i}(\tilde{h}_k)\bigr]\setminus U_P, &\quad \mbox{ since $\cl \bigl[f^n(\tilde{U}_P)\bigr]\subset \Int (U_P)$.}
\end{align*}
Hence, for any $i$ such that $W^{ss,i}(f)$ exists, \cref{e.varstab} gives
\begin{align*} 
W^{+,i}(f_k)\setminus U_P=W^{+,i}(h_k)\setminus U_P.
\end{align*}  
The proof follows the exact same path on the unstable manifolds. Thus all the conclusions of \cref{p.pertpropsimple} are satisfied.

\medskip

We just showed that  \cref{p.pertpropsimple} for the particular case of fixed points implies the same Proposition in all generality. 
We are reduced to studying the case where $P$ is a fixed point.

\section{Proof of~\cref{p.pertpropsimple} for the fixed point case}\label{s.pertprop} 
In this section, $1\leq r \leq \infty$ and $P\in M$ are fixed. We denote by $\Diff^r_P(M)$ the set of diffeomorphisms that fix $P$. 

To get \cref{p.pertpropsimple}, it is sufficient to prove the following proposition, for all the pairs $I,J$ of finite sets of positive integers:
\medskip

\noindent {\bf Proposition ($\cP_{I,J}$).} {\em Let $f\in \Diff_P^r(M)$ such that the $i$-strong stable manifold $W^{ss,i}(f)$ and the $j$-strong unstable manifold $W^{uu,i}(f)$ of $P$ for $f$ are well-defined, for each $i\in I$ and $j\in J$.

Let $g_k$ and $h_k$ be two sequences in $\Diff^r_P(M)$ converging to the diffeomorphism $f$. Choose sequences $W^+(h_k)$ and $W^-(h_k)$ of local strong stable and unstable manifolds of $P$ for the diffeomorphisms $h_k$ that converge respectively to a local stable manifold $W^+(f)$ and a local unstable manifold $W^-(f)$ for $f$,  in the $C^r$-topology. 

Assume that the dimensions of $W^+(f)$ and $W^-(f)$ are respectively the greatest elements of $I$ and $J$.\footnote{That assumption is here in order to simplify the redaction of the proof by induction of $\cP_{I,J}$.\label{f.techassump}
It can be removed, thanks to the use of regular local manifolds (see \cref{f.pejzpejd}). Thus, \cref{p.pertpropsimple} is indeed implied by the propositions $\cP_{I,J}$.}
 Let $U_P$ be a neighborhood of $P$.
\medskip

\noindent Then there exist:
\begin{itemize}
\item a neighborhood $V_P\subset U_P$ of $P$,
\item a sequence $f_k$ of $\Diff^r_P(M)$ converging to $f$,
\item two sequences of local strong stable and unstable manifolds $W^+(f_k)$ and $W^-(f_k)$ of $P$ that tend respectively to $W^+(f)$ and $W^-(f)$, in the $C^r$ topology,
\end{itemize}
such that it holds, for large $k$:
\begin{itemize}
\item $f_k^{\pm 1}=g_k^{\pm 1}$ inside $V_P$,
\item $f_k^{\pm 1}=h_k^{\pm 1}$ outside $U_P$,
\item For any integer $i\in I$, the manifolds $W^{ss,i}(f_k)$ and $W^{ss,i}(h_k)$ exist and
$$W^{+,i}(f_k)\setminus U_P=W^{+,i}(h_k)\setminus U_P,\footnote{see \cref{d.partstrongstable}.}$$
and likewise for any $j\in J$, replacing stable manifolds by unstable ones. 
\end{itemize}
}
\medskip

We prove $\cP_{I,J}$ by induction on all the pairs $I,J$  of finite sets in $\NN\setminus \{0\}$.  We initiate the induction by the proof of $\cP_{\emptyset,\emptyset}$. Then, for any $J\subset \NN$ and any nonempty set of integers $I$, writing it as a disjoint union $$I=\{i_0\}\sqcup I^*,$$ we prove that $\cP_{I^*,J}$ implies $\cP_{I,J}$ (this is the main difficulty of this paper and is treated  in \cref{s.induction}). Symmetrically, replacing $f$, $g_k$ and $h_k$ by $f^{-1}$, $g_k^{-1}$ and $h_k^{-1}$, respectively, we straightforwardly deduce that  $\cP_{J,I^*}$ implies $\cP_{J,I}$. This terminates the induction.

\begin{proof}[Proof of $\cP_{\emptyset,\emptyset}$:]This is a slight refinement of the usual Franks' lemma. Take the neighborhood $V_P\subset U_P$ of $P$ small enough. Take a partition of unity $1=\theta+\zeta$, where $\theta=1$ outside a closed set in the interior of $U_P\cap f(U_P)$, and $\theta=0$ on a neighborhood of the closure of $V_P\cup f(V_P)$. Finally define $f_k=\theta h_k+\zeta g_k$. 
For large $k$, $f_k$ is a diffeomorphism of $M$ and 
$f_k^{\pm 1}=g_k^{\pm 1}$ inside $V_P$ and $f_k^{\pm 1}=h_k^{\pm 1}$ outside $U_P$.
\end{proof}

\section{Proposition $\cP_{I^*,J}$ implies Proposition $\cP_{I,J}$}\label{s.induction}

This is the most difficult part of this paper. Although conceptually rather natural, there is a lot of work needed to prove it rigorously.

Throughout this section, we assume that Proposition~$\cP_{I^*,J}$ holds and
we put ourselves under the hypotheses and notations of Proposition~$\cP_{I,J}$ stated in \cref{s.pertprop}. 

\medskip

Our aim in this section is to build a sequence of diffeomorphisms $f_k$ and sequences of local invariant manifolds $W^+(f)$ and $W^-(f)$ that will satisfy the conclusions of Proposition~$\cP_{I,J}$. We will operate by doing two consecutive perturbations on the sequence of diffeomorphisms $h_k$ in a neighborhood of $P$:
\begin{itemize}
\item first, by application of the induction hypothesis  $\cP_{I^*,J}$, that will give us a sequence $\tilde{g}_k$ of diffeomorphisms and sequences $W^{+}(\tilde{g}_k)$ and $W^{-}(\tilde{g}_k)$ of local invariant manifolds that satisfy the conclusions of  $\cP_{I^*,J}$. The only thing lacking will be the control of the lowest dimensionnal  local strong stable manifold $W^{+,i_0}(\tilde{g}_k)$ of $\tilde{g}_k$.

\item second, by a "pushing perturbation" of that sequence $\tilde{g}_k$ supported on a box $\mathbf{T}\subset U_P$, that will push the local $i_0$-strong stable manifolds to coincide with the local $i_0$-strong stable manifold of $h_k$ "before $\mathbf{T}$", and in particular outside $U_P$ (\cref{p.ideaof} gives an idea of it).
\end{itemize}

In \cref{s.framework}, we set the stage for these two successive perturbations. First we show we can assume the local strong stable manifolds to have some regularity, which greatly simplifies the proofs. We then define the neighborhood $\tilde{U}_P$ of $P$ on which the first perturbation given by $\cP_{I^*,J}$ will be supported, and the box $\mathbf{T}$ on which the second perturbation will be supported (see \cref{f.Tbox}). We finally give a number of preliminary topological results on the way $\mathbf{T}$ intersects the local strong stable manifolds.

In the short \cref{s.firstpert}, we do the first perturbation and build the sequence $\tilde{g}_k$.

The construction of the second perturbation is much more intricate and is the purpose of \cref{s.secondpert,s.starsequence,s.chi}.

\subsection{Framework for the proof of $\cP_{I,J}$}\label{s.framework}

This subsection is devoted to defining the boxes on which our two sequences of perturbations will be supported, and giving a number of preliminary results. 

\subsubsection{Regularity assumption on the local invariant manifolds.}

\begin{definition}[regularity]\label{d.regularity3}
A local $i$-strong stable manifold $W^+(g)$ of a diffeomorphism $g\in \Diff_P^r(M)$ is {\em regular} if it is strictly $g$-invariant, if the boundary $\partial W^+(g)$ of the disk $W^+(g)$ intersects transversally\footnote{Here we mean tranversally within the manifold $W^{ss,i}(g)$.} any $j$-strong stable manifold $W^{ss,j}(g)$, with $j<i$, and if $W^{+,j}(g)=W^+(g)\cap W^{ss,j}(g)$ is again a strictly invariant local $i$-strong stable manifold.
\end{definition}

By a folklore argument, such regular local strong stable manifolds exist, for all indices.\footnote{\emph{Sketch a proof:} we can assume that $g\in \Diff(\RR^d)$ and that the fixed point $P$ is $0$. 
 
 It is easy to find one for the linear map $Dg_0$ tangent to $g$ at $0$: let $I$ be the set of its strong stable dimensions. Let $i\in I$, and $j=i+\delta_i$ be the next biggest element in $I$. 
Let $\RR^d=E^{ss,i}\oplus E^{\delta_i}\oplus F^{d-j}$ be the corresponding dominated splitting on $\RR^d$ for $Df_0$.

If $W^{+_i}\subset E^{ss,i}$ is a regular local $i$-strong stable manifold for $Dg_0$ and $B$ is a strictly invariant smooth ball for ${Dg_0}_{| E^{\delta_i}}$, then $W^{+_{j}}=W^{+_i}\times \alpha.B$ is a strictly invariant subset of $E^{ss,j}=E^{ss,i}\oplus E^{\delta_i}$ by $Dg_0$. Its boundary $\partial W^{+_{j}}$ is transverse to the strong stable manifolds of dimension $\leq i$. This makes sense: while that boundary has some rough edges, namely $\partial W^{+_i}\times \alpha.\partial B$, those edges do not meet the stable manifolds of lesser dimension. Hence the edges can be smoothened preserving the needed transversality and strict invariance properties. This builds by induction a flag of regular local strong stable manifolds for $Dg_0$. 

Consider the local projection $\pi$ of the $Dg_0$-invariant space $\RR^i\times \{0\}^{d-i}$  along the fibres $\{0\}^{i}\times \RR^{d-i}$ on a local $i$-strong stable manifold of $0$ for $g$. This projection is locally well-defined and diffeomorphic close to $0$. If $W^{+_i}$ is a regular local $i$-strong stable manifold for $Dg_0$, then $\alpha\cdot W^{+_i}$ is also one and, for small $\alpha>0$, $\pi(\alpha\cdot W^{+_i})$ is a regular local $i$-strong stable manifold for $g$.\hfill $\Box$} 

 Choose a regular local strong stable manifold for $f$.
Any of its preimages by $f$ is again regular. Take a preimage that contains $W^+(f)$ in its interior. Hence, by \cref{l.augment}, we can do the following:
 
 \begin{remark}
 We may augment $W^+(f)$ and assume in the rest of \cref{s.induction} that it is regular, without loss of generality.
 \end{remark}
 
\begin{remark}\label{r.regularity}
Let $\xi_k$ be a sequence in $\Diff^r_P(M)$  converging to the the regular local strong stable manifold $W^+(f)$. Then, 
\begin{enumerate}
\item For any strong stable manifold $W^{ss,i}(f)$, the set $$W^{+,i}(f)=W^+(f)\cap W^{ss,i}(f)$$ is also a regular local strong stable manifold.
\item Given a sequence $W^+(\xi_k)$ of local strong stable manifolds that converges to $W^+(f)$, for large $k$, it holds:
\begin{itemize}
\item $W^+(\xi_k)$ is also regular,
\item for any strong stable manifold $W^{ss,i}(f)$, the set $W^{+,i}(\xi_k)$  is again a regular local $i$-strong stable manifold\footnote{The previous parts of the remark are particular cases of this, but we first stated them for clarity.}, and the corresponding sequence converges  to $W^{+,i}(f)$ for the $C^r$-topology.
\end{itemize}
\end{enumerate}
\end{remark} 
 
Using regular strong stable/unstable manifolds, one easily sees\footnote{\label{f.pejzpejd}
\emph{proof: }
we put ourselves under the hypotheses of  $\cP_{I,J}$, minus the dimension assumption on $W^+(f)$ and $W^-(f)$. We want to show that the conclusions of $\cP_{I,J}$ hold.

Let $I_f,J_f$ be the sets of strong stable and strong unstable dimensions for $f$. 
Put $i_s=\max{I_f}$. We find a sequence $\tilde{W}^+(h_k)\supset W^+(h_k)$ of regular $i_s$-strong stable manifolds converging to a regular $i_s$-strong stable manifold $\tilde{W}^+(f)$ that contains $W^+(f)$ in its interior. Find symmetrically a sequence $\tilde{W}^-(h_k)$ converging to some $\tilde{W}^-(f)$.

Let $f_k$, $\tilde{W}^+(f_k)$ and  $\tilde{W}^-(f_k)$ be sequences given by the conclusions of $\cP_{I_f,J_f}$ with respect to those sequences $\tilde{W}^+(h_k)$ and $\tilde{W}^-(h_k)$. Then the sequences $f_k$ and the sequence of local $i$-strong stable/unstable manifolds $\hat{W}^{+}(f_k)=\tilde{W}^+(f_k)\cap {W}^{ss,i}(f_k)$ and $\hat{W}^{-}(f_k)=\tilde{W}^-(f_k)\cap {W}^{uu,j}(f_k)$ satisfy the conclusions of $\cP_{I,J}$, with respect to the sequences of local $i$-strong stable/unstable manifolds $\hat{W}^+(h_k)=\tilde{W}^+(h_k)\cap {W}^{ss,i}(h_k)$ and $\hat{W}^-(h_k)=\tilde{W}^-(h_k)\cap {W}^{uu,j}(h_k)$. 

By \cref{l.augment}, the conclusions of $\cP_{I,J}$ also hold with respect to the sequences  $W^+(h_k)$ and $\hat{W}^-(h_k)$. Apply again \cref{l.augment} on the unstable side (we may replace the diffeomorphisms by their inverses) to also replace $\hat{W}^{-}(h_k)$ by $W^-(h_k)$. \hfill $\Box$} that the technical dimension assumption of $\cP_{I,J}$ can be removed, as  pointed out in \cref{f.techassump}.
 
We state without a proof the two following topological lemma:
 
\begin{lemma}[Folklore 1]\label{l.ofijzeo}
Let $\xi_k$ be a sequence in $\Diff^r_P(M)$  converging to $f$, and let $\hat{W}^+(\xi_k)$ be a sequence of local strong stable manifolds that converges to a local strong stable manifold $\hat{W}^+(f)$ that contains a regular local strong stable manifold $W^{+}(f)$. Let $\SS_k\in \hat{W}^+(\xi_k)$ be a sequence of smooth spheres such that $\SS_k\to \SS=\partial W^{+}(f)$.

 Then, for large $k$, the sphere $\SS_k$ is the boundary $\partial W^{+}(\xi_k)$ of a  local strong stable manifold $W^{+}(\xi_k)\subset \hat{W}^+(\xi_k)$, and the sequence $W^{+}(\xi_k)$ converges $C^r$ to $W^{+}(f)$.
\end{lemma}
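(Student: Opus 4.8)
We are in the reduced setting in which $P$ is a fixed point, so all the local strong stable manifolds occurring here are single smooth disks through $P$, all of the same dimension $i$, and $\SS=\partial W^+(f)$ is a single smoothly embedded $(i-1)$-sphere contained in $\hat W^+(f)$; the general periodic case is identical, carried out separately on each of the $p$ components. Enlarging $\hat W^+(f)$ slightly beforehand (and $\hat W^+(\xi_k)$ accordingly, by the stable manifold theorem) we may assume $\SS\subset\Int\hat W^+(f)$. The one piece of input that is not soft topology is that $W^+(f)$ is regular, hence \emph{strictly} $f$-invariant: $f\bigl(W^+(f)\bigr)$ is a compact subset of $\Int W^+(f)$, so the compact sets $f\bigl(W^+(f)\bigr)$ and $\SS$ are disjoint, at distance $2\eta>0$, say.

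First I would produce the disks $W^+(\xi_k)$. Fix a tubular neighborhood $N\cong\SS\times(-1,1)$ of $\SS$ in $\hat W^+(f)$, small enough that $P\notin N$, with $W^+(f)\cap N=\SS\times(-1,0]$. From the $C^r$-convergences $\hat W^+(\xi_k)\to\hat W^+(f)$ and $\SS_k\to\SS$, for large $k$ the sphere $\SS_k$ lies inside the transported copy of $N$ and is there the graph of a map $u_k\colon\SS\to(-1,1)$ with $\|u_k\|_{C^r}\to 0$. Let $W^+(\xi_k)\subset\hat W^+(\xi_k)$ be the disk obtained by gluing the collar region $\{\,t\le u_k(s)\,\}$ of this tubular neighborhood to the transported core $W^+(f)\setminus N$. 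Then $W^+(\xi_k)$ is a smoothly embedded $i$-disk with $\partial W^+(\xi_k)=\SS_k$ and $P\in\Int W^+(\xi_k)$, it satisfies $W^+(\xi_k)\subset\hat W^+(\xi_k)\subset W^{ss,i}(\xi_k)$, and, since $u_k\to0$ in $C^r$ and the parametrizations of the ambient disks converge, $W^+(\xi_k)\to W^+(f)$ in the $C^r$-topology.

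Next I would verify forward invariance, $\xi_k\bigl(W^+(\xi_k)\bigr)\subset W^+(\xi_k)$, which is the only step that genuinely uses the regularity hypothesis. The disk $\hat W^+(\xi_k)$ is a local strong stable manifold of $\xi_k$, hence forward invariant, so $\xi_k\bigl(W^+(\xi_k)\bigr)$ is a connected compact subset of it, while the embedded sphere $\SS_k=\partial W^+(\xi_k)$ separates $\hat W^+(\xi_k)$ into $\Int W^+(\xi_k)$ (which contains $P$) and a complementary collar. Since $\xi_k\to f$ uniformly and $W^+(\xi_k)\to W^+(f)$, we get $\xi_k\bigl(W^+(\xi_k)\bigr)\to f\bigl(W^+(f)\bigr)$ in the Hausdorff metric, whereas $\SS_k\to\SS$; as $d\bigl(f(W^+(f)),\SS\bigr)=2\eta>0$, for large $k$ the set $\xi_k\bigl(W^+(\xi_k)\bigr)$ is disjoint from $\SS_k$. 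Being connected and containing the fixed point $P\in\Int W^+(\xi_k)$, it therefore lies entirely in $\Int W^+(\xi_k)$, which gives forward invariance. Hence $W^+(\xi_k)$ is a local $i$-strong stable manifold of $\xi_k$, contained in $\hat W^+(\xi_k)$, with boundary $\SS_k$, converging $C^r$ to $W^+(f)$, as required.

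I expect no real obstacle: everything but the invariance step is the standard fact that a sphere $C^r$-close to $\SS$ inside an ambient disk $C^r$-close to $\hat W^+(f)$ bounds a sub-disk $C^r$-close to $W^+(f)$. The point that needs care is the invariance, and it rests precisely on the \emph{strict} invariance built into the regularity of $W^+(f)$: that is what keeps $f\bigl(W^+(f)\bigr)$ at a definite distance $2\eta$ from $\SS$, a gap that survives the perturbation. Without regularity a sphere $\SS_k$ close to $\SS$ could bound a disk that is not $\xi_k$-invariant, and the conclusion would fail.
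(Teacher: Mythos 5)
The paper states \cref{l.ofijzeo} explicitly without proof, as folklore, so there is no argument of the author's to measure yours against; on its own terms your proof is essentially correct. It contains the two ingredients the statement requires: the soft topological part (a sphere $C^r$-close to $\SS$ inside an ambient disk $C^r$-close to $\hat W^{+}(f)$ bounds a sub-disk $C^r$-close to $W^{+}(f)$, via your collar/graph construction, in the same spirit as \cref{l.topology}), and the dynamical part, where you correctly isolate that the only feature of regularity used is strict invariance: $f\bigl(W^{+}(f)\bigr)\subset\Int W^{+}(f)$ gives a definite gap $2\eta$ between $f\bigl(W^{+}(f)\bigr)$ and $\SS$, this gap survives the passage to $\xi_k$ by uniform convergence, and your separation/connectedness argument through the fixed point $P$ then yields $\xi_k\bigl(W^{+}(\xi_k)\bigr)\subset\Int W^{+}(\xi_k)$, i.e.\ (strict) invariance.

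The one step you should tighten is the reduction to $\SS\subset\Int\hat W^{+}(f)$. Invoking the stable manifold theorem produces some enlarged sequence converging to an enlarged limit, but does not by itself guarantee that the new disks contain the given spheres $\SS_k$; the clean move is to replace $\hat W^{+}(f)$ by $f^{-1}\bigl(\hat W^{+}(f)\bigr)$ and $\hat W^{+}(\xi_k)$ by $\xi_k^{-1}\bigl(\hat W^{+}(\xi_k)\bigr)$, which are again local strong stable manifolds, contain the original ones (hence the $\SS_k$), and, by strict invariance of $W^{+}(f)$, place $\SS$ in the relative interior of the new limit. Note also that after this reduction your construction only puts $W^{+}(\xi_k)$ inside the enlarged disks, whereas the stated conclusion asserts $W^{+}(\xi_k)\subset\hat W^{+}(\xi_k)$; in the degenerate case where $\partial W^{+}(f)$ meets $\partial\hat W^{+}(f)$ recovering that containment needs an extra word. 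In both places where the paper applies the lemma (the proofs of \cref{l.before and after} and of \cref{p.starsequence}) the sphere $\SS$ already lies in the relative interior of $\hat W^{+}(f)$, so there no enlargement is needed and your argument, as written, gives the full conclusion.
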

 
 \begin{lemma}[Folklore 2]\label{l.ofijzeopop}
Let $\xi_k$ be a sequence in $\Diff^r_P(M)$  converging to $f$, and let $W^+(\xi_k)$ be a sequence of local strong stable manifolds that converges to the regular local strong stable manifold $W^+(f)$. Let $W^{+_{i}}(\xi_k)$ be a sequence of local $i$-strong stable manifolds that converges to the local $i$-strong stable manifold $W^{+,i}(f)$ such that each boundary  $\partial W^{+_{i}}(\xi_k)$ is inside $W^+(\xi_k)$.

Then, for large $k$, $W^{+_{i}}(\xi_k)$ lies inside $W^{+}(\xi_k)$, therefore inside $W^{+,i}(\xi_k)$.
\end{lemma}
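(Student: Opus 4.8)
The plan is to reduce the statement to a short topological fact inside the abstract $i$-manifold $W^{ss,i}(\xi_k)$. Since a local $i$-strong stable manifold is by definition contained in the full strong stable manifold $W^{ss,i}(\xi_k)$, and since $W^{+,i}(\xi_k)=W^+(\xi_k)\cap W^{ss,i}(\xi_k)$, it suffices to prove that $W^{+_i}(\xi_k)\subset W^+(\xi_k)$ for large $k$: indeed $W^{+_i}(\xi_k)\subset W^{ss,i}(\xi_k)$ always holds, so this inclusion immediately yields $W^{+_i}(\xi_k)\subset W^+(\xi_k)\cap W^{ss,i}(\xi_k)=W^{+,i}(\xi_k)$, which also takes care of the second conclusion.

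First I would set the stage with regularity. By \cref{r.regularity}, for large $k$ the disk $W^+(\xi_k)$ is regular, so $W^{+,i}(\xi_k)=W^+(\xi_k)\cap W^{ss,i}(\xi_k)$ is a compact $i$-disk smoothly embedded in $W^{ss,i}(\xi_k)$ with $P$ in its relative interior; the disk $W^{+_i}(\xi_k)$ is of the same nature by the very definition of a local $i$-strong stable manifold. Recall also that the $i$-strong stable manifold $W^{ss,i}(\xi_k)$ of the fixed point $P$ is a $C^r$ submanifold of $M$ (injectively immersed, which is harmless here since the argument will be intrinsic to it) diffeomorphic to $\RR^i$. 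From the hypothesis $\partial W^{+_i}(\xi_k)\subset W^+(\xi_k)$ together with the trivial inclusion $\partial W^{+_i}(\xi_k)\subset W^{ss,i}(\xi_k)$ we get $\partial W^{+_i}(\xi_k)\subset W^{+,i}(\xi_k)$.

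It then remains to prove the following elementary fact: if $A$ and $B$ are two compact $i$-disks smoothly embedded in $\RR^i$ with $\partial B\subset A$, then $B\subset A$; one applies it with $\RR^i=W^{ss,i}(\xi_k)$, $A=W^{+,i}(\xi_k)$ and $B=W^{+_i}(\xi_k)$. For the proof I would invoke the Jordan--Brouwer separation theorem: the embedded sphere $\partial A$ cuts $\RR^i$ into a bounded component $V$ and an unbounded component $W_A$, and $A=\overline{V}$ — because the interior of the $i$-disk $A$ is an open, connected, bounded subset of $\RR^i$ disjoint from $\partial A$, hence cannot lie in $W_A$ and so equals $V$. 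Likewise $\partial B$ cuts $\RR^i$ into a bounded component $U$ with $B=\overline{U}$ and an unbounded component $E=\RR^i\setminus B$. Now $\partial B\subset A$ means $W_A\cap\partial B=\emptyset$, and $W_A$ being connected and unbounded must lie in $E$; therefore $B=\RR^i\setminus E\subset\RR^i\setminus W_A=\overline{V}=A$. This gives $W^{+_i}(\xi_k)\subset W^{+,i}(\xi_k)\subset W^+(\xi_k)$ for large $k$, which is exactly the claim.

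The only points requiring care are the bookkeeping ones: using \cref{r.regularity} to be sure that, for large $k$, $W^{+,i}(\xi_k)$ really is a cleanly embedded disk with $P$ in its interior, and the standard fact (reproved above in passing) that a compact embedded $i$-disk in $\RR^i$ is the closure of the bounded complementary region of its boundary sphere. I do not expect any serious obstacle beyond this; in particular the $C^r$-convergence of the disks $W^{+_i}(\xi_k)$ is used only through the regularity of $W^+(\xi_k)$ for large $k$, not in the topological heart of the argument.
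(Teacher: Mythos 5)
The paper deliberately gives no proof of this lemma (it is introduced with ``We state without a proof the two following topological lemma''), so there is no argument of the author's to compare yours to; your task was to supply the folklore argument, and what you wrote does so correctly. Your route — work intrinsically in $W^{ss,i}(\xi_k)\cong\RR^i$, use \cref{r.regularity} to know that for large $k$ the set $A=W^{+,i}(\xi_k)=W^+(\xi_k)\cap W^{ss,i}(\xi_k)$ is itself a compact embedded $i$-disk, observe that the hypothesis forces $\partial W^{+_i}(\xi_k)\subset A$, and conclude by the codimension-zero Jordan--Brouwer fact that a compact embedded $i$-disk in $\RR^i$ is the closure of the bounded complementary component of its boundary sphere — is exactly the intended content, and your observation that the $C^r$-convergence of $W^{+_i}(\xi_k)$ enters only through the regularity of $W^+(\xi_k)$ is consistent with how the lemma is used (e.g.\ in the proof of \cref{p.starsequence}). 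Two cosmetic points, neither of which is a gap: the claim that $\Int A$ ``cannot lie in $W_A$'' because it is bounded is not by itself a reason (a bounded open set can sit inside an unbounded one); the honest justification is the same clopen-decomposition/connectedness argument you use to identify a disk with the closure of the bounded complementary region of its boundary, or the fact that every point of an embedded sphere is adherent to both complementary components. Also, for $i=1$ the complement of $\partial A$ has two unbounded components, so the separation statement as you phrase it needs the (trivial) interval-case remark.
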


\subsubsection{Definition of the boxes $\mathbf{T}$ and $\tilde{U}_P=\mathbf{\tilde{A}}\times[-1,1]^{d-i_0}$ on which the perturbations will be supported}

For simplicity, we assume that $U_P$ is open (if not, replace it by its interior). Define the local stable manifold of $P$ for $f$ {\em inside $U_P$} by
$$W^+(f,U_P)=\cap_{n\geq 0}f^{-n}\left(U_P\cap W^+(f)\right),$$ 
that is, the set of points in $W^+(f)$ whose positive orbit remains in $U_P$.
Define the {\em $i$-strong stable part} of $W^+(f,U_P)$ by
\begin{align*}
W^{+,i}(f,U_P)&= W^{ss,i}(f)\cap W^+(f,U_P).
\end{align*}
Note that $W^{+,i}(f,U_P)$ is an open neighborhood of $P$ in the embedded manifold $W^{ss,i}(f)$, and an $i$-dimensional boundaryless submanifold of $M$.
Hence, there exists a $C^r$-embedded annulus $$\mathbf{A}=\SS^{i_0-1}\times [-1,3]\subset W^{+,i_0}(f,U_P)\setminus \{P\}$$ such that:
\begin{itemize}
\item each set $\mathbf{A}_{[n-1,n)}=\SS^{i_0-1}\times [n-1,n)\subset \mathbf{A}$, for $n=0,1,2,3$, is a fundamental domain of $W^{ss,i_0}(f)\setminus \{P\}$, for the dynamics of $f$
\item the map $f$ sends the set $\mathbf{A}_{[n-1,n)}$ on $\mathbf{A}_{[n,n+1)}$, for $n=0,1,2$ (in particular, it sends $\mathbf{A}_{\{n\}}=\SS^{i_0-1}\times \{n\}$ on $\mathbf{A}_{\{n+1\}}$).
\end{itemize}

\begin{figure}[hbt] 

\ifx\JPicScale\undefined\def\JPicScale{1}\fi
\psset{linewidth=0.011,dotsep=1,hatchwidth=0.3,hatchsep=1.5,shadowsize=0,dimen=middle}
\psset{dotsize=0.7 2.5,dotscale=1 1,fillcolor=black}
\psset{arrowsize=0.1 2,arrowlength=1,arrowinset=0.25,tbarsize=0.7 5,bracketlength=0.15,rbracketlength=0.15}
\psset{xunit=.3pt,yunit=.3pt,runit=.3pt}
\begin{pspicture}(100,600)(800,1000)
{\pscustom[linewidth=1,linecolor=black]
{\newpath
\moveto(142,940)
\curveto(142,940)(91,842)(-59,660)
\curveto(88,670)(133,650)(281,660)
\curveto(448,671)(561,660)(561,660)
\curveto(627,780)(718,880)(762,940)
\curveto(762,940)(494,930)(142,940)
\closepath}}
\rput(200,920){\small $W^{ss,i_0}(f)$}
{\newrgbcolor{lightgray}{0.8 0.8 0.8}
\pscustom[linewidth=0.7,linecolor=black,fillstyle=solid,fillcolor=lightgray,opacity=1]
{\newpath
\moveto(163,830)
\curveto(331,870)(286,890)(420,910)
\curveto(554,930)(498,880)(511,870)
\curveto(524,860)(571,870)(584,860)
\lineto(597,850)
\curveto(597,850)(615,820)(561,800)
\curveto(507,780)(491,730)(404,720)
\curveto(316,710)(232,760)(152,760)
\curveto(72,760)(115,820)(163,830)
\closepath}}
{\pscustom[linewidth=0.2,linecolor=black,fillstyle=solid,fillcolor=lightgray,opacity=1]
{\newpath
\moveto(73,705)
\curveto(106,720)(174,735)(164,720)
\curveto(153,705)(119,705)(96,690)
\curveto(74,675)(28,690)(73,705)
\closepath}}
\psline(-50,750)(78,700)
\psline(10,776)(135,790)
\rput(-80,775){\small $W^{+,i_0}(f,U_P)$}
{\pscustom[linewidth=1.5,linecolor=black,fillstyle=solid,fillcolor=gray,opacity=1]
{\newpath
\moveto(481,800)
\curveto(457,767)(384,740)(318,740)
\curveto(252,740)(218,767)(241,800)
\curveto(265,833)(338,860)(404,860)
\curveto(470,860)(505,833)(481,800)
\closepath}}
{\pscustom[linewidth=1.5,linecolor=black,fillstyle=solid,fillcolor=lightgray,opacity=1]
{\newpath
\moveto(401,800)
\curveto(393,789)(370,780)(346,780)
\curveto(324,780)(312,789)(321,800)
\curveto(329,811)(354,820)(376,820)
\curveto(398,820)(409,811)(401,800)
\closepath}}
{\pscustom[linestyle=dashed,dash=3 3,linewidth=0.7,linecolor=black]
{\newpath
\moveto(441,800)
\curveto(425,778)(377,760)(332,760)
\curveto(288,760)(265,778)(281,800)
\curveto(297,822)(346,840)(390,840)
\curveto(434,840)(457,822)(441,800)
\closepath}}

{\pscustom[linestyle=dashed,dash=3 3,linewidth=0.7,linecolor=black]
{\newpath
\moveto(421,800)
\curveto(409,784.5)(373.5,770)(339,770)
\curveto(306,770)(288.5,783.5)(301,800)
\curveto(313,816.5)(350,830)(383,830)
\curveto(416,830)(433,816.5)(421,800)
\closepath}}
{\pscustom[linestyle=dashed,dash=3 3,linewidth=0.7,linecolor=black]
{\newpath
\moveto(461,800)
\curveto(441,772)(380,750)(325,750)
\curveto(270,750)(241,772)(261,800)
\curveto(281,828)(342,850)(397,850)
\curveto(452,850)(481,828)(461,800)
\closepath}}
\psline(467,795)(700,780)\rput(715,780){\small $\mathbf{A}$}
{\pscustom[linewidth=0.7,linecolor=black]
{\newpath
\moveto(356,795)
\lineto(366,805)
\moveto(366,795)
\lineto(356,805)}}
\rput(377,792){\small $P$}
\pscurve{->}(431,853)(425,871)(420,850)(420,840)\rput(420,883){\small $f$}
\end{pspicture}
\caption{The annulus $\mathbf{A}$.}


\end{figure}
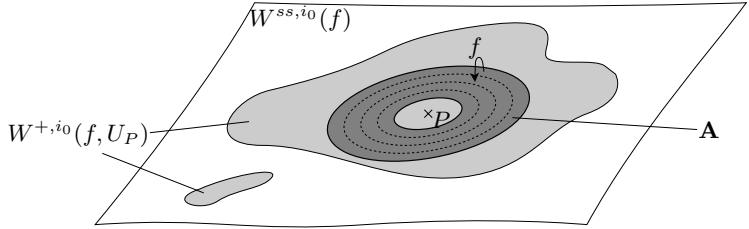

\begin{proposition}
There exists a thickening of the annulus $\mathbf{A}$ into a $C^r$-coordinated box $\mathbf{T}=\SS^{i_0-1}\times [-1,3]\times [-1,1]^{d-i_0}\subset U_P$ such that:
\begin{itemize}
\item for all $i\in I$, the intersections $\mathbf{T}\cap W^{+,i}(f,U_P)$ and $\mathbf{T}\cap W^{+,i}(f)$ are both equal to the
$i$-dimensional box $$\mathbf{T}^i=\mathbf{A}\times [-1,1]^{i-i_0}\times \{0\}^{d-i},$$
\item the box $\mathbf{T}$ does not intersect the local unstable manifold $W^-(f)$,
\item the map $f$ sends the set $\mathbf{T}_{[n-1,n)}=\mathbf{A}_{[n-1,n)}\times [-1,1]^{d-i_0}$ on $\mathbf{T}_{[n,n+1)}$, for $n=0,1,2$.
\end{itemize}
\end{proposition}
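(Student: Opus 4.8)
This is a tubular-neighbourhood construction for the regular flag of strong stable manifolds through $\mathbf{A}$, made $f$-equivariant by propagating a single slab. Write $i_0<i_1<\dots<i_s$ for the elements of $I$; then
\[
\mathbf{A}\subset W^{+,i_0}(f)\subset W^{+,i_1}(f)\subset\dots\subset W^{+,i_s}(f)=W^{+}(f)\subset M
\]
is a flag of nested regular local $C^r$ disks, so that each term is a $C^r$-embedded submanifold of the next and is transverse to the lower strong stable manifolds; recall moreover $W^{+}(f)\cap W^{-}(f)=\{P\}$, so the compact annulus $\mathbf{A}$, which misses $P$, is at positive distance from the compact disk $W^{-}(f)$. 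First I would build a flag-adapted tube over the first slab: iterating the $C^r$ tubular-neighbourhood theorem along the flag (thicken $\mathbf{A}_{[-1,0]}=\SS^{i_0-1}\times[-1,0]$ first inside $W^{+,i_1}(f)$, then inside $W^{+,i_2}(f)$, \dots, then inside $M$ — each stage a trivial bundle, as it restricts one over the disk $W^{+,i_0}(f)$) gives, for every small $\eta>0$, a $C^r$-embedding
\[
\Phi_0\colon\ \SS^{i_0-1}\times[-1,0]\times(-\eta,\eta)^{d-i_0}\ \to\ M,
\]
parametrising a neighbourhood of $\mathbf{A}_{[-1,0]}$, carrying the slice $\{\text{last }d-i_k\text{ coordinates }=0\}$ into $W^{+,i_k}(f)$ for each $k$, with image inside $U_P$ and disjoint from $W^{-}(f)$. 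I would additionally demand that the $r$-jet of $\Phi_0$ along $\SS^{i_0-1}\times\{0\}$ be the push-forward by $f$ of its $r$-jet along $\SS^{i_0-1}\times\{-1\}$, precomposed with a $C^r$ reparametrisation $\rho$ of the $\SS^{i_0-1}$-factor (meaningful since $f$ maps a neighbourhood of $\mathbf{A}_{\{-1\}}$ diffeomorphically onto one of $\mathbf{A}_{\{0\}}$): pick $\Phi_0$ and $\rho$ freely near $t=-1$, read off the $r$-jet this forces at $t=0$ (it is again flag-adapted, being the $f$-image of one), realise it by a flag-adapted embedding-germ — a relative Borel realisation carried out successively in charts of $W^{+,i_0}(f),\dots,W^{+,i_s}(f),M$ so that it genuinely lands in the $W^{+,i_k}(f)$'s — and interpolate over $t\in(-1,0)$ by a flag-adapted embedding (a relative tubular-neighbourhood argument).

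Next I would propagate: extend $\Phi_0$ to $\Phi$ on $\SS^{i_0-1}\times[-1,3]\times(-\eta,\eta)^{d-i_0}$ by imposing, for $t\ge 0$,
\[
\Phi(s,t,y)=f\bigl(\Phi(\rho(s,t,y),\,t-1,\,y)\bigr),
\]
which determines $\Phi$ successively on $[0,1],[1,2],[2,3]$. The $r$-jet matching above makes $\Phi$ $C^r$ across $\SS^{i_0-1}\times\{0\}$; since $\Phi|_{[n-1,n+1]}=f\circ\Phi|_{[n-2,n]}\circ(\text{shift of }t\text{ by }1)$ for $n=1,2$, $C^r$-ness across $\SS^{i_0-1}\times\{n\}$ follows from $C^r$-ness across $\SS^{i_0-1}\times\{n-1\}$, so $\Phi$ is $C^r$, and it is an embedding for $\eta$ small because its four slabs are the $f^n$-images of the first and the $\mathbf{A}_{[n-1,n)}$ are pairwise disjoint. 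Rescaling $(-\eta,\eta)$ to $[-1,1]$, set $\mathbf{T}=\Phi\bigl(\SS^{i_0-1}\times[-1,3]\times[-1,1]^{d-i_0}\bigr)$ with the induced coordinates and $\mathbf{T}_{[n-1,n)}=\Phi\bigl(\SS^{i_0-1}\times[n-1,n)\times[-1,1]^{d-i_0}\bigr)$.

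It then remains to check the three conclusions. The identity $f(\mathbf{T}_{[n-1,n)})=\mathbf{T}_{[n,n+1)}$ for $n=0,1,2$ is immediate from the propagation formula. Each $\mathbf{T}_{[n-1,n)}$ is a thin neighbourhood of $\mathbf{A}_{[n-1,n)}\subset\mathbf{A}\subset U_P$, so $\mathbf{T}\subset U_P$; and $\mathbf{T}$, being thin around $\mathbf{A}$, misses $W^{-}(f)$. Finally flag-adaptedness makes each $W^{+,i}(f)$ a coordinate slice inside $\mathbf{T}$, and for $\eta$ small the thin tube cannot meet the compact embedded disk $W^{+,i}(f)$ away from $\mathbf{A}$, so $\mathbf{T}\cap W^{+,i}(f)=\mathbf{T}^i:=\mathbf{A}\times[-1,1]^{i-i_0}\times\{0\}^{d-i}$; and since $W^{+,i}(f,U_P)$ is open in $W^{+,i}(f)$ and contains $\mathbf{A}$, a further shrinking of $\eta$ gives $\mathbf{T}^i\subset W^{+,i}(f,U_P)$, whence also $\mathbf{T}\cap W^{+,i}(f,U_P)=\mathbf{T}^i$.

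The one genuinely delicate point is the first step: to obtain a single, globally $C^r$ coordinate box that is honestly $f$-equivariant across the four slabs, the tube over the first slab must be $r$-jet-$f$-periodic at the seam $\SS^{i_0-1}\times\{0\}$, and reconciling this jet constraint with flag-adaptedness — while keeping the box a thin embedding contained in $U_P$ — is what takes work; everything else is routine differential topology and shrinking of $\eta$.
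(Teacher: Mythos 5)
Your route is viable and, on the one point the paper's own proof leaves entirely implicit, more explicit than it: the paper simply thickens the whole annulus $\mathbf{A}$ by induction on the dimensions $i\in I$ (tubular neighbourhoods inside the flag, treated as folklore, with compactness of $W^-(f)$ invoked for the last step) and never explains how to make $f$ send slab onto slab, whereas you secure that third bullet by construction, building a flag-adapted tube over the single slab $\mathbf{A}_{[-1,0]}$ and propagating it by $f$, with an $r$-jet matching condition at the seam so the coordinates stay $C^r$. The flag-by-flag thickening of the first slab is the same idea as the paper's induction; the dynamical propagation (and the routine shrinking-of-$\eta$ checks, which do go through using compactness and embeddedness of the $W^{+,i}(f)$ and strict invariance of the flag to keep the propagated slices inside it) is the genuinely different ingredient.

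Two points, however, are not right as written. First, $W^{+}(f)\cap W^{-}(f)=\{P\}$ is unjustified: the local invariant manifolds here are arbitrary (and augmented) invariant disks, which can meet at strong homoclinic points. What you need, and what is true, is only $\mathbf{A}\cap W^{-}(f)=\emptyset$: $W^{-}(f)$ is a compact embedded disk, so its points near $P$ lie on its local sheet, tangent at $P$ to the strong unstable direction, which meets the local sheet of $W^{ss,i_0}(f)$ only at $P$; since $\mathbf{A}$ consists of (sufficiently deep) fundamental domains on that local sheet and avoids $P$, it is at positive distance from $W^{-}(f)$, and a thin tube misses it. Second, and more seriously, the step you yourself single out as delicate is asserted rather than proved: interpolating over the slab between a freely chosen flag-adapted germ at $t=-1$ and its forced $f$-push-forward at $t=0$ is not a bare relative tubular-neighbourhood argument. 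At the $1$-jet level the two end germs give two flag-adapted framings of the normal flag over $\SS^{i_0-1}$, and a flag-adapted tube over the slab with both end germs prescribed exists only if the comparison map from $\SS^{i_0-1}$ into the flag-preserving linear group is null-homotopic; a priori this is an obstruction in $\pi_{i_0-1}$ of that group. It vanishes here for a reason you should record: the flagged normal bundle, a flag-adapted framing of it, and the bundle map induced by $Df$ are all defined over the whole subdisk of $W^{+,i_0}(f)$ bounded by $\mathbf{A}_{\{-1\}}$ (which contains $P$ and is mapped into itself by $f$), so the comparison map extends over a disk and is null-homotopic; once the framings are matched over a collar, the higher-order terms of the $r$-jet form a convex set of choices and can be matched (Borel's theorem for $r=\infty$). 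Without this argument the crux of your construction is unproved; with it, your plan does yield the proposition.
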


\begin{proof}
We first build the $i$-dimensional thickenings $\mathbf{T}^i$ of $\mathbf{A}$ by induction on $i\in I$. Initiate with $\mathbf{T}^{i_0}=\mathbf{A}$. The thickening of $\mathbf{T}^{i}$ to $\mathbf{T}^{j}$, where  $j\in I$ is the least integer strictly greater than $i$, is easy folklore using that $W^{+,i}(f,U_P)\cap W^{+,j}(f,U_P)$ is a submanifold of the boundaryless manifold $W^{+,j}(f,U_P)$. The final thickening from $\mathbf{T}^{i_m}$ to $\mathbf{T}$, with $i_m=\max{I}$,  is the same folklore, using moreover the compactness of $W^{-}(f)$.
\end{proof}

Finally, we build the box $\tilde{U}_P$ on which the first perturbation (see \cref{s.firstpert}) will be supported. Let $\mathbf{\tilde{A}}$ be a local $i_0$-strong stable manifold for $f$ that does not intersect $\mathbf{A}$. We thicken it into a closed neighborhood $\tilde{U}_P\subset U_P$ of $P$ that identifies diffeomorphically to $\mathbf{\tilde{A}}\times[-1,1]^{d-i_0}$ such that:
\begin{itemize}
\item $\tilde{U}_P\cap W^+(f,U_P)$ corresponds to  $\mathbf{\tilde{A}}\times[-1,1]^{i_s-i_0}$, where $i_s$ is the stable index of $P$ for $f$,
\item $\tilde{U}_P$ does not intersect $\mathbf{T}$.
\end{itemize}

See Figure~\ref{f.Tbox} for a general picture.

\begin{figure}[hbt]
\ifx\JPicScale\undefined\def\JPicScale{2}\fi
\psset{linewidth=0.011,dotsep=1,hatchwidth=0.3,hatchsep=1.5,shadowsize=0,dimen=middle}
\psset{dotsize=0.7 2.5,dotscale=1 1,fillcolor=black}
\psset{arrowsize=0.1 2,arrowlength=1,arrowinset=0.25,tbarsize=0.7 5,bracketlength=0.15,rbracketlength=0.15}
\psset{xunit=.5pt,yunit=.5pt,runit=.5pt}
\begin{pspicture}(50,530)(750,1030)


{\pscustom[linestyle=dashed,dash=3 3,linewidth=1.5,linecolor=black]
{\newpath\moveto(360,770)
\lineto(360,630)}}


{\pscustom[linestyle=dashed,dash=3 3,linewidth=1,linecolor=black]
{\newpath \moveto(274,741)
\curveto(274,741)(216,744)(132,738)
\curveto(110,720)(108,702)(108,702)
\curveto(182,708)(250,705)(250,705)
\curveto(252,723)(274,741)(274,741)}}
{\pscustom[linestyle=dashed,dash=3 3,linewidth=1,linecolor=black]
{\newpath\moveto(460,735)
\curveto(456,714)(436,699)(436,699)
\curveto(524,696)(586,699)(586,699)
\curveto(610,720)(610,735)(610,735)
\curveto(610,735)(548,732)(460,735)
\closepath}}
{\pscustom[linestyle=dashed,dash=3 3,linewidth=1,linecolor=black]
{\newpath\moveto(274,791)
\lineto(274,741)}}
{\pscustom[linestyle=dashed,dash=3 3,linewidth=1,linecolor=black]
{\newpath\moveto(132,738)
\lineto(132,788)}}
{\pscustom[linestyle=dashed,dash=3 3,linewidth=1,linecolor=black]
{\newpath\moveto(108,752)
\lineto(108,702)}}
{\pscustom[linestyle=dashed,dash=3 3,linewidth=1,linecolor=black]
{\newpath\moveto(250,705)
\lineto(250,755)}}
{\pscustom[linestyle=dashed,dash=3 3,linewidth=1,linecolor=black]
{\newpath\moveto(436,699)
\lineto(436,749)}}
{\pscustom[linestyle=dashed,dash=3 3,linewidth=1,linecolor=black]
{\newpath\moveto(460,735)
\lineto(460,785)}}
{\pscustom[linestyle=dashed,dash=3 3,linewidth=1,linecolor=black]
{\newpath\moveto(586,699)
\lineto(586,749)}}
{\pscustom[linestyle=dashed,dash=3 3,linewidth=1,linecolor=black]
{\newpath\moveto(610,735)
\lineto(610,785)}}


{\pscustom[linewidth=0.5,linecolor=black]
{\newpath\moveto(310,615)
\lineto(390,615)
\lineto(410,645)
\lineto(330,645)
\closepath}}

{\pscustom[linewidth=0.5,linecolor=black]
{\newpath\moveto(310,615)
\lineto(310,655)}}
{\pscustom[linewidth=0.5,linecolor=black]
{\newpath\moveto(390,615)
\lineto(390,660)}}
{\pscustom[linewidth=0.5,linecolor=black]
{\newpath\moveto(410,645)
\lineto(410,661)}}
{\pscustom[linewidth=0.5,linecolor=black]
{\newpath\moveto(330,645)
\lineto(330,655)}}

{\pscustom[linestyle=dashed,dash=3 3,linewidth=0.5,linecolor=black]
{\newpath\moveto(310,655)
\lineto(310,755)}}
{\pscustom[linestyle=dashed,dash=3 3,linewidth=0.5,linecolor=black]
{\newpath\moveto(390,660)
\lineto(390,755)}}
{\pscustom[linestyle=dashed,dash=3 3,linewidth=0.5,linecolor=black]
{\newpath\moveto(410,661)
\lineto(410,785)}}
{\pscustom[linestyle=dashed,dash=3 3,linewidth=0.5,linecolor=black]
{\newpath\moveto(330,655)
\lineto(330,785)}}


{\pscustom[linewidth=0.5,linecolor=black]
{\newpath \moveto(700,767.5)
\curveto(663,705)(481,655)(294,655)
\curveto(107,655)(-14,705)(23,767.5)
\curveto(60,830)(242,880)(429,880)
\curveto(616,880)(737,830)(700,767.5)
\closepath}}

{\pscustom[linewidth=0.2,linecolor=black,fillstyle=solid,fillcolor=lightgray,opacity=-1]
{\newpath \moveto(170,800)
\curveto(133,794)(100,790)(90,770)
\curveto(79,748)(111,724)(175,730)
\curveto(247,737)(265,715)(325,715)
\curveto(425,715)(447,728)(515,725)
\curveto(569,722)(634,732)(650,740)
\curveto(670,750)(690,760)(690,780)
\curveto(690,794)(635,810)(555,810)
\curveto(484,810)(453,814)(392,818)
\curveto(325,822)(299,806)(252,803)
\curveto(225,801)(195,804)(170,800)
\closepath}}
{\pscustom[linewidth=0.2,linecolor=black,fillstyle=solid,fillcolor=lightgray,opacity=-1]
{\newpath \moveto(445,854)
\curveto(465,859)(535,872)(560,861)
\curveto(592,852)(540,842)(505,839)
\curveto(455,837)(405,846)(445,854)
\closepath}}

{\pscustom[linewidth=1.6,linecolor=black]
{\newpath \moveto(25,770)
\curveto(209,776)(264,776)(360,770)
\curveto(475,763)(700,770)(700,770)}}

{\pscustom[linewidth=0.7,linecolor=black]
{\newpath \moveto(310,755)
\lineto(390,755)
\lineto(410,785)
\lineto(330,785)
\closepath}}

{\pscustom[linewidth=0.5,linecolor=black,fillstyle=solid,fillcolor=gray,opacity=0.5]
{\newpath \moveto(274,791)
\curveto(274,791)(216,794)(132,788)
\curveto(110,770)(108,752)(108,752)
\curveto(182,758)(250,755)(250,755)
\curveto(252,773)(274,791)(274,791)}}
{\pscustom[linewidth=0.5,linecolor=black,fillstyle=solid,fillcolor=gray,opacity=0.5]
{\newpath \moveto(460,785)
\curveto(456,764)(436,749)(436,749)
\curveto(524,746)(586,749)(586,749)
\curveto(610,770)(610,785)(610,785)
\curveto(610,785)(548,782)(460,785)
\closepath}}

{\pscustom[linewidth=3,linecolor=black]
{\newpath\moveto(117,772)
\curveto(184,775)(258,774)(258,774)}}
{\pscustom[linewidth=3,linecolor=black]
{\newpath\moveto(452,767)
\curveto(554,767)(603,768)(603,768)}}

{\pscustom[linewidth=3,linecolor=black]
{\newpath\moveto(320,772)
\lineto(399,768.4)}}


{\pscustom[linewidth=0.5,linecolor=black]
{\newpath \moveto(274,841)
\curveto(274,841)(216,844)(132,838)
\curveto(110,820)(108,802)(108,802)
\curveto(182,808)(250,805)(250,805)
\curveto(252,823)(274,841)(274,841)}}
{\pscustom[linewidth=0.5,linecolor=black]
{\newpath\moveto(460,835)
\curveto(456,814)(436,799)(436,799)
\curveto(524,796)(586,799)(586,799)
\curveto(610,820)(610,835)(610,835)
\curveto(610,835)(548,832)(460,835)
\closepath}}
{\pscustom[linewidth=0.5,linecolor=black]
{\newpath\moveto(108,752)
\lineto(108,802)}}
{\pscustom[linewidth=0.5,linecolor=black]
{\newpath\moveto(132,788)
\lineto(132,838)}}
{\pscustom[linewidth=0.5,linecolor=black]
{\newpath\moveto(250,755)
\lineto(250,805)}}
{\pscustom[linewidth=0.5,linecolor=black]
{\newpath\moveto(274,791)
\lineto(274,841)}}
{\pscustom[linewidth=0.5,linecolor=black]
{\newpath\moveto(436,749)
\lineto(436,799)}}
{\pscustom[linewidth=0.5,linecolor=black]
{\newpath\moveto(460,785)
\lineto(460,835)}}
{\pscustom[linewidth=0.5,linecolor=black]
{\newpath\moveto(586,749)
\lineto(586,799)}}
{\pscustom[linewidth=0.5,linecolor=black]
{\newpath\moveto(610,785)
\lineto(610,835)}}


{\pscustom[linewidth=0.5,linecolor=black]
{\newpath \moveto(310,905)
\lineto(390,905)
\lineto(410,935)
\lineto(330,935)
\closepath}}    
{\pscustom[linewidth=0.5,linecolor=black]
{\newpath\moveto(310,905)
\lineto(310,755)}}
{\pscustom[linewidth=0.5,linecolor=black]
{\newpath\moveto(390,905)
\lineto(390,755)}}
{\pscustom[linewidth=0.5,linecolor=black]
{\newpath\moveto(410,935)
\lineto(410,785)}}
{\pscustom[linewidth=0.5,linecolor=black]
{\newpath\moveto(330,935)
\lineto(330,785)}}


{\pscustom[linewidth=1.5,linecolor=black]
{\newpath\moveto(360,1030)
\lineto(360,920)}}
{\pscustom[linestyle=dashed,dash=3 3,linewidth=1.5,linecolor=black]
{\newpath\moveto(360,920)
\lineto(360,770)}}
{\pscustom[linewidth=1.5,linecolor=black]
{\newpath\moveto(360,630)
\lineto(360,550)}}
{\pscustom[linewidth=1.5,linecolor=black]
{\newpath\moveto(355,1030)
\lineto(365,1030)}}
{\pscustom[linewidth=1.5,linecolor=black]
{\newpath\moveto(355,550)
\lineto(365,550)}}


{\pscustom[linewidth=0.2,linecolor=black]
{\newpath\moveto(205,770)
\lineto(550,610)}}
{\pscustom[linewidth=0.2,linecolor=black]
{\newpath\moveto(515,765)
\lineto(565,610)}}
\rput(585,600){$\mathbf{A}=\mathbf{T}^1$}
{\pscustom[linewidth=0.2,linecolor=black]
{\newpath\moveto(50,768)
\lineto(10,635)}}
\rput(15,620){$W^{+,i_0}(f)$}
\rput(300,610){$\tilde{U}_P$}
\rput(715,842){$W^+(f)$}
\rput(400,1010){$W^-(f)$}
\rput(595,825){$\mathbf{T}$}
\rput(160,828){$\mathbf{T}$}
\rput(380,785){$\mathbf{\tilde{A}}$}
\rput(380,731){\small $W^+(f,U_P)$}

{\pscustom[linewidth=0.2,linecolor=black]
{\newpath\moveto(577,753)
\lineto(652,685)}}
\rput(663,677){$\mathbf{T}^2$}
\end{pspicture}

\caption{
}
\label{f.Tbox}
\end{figure}

\subsubsection{Before- and after-$\mathbf{T}_{\{0\}}$ regions in the local $i_0$-strong stable manifolds.}
Define the $0$-abscissa sphere of $\mathbf{A}$ and the $0$-abscissa strip of $\mathbf{T}$, respectively, by  
\begin{align*}
\mathbf{A}_{\{0\}}&=\SS^{i_0-1}\times \{0\}\subset \SS^{i_0-1}\times [-1,3]=\mathbf{A}\\
\mathbf{T}_{\{0\}}&=\mathbf{A}_{\{0\}}\times[-1,1]^{d-i_0}\subset \mathbf{A}\times[-1,1]^{d-i_0}=\mathbf{T} .
\end{align*}
We prove in this section that, for any diffeomorphism $\xi$ sufficiently close to $f$ that fixes $P$, and for any local strong stable manifold $W^{+}(\xi)$ sufficiently $C^r$-close to $W^{+}(f)$, the set $W^{+,i_0}(\xi)$ is transversally cut by $\mathbf{T}_{\{0\}}$ into the disjoint union of a before-$\mathbf{T}_{\{0\}}$ region that does not intersect $\tilde{U}_P$ and an after-$\mathbf{T}_{\{0\}}$ region that is strictly invariant by $\xi$ and included in $W^{+,i_0}(\xi,U_P)$.
\medskip

Let $W^{+,i_0}_{\geq 0}(f)\subset W^{+,i_0}(f)$ be the closed $i_0$-disk whose boundary is the sphere $\mathbf{A}_{\{0\}}$. It is  strictly $f$-invariant: $f$ sends it in its interior relative to the manifold $W^{ss,i_0}(f)$.
We may call that local strong stable manifold, the {\em after-$\mathbf{T}_{\{0\}}$ region} of  $W^{+,i_0}(f)$, and may also write
$$W^{+,i_0}_{\geq 0}(f)=\mathbf{A}_{\geq 0}.$$
By opposition, its complement set 
$$W^{+,i_0}_{< 0}(f)=W^{+,i_0}(f)\setminus W^{+,i_0}_{\geq 0}(f)$$ is called the {\em before-$\mathbf{T}_{\{0\}}$ region} in $W^{+,i_0}(f)$. Note that   $\mathbf{A}\subset W^{+,i_0}(f,U_P)$ implies that
\begin{align}
W^{+,i_0}_{\geq 0}(f)\subset W^{+,i_0}(f,U_P)\label{e.qerep}
\end{align}

\begin{lemma}\label{l.before and after}
Let a sequence $\xi_k\in \Diff^r_P(M)$, and a sequence of local strong stable manifolds such that
\begin{align*}
\xi_k&\xrightarrow{\;\; C^r\;\;}f\\
W^{+}(\xi_k)&\xrightarrow{\;\; C^r\;\;}W^{+}(f).
\end{align*}
Then, for large $k$, it holds:
\begin{enumerate}
\item for all $i\in I$, $\mathbf{T}^i_{\xi_k}=W^{+,i}(\xi_k)\cap \mathbf{T}$ is the graph of a $C^r$-map $$F^i_{\xi_k}\colon \mathbf{A}\times[-1,1]^{i-i_0}\to [-1,1]^{d-i}.$$ The sequence ${\bigl(F^i_{\xi_k}\bigr)}_{k\geq k_0}$ $C^r$-converges uniformly to $0$, 
\label{l.after and before 1}
\item \label{l.after and before 2} $\mathbf{A}_{\{0\}, \xi_k}=W^{+,i_0}(\xi_k)\cap \mathbf{T}_{\{0\}}$  is the boundary of an after--$\mathbf{T}_{\{0\}}$ region in  $W^{+,i_0}(\xi_k)$, that is, a strictly $\xi_k$-invariant smoothly embedded closed ball $$W^{+,i_0}_{\geq 0}(\xi_k)\subset W^{+,i_0}(\xi_k,U_P).$$
\item  The before-$\mathbf{T}_{\{0\}}$ region in $W^{+,i_0}(\xi_k)$, defined by
$$W^{+,i_0}_{< 0}(\xi_k)= W^{+,i_0}(\xi_k)\setminus W^{+,i_0}_{\geq 0}(\xi_k)$$
does not intersect $\tilde{U}_P$ and satisfies 
\begin{align}W^{+,i_0}_{< 0}(\xi_k)\setminus U_P =W^{+,i_0}(\xi_k)\setminus U_P.\label{e.qeoier}
\end{align} \label{l.after and before 4}
\end{enumerate}
Moreover, the sequence $W^{+,i_0}_{\geq 0}(\xi_k)$ converges for the $C^r$ topology to $W^{+,i_0}_{\geq 0}(f)$.
\end{lemma}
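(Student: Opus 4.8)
The plan is to deduce this purely topological statement from the $C^r$-convergence of the local strong stable manifolds together with the Folklore Lemma~\cref{l.ofijzeo}, with no further dynamical input. First I would record that, since possessing an $i$-strong stable direction is an open condition and $\xi_k\to f$, for large $k$ the manifolds $W^{ss,i}(\xi_k)$ exist for every $i\in I$, and then \cref{r.regularity} gives that $W^{+,i}(\xi_k)=W^+(\xi_k)\cap W^{ss,i}(\xi_k)$ is a regular local $i$-strong stable manifold with $W^{+,i}(\xi_k)\to W^{+,i}(f)$ in the $C^r$-topology; since $P$ is a fixed point each of these is a single smoothly embedded disk. This is the only place the hypotheses $\xi_k\to f$, $W^+(\xi_k)\to W^+(f)$ are used.

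For the graph claim, I would work in the product coordinates of $\mathbf T=\SS^{i_0-1}\times[-1,3]\times[-1,1]^{d-i_0}$, where $W^{+,i}(f)\cap\mathbf T=\mathbf T^i=\mathbf A\times[-1,1]^{i-i_0}\times\{0\}^{d-i}$ is the graph of $0$ over $\mathbf A\times[-1,1]^{i-i_0}$ and $W^{+,i}(f)$ crosses the faces $\SS^{i_0-1}\times\{-1,3\}\times[-1,1]^{d-i_0}$ of $\partial\mathbf T$ transversally — both built into the box. The $C^r$-convergence $W^{+,i}(\xi_k)\to W^{+,i}(f)$ then forces, for large $k$, $W^{+,i}(\xi_k)\cap\mathbf T$ to be again a $C^r$-graph $(\theta,t)\mapsto F^i_{\xi_k}(\theta,t)\in(-1,1)^{d-i}$ over $\mathbf A\times[-1,1]^{i-i_0}$, with $F^i_{\xi_k}\to 0$ $C^r$-uniformly; this is the standard fact that a $C^r$-perturbation of a graph transverse to the box faces is again a graph, applied for each $i\in I$ (or inductively along $\mathbf T^{i_0}\subset\cdots\subset\mathbf T^{i_m}$). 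Intersecting with the slice $\mathbf T_{\{0\}}$ then exhibits $\mathbf A_{\{0\},\xi_k}=W^{+,i_0}(\xi_k)\cap\mathbf T_{\{0\}}$ as the single smooth sphere $\{(\theta,0,F^{i_0}_{\xi_k}(\theta,0))\}$, converging $C^r$ to $\mathbf A_{\{0\}}=\partial W^{+,i_0}_{\geq0}(f)$.

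For the after-region (and the final sentence of the lemma), I would apply \cref{l.ofijzeo} to the sequence $\xi_k$, the local strong stable manifolds $W^{+,i_0}(\xi_k)\to W^{+,i_0}(f)$, the regular local strong stable manifold $W^{+,i_0}_{\geq0}(f)\subset W^{+,i_0}(f)$ (regular because $W^{+,i_0}(f)$ is and because, choosing $\mathbf A$ in general position, $\mathbf A_{\{0\}}$ is transverse to the lower-dimensional strong stable manifolds of $f$ inside $W^{ss,i_0}(f)$), and the spheres $\SS_k=\mathbf A_{\{0\},\xi_k}\to\partial W^{+,i_0}_{\geq0}(f)$. This produces directly, for large $k$, a local strong stable manifold $W^{+,i_0}_{\geq0}(\xi_k)\subset W^{+,i_0}(\xi_k)$ bounded by $\mathbf A_{\{0\},\xi_k}$ and converging $C^r$ to $W^{+,i_0}_{\geq0}(f)$, which is exactly the last sentence. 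Its strict $\xi_k$-invariance I would get from the strict $f$-invariance of $W^{+,i_0}_{\geq0}(f)$ with a definite margin (the fundamental domain $\mathbf A_{[0,1)}$), strict invariance being a $C^r$-open condition under the joint convergence just obtained; and its inclusion in $W^{+,i_0}(\xi_k,U_P)$ from \cref{e.qerep}, which places $W^{+,i_0}_{\geq0}(f)$ in the open set $U_P$, so that $W^{+,i_0}_{\geq0}(\xi_k)\subset U_P$ for large $k$, whence its whole forward $\xi_k$-orbit stays in $U_P\cap W^+(\xi_k)$.

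Finally, for the before-region: the inclusion $W^{+,i_0}_{\geq0}(\xi_k)\subset U_P$ just obtained gives $W^{+,i_0}(\xi_k)\setminus U_P\subset W^{+,i_0}_{<0}(\xi_k)$, hence \cref{e.qeoier}; and disjointness from $\tilde U_P$ I would obtain by noting that $\mathbf{\tilde A}$, a disk through $P$ disjoint from the annulus $\mathbf A$, lies in the connected component of $W^{ss,i_0}(f)\setminus\mathbf A$ containing $P$, hence in $\Int W^{+,i_0}_{\geq0}(f)$, so the small box $\tilde U_P$ is disjoint from the compact set $\overline{W^{+,i_0}_{<0}(f)}$; the Hausdorff convergence $\overline{W^{+,i_0}_{<0}(\xi_k)}\to\overline{W^{+,i_0}_{<0}(f)}$ (from the $C^r$-convergence of the after-region) then keeps $W^{+,i_0}_{<0}(\xi_k)$ off $\tilde U_P$ for large $k$. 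The main obstacle is the topological bookkeeping: making the graph step rigorous — where it is crucial to use the convergence of the \emph{local} manifolds $W^{+,i}(\xi_k)$, since the global $W^{ss,i}(\xi_k)$ may re-enter $\mathbf T$ — and then tracking the mutual positions of $\mathbf T^i_{\xi_k}$, $W^{+,i_0}_{\geq0}(\xi_k)$ and $W^{+,i_0}_{<0}(\xi_k)$ relative to $\tilde U_P$ and $U_P$; the one genuinely substantive ingredient is \cref{l.ofijzeo}.
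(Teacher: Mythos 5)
Your argument is correct and follows essentially the same route as the paper: \cref{r.regularity} to get $C^r$-convergence of the $W^{+,i}(\xi_k)$, the graph-over-the-box fact (which is exactly \cref{l.topology} in the paper) for item (1), \cref{l.ofijzeo} applied to the spheres $\mathbf{A}_{\{0\},\xi_k}$ for the after-region and its $C^r$-convergence, with strict invariance, the inclusion in $W^{+,i_0}(\xi_k,U_P)$ via \cref{e.qerep}, and the before-region claims obtained by the same convergence/openness considerations. The only differences are presentational (you extract the sphere from the graph rather than from transversality with $\mathbf{T}_{\{0\}}$, and you spell out a little more why $\cl\bigl[W^{+,i_0}_{<0}(f)\bigr]$ avoids $\tilde{U}_P$), so nothing substantive is missing.
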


See~\cref{f.after and before} for a picture.

\begin{figure}[hbt]
\psset{linewidth=0.017,dotsep=4,hatchwidth=0.1,hatchsep=0.5,shadowsize=0,dimen=middle}
\psset{dotsize=0.07 2.5,dotscale=1 1,fillcolor=black}
\psset{arrowsize=0.1 2,arrowlength=1,arrowinset=0.25,tbarsize=0.7 5,bracketlength=0.15,rbracketlength=0.15}
\psset{xunit=.6pt,yunit=.6pt,runit=.6pt}

\begin{pspicture}(50,835)(650,1070)


{\pscustom[linewidth=0.5,linecolor=black]
{\newpath
\moveto(180,905)
\curveto(160,890)(160,875)(160,875)}}
{\pscustom[linewidth=1,linecolor=black]
{\newpath
\moveto(160,875)
\lineto(160,915)}}
{\pscustom[linewidth=1,linecolor=black]
{\newpath
\moveto(180,905)
\lineto(180,945)}}

{\pscustom[linewidth=0.5,linecolor=black]
{\newpath
\moveto(520,905)
\curveto(520,890)(500,875)(500,875)}}
{\pscustom[linewidth=0.5,linecolor=black]
{\newpath
\moveto(500,875)
\lineto(500,915)}}
{\pscustom[linewidth=0.5,linecolor=black]
{\newpath
\moveto(520,905)
\lineto(520,945)}}

\psline(335,900)(335,930)
\psline{-<}(335,835)(335,900)

{\pscustom[linecolor=black, linewidth=1,fillstyle=solid,fillcolor=white,opacity=0]
{\newpath
\moveto(560,930)
\curveto(536,891)(418,860)(296,860)
\curveto(174,860)(96,891)(120,930)
\curveto(144,967)(262,1000)(384,1000)
\curveto(506,1000)(584,969)(560,930)
\closepath}}

{\pscustom[linewidth=1,linecolor=black,dash=2 4,linestyle=dashed]
{\newpath
\moveto(160,950)
\lineto(170,950)}}
{\pscustom[linewidth=1,linecolor=black,dash=2 4,linestyle=dashed]
{\newpath
\moveto(160,950)
\curveto(110,950)(90,970)(50,1040)}}

\psline{-*}(50,1040)(50,1040)

{\pscustom[linecolor=black, linewidth=0.5,fillstyle=solid,fillcolor=white,opacity=0]
{\newpath
\moveto(180,945)
\curveto(160,930)(160,915)(160,915)
\lineto(160,955)
\curveto(160,955)(160,970)(180,985)
\lineto(180,945)
\closepath}}

\newrgbcolor{lightgray}{0.7 0.7 0.7}
{\pscustom[linewidth=1,linecolor=lightgray,dash=2 4,linestyle=dashed]
{\newpath
\moveto(160.5,950)
\lineto(170,950)}}

{\pscustom[linewidth=1,linecolor=black]
{\newpath
\moveto(165,930)
\lineto(515,930)}}

{\pscustom[linewidth=1,linecolor=black]
{\newpath
\moveto(170,950)
\curveto(250,950)(236,945)(335,930)
\curveto(435,915)(500,935)(500,935)}}

{\pscustom[linecolor=black, linewidth=0.5,fillstyle=solid,fillcolor=white,opacity=0]
{\newpath
\moveto(520,945)
\curveto(520,930)(500,915)(500,915)
\lineto(500,955)
\curveto(500,955)(520,970)(520,985)
\lineto(520,945)
\closepath}}

{\pscustom[linewidth=1,linecolor=black,dash=2 4,linestyle=dashed]
{\newpath
\moveto(515,940)
\curveto(545,950)(630,930)(630,930)}}

\psline{-*}(630,930)(630,930)

\psline{->}(335,930)(335,960)
\psline(335,960)(335,990)

{\pscustom[linewidth=2,linecolor=black,fillstyle=solid,fillcolor=black,opacity=0]
{\newpath
\moveto(171,950)
\curveto(171,949)(171,949)(170,949)
\curveto(169,949)(169,949)(169,950)
\curveto(169,951)(169,951)(170,951)
\curveto(171,951)(171,951)(171,950)}}

{\pscustom[linewidth=2,linecolor=black,fillstyle=solid,fillcolor=black,opacity=0]
{\newpath
\moveto(516,940)
\curveto(516,939)(516,939)(515,939)
\curveto(514,939)(514,939)(514,940)
\curveto(514,941)(514,941)(515,941)
\curveto(516,941)(516,941)(516,940)}}

{\pscustom[linewidth=0.4,linecolor=black]
{\newpath
\moveto(170,955)
\lineto(210,1050)}}
{\pscustom[linewidth=0.4,linecolor=black]
{\newpath
\moveto(510,945)
\lineto(230,1050)}}
\rput(220,1065){$\mathbf{A}_{\{0\},\xi_k}$}

\rput(60,961){\Small $W^{+,i_0}_{<0}(\xi_k)$}
\rput(295,877){$W^{+}(f)$}

\rput(155,980){\Small $\mathbf{T}\!_{\{0\}}$}
\rput(527,880){\Small $\mathbf{T}\!_{\{0\}}$}
\rput(240,920){\small $W^{+,i_0}_{\geq 0}(f)$}
\rput(257,962){\small $W^{+,i_0}_{\geq 0}(\xi_k)$}
\rput(340,920){$P$}
\rput(50,1057){\Small $\partial W^{+,i_0}(\xi_k)$}
\rput(630,910){\Small $\partial W^{+,i_0}(\xi_k)$}
\end{pspicture}

\caption{}\label{f.after and before}
\end{figure}

\begin{remark}\label{r.beforeregion}
The set $W^{+,i_0}_{< 0}(\xi_k)$ can be characterized as the unique bounded full-dimensional submanifold of $W^{ss,i_0}(\xi_k)$ delimited by the two disjoint spheres $\partial W^{+,i_0}(\xi_k)$ and $\mathbf{A}_{\{0\}, \xi_k}$, that contains $\partial W^{+,i_0}(\xi_k)$ and that does not intersect $\mathbf{A}_{\{0\}, \xi_k}$.
\end{remark}

Before showing \cref{l.before and after}, we state without a proof an elementary topology lemma:

\begin{lemma}\label{l.topology}
Let $0\leq i\leq d$. Let $V$ be a $C^r$ $i$-dimensional boundaryless submanifold in $M$, and let $W\subset V$ be a $C^r$ compact $i$-dimensional manifold, possibly with boundary and corners. Let
$$W\times[-1,1]^{d-i}\subset M$$ be a $C^r$-embedding that does not intersect $\partial V=\cl(V)\setminus V$, where $\cl$ is the closure in $M$, and such that 
$$W\times[-1,1]^{d-i}\cap V=W.$$
Let  $V_k$ be a sequence of embeddings of $V$ into $M$ converging uniformly to $V\overset{\Id}{\hookrightarrow} M$ for the $C^r$-topology.
Then, for large $k$, the set $W\times[-1,1]^{d-i}\cap V_k$ is the graph of a $C^r$-map $$F_k\colon W \to [-1,1]^{d-i},$$ and the sequence $F_k$ converges uniformly to $0$ in the $C^r$-topology.\footnote{\label{footn.topology} When $i = d$, we put $[-1,1]^{d-i}=\{0\}$ and $W\times[-1,1]^{d-i}=W$. That is, for any $k\geq k_0$, 
$W\cap V_k=W.$}
\end{lemma}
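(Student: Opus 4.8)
\medskip

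\noindent\emph{Proof strategy.} The plan is to first confine $V_k\cap\mathbf{B}$, where $\mathbf{B}:=W\times[-1,1]^{d-i}$ is identified with its image in $M$, to a neighborhood of $W$, and then to recognise that piece as a graph over $W$. Write $\pi_1\colon\mathbf{B}\to W$ and $\pi_2\colon\mathbf{B}\to[-1,1]^{d-i}$ for the coordinate projections, and $\phi_k\colon V\to M$ for the embedding with image $V_k$, so that $\phi_k\to\iota_V$ in the $C^r$-topology, where $\iota_V\colon V\hookrightarrow M$ is the inclusion; recall that $\iota_V(V)\cap\mathbf{B}=W\times\{0\}$ and $\mathbf{B}\cap\partial V=\emptyset$.

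\emph{Localization.} Fix a compact neighborhood $N$ of $W$ in $V$ with $W\subset\Int_V N$. I would argue that $\cl_M\bigl(\iota_V(V\setminus\Int_V N)\bigr)\subset\iota_V(V\setminus\Int_V N)\cup\partial V$ — this is where the fact that $V$ carries the subspace topology of $M$ enters — and that this set misses $\mathbf{B}$, since $\partial V\cap\mathbf{B}=\emptyset$ and $\iota_V(V\setminus\Int_V N)\cap\mathbf{B}\subset(V\cap\mathbf{B})\setminus\Int_V N=W\setminus\Int_V N=\emptyset$. A compact set and a disjoint closed set in $M$ are at positive distance, so uniform $C^0$-convergence $\phi_k\to\iota_V$ yields $\phi_k(V\setminus\Int_V N)\cap\mathbf{B}=\emptyset$ for large $k$; hence $\phi_k^{-1}(\mathbf{B})$ is a compact subset of $\Int_V N$ and $V_k\cap\mathbf{B}=\phi_k\bigl(\phi_k^{-1}(\mathbf{B})\bigr)$ depends only on $\phi_k|_N$.

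\emph{Graph near $W$.} Along $W$ the inclusion is transverse to the fibres $\{w\}\times[-1,1]^{d-i}$ of $\pi_1$ — indeed $\pi_1\circ\iota_V$ is the identity on $W\subset V$ — and transversality is $C^1$-open; since $\phi_k^{-1}(\mathbf{B})$ stays inside the fixed compact $N$ on which $\phi_k\to\iota_V$ in $C^r$, for large $k$ the map $\pi_1\circ\phi_k$ is a local $C^r$-diffeomorphism along $\phi_k^{-1}(\mathbf{B})$. Covering the compact $W$ by finitely many $C^r$-charts together with shrunk charts that still cover $W$, pairing each with $\pi_2$ to obtain $M$-coordinates $(u,t)$ in which $\iota_V(N)$ is locally $\{t=0\}$, and applying the implicit function theorem to $\pi_1\circ\phi_k$, one gets for large $k$ and each shrunk chart that $\phi_k(N)$ is a graph $\{t=g_k^\alpha(u)\}$ with $g_k^\alpha\to0$ in $C^r$. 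By $C^r$-smallness the values lie in the open cube, and by uniqueness the $g_k^\alpha$ glue to a single $C^r$-map $F_k\colon W\to[-1,1]^{d-i}$ with $F_k\to0$ uniformly in $C^r$, whose graph is $V_k\cap\mathbf{B}$. When $i=d$ the box is just $W$, the only possible graph is $W$ itself, and the transversality argument (or a degree argument for $\phi_k|_N$) gives $W\subset V_k$ for large $k$, matching the case noted in the statement.

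\emph{Main obstacle.} The delicate point is not any single step but getting the graph description to hold over \emph{all} of $W$, including $\partial W$ and its corners: one must choose the finite chart cover with enough slack that the shrunk charts still cover $W$ and that, uniformly over the finitely many charts, the local graphs are defined over them for $k$ large. Compactness of $W$ together with the openness of transversality is exactly what makes this uniform choice possible, while the hypothesis $\mathbf{B}\cap\partial V=\emptyset$ is precisely what prevents far-away sheets of $V_k$ from re-entering $\mathbf{B}$.
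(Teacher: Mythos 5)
The paper states Lemma~\ref{l.topology} as an ``elementary topology lemma'' and gives no proof at all, so there is nothing of the author's to compare your argument with; it has to stand on its own. Its first half does stand: the localization step (the set $\cl\bigl(V\setminus\Int_V N\bigr)$ is contained in $(V\setminus\Int_V N)\cup\partial V$, misses the compact box $\mathbf{B}=W\times[-1,1]^{d-i}$ by the hypotheses $\partial V\cap\mathbf{B}=\emptyset$ and $V\cap\mathbf{B}=W$, hence is at positive distance from it, so uniform $C^0$-closeness expels $\phi_k(V\setminus\Int_V N)$ from $\mathbf{B}$) is correct and complete, and so is the observation that $V$ is transverse to the fibres along $W$ because $\pi_1\circ\iota_V=\Id$ on $W$.

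The gap sits exactly where you flag the ``main obstacle'', and the device you propose does not close it. Over a point $w_0\in\partial W$ (or a corner) the pair $(u\circ\pi_1,\pi_2)$ is not an $M$-chart: $\mathbf{B}$ is not a neighborhood in $M$ of the fibre through $w_0$, so the announced ``$M$-coordinates $(u,t)$'' do not exist there, and $\pi_1\circ\phi_k$ is only defined on $\phi_k^{-1}(\mathbf{B})$, whose lateral boundary moves with $k$; consequently the implicit-function-theorem-over-shrunk-charts step, as written, never establishes that there \emph{is} a point of $V_k$ on the fibre over every $w\in W$ -- and shrinking charts cannot dodge $\partial W$, since the shrunk charts must still cover it. The standard repair is to apply the implicit function theorem with $w$ as a parameter: writing $\Phi$ for the box embedding, solve $\phi_k(x)=\Phi(w,t)$ for $(x,t)\in N\times(-1,1)^{d-i}$ near $(w,0)$; all derivatives are then taken at interior points of $V$ and of the cube, so boundary and corner points of $W$ cost nothing, and compactness of $W$ with uniform $C^r$-convergence yields one $C^r$-small solution $t=F_k(w)$ for all $w\in W$ simultaneously. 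Per-fibre uniqueness also needs a second use of your own localization trick: besides expelling $\phi_k(V\setminus\Int_V N)$ (which uses $\partial V\cap\mathbf{B}=\emptyset$), you must expel $\phi_k(N\setminus U)$ for a small $V$-neighborhood $U$ of $W$, using $N\cap\mathbf{B}=W$ and compactness, so that $V_k\cap\mathbf{B}$ is confined near $W\times\{0\}$; your closing sentence attributes the exclusion of all far sheets to $\partial V\cap\mathbf{B}=\emptyset$ alone, which only handles the first of these two mechanisms.
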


\begin{proof}[Proof of \cref{l.before and after}]
By regularity of $W^{+}(f)$ and \cref{r.regularity}, for all $i\in I$, $W^{+,i}(\xi_k)$ is a sequence of $i$-disks that converges to $W^{+,i}(f)$ for the $C^1$-topology. Then \cref{l.after and before 1} is a straightforward consequence of \cref{l.topology}.

Note that $W^{+,i_0}(f)$ intersects $\mathbf{T}_{\{0\}}$ transversally into a sphere $\mathbf{A}_{\{0\}}$ that does not intersect the boundaries $\partial W^{+,i_0}(f)$ and $\partial\mathbf{T}_{\{0\}}$. Thus, for large $k$, $W^{+,i_0}(\xi_k)$ intersects also $\mathbf{T}_{\{0\}}$ transversally into a sphere $\mathbf{A}_{\{0\}, \xi_k}$ and the sequence  $\mathbf{A}_{\{0\}, \xi_k}$ converges to the sphere $\mathbf{A}_{\{0\}}$ in the $C^1$ topology.
By \cref{l.ofijzeo}, for large $k$, the sphere $\mathbf{A}_{\{0\}, \xi_k}$ delimits a local $i_0$-strong stable manifold $W^{+,i_0}_{\geq 0}(\xi_k)$, and the sequence of those manifolds converges to $W^{+,i_0}_{\geq 0}(f)$ for the $C^r$ topology. By \cref{e.qerep}, $W^{+,i_0}_{\geq 0}(f)\subset U_P$ thus $W^{+,i_0}_{\geq 0}(\xi_k)\subset U_P\cap W^{+,i_0}(\xi_k)=W^{+,i_0}(\xi_k,U_P)$, for large $k$. By regularity of $W^{+}(f)$, we get the strict $\xi_k$-invariance of $W^{+,i_0}_{\geq 0}(\xi_k)$, for large $k$. This ends the proof of Item~\ref{l.after and before 2}.

The $C^r$-convergence of $W^{+,i_0}_{\geq 0}(\xi_k)$  to $W^{+,i_0}_{\geq 0}(f)$, gives that any adherence value of a sequence $x_k\in W^{+,i_0}_{< 0}(\xi_k)$ is in the closure of $W^{+,i_0}_{< 0}(f)$, that is, outside $\tilde{U}$. Thus, for large $k$, $W^{+,i_0}_{< 0}(\xi_k)\cap \tilde{U}=\emptyset$. \cref{e.qeoier} comes from $W^{+,i_0}_{\geq 0}(\xi_k)\subset U_P$.
\end{proof}

In the next two sections, and as already explained in the introduction of \cref{s.induction}, we build by two consecutive perturbations of the sequence $h_k$, a sequence $f_k$ that matches the conclusions of Proposition~$\cP_{I,J}$. 

\subsection{First perturbation: application of the induction hypothesis}\label{s.firstpert}

Apply Proposition~$\cP_{I^*,J}$ to the pair of sequences $(g_k,h_k)$ with $U_P:=\tilde{U}_P$. We rename the sequence $f_k$ thus obtained into $\tilde{g}_k$, and save the name "$f_k$" for the sequence of diffeomorphisms we want to build ultimately: a sequence that will satisfy the stronger conclusions of $\cP_{I,J}$. We now have 
\begin{itemize}
\item a neighborhood $\tilde{V}_P\subset \tilde{U}_P$ of $P$,
\item a sequence $\tilde{g}_k$ of diffeomorphisms that converges $C^r$ to $f$,
\item sequences of local strong stable and unstable manifolds $W^+(\tilde{g}_k)$ and $W^-(\tilde{g}_k)$ that tend respectively to $W^+(f)$ and $W^-(f)$,
\end{itemize}
 such that for large $k$,
\begin{itemize}
\item $\tilde{g}_k^{\pm 1}=g_k^{\pm 1}$ on $\tilde{V}_P$
\item $\tilde{g}_k^{\pm 1}=h_k^{\pm 1}$ outside $\tilde{U}_P$,
\item for all $i\in I^*$, the $i$-strong stable parts $W^{+,i}(\tilde{g}_k)$ and $W^{+,i}(h_k)$ of $W^{+}(\tilde{g}_k)$ and $W^{+}(h_k)$ are well-defined and $$W^{+,i}(\tilde{g}_k)\setminus \tilde{U}_P=W^{+,i}(h_k)\setminus \tilde{U}_P,$$
and likewise for any $j\in J$, replacing stable manifolds by unstable ones. 
\end{itemize}

\subsection{Second perturbation: the pushing perturbation}\label{s.secondpert} We want to push the manifold $W^{+,i_0}(\tilde{g}_k)$ to make it coincide with $W^{+,i_0}(h_k)$ before $\mathbf{T}_{\{0\}}$ by composing $\tilde{g}_k$ by a diffeomorphism supported in $\mathbf{T}$. \cref{l.before and after} \cref{l.after and before 2}  will then imply that both local manifolds coincide outside $U_P$, which is what we ultimately want.

We want moreover that that pushing does not affect the other strong stable manifolds $W^{+,i}(\tilde{g}_k)$, for all $i\in I^*$, which already coincide with $W^{+,i}(h_k)$ outside $U_P$. For this we need to push the manifold $W^{+,i_0}(\tilde{g}_k)$ within the sets $\mathbf{T}^i_{\tilde{g}_k}$.

\begin{proof}[Sketch of the construction of the second perturbation: ] 
In order to make the construction of that pushing simpler, we consider a sequence of changes of coordinates $\Phi_k\colon \mathbf{T} \to \mathbf{T}$, that tends to $Id_M$ for the $C^r$ topology, and such that in the $\Phi_k$-coordinates, the sets $\mathbf{T}^i_{\tilde{g}_k}$ are the sets $\mathbf{T}^i=\SS^{i_0-1}\times [-1,1]\times [-1,1]^{i-i_0}\times \{0\}^{d-i}$. 

Those changes of coordinates $\Phi_k$ will be actually seen as a sequence of "straightening diffeomorphisms" of $M$ that leave $\mathbf{T}$ invariant and that straighten the sets  $\mathbf{T}^i_{\tilde{g}_k}$ into the sets $\mathbf{T}^i=\Phi_k(\mathbf{T}^i_{\tilde{g}_k})$. We work on new sequences $\tilde{g}_k^*$ and $h_k^*$ obtained by conjugation by $\Phi_k$, and on the sequences $W^{+}(\tilde{g}^*_k)$ and $W^{+}(h^*_k)$ of images by $\phi_k$ of the sets $W^{+}(\tilde{g}_k)$ and $W^{+}(h_k)$. The local strong stable manifolds $W^{+,i}(\tilde{g}^*_k)$ and $W^{+,i}(h^*_k)$ are then the images of $W^{+,i}(\tilde{g}_k)$ and $W^{+,i}(h_k)$ by $\Phi_k$. This is depicted in~\cref{f.conjugation by Phi}. 

For all $i\in I^*$, the local stable manifolds $W^{+,i}(\tilde{g}^*_k)$ and $W^{+,i}(h^*_k)$ coincide outside $\tilde{U}_P$ and intersect $\mathbf{T}$ into $\mathbf{T}^i$. Then we build a sequence $f_k^*$ of perturbations of $\tilde{g}_k^*$ supported on $\mathbf{T}$ so that the local strong stable manifold $W^{+,i_0}(f^*_k)$ is pushed to coincide with that of $h_k^*$ before $\mathbf{T}_{\{0\}}$. That pushing is done within the sets $\mathbf{T}^i$, $i\in I^*$, so that the other manifolds $W^{+,i}(f^*_k)$ remain equal to those of $\tilde{g}_k^*$. This is done by \cref{p.starsequence}, which proof is postponed to \cref{s.starsequence} (it is summarized in \cref{p.ideaof}). 

Finally, we pull back the diffeomorphisms $f_k^*$ by the reverse conjugation, and the sequence  $W^{+}(f^*_k)$ by the sequence of diffeomorphisms $\Phi_k^{-1}$. We check in \cref{s.endofproof} that the sequences $f_k$ and $W^{+}(f_k)$ thus obtained have all the required properties.
\end{proof}

\subsubsection{Straightening of the sets $\mathbf{T}^i_{\tilde{g}_k}$}

\begin{lemma}[Existence of the straightening diffeomorphisms]\label{l.sequphi_k}
There exists a sequence $\Phi_k$ of diffeomorphisms in $\Diff^r_P(M)$ that converges to the identity map $\Id_M$ for the $C^r$-topology and such that, for large $k$,
\begin{itemize}
\item $\Phi_k(\mathbf{T})=\mathbf{T}$,
\item $\Phi_k(\mathbf{T}_{\tilde{g}_k}^i)=\mathbf{T}^i, \mbox{ for all $i\in I^*$.}$
\item $\Phi_k=\Id_M$ by restriction to $\tilde{U}_P$. 
\end{itemize}
\end{lemma}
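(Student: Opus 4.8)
The plan is to build $\Phi_k$ as a composition of successively straightening diffeomorphisms, one for each index $i\in I^*$, proceeding from the smallest index upward. Fix $i\in I^*$. By \cref{l.before and after}\cref{l.after and before 1}, for large $k$ the set $\mathbf{T}^i_{\tilde g_k}=W^{+,i}(\tilde g_k)\cap\mathbf{T}$ is the graph of a $C^r$-map $F^i_{\tilde g_k}\colon\mathbf{A}\times[-1,1]^{i-i_0}\to[-1,1]^{d-i}$, and the sequence $\bigl(F^i_{\tilde g_k}\bigr)_k$ converges $C^r$-uniformly to $0$. The idea is that a graph over the core box $\mathbf{T}^i=\mathbf{A}\times[-1,1]^{i-i_0}\times\{0\}^{d-i}$, with small $C^r$-norm, can be straightened to $\mathbf{T}^i$ by a diffeomorphism of $\mathbf{T}$ close to the identity: namely take the map that on a point $(a,u,v)\in\mathbf{A}\times[-1,1]^{i-i_0}\times[-1,1]^{d-i}$ subtracts from the last $d-i$ coordinates the quantity $\beta(v)\,F^i_{\tilde g_k}(a,u)$, where $\beta$ is a fixed bump function equal to $1$ near $v=0$ and vanishing near $\partial[-1,1]^{d-i}$. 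Since $F^i_{\tilde g_k}\to 0$ in $C^r$, this is a diffeomorphism of $\mathbf{T}$ for large $k$, tends $C^r$ to $\Id$, fixes $\mathbf{T}$ setwise, sends $\mathbf{T}^i_{\tilde g_k}$ to $\mathbf{T}^i$, and is the identity near $\partial\mathbf{T}$ — hence extends by the identity to a diffeomorphism of $M$ in $\Diff^r_P(M)$ (it is supported in $\mathbf{T}\subset U_P$, and $\mathbf{T}$ is disjoint from $\tilde U_P$, so it is the identity on $\tilde U_P$).

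Next I would handle the compatibility between different indices. The subtlety is that straightening $\mathbf{T}^{i}_{\tilde g_k}$ must not destroy the already-achieved straightening of $\mathbf{T}^{i'}_{\tilde g_k}$ for $i'<i$ in $I^*$. This is where regularity of the invariant manifolds (\cref{d.regularity3}, \cref{r.regularity}) enters: $W^{+,i'}(\tilde g_k)\subset W^{+,i}(\tilde g_k)$, so after the first straightening $\mathbf{T}^{i}_{\tilde g_k}$ is again a graph — now over $\mathbf{T}^i=\mathbf{A}\times[-1,1]^{i-i_0}\times\{0\}^{d-i}$ in the directions $\{0\}^{i-i'}\times[-1,1]^{d-i}$ complementary — whose restriction to the sub-box $\mathbf{A}\times[-1,1]^{i'-i_0}\times\{0\}^{i-i'}$ (which now carries $\mathbf{T}^{i'}$) is already flat. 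Choosing the straightening map for index $i$ to act only on the last $d-i$ coordinates (leaving the coordinates $[-1,1]^{i-i_0}$ fixed) guarantees it preserves $\mathbf{T}^{i'}$ for all smaller $i'\in I^*$. Composing these finitely many maps (in increasing order of index), and noting each tends $C^r$ to $\Id$ and is supported in $\mathbf{T}$, yields $\Phi_k$ with $\Phi_k\to\Id_M$ in $C^r$, $\Phi_k(\mathbf{T})=\mathbf{T}$, $\Phi_k(\mathbf{T}^i_{\tilde g_k})=\mathbf{T}^i$ for all $i\in I^*$, and $\Phi_k=\Id_M$ on $\tilde U_P$.

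I expect the main obstacle to be bookkeeping the coordinates so that the successive straightenings genuinely commute with each other's effect on the lower-index boxes: one must arrange that the $i$-th straightening fixes the first $i$ of the $\mathbf{T}$-coordinates (equivalently, acts only in the normal directions to $W^{ss,i}(\tilde g_k)$) and that, after the lower straightenings, $\mathbf{T}^i_{\tilde g_k}$ is still a graph in exactly those normal directions with small $C^r$-norm — both of which follow from the flag structure $W^{+,i_0}(\tilde g_k)\subset\cdots\subset W^{+,i}(\tilde g_k)$ provided by regularity, together with \cref{l.topology} applied repeatedly. Everything else (bump functions, $C^r$-smallness, extension by the identity, the fact that $P\in\mathbf{A}_{\ge 0}$ is untouched because $\mathbf{A}$ avoids a neighborhood of $P$, membership in $\Diff^r_P(M)$) is routine.
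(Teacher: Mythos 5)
Your core straightening mechanism is the same as the paper's (graphs with $C^r$-small graph maps, killed by fibrewise translations, handled inductively over $i\in I^*$ using the nesting $W^{+,i'}(\tilde g_k)\subset W^{+,i}(\tilde g_k)$ so that already-straightened boxes are preserved; that you run the induction upward while the paper runs it downward from $d$ is immaterial). But there is a genuine gap in the support claim. Your map on $\mathbf{T}$, $(a,u,v)\mapsto(a,u,v-\beta(v)F^i_{\tilde g_k}(a,u))$ with $\beta$ a bump in $v$ only, is \emph{not} the identity near $\partial\mathbf{T}$: it is the identity only near the faces $\mathbf{A}\times[-1,1]^{i-i_0}\times\partial([-1,1]^{d-i})$, not near the end faces $\partial\mathbf{A}\times[-1,1]^{d-i_0}$ nor near $\mathbf{A}\times\partial([-1,1]^{i-i_0})\times[-1,1]^{d-i}$, where it moves points by $F^i_{\tilde g_k}(a,u)\neq0$ in general. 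Hence "extends by the identity to a diffeomorphism of $M$" fails (the extension is not even continuous), and with it your argument that $\Phi_k=\Id$ on $\tilde U_P$ "because it is supported in $\mathbf{T}$". Worse, this cannot be repaired by also cutting off in $(a,u)$: the set $\mathbf{T}^i_{\tilde g_k}$ is the \emph{full} graph over $\mathbf{A}\times[-1,1]^{i-i_0}$, and its points over $\partial(\mathbf{A}\times[-1,1]^{i-i_0})$ lie on $\partial\mathbf{T}$ with nonzero height; since the lemma demands the set equality $\Phi_k(\mathbf{T}^i_{\tilde g_k})=\mathbf{T}^i$, those boundary points must be moved onto the flat box, so no $\Phi_k$ that is the identity near $\partial\mathbf{T}$ (a fortiori none supported in $\mathbf{T}$) can work unless $F^i_{\tilde g_k}$ happens to vanish there.

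The paper avoids this by asking less: $\Phi_k$ is only required to preserve $\mathbf{T}$ setwise, to equal $\Id_M$ on $\tilde U_P$, and to tend to $\Id_M$; it is allowed to move $\partial\mathbf{T}$ within itself and to differ from the identity on a neighborhood of $\mathbf{T}$ in $M$. Concretely, at each inductive step the straightening is first defined as a genuine fibrewise \emph{translation} on the cylinder $\mathbf{A}\times[-1,1]^{j_1-i_0}\times\RR^{j_0-j_1}\times[-1,1]^{d-j_0}$ (translating only the middle block of coordinates, inside the already-flat higher box), and is then turned, via a partition of unity, into a diffeomorphism $\psi_k$ of $M$ that coincides with the translation where the graph lives (the graph values are small), preserves $\mathbf{T}$ and the fibres, leaves the higher boxes $\mathbf{T}^{i}$ invariant, and is the identity on the closed set $\tilde U_P$, which is disjoint from $\mathbf{T}$ — but is \emph{not} the identity near $\partial\mathbf{T}$. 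If you replace your "supported in $\mathbf{T}$, extend by the identity" step by this weaker cut-off (damping only in the directions normal to the graph inside $\mathbf{T}$, and interpolating to the identity in $M$ away from $\mathbf{T}$, avoiding $\tilde U_P$), the rest of your argument — including the upward induction and the observation that the graph map vanishes on the previously straightened sub-boxes — goes through.
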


\begin{figure}[hbt]
\ifx\JPicScale\undefined\def\JPicScale{1}\fi
\psset{linewidth=10,dotsep=1,hatchwidth=0.3,hatchsep=1.5,shadowsize=0,dimen=middle}
\psset{dotsize=0.7 2.5,dotscale=1 1,fillcolor=black}
\psset{arrowsize=0.1 2,arrowlength=1,arrowinset=0.25,tbarsize=0.7 5,bracketlength=0.15,rbracketlength=0.15}
\psset{xunit=.22pt,yunit=.22pt,runit=.22pt}
\begin{pspicture}(-20,330)(800,1100)

{\pscustom[linewidth=2,linecolor=black]
{\newpath\moveto(210,750)
\curveto(150,780)(90,820)(80,870)}}

{\pscustom[linewidth=2,linecolor=black]
{\newpath\moveto(175,705)
\curveto(110,730)(20,730)(-20,770)}}
\rput(10,900){\small $W^{+,i_0}(h_k)$}
\rput(-100,800){\small $W^{+,i_0}(\tilde{g}_k)$}

{\pscustom[linestyle=none,fillstyle=solid,fillcolor=white,opacity=0.5]
{\newpath\moveto(120,520)
\lineto(120,680)
\lineto(240,880)
\lineto(240,720)
\closepath}}

{\pscustom[linestyle=dashed, dash=5 5,linewidth=2,linecolor=black]
{\newpath\moveto(120,520)
\lineto(240,720)
\lineto(240,880)}}
{\pscustom[linestyle=dashed, dash=5 5,linewidth=2,linecolor=black]
{\newpath\moveto(240,720)
\lineto(600,720)}}

{\newrgbcolor{lightgray}{0.8 0.8 0.8}
\pscustom[linewidth=1,linecolor=black,fillstyle=solid,fillcolor=lightgray,opacity=0]
{\newpath
\moveto(120,600)
\curveto(180,740)(240,780)(240,780)
\curveto(240,780)(260,740)(400,760)
\curveto(540,780)(600,760)(600,760)
\curveto(600,760)(560,740)(540,700)
\curveto(520,660)(500,580)(500,580)
\curveto(500,580)(420,520)(320,560)
\curveto(220,600)(120,600)(120,600)
\closepath}}
\rput(330,600){$\mathbf{T}_{\tilde{g}_k}^i$}

{\pscustom[linewidth=1,linecolor=black]
{\newpath\moveto(240,880)
\lineto(120,680)
\lineto(500,680)
\lineto(600,880)
\closepath}}
{\pscustom[linewidth=1,linecolor=black]
{\newpath\moveto(120,680)
\lineto(120,520)
\lineto(500,520)
\lineto(500,680)}}
{\pscustom[linewidth=1,linecolor=black]
{\newpath\moveto(500,520)
\lineto(600,720)
\lineto(600,880)}}
\rput(460,910){$\mathbf{T}$}

{\pscustom[linewidth=3,linecolor=black]
{\newpath\moveto(210,750)
\curveto(210,750)(280,725)(380,735)
\curveto(480,745)(555,725)(555,725)}}
{\pscustom[linewidth=3,linecolor=black]
{\newpath\moveto(175,705)
\curveto(175,705)(250,673)(315,700)
\curveto(375,725)(450,730)(540,700)}}

{\pscustom[linewidth=2,linecolor=black]
{\newpath\moveto(540,700)
\curveto(710,630)(790,690)(820,700)}}
{\pscustom[linewidth=2,linecolor=black]
{\newpath
\moveto(555,725)
\curveto(655,695)(760,670)(830,720)}}

{\pscustom[linewidth=1,linecolor=black]
{\newpath
\moveto(410,360)
\curveto(-130,340)(-73,703)(-20,770)
\curveto(20,820)(39,840)(80,870)
\curveto(120,900)(250,969)(380,979)}}

\rput(370,395){\Small $W^{+,i}(\tilde{g}_k)\!=\!W^{+,i}(h_k)$}
\rput(370,335){(\Small outside $\tilde{U}_P$)}

\end{pspicture}\begin{pspicture}(-290,330)(470,1100)


{\pscustom[linewidth=1,linecolor=black]
{\newpath\moveto(205,740)
\curveto(100,770)(90,820)(80,870)}}

{\pscustom[linewidth=2,linecolor=black]
{\newpath\moveto(175,690)
\curveto(75,695)(20,720)(-20,770)}}
\rput(-13,880){\Small $W^{+,i_0}(h_k^*)$}
\rput(-100,800){\Small $W^{+,i_0}(\tilde{g}_k^*)$}

{\pscustom[linestyle=none,fillstyle=solid,fillcolor=white,opacity=0.5]
{\newpath\moveto(120,520)
\lineto(120,680)
\lineto(240,880)
\lineto(240,720)
\closepath}}

{\pscustom[linestyle=dashed, dash=5 5,linewidth=2,linecolor=black]
{\newpath\moveto(120,520)
\lineto(240,720)
\lineto(240,880)}}
{\pscustom[linestyle=dashed, dash=5 5,linewidth=2,linecolor=black]
{\newpath\moveto(240,720)
\lineto(600,720)}}

{\newrgbcolor{lightgray}{0.8 0.8 0.8}
\pscustom[linewidth=1,linecolor=black,fillstyle=solid,fillcolor=lightgray,opacity=0]
{\newpath\moveto(120,600)
\lineto(500,600)
\lineto(600,800)
\lineto(240,800)
\closepath}}
\rput(340,628){\Small$\Phi_k\bigl(\mathbf{T}_{\tilde{g}_k}^i\bigr)\!=\!\mathbf{T}^i$}

{\pscustom[linewidth=1,linecolor=black]
{\newpath\moveto(240,880)
\lineto(120,680)
\lineto(500,680)
\lineto(600,880)
\closepath}}
{\pscustom[linewidth=1,linecolor=black]
{\newpath\moveto(120,680)
\lineto(120,520)
\lineto(500,520)
\lineto(500,680)}}
{\pscustom[linewidth=1,linecolor=black]
{\newpath\moveto(500,520)
\lineto(600,720)
\lineto(600,880)}}
{\pscustom[linewidth=1,linecolor=black]
{\newpath\moveto(120,680)
\lineto(120,520)
\lineto(500,520)
\lineto(500,680)}}
\rput(460,910){\small $\Phi_k(\mathbf{T})=\mathbf{T}$}

{\pscustom[linewidth=3,linecolor=black]
{\newpath\moveto(205,740)
\curveto(205,740)(281,715)(380,735)
\curveto(480,755)(570,740)(570,740)}}
{\pscustom[linewidth=3,linecolor=black]
{\newpath\moveto(175,690)
\curveto(175,690)(241,683)(310,695)
\curveto(370,705)(440,700)(540,680)}}

{\pscustom[linewidth=2,linecolor=black]
{\newpath\moveto(540,680)
\curveto(720,645)(790,690)(820,700)}}
{\pscustom[linewidth=2,linecolor=black]
{\newpath\moveto(570,740)
\curveto(680,725)(760,670)(830,720)}}

{\pscustom[linewidth=1,linecolor=black]
{\newpath
\moveto(410,360)
\curveto(-130,340)(-73,703)(-20,770)
\curveto(20,820)(39,840)(80,870)
\curveto(120,900)(250,969)(380,979)}}

\rput(370,395){\Small $W^{+,i}(\tilde{g}_k^*)\!=\!W^{+,i}(h_k^*)$}
\rput(370,335){(\Small outside $\tilde{U}_P$)}

\psbezier[linewidth=3,linecolor=black]{->}(-300,1020)(-200,1060)(-140,1060)(-40,1020)
\rput(-170,1080){$\Phi_k$}

\end{pspicture}

\caption{The diffeomorphism $\Phi_k$ straightens the intersection 
of $\mathbf{T}$ with $W^{+,i}(\tilde{g}_k)$ (which coincides with $W^{+,i}(h_k)$ outside of $\tilde{U}_P$).}
\label{f.conjugation by Phi}
\end{figure}
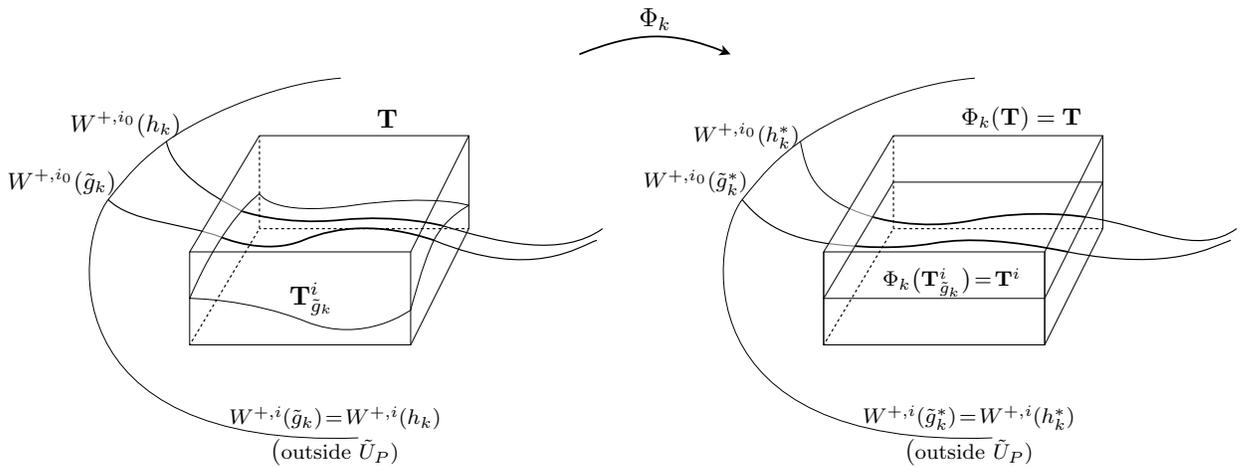

This lemma is proved at the end of this section. Define the two sequences 
\begin{align*}
\tilde{g}_k^*&=\Phi_k\circ \tilde{g}_k\circ \Phi_k^{-1},\\
h_k^*&=\Phi_k\circ h_k\circ \Phi_k^{-1}.
\end{align*}
Since $\Phi_k=\Id$ by restriction to $\tilde{U}_P$, those are two sequences in $\Diff^r_P(M)$  that tend to $f$, and 
\begin{align}
\tilde{g}_k^{*\pm 1}&=h_k^{*\pm 1} \quad\mbox{ outside $\tilde{U}_P$.}\label{e.egalitezef}
\end{align}
On the other hand, 
the compact sets 
\begin{align*}
W^+(\tilde{g}_k^*)&=\Phi_k\bigl[W^+(\tilde{g}_k)\bigr]\\
W^+(h_k^*)&=\Phi_k\bigl[W^+(h_k)\bigr]
\end{align*}
are local strong stable manifolds for $\tilde{g}_k^*$ and $h_k^*$, respectively. 
Since $W^{ss,i}(\tilde{g}_k^*/h_k^*)=\Phi_k\bigl[W^{ss,i}(\tilde{g}_k/h_k)\bigr]$, we get that, for all $i\in I$,
\begin{align}
W^{+,i}(\tilde{g}_k^*)&\overset{\mbox{\SMALL def}}{=}W^{+}(\tilde{g}_k^*)\cap W^{ss,i}(\tilde{g}_k^*)= \Phi_k\bigl[ W^{+,i}(\tilde{g}_k)\bigr]\label{e.1}\\
W^{+,i}(h_k^*)&\overset{\mbox{\SMALL def}}{=}W^{+}(h_k^*)\cap W^{ss,i}(h_k^*)= \Phi_k\bigl[ W^{+,i}(h_k)\bigr].\label{e.2}
\end{align}
This, with the facts that $\Phi_k$ fixes $M\setminus \tilde{U}_P$ and $W^{+,i}(h_k)\setminus  \tilde{U}_P=W^{+,i}(\tilde{g}_k)\setminus  \tilde{U}_P$, for all $i\in I^*$
imply
\begin{align}
W^{+,i}(h_k^*)\setminus  \tilde{U}_P=W^{+,i}(\tilde{g}_k^*)\setminus  \tilde{U}_P,\quad \mbox{  for all $i\in I^*$.}\label{e.outsideU}
\end{align}
We have in particular $W^{+,i}(\tilde{g}_k^*)\cap \mathbf{T}=W^{+,i}(h_k^*)\cap \mathbf{T}$, since $\tilde{U}_P$ does not intersect $\mathbf{T}$.
Finally, as $\Phi_k(\mathbf{T})=\mathbf{T}$, we get 
\begin{align}
W^{+,i}(\tilde{g}_k^*)\cap \mathbf{T}&=\Phi_k\bigl[W^{+,i}(\tilde{g}_k)\bigr]\cap \Phi_k(\mathbf{T})\nonumber\\
&=\Phi_k\bigl[W^{+,i}(\tilde{g}_k)\cap \mathbf{T}\bigr]\nonumber\\
&=\Phi_k\bigl(\mathbf{T}_{\tilde{g}_k}^i\bigr)\nonumber\\
W^{+,i}(\tilde{g}_k^*)\cap  \mathbf{T}&=W^{+,i}(h_k^*)\cap \mathbf{T}=\mathbf{T}^i&\mbox{for all $i\in I^*$} \label{e.qonqornqov}.
\end{align}

%
%

\begin{proof}[Proof of \cref{l.sequphi_k}]
We actually show the following:

\begin{claim} There is a sequence of sequences $\bigl\{{(\Phi^{j}_k)}_{k\in \NN}\bigr\}_{j \in I^*\cup\{d\} }$ such that it holds, for all $j\in I^*\cup\{d\}$:
$$\Phi^{j}_k\xrightarrow{\;\; C^r\;\;}\Id_M$$ is a sequence of diffeomorphisms of $M$ such that, for large $k$, on has
\begin{itemize}
\item $\Phi^{j}_k(\mathbf{T})=\mathbf{T}$,
\item $\Phi^{j}_k(\mathbf{T}_{\tilde{g}_k}^i)=\mathbf{T}^i, \mbox{ for all $i\in I^*\cup\{d\}$, $i\geq j$.}$ (if $d\notin I^*$, then let $\mathbf{T}_{\tilde{g}_k}^d=\mathbf{T}^d$) 
\item $\Phi^{j}_k=\Id_M$ by restriction to $\tilde{U}_P$. 
\end{itemize}
\end{claim}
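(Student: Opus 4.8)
The plan is to prove the claim by finite downward induction on $j\in I^*\cup\{d\}$, peeling off one index at a time; \cref{l.sequphi_k} then follows by setting $\Phi_k=\Phi^{j_0}_k$ with $j_0=\min(I^*\cup\{d\})$, since for that value of $j$ the second bullet of the claim gives $\Phi^{j_0}_k(\mathbf{T}^i_{\tilde{g}_k})=\mathbf{T}^i$ for all $i\in I^*\cup\{d\}$, in particular for all $i\in I^*$. For the base case $j=\max(I^*\cup\{d\})=d$ one has $\mathbf{T}^d_{\tilde{g}_k}=\mathbf{T}^d$: this holds by convention when $d\notin I^*$, and when $d\in I^*$ it holds because $[-1,1]^{d-d}=\{0\}$, so that \cref{l.before and after} (applied to $\xi_k=\tilde{g}_k$, which by \cref{s.firstpert} converges to $f$ with $W^+(\tilde{g}_k)\to W^+(f)$) forces the graph of $F^d_{\tilde{g}_k}$ to fill $\mathbf{T}^d$. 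Thus $\Phi^d_k=\Id_M$ works.

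For the inductive step, suppose $(\Phi^{j'}_k)_k$ has been built, where $j'$ is the least element of $I^*\cup\{d\}$ with $j'>j$; I will produce $\Phi^j_k=\Psi^j_k\circ\Phi^{j'}_k$. By the induction hypothesis $\Phi^{j'}_k(\mathbf{T}^i_{\tilde{g}_k})=\mathbf{T}^i$ for all $i\ge j'$ in $I^*\cup\{d\}$, so in particular $\Phi^{j'}_k(\mathbf{T}^{j'}_{\tilde{g}_k})=\mathbf{T}^{j'}$; and since $W^{+,j}(\tilde{g}_k)\subset W^{+,j'}(\tilde{g}_k)$ and $\Phi^{j'}_k\to\Id_M$, the set $\Phi^{j'}_k(\mathbf{T}^j_{\tilde{g}_k})$ is a $j$-dimensional $C^r$-submanifold of $\mathbf{T}^{j'}$ converging $C^r$ to $\mathbf{T}^j$ (using $\mathbf{T}^j_{\tilde{g}_k}\to\mathbf{T}^j$ from \cref{l.before and after}). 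Write the box in coordinates $\mathbf{T}=\mathbf{A}\times[-1,1]^{j-i_0}\times[-1,1]^{j'-j}\times[-1,1]^{d-j'}$, with points $(x,v,w)$, so that $\mathbf{T}^{j'}=\{w=0\}$ and $\mathbf{T}^j=\{v=0,\ w=0\}$. An elementary topology lemma in the spirit of \cref{l.topology}, applied inside $\mathbf{T}^{j'}$, shows that for large $k$ the set $\Phi^{j'}_k(\mathbf{T}^j_{\tilde{g}_k})$ is the graph $\{v=\phi_k(x)\}$ of a $C^r$-map $\phi_k$ with $\phi_k\to0$. Now fix a bump function $\beta$ on $[-1,1]^{j'-j}$ equal to $1$ near $0$ and to $0$ near the boundary, extend $\phi_k$ $C^r$-smoothly to a neighborhood of its domain, and set $\Psi^j_k(x,v,w)=\bigl(x,\ v-\beta(v)\phi_k(x),\ w\bigr)$ on a neighborhood $\mathbf{T}'\supset\mathbf{T}$ chosen disjoint from $\tilde{U}_P$; for large $k$ this is a $C^r$-diffeomorphism $C^r$-close to the identity. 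It preserves $\mathbf{T}$; being the identity along the $w$-directions it restricts to a diffeomorphism of each box $\mathbf{T}^i$, $i\ge j'$; and since $\beta\equiv1$ near $0$ and $\|\phi_k\|\to0$ it carries $\{v=\phi_k(x)\}$ exactly onto $\mathbf{T}^j$. Cutting $\Psi^j_k$ off outside $\mathbf{T}'$ by the usual near-identity procedure ($y\mapsto y+\rho(y)(\Psi^j_k(y)-y)$ in charts, with $\rho\equiv1$ on $\mathbf{T}$ and $\rho$ supported in $\mathbf{T}'$) turns it into a diffeomorphism of $M$ with $\Psi^j_k\to\Id_M$, $\Psi^j_k(\mathbf{T})=\mathbf{T}$ and $\Psi^j_k=\Id_M$ on $\tilde{U}_P$. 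Then $\Phi^j_k=\Psi^j_k\circ\Phi^{j'}_k$ satisfies the three properties of the claim: the first and third are immediate, and for the second $\Phi^j_k(\mathbf{T}^i_{\tilde{g}_k})=\Psi^j_k(\mathbf{T}^i)=\mathbf{T}^i$ for $i\ge j'$, while $\Phi^j_k(\mathbf{T}^j_{\tilde{g}_k})=\Psi^j_k\bigl(\Phi^{j'}_k(\mathbf{T}^j_{\tilde{g}_k})\bigr)=\mathbf{T}^j$. This closes the induction.

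The step I expect to be the main obstacle is the one highlighted inside the inductive step: straightening the new layer $\mathbf{T}^j_{\tilde{g}_k}$ must not disturb the layers $\mathbf{T}^i$, $i\ge j'$, that were straightened at earlier stages, and it must avoid $\tilde{U}_P$ while still defining a global diffeomorphism. The product form of the shear $\Psi^j_k$ — trivial in the $[-1,1]^{d-j'}$-directions — is precisely what reconciles the first two requirements, and the near-identity cutoff handles the third. The remaining ingredients (the graph representations supplied by \cref{l.before and after}, bump-function shears, and globalizing near-identity maps) are routine.
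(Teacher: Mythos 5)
Your proposal is correct and follows essentially the same route as the paper: downward induction over $I^*\cup\{d\}$ starting from $\Phi^d_k=\Id_M$, straightening the next layer via the graph representation supplied by \cref{l.topology} (the key point, which you also use, being that the new layer sits inside the already-straightened $\mathbf{T}^{j'}$ because $\Phi^{j'}_k(\mathbf{T})=\mathbf{T}$ and $W^{+,j}\subset W^{+,j'}$), and then applying a fibered map trivial in the directions $\geq j'$, cut off inside the box and away from $\tilde{U}_P$. Your explicit bump-function shear $v\mapsto v-\beta(v)\phi_k(x)$ is just a repackaging of the paper's fiberwise translation on the $\RR^{j'-j}$-cylinder followed by a partition-of-unity cutoff, so the two arguments coincide in substance.
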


\begin{proof}[Proof of the claim:] We build the sequences $\Phi^{j}_k$ by induction on $j \in I^*\cup\{d\}$.
Take $\Phi^d_k=\Id_M$ to initiate the induction: if $d\in I^*$, then by footnote~\ref{footn.topology}, we have $\mathbf{T}_{\tilde{g}_k}^d=\mathbf{T}^d$ for large $k$, and if not, this is by definition. 
\bigskip

Let $j_0 \in I^*\cup\{d\}$, with $j_0>\min(I^*)$. Let $j_1\in I^*$ be the greatest integer such that $j_1<j_0$. 
\medskip

We now build the sequence $\Phi^{j_1}_k$ from $\Phi^{j_0}_k$.
Let $\gamma_k=\Phi^{j_0}_k\circ \tilde{g}_k\circ {\Phi^{j_0}_k}^{-1}$. By Lemma~\ref{l.topology},  for large $k$, $\mathbf{T}_{\gamma_k}^{j_1}=\bigl(W^{+,j_1}(\gamma_k)\bigr)\cap \mathbf{T}=\Phi^{j_0}_k\bigl(W^{+,j_1}(\tilde{g}_k)\bigr)\cap \mathbf{T}$ is the graph of a $C^r$ map
$$F_{\gamma_k}^{j_1}\colon \mathbf{A}\times [-1,1]^{j_1-i_0}\to [-1,1]^{d-j_1},$$
and the sequence $\tilde{F}_{\gamma_k}^{j_1}$ $C^r$-converges to $0$.
Since $\Phi_k^{j_0}$ preserves $\mathbf{T}$,  we have $\Phi_k^{j_0}(\mathbf{T}_k^{j_1})=\mathbf{T}_{\gamma_k}^{j_1}$. 
By assumption, $\Phi^{j_0}_k(\mathbf{T}_k^{j_0})=\mathbf{T}^{j_0}$, hence the map $F_{\gamma_k}^{j_1}$
takes its values in $[-1,1]^{j_0-j_1}\times\{0\}^{d-j_0}$.

Let $\phi_k$ be the diffeomorphism of the cylinder 
$$\mathbf{A}\times [-1,1]^{j_1-i_0}\times \RR^{j_0-j_1}\times[-1,1]^{d-j_0}$$ that leaves invariant each fiber $$\{\alpha\}\times \{(x_{i_0+1},\ldots,x_{j_1})\}\times \RR^{j_0-j_1}\times \{(x_{j_0+1},\ldots, x_d)\},$$
and that is the translation by the vector $\bigl[0,...,0,\tilde{F}_k^{j_1}(\alpha,x_{i_0+1},\ldots,x_{j_1})\bigr]$ by restriction to it.
Then $\phi_k$ is a fibered diffeomorphism that sends $\mathbf{T}_{\gamma_k}^{j_1}$ on ${\mathbf{T}}^{j_1}$. By a partition of unity, one easily builds from $\phi_k$ a sequence $$\psi_k\xrightarrow{\;\; C^r\;\;}\Id_M$$ of diffeomorphisms of $M$ such that, for large $k$, 
\begin{itemize}
\item $\psi_k$ restricts to a fibered diffeomorphism of $\mathbf{T}$, leaving invariant each fiber $$\{\alpha\}\times \{(x_{i_0+1},\ldots,x_{j_1})\}\times [-1,1]^{j_0-j_1}\times \{(x_{j_0+1},\ldots, x_d)\},$$
in particular, $\psi_k$ leaves $\mathbf{T}^{i}$ invariant for any $i\geq j_0$,
\item the diffeomorphism $\psi_k$ coincides with $\phi_k$ by restriction to 
$$\mathbf{A}\times [-1,1]^{j_1}\times \left[-\frac{1}{2},\frac{1}{2}\right]^{j_0-j_1}\times [-1,1]^{d-j_0},$$
in particular it sends $\mathbf{T}_{\gamma_k}^{j_1}$ on $\mathbf{T}^{j_1}$ for large $k$,
\item $\psi_k(\mathbf{T})=\mathbf{T}$,
\item $\psi_k=\Id$ by restriction to $\tilde{U}_P$ (remember that $\tilde{U}_P$ is closed and does not intersect $\mathbf{T}$).
\end{itemize}
The sequence of the diffeomorphisms $\Phi^{j_1}_k=\psi_k\circ \Phi^{j_0}_k$ tends to $\Id_M$ and satisfies the following:
\begin{itemize}
\item $\Phi^{j_1}_k(\mathbf{T})=\mathbf{T}$,
\item $\Phi^{j_1}_k(\mathbf{T}_{\tilde{g}_k}^i)=\mathbf{T}^i, \mbox{ for all $i\in I^*\cup \{d\}$, $i\geq j_1$.}$
\item $\Phi^{j_1}_k=\Id_M$ by restriction to $\tilde{U}_P$. 
\end{itemize}
This ends the proof by induction of the claim.
\end{proof}
The sequence $\Phi_k=\Phi^{m}_k$, where $m$ is the least element of $I^*\cup\{d\}$, then concludes the proof of \cref{l.sequphi_k}.
\end{proof}

\subsubsection{End of the proof that $\cP_{I^*,J}$ implies $\cP_{I,J}$}\label{s.endofproof}

We assume in this section that we know how to push the local $i_0$-strong stable manifold of $\tilde{g}_k^*$ to coincide with that of $h_k^*$ before $\mathbf{T}_{\{0\}}$, without changing the strong stable manifolds of other dimensions. Precisely, we assume the following:

\begin{proposition}\label{p.starsequence}
There exists a sequence $$f^*_k\xrightarrow{\;\; C^r\;\;}f$$ of $\Diff^r_P(M)$ and a sequence of local strong stable manifolds $W^+(f_k^*)$ converging to $W^+(f)$ such that, for large $k$:
\begin{itemize}
\item  $f_k^{*\pm 1}=\tilde{g}_k^{*\pm 1}$ outside $\mathbf{T}$,
\item $W^{+,i_0}(f_k^*)=W^{+,i_0}(h_k^*)$ before $\mathbf{T}_{\{0\}}$, that is, $W^{+,i_0}_{<0}(f_k^*)=W^{+,i_0}_{<0}(h_k^*)$, with the notations of \cref{l.before and after}.
\item for all $i\in I^*$, $W^{+,i}(f_k^*)=W^{+,i}(\tilde{g}_k^*)$.
\end{itemize}
\end{proposition}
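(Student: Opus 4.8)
The plan is to work entirely in the straightened $\mathbf{T}$-coordinates, where, by \cref{e.qonqornqov}, for every $i\in I^*$ the manifolds $W^{+,i}(\tilde g_k^*)$ and $W^{+,i}(h_k^*)$ both meet $\mathbf{T}$ exactly in the flat box $\mathbf{T}^i=\SS^{i_0-1}\times[-1,3]\times[-1,1]^{i-i_0}\times\{0\}^{d-i}$. Thus inside $\mathbf{T}$ the only discrepancy between $\tilde g_k^*$ and $h_k^*$ is the position of the $i_0$-strong stable manifold $W^{+,i_0}(\tilde g_k^*)$ versus $W^{+,i_0}(h_k^*)$ within the common box $\mathbf{T}^{i_1}$ (where $i_1$ is the least element of $I^*$, or $d$ if $I^*=\emptyset$); both $i_0$-manifolds are graphs over a neighbourhood of the annulus $\mathbf{A}$ inside $\mathbf{T}^{i_1}$, of $C^r$-maps converging uniformly to $0$, by \cref{l.before and after}(\ref{l.after and before 1}) applied to $\xi_k=\tilde g_k^*$ and $\xi_k=h_k^*$. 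The diffeomorphism $f_k^*$ will be obtained by composing $\tilde g_k^*$ with a fibered "pushing" diffeomorphism $\theta_k$ of $M$, supported in $\mathbf{T}$, that moves $W^{+,i_0}(\tilde g_k^*)$ onto $W^{+,i_0}(h_k^*)$ in the region corresponding to abscissae $<0$ in $\mathbf{A}$, while leaving every $\mathbf{T}^i$, $i\in I^*$, globally invariant.

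**Key steps.**

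First I would set $f_k^*=\tilde g_k^*\circ\theta_k$ for a carefully chosen $\theta_k$. The map $\theta_k$ should (a) preserve $\mathbf{T}$ and equal $\Id$ near $\partial\mathbf{T}$ and on $\tilde U_P$, so that $f_k^{*\pm1}=\tilde g_k^{*\pm1}$ outside $\mathbf{T}$ and $f_k^*\to f$ in $C^r$ since $\theta_k\to\Id_M$; (b) preserve each box $\mathbf{T}^i$, $i\in I^*$, hence fix each stable manifold $W^{ss,i}$ semi-locally and so give $W^{+,i}(f_k^*)=W^{+,i}(\tilde g_k^*)$ for $i\in I^*$; and (c) send the portion of $W^{+,i_0}(\tilde g_k^*)$ lying over abscissae in, say, $[-1,\tfrac12]$ of $\mathbf{A}$ onto the corresponding portion of $W^{+,i_0}(h_k^*)$, while being the identity over abscissae $\geq 1$. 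To build $\theta_k$: inside $\mathbf{T}^{i_1}\cong \mathbf{A}\times[-1,1]^{i_1-i_0}$ both $i_0$-graphs are governed by $C^r$-small maps $G_k^{\tilde g},G_k^{h}$ into $[-1,1]^{i_1-i_0}$; take the fibered translation by $\bigl(G_k^{h}-G_k^{\tilde g}\bigr)$ along the $[-1,1]^{i_1-i_0}$ directions, cut off smoothly in the $\mathbf{A}$-abscissa (identity for abscissa $\geq 1$, full translation for abscissa $\leq\tfrac12$) and in the $[-1,1]^{i_1-i_0}$ fibre coordinate (identity near the boundary), then extend by a partition of unity to a diffeomorphism of $M$ that is $\Id$ off $\mathbf{T}$. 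One checks $\theta_k\xrightarrow{C^r}\Id_M$ because $G_k^{h}-G_k^{\tilde g}\to 0$ in $C^r$. Next I would verify that $W^{+}(f_k^*):=\theta_k^{-1}\bigl[W^{+}(\tilde g_k^*)\bigr]$ — equivalently, the set obtained by keeping $W^+(\tilde g_k^*)$ outside $\mathbf{T}$ and replacing its $i_0$-part inside $\mathbf{T}$ by the pushed graph, then re-saturating — is a genuine local strong stable manifold for $f_k^*$: the crucial point is that after the push the $i_0$-graph is still $f_k^*$-invariant over the relevant region, which holds because $f_k^*=\tilde g_k^*$ off $\mathbf{T}$ and, using that over abscissae $[1,3]$ of $\mathbf A$ we have $\theta_k=\Id$ and the two $i_0$-manifolds need not yet agree there, the re-saturation by forward iteration of the "after-$\mathbf T_{\{0\}}$" piece produces a manifold that still contains $P$ in its interior and is strictly invariant (here one uses regularity of $W^+(f)$ and \cref{l.before and after}(\ref{l.after and before 2})). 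Finally, since over abscissae $<0$ of $\mathbf A$ the pushed $i_0$-graph coincides with $W^{+,i_0}(h_k^*)$, and since $W^{+,i_0}(h_k^*)$ and $W^{+,i_0}(f_k^*)$ share the sphere $\mathbf A_{\{0\},\cdot}$ as the boundary of their "after" regions — which by \cref{l.before and after}(\ref{l.after and before 4}) lie inside $U_P$ — we get $W^{+,i_0}_{<0}(f_k^*)=W^{+,i_0}_{<0}(h_k^*)$ as required, while $W^{+,i}(f_k^*)=W^{+,i}(\tilde g_k^*)$ for $i\in I^*$ from step (b).

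**Main obstacle.**

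The hard part will be establishing that the pushed-and-re-saturated set $W^{+}(f_k^*)$ is actually a local strong stable manifold for $f_k^*$ with the right $i_0$-part, rather than merely a topological disk: one must reconcile the fact that the push only synchronises the two $i_0$-manifolds over abscissae $<0$ (leaving them possibly distinct over $[1,3]$, inside $\mathbf T$) with the requirement that the $i_0$-strong stable manifold be $f_k^*$-invariant and capture the correct semi-local piece. The resolution should go through the "before-/after-$\mathbf T_{\{0\}}$" dichotomy of \cref{l.before and after} together with \cref{r.beforeregion}: the after-region is strictly invariant and sits in $U_P$, so it is harmless; the before-region of $f_k^*$ is forced to equal that of $h_k^*$ by construction, and \cref{e.qeoier} then upgrades this to equality outside $U_P$. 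Keeping track of all invariance and convergence statements simultaneously for the whole flag $\{\mathbf T^i\}_{i\in I^*}$ is where the bookkeeping is delicate; this is precisely the content deferred to \cref{s.starsequence}.
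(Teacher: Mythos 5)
Your overall strategy (a fibered push supported in $\mathbf{T}$, preserving each $\mathbf{T}^i$ and $C^r$-converging to the identity) is in the spirit of the paper's construction, but the core step fails, and it fails exactly at the point you flag as the "main obstacle". Composing on one side, $f_k^*=\tilde g_k^*\circ\theta_k$, is not a conjugation, so invariant manifolds cannot be transported by $\theta_k^{\pm1}$: writing $W=W^{+,i_0}(\tilde g_k^*)$, one has $f_k^*\bigl(\theta_k^{-1}(W)\bigr)=\tilde g_k^*(W)$, and invariance of $\theta_k^{-1}(W)$ would require $\theta_k\bigl(\tilde g_k^*(W)\bigr)\subset W$; this fails on the support of the push, where $\theta_k$ moves points of the graph of $G_k=F^{i_0}_{\tilde g_k^*}$ onto the graph of $H_k=F^{i_0}_{h_k^*}$, which is not contained in $W$. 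There is also a direction inconsistency: $\theta_k^{-1}$ applied to the graph of $G_k$ yields the graph of $2G_k-H_k$, not of $H_k$, so your candidate set does not even have the announced $i_0$-part. The same computation shows the dynamics you define does not deliver the points of $W^{+,i_0}_{<0}(h_k^*)$ onto the $\tilde g_k^*$-manifold: for $y=(\alpha,H_k(\alpha))$ one has $\theta_k(y)=(\alpha,2H_k(\alpha)-G_k(\alpha))$, so $f_k^*(y)$ lies on neither manifold. Finally, supporting the push over abscissae $[-1,\tfrac12]$ modifies the map on the before-$\mathbf{T}_{\{0\}}$ region itself, which is precisely what must be left untouched so that the $f_k^*$-orbit of a point of $W^{+,i_0}_{<0}(h_k^*)$ agrees with its $h_k^*$-orbit until it reaches $\cD_{-1,h_k^*}$.

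The paper's mechanism is dynamical rather than a transport of manifolds: it sets $f_k^*=\chi_k\circ\tilde g_k^*$ with $\chi_k$ supported in $^{1/2}\mathbf{T}_{[0,4/3)}$ (abscissae $\geq 0$, so by \cref{l.iterates} the before-region and the forward iterates of $W^{+,i_0}_{\geq 1/2}(\tilde g_k^*)$ are untouched), translating the fibres by $G_k-H_k$ so that the images of the fundamental domain $\cD_{-1,h_k^*}$ are pushed onto the graph of $G_k$; since the cut-off near abscissa $0$ leaves the first image unaligned over $\mathbf{A}_{[0,a)}$, a second correction at the second iterate is needed (the maps $\hat\Phi_k$ and $\hat\Psi_k$ in the proof of \cref{l.chi}). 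After two iterates the orbit lies in the strictly invariant disk $W^{+,i_0}_{\geq 1/2}(\tilde g_k^*)$, whose forward $\tilde g_k^*$-iterates avoid the support, so $W^{+,i_0}_{<0}(h_k^*)$ sits inside the backward saturation $f_k^{*-n_m-2}\bigl[W^{+,i_0}_{\geq 1/2}(\tilde g_k^*)\bigr]$, an honest local $i_0$-strong stable manifold of $f_k^*$; one must then still identify its before-region with that of $h_k^*$ (via \cref{r.beforeregion} and Claim~\ref{c.aoeif}), define $W^+(f_k^*)$ separately in the cases $I^*=\emptyset$ and $I^*\neq\emptyset$, and use regularity of $W^+(f)$ to get $W^{+,i}(f_k^*)=W^{+,i}(\tilde g_k^*)$ for $i\in I^*$. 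Your "re-saturation by forward iteration" gestures towards this, but as written the invariance of your pushed set is not established, and indeed it is false for the map you define; the proposal would need to be rebuilt along the lines above.
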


The proof of this proposition is postponed until the next section. It terminates the proof that $\cP_{I^*,J}$ implies $\cP_{I,J}$:  consider indeed the sequence 
$$f_k=\Phi_k^{-1}\circ f_k^*\circ \Phi_k.$$
Note first that since $\Phi_k$ converges to $\Id_M$ in $\Diff^r_P(M)$, $f_k$ converges to $f$ in $\Diff^r_P(M)$.  We have $f_k^{*\pm 1}=\tilde{g}_k^{*\pm 1}$ outside $\mathbf{T}$. Thus, for large $k$:
\begin{itemize}
\item Take an neighborhood $V_P\subset \tilde{V}_P$ of $P$ whose closure does not intersect $\mathbf{T}$. Since we built $\tilde{g}_k$ so that $g_k^{\pm 1}=\tilde{g}_k^{\pm 1}$ on $\tilde{V}_P$ we get  
\begin{align*}f_k^{\pm 1}=g_k^{\pm 1} \quad \mbox{ on $V_P$}.
\end{align*}
\item  Since $\mathbf{T}\subset U_P$, we have $f_k^{\pm 1}=\tilde{g}_k^{\pm 1}$ outside of $U_P$. As $\tilde{U}_P\subset U_P$, and $\tilde{g}_k^{\pm 1}=h_k^{\pm 1}$ outside $\tilde{U}_P$ for large $k$, we have 
\begin{align*}f_k^{\pm 1}=h_k^{\pm 1} \quad \mbox{ outside $U_P$}.
\end{align*}
\end{itemize}
 We are now left to build local stable/unstable manifolds of $f_k$ such that the corresponding strong stable/unstable manifolds of $f_k$ coincide locally with those of $h_k$ outside of $U_P$. Define 
\begin{align*}
W^-(f_k)&=W^-(\tilde{g}_k)\\
W^+(f_k)&=\Phi_k^{-1}\bigl[W^+(f_k^*)\bigr].
\end{align*}

\begin{proof}[The unstable manifolds case :] Recall that the compact sets $\mathbf{T}$ and $W^-(f)$ do not intersect and $f_k^{-1}=\tilde{g}_k^{-1}$ outside $\mathbf{T}$. Thus, for large $k$, $f_k^{-1}=\tilde{g}_k^{-1}$ on a neighborhood of $W^-(\tilde{g}_k)$. In particular, $W^-(f_k)$ is a sequence of local strong unstable manifolds for $f_k$, and $W^{-,j}(f_k)=W^{-,j}(\tilde{g}_k)$, for all $j\in J$. 
By our construction of $\tilde{g}_k$ and $W^-(\tilde{g}_k)$, for all $j\in J$ it holds:
\begin{align*}
W^{-,j}(f_k)\setminus U_P=W^{-,j}(h_k)\setminus U_P.
\end{align*}
\end{proof}

\begin{proof}[The stable manifolds case :]
The same way as \cref{e.1} and~\cref{e.2}, we have, for all $i\in I$,
\begin{align}
W^{+,i}(f_k)&\overset{\mbox{\SMALL def}}{=}W^{+}(f_k)\cap W^{ss,i}(f_k)= \Phi_k^{-1}\bigl[ W^{+,i}(f_k^*)\bigr].\label{e.defile}
\end{align}

In particular, by the conclusion of \cref{p.starsequence}, for all $k\geq k_0$, for $i\in I^*$, 
\begin{align*}
W^{+,i}(f_k)\setminus \tilde{U}_P&=\Phi_k^{-1}\bigl[ W^{+,i}(\tilde{g}_k^*)\bigr]\setminus \tilde{U}_P\\
&=\Phi_k^{-1}\bigl[ W^{+,i}(\tilde{g}_k^*)\setminus \tilde{U}_P\bigr], \quad \mbox{ since $\Phi_k^{-1}(\tilde{U}_P)=\tilde{U}_P$}\\
&=\Phi_k^{-1}\bigl[ W^{+,i}(\tilde{h}_k^*)\setminus \tilde{U}_P\bigr], \quad \mbox{ by \cref{e.outsideU}}.
\end{align*}
From $\tilde{U}_P\subset U_P$, we finally have, for all $k\geq k_0$, for all $i\in I^*$:
$$W^{+,i}(f_k)\setminus U_P=W^{+,i}(h_k)\setminus U_P.$$

We apply \cref{l.before and after} to $\xi_k=f_k$ and follow the corresponding notations. We have $\Phi_k=\Id$ by restriction to $\mathbf{T}_{\{0\}}$, so that \cref{e.defile} gives
\begin{align*}
\mathbf{A}_{\{0\},f_k}&=W^{+,i_0}(f_k)\cap\mathbf{T}_{\{0\}}\\
&=W^{+,i_0}(f^*_k)\cap\mathbf{T}_{\{0\}}\\
&= \mathbf{A}_{\{0\},f_k^*}
\end{align*}
thus
$$\mathbf{A}_{\{0\},f_k}=\Phi_k^{-1}(\mathbf{A}_{\{0\},f_k^*}).$$
By \cref{l.before and after} \cref{l.after and before 2}  and by the fact that $W^{ss,i_0}(f_k)= \Phi_k^{-1}\bigl[ W^{ss,i_0}(f_k^*)\bigr]$, we get $W^{+,i_0}_{\geq 0}(f_k)= \Phi_k^{-1}\bigl[ W^{+,i_0}_{\geq 0}(f_k^*)\bigr]$. As a consequence, $W^{+,i_0}_{< 0}(f_k)= \Phi_k^{-1}\bigl[ W^{+,i_0}_{<0}(f_k^*)\bigr]$.
The same equalities hold replacing $f_k$ by $h_k$ and $f_k^*$ by $h_k^*$.
Therefore, the equality $W^{+,i_0}_{<0}(f_k^*)=W^{+,i_0}_{<0}(h_k^*)$ implies 
\begin{align*}
W^{+,i_0}_{<0}(f_k)=W^{+,i_0}_{<0}(h_k)
\end{align*}
With \cref{l.after and before 4} of \cref{l.before and after}, for large $k$, we get
\begin{align*}
W^{+,i_0}(f_k)\setminus U_P=W^{+,i_0}(h_k)\setminus U_P.
\end{align*}
\end{proof}

Therefore, all the conclusions of Proposition $\cP_{I,J}$ are satisfied by the sequences $f_k$, $W^+(f_k)$ and $W^-(f_k)$. This ends the proof that $\cP_{I^*,J}$ implies $\cP_{I,J}$.

\subsection{Proof of \cref{p.starsequence}}\label{s.starsequence} Let us first introduce a few notations and state two lemmas.

Putting ourselves under the assumptions and notations of Lemma~\ref{l.before and after}, for large $k$, for all $\ell \in \ZZ$, the spheres $\xi_k^\ell\bigl(\mathbf{A}_{\{0\}, \xi_k}\bigl)$ and $\xi_k^{\ell+1}\bigl(\mathbf{A}_{\{0\}, \xi_k}\bigl)$ do not intersect and delimit a fundamental domain $\cD_{\ell,\xi_k}$ diffeomorphic to $\SS^{i_0-1}\times [0,1)$ of $W^{ss,i_0}(\xi_k)$, where $\SS^{i_0-1}\times \{0\}$ corresponds to $\xi_k^\ell\bigl(\mathbf{A}_{\{0\}, \xi_k}\bigl)$.  
We choose it so that it contains the first sphere and does not intersect the second one. 

For all $\ell \in \ZZ$, $\cD_{\ell,f}$ is the fundamental domain of $W^{ss,i_0}(\xi_k)$ delimited by $f^\ell\bigl(\mathbf{A}_{\{0\}}\bigr)$
and $f^{\ell +1}\bigl(\mathbf{A}_{\{0\}}\bigr)$.

\begin{remark}\label{r.conv}
For all $\ell\in \ZZ$, the sequence of manifolds $\cD_{\ell,\xi_k}$ converges to $\cD_{\ell,f}$, for the $C^r$-topology.
\end{remark} 

Recall that, for all $-1\leq a\leq b\leq 3$,  $\mathbf{A}_{[a,b)}$ is the subset $\SS^{i_0-1}\times [a,b)\subset \mathbf{A}$ and $$\mathbf{T}_{[a,b)}=\mathbf{A}_{[a,b)}\times[-1,1]^{d-i_0}.$$ 

\begin{figure}[hbt]
\ifx\JPicScale\undefined\def\JPicScale{1}\fi
\psset{linewidth=0.01,dotsep=1,hatchwidth=0.3,hatchsep=1.5,shadowsize=0,dimen=middle}
\psset{dotsize=0.06 2.5,dotscale=1 1,fillcolor=black}
\psset{arrowsize=0.1 2,arrowlength=1,arrowinset=0.25,tbarsize=0.7 5,bracketlength=0.15,rbracketlength=0.15}
\psset{xunit=.5pt,yunit=.5pt,runit=.5pt}
\begin{pspicture}(150,440)(620,920)


{\pscustom[linewidth=0.5,linecolor=black]
{\newpath
\moveto(240,880)
\lineto(560,880)
\lineto(560,720)
\lineto(240,720)
\closepath}}

{\pscustom[linewidth=1,linecolor=black,dash=3 4,linestyle=dashed]
{\newpath
\moveto(240,800)
\lineto(560,800)}}

{\pscustom[linewidth=1,linecolor=black]
{\newpath
\moveto(320,880)
\lineto(320,720)}}

{\pscustom[linewidth=0.5,linecolor=black,dash=2 2,linestyle=dashed]
{\newpath
\moveto(360,880)
\lineto(360,720)}}

{\pscustom[linewidth=0.5,linecolor=black]
{\newpath
\moveto(20,850)
\curveto(70,865)(190,870)(240,865)
\moveto(240,865)
\curveto(291,859)(275,860)(320,850)
\moveto(320,850)
\curveto(365,840)(374,841)(400,845)
\moveto(400,845)
\curveto(417.5,847)(430,850)(480,855)
\moveto(480,855)
\curveto(550,860)(602.5,845)(645,835)}}

{\pscustom[linewidth=1.5,linecolor=black]
{\newpath
\moveto(20,850)
\curveto(70,865)(190,870)(240,865)
\moveto(240,865)
\curveto(291,859)(275,860)(320,850)}}
\psline{-*}(230,866)(230,866)
\psline{-*}(320,850)(320,850)

{\pscustom[linewidth=0.5,linecolor=black]
{\newpath
\moveto(40,770)
\curveto(120,750)(210,825)(320,820)
\moveto(320,820)
\curveto(330,820)(345,818)(360,815)
\moveto(360,815)
\curveto(430,790)(480,730)(630,780)}}

{\pscustom[linewidth=1.5,linecolor=black]
{\newpath
\moveto(360,815)
\curveto(430,790)(480,730)(630,780)}}
\psline{-*}(360,815)(360,815)


{\pscustom[linewidth=0.5,linecolor=black]
{\newpath
\moveto(240,640)
\lineto(560,640)
\lineto(560,480)
\lineto(240,480)
\closepath}}

{\pscustom[linewidth=1,linecolor=black,dash=3 4,linestyle=dashed]
{\newpath
\moveto(240,560)
\lineto(560,560)}}

{\pscustom[linewidth=1,linecolor=black]
{\newpath
\moveto(320,640)
\lineto(320,480)}}


{\pscustom[linewidth=0.5,linecolor=black,dash=2 4,linestyle=dashed]
{\newpath
\moveto(320,610)
\curveto(365,600)(374,601)(400,605)
\moveto(400,605)
\curveto(417.5,607)(430,610)(480,615)
\moveto(480,615)
\curveto(550,620)(602.5,605)(645,595)}}

{\pscustom[linewidth=0.5,linecolor=black,dash=2 4,linestyle=dashed]
{\newpath
\moveto(40,530)
\curveto(120,510)(210,585)(320,580)
\moveto(320,580)
\curveto(330,580)(345,578)(360,575)}}

{\pscustom[linewidth=1,linecolor=black]
{\newpath
\moveto(20,610)
\curveto(70,625)(190,630)(240,625)
\moveto(240,625)
\curveto(291,619)(280,620)(320,610)
\moveto(320,610)
\curveto(333,607)(336,599)(341,589)
\curveto(346,578)(360,575)(360,575)
\curveto(430,550)(480,490)(630,540)}}

{\pscustom[linewidth=1,linecolor=black]
{\newpath
\moveto(427,640)
\lineto(427,480)}}

\psline{->}(380,710)(380,650)

\rput(30,873){\small $W^{+,i_0}_{< 0}(h_k^*)$}
\rput(620,756){\small $W^{+,i_0}_{\geq 1/2}(\tilde{g}_k^*)$}
\rput(320,893){\small $\mathbf{T}\!_{\{\!0\!\}}$}
\rput(360,893){\small $\mathbf{T}\!_{\{\!1\!/\!2\!\}}$}
\rput(275,850){\small $\cD_{\!-\!1\!,h\!_k^*}$}

\rput(30,633){\small $W^{+,i_0}(f_k^*)$}
\rput(320,463){\small $\mathbf{T}\!_{\{\!0\!\}}$}
\rput(427,463){\small $\mathbf{T}\!_{\{\!4/3\!\}}$}
\rput(435,680){\small $f_k^*\!=\!\chi_k\circ \tilde{g}_k^*$}
\rput(130,485){$h_k^*=\tilde{g}_k^*$}
\rput(130,515){outside $\tilde{U}_P$}

\end{pspicture}
 \caption{Idea of the proof of \cref{p.starsequence}.}
\bigskip
\label{p.ideaof}
\small 
\centering\vspace*{\fill}
\begin{minipage}{10cm}
Composing $\tilde{g}_k^*$ by a small diffeomorphism $\chi_k$ supported in $\mathbf{T}_{[0,4/3)}$, we will make the second iterate of $\cD_{-1,h_k^*}$ fall in $W^{+,i_0}_{\geq 1/2}(\tilde{g}_k^*)$. The local $i_0$-strong stable manifold $W^{+,i_0}(f_k^*)$ thus obtained will then coincide with that of $h_k^*$ before $\mathbf{T}_{\{0\}}$.
\end{minipage}
\vspace*{\fill}
\end{figure}

%
 
 The same way as we defined $W^{+,i_0}_{\geq 0}(f)$, let $W^{+,i_0}_{\geq 1/2}(f)\subset W^{+,i_0}(f)$ be the closed $i_0$-disk whose boundary is the sphere $\mathbf{A}_{\{1/2\}}$. Note that it is strictly $f$ invariant, in particular it is a local strong stable manifold.

The following lemma is a continuation of \cref{l.before and after}.

\begin{lemma}\label{l.iterates}
There is $n_m\in \NN$ such that, for large $k$,
\begin{enumerate}
\item the sphere $\mathbf{A}_{\{1/2\}, \xi_k}=W^{+,i_0}(\xi_k)\cap \mathbf{T}_{\{1/2\}}$ delimits in $W^{+,i_0}(\xi_k)$ a strictly $\xi_k$-invariant disk $W^{+,i_0}_{\geq 1/2}(\xi_k)$,\label{I.1/2}
\item the sets $W^{+,i_0}_{<0}(\xi_k)$ and $\xi_k\bigl[W^{+,i_0}_{\geq 1/2}(\xi_k)\bigr]$ do not intersect $\mathbf{T}_{[0,4/3)}$,\label{I.notintersect}
\item for all $x\in W^{+,i_0}_{<0}(\xi_k)$, there is an integer $0\leq n_{x,k}< n_m$ such that 
\begin{align*}
\xi_k^\ell(x)&\in W^{+,i_0}_{<0}(\xi_k), \quad \mbox{ for all $0\leq \ell\leq n_{x,k}$},\\
\xi_k^{n_{x,k}}(x)&\in \cD_{-1,\xi_k}.
\end{align*}
 \label{l.iterates in <}
\end{enumerate}
 Moreover, the sequence of local strong stable manifolds $W^{+,i_0}_{\geq 1/2}(\xi_k)$ $C^r$-converges to $W^{+,i_0}_{\geq 1/2}(f)$.
 \end{lemma}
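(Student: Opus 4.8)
The plan is to derive \cref{l.iterates} from the already-established \cref{l.before and after} by essentially the same transversality and $C^r$-convergence arguments, applied now to the sphere $\mathbf{A}_{\{1/2\}}$ in place of $\mathbf{A}_{\{0\}}$, together with an ``escape'' argument for Item~\ref{l.iterates in <}.

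First I would prove Item~\ref{I.1/2}. Observe that $W^{+,i_0}(f)$ meets the strip $\mathbf{T}_{\{1/2\}}=\mathbf{A}_{\{1/2\}}\times[-1,1]^{d-i_0}$ transversally in the sphere $\mathbf{A}_{\{1/2\}}$, which is disjoint from $\partial W^{+,i_0}(f)$ and from $\partial\mathbf{T}_{\{1/2\}}$; this is exactly the setup of the argument for \cref{l.after and before 2} of \cref{l.before and after}. Hence, for large $k$, $W^{+,i_0}(\xi_k)$ also meets $\mathbf{T}_{\{1/2\}}$ transversally in a sphere $\mathbf{A}_{\{1/2\},\xi_k}$ converging $C^1$ to $\mathbf{A}_{\{1/2\}}$, and by \cref{l.ofijzeo} this sphere bounds a local $i_0$-strong stable manifold $W^{+,i_0}_{\geq 1/2}(\xi_k)\subset W^{+,i_0}(\xi_k)$, with $C^r$-convergence $W^{+,i_0}_{\geq 1/2}(\xi_k)\to W^{+,i_0}_{\geq 1/2}(f)$. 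Strict $\xi_k$-invariance for large $k$ follows from regularity of $W^+(f)$ exactly as in the proof of \cref{l.before and after}, since $f$ sends $W^{+,i_0}_{\geq 1/2}(f)$ into its interior (this uses that $f$ maps $\mathbf{A}_{\{1/2\}}$ strictly inside $W^{+,i_0}_{\geq 1/2}(f)$, as $f(\mathbf{A}_{[-1,3)})=\mathbf{A}_{[0,4)}$ shifts abscissae up by one). This also proves the final convergence statement.

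For Item~\ref{I.notintersect}: for $f$ itself, $W^{+,i_0}_{<0}(f)=\mathbf{A}_{<0}=W^{+,i_0}(f)\setminus\mathbf{A}_{\geq 0}$ has all abscissae $<0$, so it is disjoint from $\mathbf{T}_{[0,4/3)}=\mathbf{A}_{[0,4/3)}\times[-1,1]^{d-i_0}$; and $f\bigl[W^{+,i_0}_{\geq 1/2}(f)\bigr]=\mathbf{A}_{\geq 3/2}$ has all abscissae $\geq 3/2>4/3$, hence is also disjoint from $\mathbf{T}_{[0,4/3)}$. Both disjointnesses are between a compact set and a closed set with positive distance (inside the coordinate box $\mathbf{T}$, and automatically outside it); by the $C^r$-convergence $W^{+,i_0}_{<0}(\xi_k)\to W^{+,i_0}_{<0}(f)$ from \cref{l.before and after} and $\xi_k\bigl[W^{+,i_0}_{\geq 1/2}(\xi_k)\bigr]\to f\bigl[W^{+,i_0}_{\geq 1/2}(f)\bigr]$ from Item~\ref{I.1/2} together with $\xi_k\to f$, any adherence value of a sequence of points $x_k\in W^{+,i_0}_{<0}(\xi_k)\cap\mathbf{T}_{[0,4/3)}$ (resp. $x_k\in\xi_k[W^{+,i_0}_{\geq 1/2}(\xi_k)]\cap\mathbf{T}_{[0,4/3)}$) would lie in $W^{+,i_0}_{<0}(f)\cap\mathbf{T}_{[0,4/3)}=\emptyset$ (resp. $f[W^{+,i_0}_{\geq 1/2}(f)]\cap\mathbf{T}_{[0,4/3)}=\emptyset$), a contradiction. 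So for large $k$ both sets avoid $\mathbf{T}_{[0,4/3)}$.

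Finally, Item~\ref{l.iterates in <}: the before-region $W^{+,i_0}_{<0}(\xi_k)$ is a fundamental-domain-type neighborhood of $\partial W^{+,i_0}(\xi_k)$ in $W^{ss,i_0}(\xi_k)$ bounded by $\mathbf{A}_{\{0\},\xi_k}$, while $\cD_{-1,\xi_k}$ is the fundamental domain delimited by $\xi_k^{-1}\bigl(\mathbf{A}_{\{0\},\xi_k}\bigr)$ and $\mathbf{A}_{\{0\},\xi_k}$, containing the first sphere. By strict $\xi_k$-invariance of $W^{+,i_0}_{\geq 0}(\xi_k)$, forward iteration of any $x\in W^{+,i_0}_{<0}(\xi_k)$ eventually enters $W^{+,i_0}_{\geq 0}(\xi_k)$; the last iterate before doing so lies in $\cD_{-1,\xi_k}$, and all earlier iterates remain in $W^{+,i_0}_{<0}(\xi_k)$. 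The uniform bound $n_{x,k}<n_m$ independent of $x$ and (large) $k$ comes from the fact that $W^{+,i_0}_{<0}(f)$ is covered by finitely many of the fundamental domains $\cD_{\ell,f}$, $\ell<0$ (there are finitely many since $W^{+,i_0}_{<0}(f)$ is compact and $W^{ss,i_0}(f)$ is a single orbit of fundamental domains), say those with $-n_m\le \ell\le -1$; by \cref{r.conv} and the $C^r$-convergence of $W^{+,i_0}_{<0}(\xi_k)$, for large $k$ the set $W^{+,i_0}_{<0}(\xi_k)$ is likewise covered by $\cD_{-n_m,\xi_k},\dots,\cD_{-1,\xi_k}$, whence $n_{x,k}\le n_m-1$. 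The main obstacle here is making this last uniformity claim rigorous — i.e. checking that the combinatorial ``depth'' of the before-region in terms of fundamental domains does not blow up as $k\to\infty$; this is handled purely by the $C^r$-convergence statements already packaged in \cref{l.before and after} and \cref{r.conv}, so no genuinely new idea is needed beyond a careful compactness argument. This completes the proof of \cref{l.iterates}.
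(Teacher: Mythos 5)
Your handling of the first item (together with the final convergence claim) and of the third item is essentially the paper's own: the first item is obtained exactly as in the proof of \cref{l.before and after}, via transversality and \cref{l.ofijzeo}, and your covering of $W^{+,i_0}_{<0}(\xi_k)$ by the fundamental domains $\cD_{-n_m,\xi_k},\dots,\cD_{-1,\xi_k}$ is just a reformulation of the paper's choice of $n_m$ with $f^{n_m}\bigl[W^{+,i_0}(f)\bigr]\subset \Int W^{+,i_0}_{\geq 0}(f)$, hence $\xi_k^{n_m}\bigl[W^{+,i_0}(\xi_k)\bigr]\subset \Int W^{+,i_0}_{\geq 0}(\xi_k)$ for large $k$.

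The genuine gap is in the first half of the second item, i.e.\ the assertion $W^{+,i_0}_{<0}(\xi_k)\cap\mathbf{T}_{[0,4/3)}=\emptyset$. You claim both disjointnesses are between sets ``with positive distance'', but this is false for the before-region: the closure of $W^{+,i_0}_{<0}(f)$ contains the sphere $\mathbf{A}_{\{0\}}$, which lies in $\mathbf{T}_{[0,4/3)}$, so $\dist\bigl(W^{+,i_0}_{<0}(f),\mathbf{T}_{[0,4/3)}\bigr)=0$. Consequently the adherence-value argument breaks down: a limit of points $x_k\in W^{+,i_0}_{<0}(\xi_k)\cap\mathbf{T}_{[0,4/3)}$ need only lie in $\cl\bigl(W^{+,i_0}_{<0}(f)\bigr)\cap\mathbf{T}_{[0,4/3]}$, which contains $\mathbf{A}_{\{0\}}$ and is nonempty, so there is no contradiction. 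What has to be excluded is precisely that, for $\xi_k$, a piece of the before-region slips past the sphere $\mathbf{A}_{\{0\},\xi_k}$ (which differs from $\mathbf{A}_{\{0\}}$) into the part of $\mathbf{T}$ lying over abscissae in $[0,4/3)$, and soft convergence gives no control there. The paper closes this by thickening the whole disk $W^{+,i_0}_{\geq 0}(f)$ into a coordinated box $\mathbf{T}_{\geq 0}=W^{+,i_0}_{\geq 0}(f)\times[-1,1]^{d-i_0}$ extending $\mathbf{T}_{[0,3)}$ and applying \cref{l.topology}: for large $k$, $W^{+,i_0}(\xi_k)\cap\mathbf{T}_{\geq 0}$ is the graph of a map $G_k\colon \mathbf{A}_{\geq 0}\to[-1,1]^{d-i_0}$, hence a disk with boundary $\mathbf{A}_{\{0\},\xi_k}$, hence equal to $W^{+,i_0}_{\geq 0}(\xi_k)$; therefore $W^{+,i_0}_{<0}(\xi_k)$ avoids all of $\mathbf{T}_{\geq 0}\supset\mathbf{T}_{[0,4/3)}$. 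You need this identification (or an equivalent argument showing that the part of $W^{+,i_0}(\xi_k)$ lying over $\mathbf{A}_{\geq 0}$ belongs to the after-disk) to get the first half of the second item; your argument for the other set, $\xi_k\bigl[W^{+,i_0}_{\geq 1/2}(\xi_k)\bigr]$, is fine, since its limit $f\bigl[W^{+,i_0}_{\geq 1/2}(f)\bigr]$ is a compact set genuinely disjoint from the compact set $\mathbf{T}_{[0,4/3]}$.
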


\begin{proof} The first item  and the fact that $W^{+,i_0}_{\geq 1/2}(\xi_k)$ $C^r$-converges to $W^{+,i_0}_{\geq 1/2}(f)$ are obtained exactly the same way as we showed \cref{l.before and after} \cref{l.after and before 2}. 

Let $\mathbf{T}_{\geq 0}$ be a thickening of $W^{+,i_0}_{\geq 0}(f)$ into a coordinated box $W^{+,i_0}_{\geq 0}(f)\times [-1,1]^{d-i_0}$ that extends the box $\mathbf{T}_{[0,3)}=\mathbf{A}_{[0,3)}\times [-1,1]^{d-i_0}$. By \cref{l.topology}, for large $k$,
$W^{+,i_0}(\xi_k)\cap \mathbf{T}_{\geq 0}$ is the graph of a $C^r$-map $G_k\colon \mathbf{A}_{\geq 0} \to [-1,1]^{d-i_0}$, where 
$$G_k \xrightarrow{\;\;C^r\;\;} 0,$$
in particular $W^{+,i_0}(\xi_k)\cap \mathbf{T}_{\geq 0}$ is an $i_0$-disk in $W^{+,i_0}(\xi_k)$ and its boundary is $\mathbf{A}_{\{0\},\xi_k}$. By definition it is $W^{+,i_0}_{\geq 0}(\xi_k)$. Thus $W^{+,i_0}_{<0}(\xi_k)=W^{+,i_0}(\xi_k)\setminus W^{+,i_0}_{\geq 0}(\xi_k)$ does not intersect $\mathbf{T}_{\geq 0}$, and in particular $\mathbf{T}_{[0,4/3)}$. 

On the other hand, the manifold  $\xi_k\bigl[W^{+,i_0}_{\geq 1/2}(\xi_k)\bigr]$ converges  uniformly to $f(\mathbf{A}_{\geq 1/2})=\mathbf{A}_{\geq 3/2}$, for the $C^r$-topology, which concludes the proof of the second item.

Finally, there is some $n_m\in \NN$ such that $f^{n_m}\bigl[W^{+,i_0}(f)\bigr]$ is in the interior of $W_{\geq 0}^{+,i_0}(f)$. Then, for large $k$, $\xi_k^{n_m}\bigl[W^{+,i_0}(\xi_k)\bigr]$ is in the interior of the local strong stable manifold $W_{\geq 0}^{+,i_0}(\xi_k)$. Hence, for all $x\in W^{+,i_0}_{< 0}(\xi_k)$, the set of integers $n\in \NN$ such that $\xi^n(x)\notin W^{+,i_0}_{\geq 0}(\xi_k)$ is a nonempty interval of the form $[0,n_{x,k}]\cap \NN$, where $n_{x,k}<n_m$. One has necessarily $\xi_k^{n_{x,k}}(x)\in \cD_{-1,\xi_k}$, and $\xi_k^{n}(x)\in  W^{+,i_0}_{< 0}(\xi_k), $ for all $0\leq n\leq n_{x,k}$. This ends the proof of the third item.
\end{proof}

\medskip
The following lemma provides a way to perturb $\tilde{g}_k^*$ into $f_k^*=\chi_k\circ \tilde{g}_k^*$ so that the second iterate sends the fundamental domain $\cD_{-1,h^*_k}$ of $W^{ss,i_0}(h^*_k)$ inside $W^{+,i_0}_{\geq 1/2}(\tilde{g}_k^*)$, while leaving the strong manifolds of higher dimension locally unaffected. By doing so, we will force $W^{+,i_0}_{<0}(h_k^*)$ into $W^{+,i_0}(\tilde{g}^*_k)$. 

For all $0<\alpha<1$, write $$^\alpha\mathbf{T}_{[a,b)}=\mathbf{A}_{[a,b)}\times[-\alpha,\alpha]^{d-i_0}\subset \mathbf{T}_{[a,b)}.$$

\begin{lemma}\label{l.chi}
There is a sequence  of diffeomorphisms of $M$
$$\chi_k\xrightarrow{\;\; C^r\;\;}\Id_M$$
such that, for large $k\in \NN$, it holds
\begin{itemize}
\item $\chi_k=\Id$ outside $^{1/2}\mathbf{T}_{[0,4/3)}$,
\item $\chi_k(\mathbf{T}^i)=\mathbf{T}^i$, for all $i\in I^*$,
\item ${f_k^*}^{2}\bigl(\cD_{-1,h^*_k}\bigr)\subset W^{+,i_0}_{\geq 1/2}(\tilde{g}^*_k)$,
\end{itemize}
where $f_k^*=\chi_k\circ \tilde{g}_k^*$.
\end{lemma}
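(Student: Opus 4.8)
The idea is to construct $\chi_k$ by first working in coordinates adapted to the already-straightened geometry. Recall that in the $\ast$-coordinates, for all $i\in I^*$ the manifold $W^{+,i}(\tilde g_k^\ast)$ intersects $\mathbf{T}$ exactly along $\mathbf{T}^i=\mathbf{A}\times[-1,1]^{i-i_0}\times\{0\}^{d-i}$ (by \cref{e.qonqornqov}), while the lowest-dimensional manifold $W^{+,i_0}(\tilde g_k^\ast)$ intersects $\mathbf{T}$ along $\mathbf{T}^{i_0}=\mathbf{A}$. Applying \cref{l.before and after} and \cref{l.iterates} to $\xi_k=\tilde g_k^\ast$ and to $\xi_k=h_k^\ast$, for large $k$ the relevant fundamental domains $\cD_{-1,h_k^\ast}$ and the disks $W^{+,i_0}_{\geq 1/2}(\tilde g_k^\ast)$ and $W^{+,i_0}_{\geq 0}(\tilde g_k^\ast)$ are well-defined, lie in a small $C^r$-neighbourhood of the model objects $\cD_{-1,f}$, $\mathbf{A}_{\geq 1/2}$, $\mathbf{A}_{\geq 0}$, and the second iterate ${\tilde g_k^\ast}^{2}(\cD_{-1,h_k^\ast})$ converges $C^r$ to $f^2(\cD_{-1,f})$, which lies in the interior of $\mathbf{A}_{\geq 1/2}=W^{+,i_0}_{\geq 1/2}(f)$. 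So the required inclusion "almost" holds for $\chi_k=\Id$; the point is that the graphs differ from the model by an amount tending to $0$ in $C^r$, and we need a small fibered perturbation inside $^{1/2}\mathbf{T}_{[0,4/3)}$ that corrects this discrepancy.

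First I would pin down where the correction must take place. By item \ref{I.notintersect} of \cref{l.iterates} (applied to $\tilde g_k^\ast$), the set $W^{+,i_0}_{\geq 1/2}(\tilde g_k^\ast)$ and its $\tilde g_k^\ast$-image avoid $\mathbf{T}_{[0,4/3)}$ (in fact $W^{+,i_0}_{\geq 1/2}(\tilde g_k^\ast)$ lives "after" $\mathbf{T}_{\{1/2\}}$), whereas $\cD_{-1,h_k^\ast}$ lies in $W^{+,i_0}_{<0}(h_k^\ast)$, hence before $\mathbf{T}_{\{0\}}$, and one checks using \cref{l.topology} that $\tilde g_k^\ast(\cD_{-1,h_k^\ast})$ is a small graph over a fundamental domain straddling $\mathbf{A}_{\{0\}}$, i.e. contained in $^{1/2}\mathbf{T}_{[0,4/3)}$ for large $k$ (after possibly shrinking the thickening once and for all). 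Thus the second iterate of $\cD_{-1,h_k^\ast}$ under $f_k^\ast=\chi_k\circ\tilde g_k^\ast$ equals $\chi_k\circ\tilde g_k^\ast\circ\chi_k\circ\tilde g_k^\ast(\cD_{-1,h_k^\ast})$; since the first application of $\chi_k$ is supported in $^{1/2}\mathbf{T}_{[0,4/3)}$ where the first iterate already sits, while the second application of $\chi_k$ acts on points that have left this box, it is the first application that does the work. Concretely I want $\chi_k$ to be, on $^{1/2}\mathbf{T}_{[0,4/3)}$, a fibered diffeomorphism preserving each fiber $\{(\alpha,x_{i_0+1},\dots,x_{i_0})\}$-direction transverse coordinate line $\{\alpha\}\times\RR^{d-i_0}$, which: (i) fixes the central disks $\mathbf{T}^i$ for $i\in I^*$ pointwise (achieved by making $\chi_k$ fiberwise fix $\{0\}^{d-i}$ appropriately, using the flag structure $\mathbf{T}^{i_0}\subset\mathbf{T}^i$), (ii) is $\Id$ outside $^{1/2}\mathbf{T}_{[0,4/3)}$, and (iii) maps the small graph $\tilde g_k^\ast(\cD_{-1,h_k^\ast})$ onto (a subset of) the model piece $f(\mathbf{A}_{<0}\cap\text{(corresponding domain)})$, so that a further application of $\tilde g_k^\ast$ lands inside $W^{+,i_0}_{\geq 1/2}(\tilde g_k^\ast)$.

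To build such a $\chi_k$: parametrize $^{1/2}\mathbf{T}_{[0,4/3)}\cong \mathbf{A}_{[0,4/3)}\times[-1/2,1/2]^{d-i_0}$; the two relevant $i_0$-graphs — the one we want to move, namely $W^{+,i_0}(\tilde g_k^\ast\circ\text{restricted})$ near there, call it $y=a_k(\alpha)$, and the target graph $y=b_k(\alpha)$ coming from $W^{+,i_0}(\tilde g_k^\ast)$ itself — are both $C^r$-close to $0$, and moreover, by the flag structure, their $\mathbf{T}^i$-components for $i\in I^*$ already agree (both are the straightened boxes), so the discrepancy $a_k-b_k$ lives in the last $d-i_m$ coordinates (where $i_m=\max I^*$) ... wait, more carefully: $a_k-b_k$ takes values in $[-\epsilon_k,\epsilon_k]^{d-i_0}$ with $\epsilon_k\to0$, but we need the perturbation to preserve all $\mathbf{T}^i$, $i\in I^*$; this is arranged because the target and source both contain the straightened $\mathbf{T}^i$'s, so we may take the fiberwise translation (cut off by a partition of unity away from the center and the boundary) that sends $a_k(\alpha)$ to $b_k(\alpha)$ in those last coordinates only, using a bump function $\rho$ that is $1$ on a neighbourhood of $\mathbf{A}_{[0,4/3)}\times\{0\}$-complement where the graph sits and $0$ near $\partial(^{1/2}\mathbf{T}_{[0,4/3)})$ and near the central disks — exactly the same partition-of-unity device used in the proof of \cref{l.sequphi_k} to pass from $\phi_k$ to $\psi_k$. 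Since $a_k-b_k\to 0$ in $C^r$, the resulting $\chi_k\to\Id_M$ in $C^r$. Then ${f_k^\ast}^{2}(\cD_{-1,h_k^\ast})$, being $C^r$-close to $f^2(\cD_{-1,f})\subset\Int W^{+,i_0}_{\geq1/2}(f)$ and now corrected to be a graph over $\mathbf{A}_{<0}$-image equal to that of $\tilde g_k^\ast$'s own manifold, maps by $\tilde g_k^\ast$ into $W^{+,i_0}_{\geq 1/2}(\tilde g_k^\ast)$, giving the third bullet.

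\textbf{Main obstacle.} The delicate point is not the existence of a small fibered correction — that is routine — but verifying the compatibility constraints simultaneously: that $\chi_k$ can be chosen to fix every $\mathbf{T}^i$ ($i\in I^*$) \emph{while} moving the $i_0$-graph, and that after the correction the \emph{second} iterate under the composed map lands in the strictly invariant disk $W^{+,i_0}_{\geq1/2}(\tilde g_k^\ast)$ rather than merely near it. This requires (a) knowing the graph $\tilde g_k^\ast(\cD_{-1,h_k^\ast})$ is genuinely inside $^{1/2}\mathbf{T}_{[0,4/3)}$ and transverse to the fibers (so a fibered translation makes sense and is a diffeomorphism), for which one invokes \cref{l.topology} together with \cref{r.conv}; and (b) using the strict invariance (regularity) of $W^{+,i_0}_{\geq1/2}(f)$ plus the uniform integer $n_m$ from \cref{l.iterates} to get that a single $\tilde g_k^\ast$-iterate of a graph $C^r$-close to $f(\mathbf{A}_{<0}\text{-piece})$ lands in the interior of $W^{+,i_0}_{\geq1/2}(\tilde g_k^\ast)$ — an open condition, hence stable under the small errors, for large $k$. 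Assembling these and checking $\chi_k\to\Id_M$ is where the real care lies.
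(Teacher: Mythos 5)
Your plan hinges on a single fibered correction applied to the first iterate: you assert that in ${f_k^*}^2=\chi_k\circ\tilde g_k^*\circ\chi_k\circ\tilde g_k^*$ "the second application of $\chi_k$ acts on points that have left this box, so it is the first application that does the work", and accordingly you try to choose $\chi_k$ so that it maps the whole graph $\tilde g_k^*(\cD_{-1,h_k^*})=\cD_{0,h_k^*}$ onto $W^{+,i_0}(\tilde g_k^*)$. This cannot work as stated. Since $\chi_k=\Id$ outside $^{1/2}\mathbf{T}_{[0,4/3)}$, by continuity $\chi_k$ is the identity on the abscissa-$0$ face of that box; but $\cD_{0,h_k^*}$ is a graph over (approximately) $\mathbf{A}_{[0,1)}$ whose left boundary sphere $\mathbf{A}_{\{0\},h_k^*}$ sits exactly in $\mathbf{T}_{\{0\}}$, where the graphs of $H_k$ and $G_k$ (of $W^{+,i_0}(h_k^*)$ and $W^{+,i_0}(\tilde g_k^*)$) genuinely differ. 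So near abscissa $0$ the corrected first image must still lie on $W^{+,i_0}(h_k^*)$, not on $W^{+,i_0}(\tilde g_k^*)$, and a further application of $\tilde g_k^*$ does not put those points into $W^{+,i_0}_{\geq 1/2}(\tilde g_k^*)$ — and the lemma demands the exact inclusion, not an approximate one, so "close in $C^r$" is of no help. This is precisely why the paper's proof is two-staged: a first fibered translation $\hat\Phi_k$ (built from $\phi_k=G_k-H_k$ on $\mathbf{A}_{[a,5/4)}$, $0$ outside $\mathbf{A}_{[0,4/3)}$) aligns the first iterate with $W^{+,i_0}(\tilde g_k^*)$ only away from the strip $\mathbf{A}_{[0,a)}$, and then a second fibered translation $\hat\Psi_k$ corrects the leftover strip after the \emph{second} iterate, using $f(\mathbf{T}_{[0,a]})\subset\mathbf{T}_{[1,4/3]}$ so that this leftover has by then moved deep inside the support box; the second occurrence of $\chi_k$ in ${f_k^*}^2$ is exactly where this second correction acts. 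Your proposal contains no substitute for this mechanism.

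A second, related error: you require $\chi_k$ to fix the disks $\mathbf{T}^i$, $i\in I^*$, pointwise (and take your bump function to vanish near them), and you locate the discrepancy $a_k-b_k$ in the last $d-i_m$ coordinates. Both points are backwards. By \cref{e.qonqornqov} and the nesting of strong stable manifolds, the graphs of $H_k$ and $G_k$ lie \emph{inside} $\mathbf{T}^{i}$ for every $i\in I^*$ (in particular inside $\mathbf{T}^{\min I^*}$), so a map fixing these disks pointwise, or equal to the identity near them, cannot move one graph onto the other at all; and the discrepancy $G_k-H_k$ takes values in $[-1,1]^{i-i_0}\times\{0\}^{d-i}$ for all $i\in I^*$, i.e. in the coordinates $i_0+1,\dots,\min I^*$, not in the last ones. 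What the lemma asks (and what the paper arranges) is only setwise invariance $\chi_k(\mathbf{T}^i)=\mathbf{T}^i$, obtained because the translation vectors are parallel to each $\mathbf{T}^i$; note that $i_0\notin I^*$, so $\mathbf{T}^{i_0}$ itself may (and must) be moved.
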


The proof of \cref{l.chi} is postponed until \cref{s.chi}.  We are now ready for the proof of \cref{p.starsequence}.

\begin{proof}[Proof of \cref{p.starsequence}]
Let $\chi_k$ and $f_k^*$ be sequences given by \cref{l.chi}.
We have clearly $f^*_k\xrightarrow{\;\; C^r\;\;}f$.
As $\chi_k=\chi_k^{-1}=\Id$ outside $^{1/2}\mathbf{T}_{[0,4/3)}\subset \mathbf{T}$, and ${{\tilde{g}_k}}^{*-1}\bigl[\!\;^{1/2}\mathbf{T}_{[0,4/3)}\bigr]\subset \mathbf{T}$, for large $k$ it holds:
\begin{align*}
f_k^{*\pm 1}=\tilde{g}_k^{*\pm 1}\quad \mbox{ outside $\mathbf{T}$.}
\end{align*}

By  \cref{l.iterates}, there is $n_m\in \NN$ such that for large $k$ it holds:  
for all $x\in W^{+,i_0}_{<0}(h^*_k)$, there is an integer $n_{x,k}\leq n_m$ such that $h_k^{*n_{x,k}}(x)\in \cD_{-1,h^*_k}$ and $h_k^{*\ell}(x)\in W^{+,i_0}_{<0}(h^*_k)$, for all $0\leq \ell\leq n_{x,k}$.

By \cref{l.after and before 4} of \cref{l.before and after}, $W^{+,i_0}_{<0}(h^*_k)\cap \tilde{U}_P=\emptyset$, thus such $h_k^{*n_x}(x)$ is equal to  $\tilde{g}_k^{*n_x}(x)$, and finally by  \cref{l.iterates} \cref{I.notintersect}  applied to $\xi_k=\tilde{g}_k^*$, and by construction of $f_k^*$, it is equal to $f_k^{*n_x}(x)$. Therefore $f_k^{*n_x+2}(x)\in W^{+,i_0}_{\geq 1/2}(\tilde{g}^*_k)$. 

By \cref{l.iterates} \cref{I.notintersect}, for large $k$, $\tilde{g}_k^*\bigl[W^{+,i_0}_{\geq 1/2}(\tilde{g}^*_k)\bigr]$ does not intersect $\mathbf{T}_{[0,4/3)}$, hence the positive iterates of $f_k^*$ and $\tilde{g}^*_k$ coincide on the $\tilde{g}^*_k$-invariant set $W^{+,i_0}_{\geq 1/2}(\tilde{g}^*_k)$. As a consequence, for large $k$
\begin{itemize}
\item $f_k^{*n_m+2}\bigl[W^{+,i_0}_{<0}(h^*_k)\bigr]$ is a subset of $W^{+,i_0}_{\geq 1/2}(\tilde{g}^*_k)$,
\item the preimage $\hat{W}^{+_{i_0}}(f_k^*)=f_k^{*-n_m-2}\big[W^{+,i_0}_{\geq 1/2}(\tilde{g}^*_k)\bigr]$ is a local $i_0$-strong stable manifold for $f_k^*$ that contains $W^{+,i_0}_{<0}(h^*_k)$. 
\end{itemize}
The sequence $\hat{W}^{+_{i_0}}(f_k^*)$ converges to the local $i_0$-strong stable manifold $\hat{W}^{+_{i_0}}(f)=f^{-n_m-2}[W^{+,i_0}_{\geq 1/2}(f)]$, and the sequence of boundaries $\SS_k=\partial W^{+,i_0}(h^*_k)$ is a sequence of smooth spheres in $\hat{W}^{+_{i_0}}(f_k^*)$ that converges to the boundary $\SS=\partial W^{+,i_0}(f)$.

By \cref{l.ofijzeo}, for large $k$, $\SS_k$ delimits a local $i_0$-strong stable manifold $W^{+_{i_0}}(f^*_k)\subset \hat{W}^{+_{i_0}}(f^*_k)$ in $W^{ss,i_0}(f^*_k)$, and  $W^{+_{i_0}}(f_k^*)\to W^{+,i_0}(f)$ for the $C^r$ topology. 

\begin{claim}\label{c.aoeif}
We have $W_{<0}^{+_{i_0}}(f_k^*)=W_{<0}^{+,i_0}(h_k^*)$, for large $k$.
\end{claim}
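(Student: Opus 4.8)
The plan is to prove \cref{c.aoeif} by showing that the local $i_0$-strong stable manifold $W^{+_{i_0}}(f_k^*)$ we just built coincides with $W^{+,i_0}(h_k^*)$ on the before-$\mathbf{T}_{\{0\}}$ region. The starting point is that, by construction, $W^{+_{i_0}}(f_k^*)\subset \hat{W}^{+_{i_0}}(f_k^*)=f_k^{*-n_m-2}\bigl[W^{+,i_0}_{\geq 1/2}(\tilde{g}^*_k)\bigr]$ and that this latter manifold contains $W^{+,i_0}_{<0}(h^*_k)$, as established in the preceding lines. Moreover $W^{+_{i_0}}(f_k^*)$ and $W^{+,i_0}(h_k^*)$ have by construction the \emph{same boundary} $\SS_k=\partial W^{+,i_0}(h^*_k)$.

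First I would observe, using \cref{l.before and after} \cref{l.after and before 4} applied to $\xi_k=h_k^*$, that $W^{+,i_0}_{<0}(h_k^*)$ does not intersect $\tilde{U}_P$, hence the diffeomorphism $\chi_k$ (supported in $^{1/2}\mathbf{T}_{[0,4/3)}\subset \mathbf{T}$, which is disjoint from the region traversed by the negative orbit of $W^{+,i_0}_{<0}(h^*_k)$ under $h_k^*=\tilde{g}_k^*$ outside $\mathbf{T}$ up to time $n_m$) does not affect the relevant orbit segments; concretely, by \cref{l.iterates} \cref{l.iterates in <} and \cref{I.notintersect} the iterates $f_k^{*\ell}$ and $h_k^{*\ell}$ agree on $W^{+,i_0}_{<0}(h_k^*)$ for $0\leq \ell\leq n_m+2$, so the positive $f_k^*$-orbit of every $x\in W^{+,i_0}_{<0}(h_k^*)$ eventually enters $W^{+,i_0}_{\geq 1/2}(\tilde{g}_k^*)$ and thus $x\in \hat{W}^{+_{i_0}}(f_k^*)$, i.e. $x\in W^{ss,i_0}(f_k^*)$. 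This gives the inclusion $W^{+,i_0}_{<0}(h_k^*)\subset W^{ss,i_0}(f_k^*)$, and since $W^{+,i_0}_{<0}(h_k^*)$ lies outside $\mathbf{T}_{\geq 0}$, it lies inside the region of $W^{ss,i_0}(f_k^*)$ bounded by $\SS_k=\partial W^{+_{i_0}}(f_k^*)$ and $\mathbf{A}_{\{0\},f_k^*}$ that is on the "before" side.

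Then I would invoke the characterization of the before-region given in \cref{r.beforeregion}: $W^{+,i_0}_{<0}(f_k^*)$ is the unique bounded full-dimensional submanifold of $W^{ss,i_0}(f_k^*)$ delimited by the two disjoint spheres $\partial W^{+,i_0}(f_k^*)=\SS_k$ and $\mathbf{A}_{\{0\},f_k^*}$ that contains the former and avoids the latter. Since $W^{+,i_0}_{<0}(h_k^*)$ is a full-dimensional submanifold of $W^{ss,i_0}(h_k^*)=\cdots$; here I must be careful that the two strong stable manifolds coincide near $\mathbf{T}$, which follows from $\tilde{g}_k^{*\pm1}=h_k^{*\pm1}$ outside $\tilde{U}_P$ (\cref{e.egalitezef}) together with $f_k^{*\pm1}=\tilde{g}_k^{*\pm1}$ outside $\mathbf{T}$ and the fact that $\mathbf{T},\tilde{U}_P$ are disjoint — so that near $\mathbf{T}_{\{0\}}$ all three dynamics and all three manifolds $W^{ss,i_0}$ agree, forcing $\mathbf{A}_{\{0\},f_k^*}=\mathbf{A}_{\{0\},h_k^*}$ and $\partial W^{+,i_0}(f_k^*)$ (which equals $\SS_k=\partial W^{+,i_0}(h_k^*)$) to bound, in $W^{ss,i_0}(f_k^*)$, exactly the same before-region as they do in $W^{ss,i_0}(h_k^*)$. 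By uniqueness in \cref{r.beforeregion}, $W^{+,i_0}_{<0}(f_k^*)=W^{+,i_0}_{<0}(h_k^*)$.

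The main obstacle I anticipate is the bookkeeping needed to justify that all the relevant manifolds and iterates genuinely coincide on a neighborhood of the before-region and of $\mathbf{T}_{\{0\}}$ — i.e. verifying that the perturbation $\chi_k$, being supported in $^{1/2}\mathbf{T}_{[0,4/3)}$, neither touches $W^{+,i_0}_{<0}(h_k^*)$ nor the portion of $W^{ss,i_0}$ near $\mathbf{T}_{\{0\}}$, and that the two candidate before-regions are subsets of the \emph{same} ambient manifold $W^{ss,i_0}$ so that the uniqueness statement of \cref{r.beforeregion} can legitimately be applied. Once that matching is in place, the conclusion is immediate; and combined with $W^{+,i_0}_{\geq 0}(f_k^*)\subset U_P$ from \cref{l.before and after} \cref{l.after and before 4}, this yields $W^{+,i_0}(f_k^*)\setminus U_P=W^{+,i_0}(h_k^*)\setminus U_P$, which is the assertion $W^{+,i_0}(f_k^*)=W^{+,i_0}(h_k^*)$ before $\mathbf{T}_{\{0\}}$ required by \cref{p.starsequence}.
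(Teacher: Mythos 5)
Your overall skeleton is the same as the paper's: you use that $W_{<0}^{+,i_0}(h_k^*)\subset W^{ss,i_0}(f_k^*)$, that $W^{+_{i_0}}(f_k^*)$ and $W^{+,i_0}(h_k^*)$ share the boundary sphere $\SS_k$, and you conclude via the uniqueness characterization of \cref{r.beforeregion}. But there is a genuine gap at the decisive step, the identification $\mathbf{A}_{\{0\},f_k^*}=\mathbf{A}_{\{0\},h_k^*}$. You justify it by claiming that, since $f_k^*=\tilde g_k^*$ outside $\mathbf{T}$ and $\tilde g_k^*=h_k^*$ outside $\tilde U_P$, "near $\mathbf{T}_{\{0\}}$ all three dynamics and all three manifolds $W^{ss,i_0}$ agree". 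Coincidence of the diffeomorphisms on a neighborhood of $\mathbf{T}_{\{0\}}$ does not imply coincidence of their $i_0$-strong stable manifolds there: membership in $W^{ss,i_0}$ is determined by the entire forward orbit, and the forward orbit of any point near $\mathbf{T}_{\{0\}}$ immediately enters $\mathbf{T}_{[0,4/3)}$, where $f_k^*=\chi_k\circ\tilde g_k^*$ differs from $\tilde g_k^*=h_k^*$, and later crosses $\tilde U_P$, where $\tilde g_k^*$ differs from $h_k^*$. Nothing in the construction makes $W^{ss,i_0}(f_k^*)$ and $W^{ss,i_0}(h_k^*)$ coincide as germs along $\mathbf{T}_{\{0\}}$ (indeed they typically do not, past $\mathbf{T}_{\{0\}}$); all one knows a priori is that the single sphere $\mathbf{A}_{\{0\},h_k^*}$ lies in $W^{ss,i_0}(f_k^*)$, being in the closure of $W_{<0}^{+,i_0}(h_k^*)$.

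Relatedly, you assert that $W_{<0}^{+,i_0}(h_k^*)$ "lies inside the region of $W^{ss,i_0}(f_k^*)$ bounded by $\SS_k$ and $\mathbf{A}_{\{0\},f_k^*}$ on the before side" with no argument about which side of $\SS_k$ it sits on inside $\hat W^{+_{i_0}}(f_k^*)$; lying outside $\mathbf{T}_{\geq 0}$ does not settle this, and the phrasing already presupposes the sphere identification you prove only afterwards. The paper closes both points at once with a convergence (positional) argument: since $W_{<0}^{+,i_0}(h_k^*)\to W_{<0}^{+,i_0}(f)$ and $W^{+_{i_0}}(f_k^*)\to W^{+,i_0}(f)$, for large $k$ the before-region of $h_k^*$ is on the "good side" of $\SS_k$, i.e.\ inside the disk $W^{+_{i_0}}(f_k^*)$; hence $\mathbf{A}_{\{0\},h_k^*}\subset W^{+_{i_0}}(f_k^*)\cap \mathbf{T}_{\{0\}}$, and since by \cref{l.before and after} this intersection is a single $(i_0-1)$-sphere, the two spheres coincide, after which \cref{r.beforeregion} applies exactly as you intend. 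Replacing your local-coincidence claim by this convergence argument repairs the proof; as written, the inference is invalid.
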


\begin{proof}
We have just seen that $W_{<0}^{+,i_0}(h_k^*)$ is inside $W^{ss,i_0}(f_k^*)$. By \cref{r.beforeregion}, it is an $i_0$-dimensional manifold delimited by the two spheres $\mathbf{A}_{\{0\},h_k^*}=W^{+,i_0}(h_k^*)\cap \mathbf{T}_{\{0\}}$ and $\SS_k$. 
Thus, either it is on the "good side" of $\SS_k$ in $W^{ss,i_0}(f_k^*)$, that is, in the disk $W^{+_{i_0}}(f^*_k)$ delimited by $\SS_k$, or on the "wrong side". 

By convergence of $W_{<0}^{+,i_0}(h_k^*)$ to $W_{<0}^{+,i_0}(f)$, and of $W^{+_{i_0}}(f^*_k)$ to $W^{+_{i_0}}(f)$, for large $k$ it is on the good side. In particular, the $i_0$-sphere $\mathbf{A}_{\{0\},h_k^*}\subset  \mathbf{T}_{\{0\}}$ is in $W^{+_{i_0}}(f^*_k)$. Note that, by  \cref{l.before and after}, $\mathbf{A}_{\{0\},f_k^*}=W^{+_{i_0}}(f_k^*)\cap \mathbf{T}_{\{0\}}$ is reduced to an $i_0$-sphere for large $k$, hence it has to coincide with $\mathbf{A}_{\{0\},h_k^*}$. 

The manifold $W_{<0}^{+,i_0}(h_k^*)\subset W^{ss,i_0}(f_k^*)$ is therefore delimited by $\mathbf{A}_{\{0\},f_k^*}$ and $\SS_k$. By \cref{r.beforeregion}, we finally get $W_{<0}^{+,i_0}(h_k^*)=W_{<0}^{+_{i_0}}(f_k^*)$.
\end{proof}

\bigskip

We now build the sequence $W^+(f_k^*)$.
For this, we need to distinguish two cases:

\begin{itemize}
\item {\em The $I^*=\emptyset$ case:} by the dimension assumption pointed out in \cref{f.techassump}, we have $W^+(h_k^*)=W^{+,i_0}(h_k^*)$ and $W^+(f)=W^{+,i_0}(f)$. We put $$W^{+}(f_k^*)=W^{+_{i_0}}(f_k^*).$$

\item {\em The  $I^*\neq\emptyset$ case:} by the same dimension assumption, we have  $W^{+}(\tilde{g}_k^*)=W^{+,i_m}(\tilde{g}_k^*)$, where $i_m$ is the maximal element of $I$. By \cref{e.qonqornqov},
$\mathbf{T}^{i_m}=\mathbf{T}\cap W^+(\tilde{g}_k^*)$.
We have $\chi_k=\Id$ on $W^+(\tilde{g}_k^*)\setminus \mathbf{T}^{i_m}$ and $\chi_k(\mathbf{T}^{i_m})=\mathbf{T}^{i_m}$, that is, $\chi_k\bigl[W^+(\tilde{g}_k^*)\bigr]=W^+(\tilde{g}_k^*)$. This, with the facts that $\chi_k$ tends to $\Id$ and $W^+(\tilde{g}_k^*)$ is strictly invariant by $\tilde{g}_k^*$, implies that, for large $k$, 
$W^+(\tilde{g}_k^*)$ 
is a local strong stable manifold also for the diffeomorphism $f_k^*$. We put
$$W^+(f_k^*)=W^+(\tilde{g}_k^*).$$ 

\begin{claim}\label{c.zefzzffez}
For large $k\in\NN$:
\begin{align}
W^{+,i}(f_k^*)&=W^{+,i}(\tilde{g}_k^*),\quad \mbox{ for all $i\in I^*$},\label{e.fg}\\
W^{+,i_0}(f_k^*)&=W^{+_{i_0}}(f_k^*).\label{e.fgx}
\end{align}
\end{claim}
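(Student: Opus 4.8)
Here is how I would organise the proof of \cref{c.zefzzffez}.

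\emph{Overall approach.} The two equalities are of different natures. For \cref{e.fg} ($i\in I^*$) the point is that the pushing perturbation $\chi_k$ \emph{respects the flag} $\{\mathbf{T}^i\}_{i\in I^*}$, so it cannot disturb the strong stable manifolds of those indices; for \cref{e.fgx} ($i_0\notin I^*$) the disk $W^{+_{i_0}}(f_k^*)$ was manufactured (via \cref{c.aoeif}) precisely to coincide with the expected $i_0$-strong stable part, and one only has to recognise it as such. Throughout I would first record that $\chi_k$ is the identity on a neighbourhood of $P$, since $\operatorname{supp}(\chi_k)\subset{}^{1/2}\mathbf{T}_{[0,4/3)}\subset\mathbf{T}$ and $\mathbf{T}\cap\tilde U_P=\emptyset$ while $P\in\tilde U_P$; hence $f_k^*$ and $\tilde g_k^*$ have the same germ at $P$, in particular the same $i$-strong stable subspace $E^i$ for every $i\in I$.

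\emph{Proof of \cref{e.fg}.} First I would show that, for $i\in I^*$, the set $W^{+,i}(\tilde g_k^*)$ is (strictly) $f_k^*$-invariant. By \cref{e.qonqornqov} we have $W^{+,i}(\tilde g_k^*)\cap\mathbf{T}=\mathbf{T}^i$, and by \cref{l.chi} the diffeomorphism $\chi_k$ fixes $M\setminus\mathbf{T}$ pointwise and maps $\mathbf{T}^i$ onto itself; therefore $\chi_k\bigl(W^{+,i}(\tilde g_k^*)\bigr)=W^{+,i}(\tilde g_k^*)$, and combined with the strict $\tilde g_k^*$-invariance of $W^{+,i}(\tilde g_k^*)$ given by regularity (\cref{r.regularity}) this yields $f_k^*\bigl(W^{+,i}(\tilde g_k^*)\bigr)=\chi_k\bigl(\tilde g_k^*(W^{+,i}(\tilde g_k^*))\bigr)$ strictly inside $W^{+,i}(\tilde g_k^*)$. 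Thus $W^{+,i}(\tilde g_k^*)$ is a regular local $i$-strong stable manifold for $f_k^*$, so $W^{+,i}(\tilde g_k^*)\subset W^{ss,i}(f_k^*)$, and since $W^{+,i}(\tilde g_k^*)\subset W^+(\tilde g_k^*)=W^+(f_k^*)$ we get the inclusion $W^{+,i}(\tilde g_k^*)\subset W^{+,i}(f_k^*)$. For the reverse inclusion I would prove that $W^{ss,i}(f_k^*)=W^{ss,i}(\tilde g_k^*)$, or at least that these immersed manifolds agree on $W^+(\tilde g_k^*)$. Writing both as the nested unions $\bigcup_{n\ge 0}f_k^{*-n}\bigl(W^{+,i}(\tilde g_k^*)\bigr)$ and $\bigcup_{n\ge 0}\tilde g_k^{*-n}\bigl(W^{+,i}(\tilde g_k^*)\bigr)$ of backward images of the \emph{common} local piece $W^{+,i}(\tilde g_k^*)$, one shows by induction that each $\tilde g_k^{*-n}\bigl(W^{+,i}(\tilde g_k^*)\bigr)$ meets $\mathbf{T}$ only inside $\mathbf{T}^i$ (a point of $\bigl(\mathbf{T}\setminus\mathbf{T}^i\bigr)\cap W^{ss,i}(\tilde g_k^*)$ would, after one application of $\tilde g_k^*$ and using the control of the iterates of the collar $\mathbf A$ together with \cref{e.qonqornqov}, yield a contradiction; alternatively one argues orbit by orbit, using \cref{l.iterates}\cref{I.notintersect} to drive a forward orbit out of $\operatorname{supp}(\chi_k)$ after boundedly many steps into a strictly invariant core on which $f_k^*=\tilde g_k^*$). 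Since $\chi_k^{-1}$ fixes $M\setminus\mathbf{T}$ and preserves $\mathbf{T}^i$, it preserves each of these sets, whence $f_k^{*-n}\bigl(W^{+,i}(\tilde g_k^*)\bigr)=\tilde g_k^{*-n}\bigl(W^{+,i}(\tilde g_k^*)\bigr)$ for all $n$, and \cref{e.fg} follows.

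\emph{Proof of \cref{e.fgx}.} Here $\chi_k$ genuinely moves $W^{+,i_0}$. I would first invoke \cref{l.ofijzeopop} with the spheres $\SS_k=\partial W^{+,i_0}(h_k^*)$: for large $k$ these lie in $W^+(\tilde g_k^*)=W^+(f_k^*)$ (they are disjoint from $\tilde U_P$, on which $W^+(h_k^*)$ and $W^+(\tilde g_k^*)$ agree), so Folklore~2 gives $W^{+_{i_0}}(f_k^*)\subset W^{+,i_0}(f_k^*)$. Both are $i_0$-disks meeting $\mathbf{T}_{\{0\}}$ in the single sphere $\mathbf{A}_{\{0\},f_k^*}$ and both contain the strictly $f_k^*$-invariant after-region $W^{+,i_0}_{\geq 0}(f_k^*)$ (\cref{l.before and after}\cref{l.after and before 1,l.after and before 2}), so by the characterisation of before-regions (\cref{r.beforeregion}) it remains to see that the two before-regions coincide, i.e. $W^{+,i_0}_{<0}(f_k^*)\subset W^{+_{i_0}}_{<0}(f_k^*)$. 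This is obtained exactly as in the proof of \cref{p.starsequence}: \cref{l.iterates}\cref{l.iterates in <} gives, for every point of $W^{+,i_0}_{<0}(f_k^*)$, an entry into $\cD_{-1,f_k^*}$ after at most $n_m$ iterates of $f_k^*$, and two further iterates carry it into $W^{+,i_0}_{\geq 1/2}(\tilde g_k^*)$, so $W^{+,i_0}_{<0}(f_k^*)\subset\hat W^{+_{i_0}}(f_k^*)=f_k^{*-n_m-2}\bigl[W^{+,i_0}_{\geq 1/2}(\tilde g_k^*)\bigr]$; \cref{r.beforeregion} then pins it inside $W^{+_{i_0}}(f_k^*)$. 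Together with \cref{c.aoeif} ($W^{+_{i_0}}_{<0}(f_k^*)=W^{+,i_0}_{<0}(h_k^*)$) this gives $W^{+,i_0}_{<0}(f_k^*)=W^{+_{i_0}}_{<0}(f_k^*)$ and hence \cref{e.fgx}.

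\emph{Main obstacle.} The conceptual content is light, but the work lies entirely in set-level bookkeeping: one must check that the various local strong stable manifolds, boundary spheres, fundamental domains and before-regions coincide \emph{as sets}, not merely up to the freedom inherent in choosing local manifolds. The delicate step is the inductive claim in the second paragraph that the backward images of $W^{+,i}(\tilde g_k^*)$ meet $\mathbf{T}$ only inside $\mathbf{T}^i$; everything else reduces to the identities $\chi_k(\mathbf{T}^i)=\mathbf{T}^i$ (\cref{l.chi}) and $W^{+,i}(\tilde g_k^*)\cap\mathbf{T}=\mathbf{T}^i$ (\cref{e.qonqornqov}), the before-region characterisation \cref{r.beforeregion}, and the uniform iterate bounds of \cref{l.iterates} — all of which are available only because $W^+(f)$ was taken regular.
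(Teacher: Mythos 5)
Your two forward inclusions are exactly the paper's: $\chi_k(\mathbf{T}^i)=\mathbf{T}^i$ makes $W^{+,i}(\tilde g_k^*)$ a local $i$-strong stable manifold for $f_k^*$, and \cref{l.ofijzeopop} applied to the spheres $\partial W^{+,i_0}(h_k^*)\subset\partial W^+(f_k^*)$ gives $W^{+_{i_0}}(f_k^*)\subset W^{+,i_0}(f_k^*)$. The gap is in how you upgrade these inclusions to equalities. For \cref{e.fg} you propose to show $W^{ss,i}(f_k^*)=W^{ss,i}(\tilde g_k^*)$ (or their agreement on $W^+$) by proving inductively that every backward image $\tilde g_k^{*-n}\bigl(W^{+,i}(\tilde g_k^*)\bigr)$ meets $\mathbf{T}$ only inside $\mathbf{T}^i$. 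That claim is not available and is false in general: the construction of $\mathbf{T}$ only controls $\mathbf{T}\cap W^{+,i}(f)$ and $\mathbf{T}\cap W^{+,i}(f,U_P)$, i.e.\ the \emph{local} pieces, while the global manifold $W^{ss,i}(f)$ — hence the backward iterates of the local piece, already for small $n$ — may leave $U_P$ and re-enter the box $\mathbf{T}$ far from $\mathbf{T}^i$, inside the support of $\chi_k$. Once that happens, $\chi_k^{-1}$ no longer preserves $\tilde g_k^{*-n}\bigl(W^{+,i}(\tilde g_k^*)\bigr)$, the term-by-term equality of backward images breaks down, and the global equality $W^{ss,i}(f_k^*)=W^{ss,i}(\tilde g_k^*)$ you aim for need not hold. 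The paper's mechanism is different and avoids orbit analysis altogether: by \cref{r.regularity}, $W^+(f_k^*)=W^+(\tilde g_k^*)$ is, for large $k$, a \emph{regular} local strong stable manifold for both $f_k^*$ and $\tilde g_k^*$, so $W^{+,i}(\tilde g_k^*)$ and $W^{+,i}(f_k^*)$ are two compact $C^1$ $i$-disks inside the disk $W^+(f_k^*)$, transverse to $\partial W^+(f_k^*)$ and with boundaries contained in $\partial W^+(f_k^*)$; a nested pair of such proper disks must coincide, since the smaller is a compact codimension-zero submanifold of the connected larger disk with boundary inside its boundary. You mention regularity only as background and never run this argument, yet it is the whole content of the reverse inclusion.

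Your route to \cref{e.fgx} has a similar circularity. To compare before-regions via \cref{r.beforeregion} you need the delimiting spheres to agree, in particular $\partial W^{+,i_0}(f_k^*)=\partial W^{+,i_0}(h_k^*)$ — but that is part of what is being proven. Moreover the orbit step borrows from the proof of \cref{p.starsequence} the fact that two further iterates of a point of a fundamental domain land in $W^{+,i_0}_{\geq 1/2}(\tilde g_k^*)$; \cref{l.chi} gives this only for $\cD_{-1,h_k^*}$, whereas \cref{l.iterates} applied to $\xi_k=f_k^*$ delivers points of $\cD_{-1,f_k^*}$, and the identification $\cD_{-1,f_k^*}=\cD_{-1,h_k^*}$ is not known at this stage (inside \cref{c.aoeif} the analogous identification is established for the manufactured disk $W^{+_{i_0}}(f_k^*)$, not for $W^{+,i_0}(f_k^*)$). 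The paper closes \cref{e.fgx} by the very same regular-disks argument as above, applied to the nested pair $W^{+_{i_0}}(f_k^*)\subset W^{+,i_0}(f_k^*)$, whose boundaries both lie in $\partial W^+(f_k^*)$ because $\partial W^{+_{i_0}}(f_k^*)=\partial W^{+,i_0}(h_k^*)\subset \partial W^+(h_k^*)=\partial W^+(f_k^*)$. Replacing both of your reverse-inclusion arguments by this single topological step repairs the proof.
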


\begin{proof}[Proof of the claim:]
As previoulsy, for any $i\in I^*$, $\chi_k(\mathbf{T}^{i})=\mathbf{T}^{i}$ implies that that for large $k$, the local $i$-strong stable manifold $W^{+,i}(\tilde{g}_k^*)$ for $\tilde{g}_k^*$ is also a local $i$-strong stable manifold for $f_k^*$, hence 
\begin{align}
W^{+,i}(\tilde{g}_k^*)&\subset W^{+,i}(f_k^*).\label{e.fpzi}
\end{align}
On the other hand, $\partial W^{+_{i_0}}(f_k^*)=\partial W^{+,i_0}(h^*_k)$ is a subset of $\partial W^+(h^*_k)\setminus \tilde{U}_P$, hence of  $W^+(f_k^*)=W^+(\tilde{g}_k^*).$ \cref{l.ofijzeopop} implies then that 
\begin{align}
W^{+_{i_0}}(f_k^*)&\subset W^{+,i_0}(f_k^*).\label{e.fpzix}
\end{align}

As $W^+(f)$ is regular, by \cref{r.regularity}, $W^+(f_k^*)=W^+(\tilde{g}_k^*)$ is also regular for both $f_k^*$ and $\tilde{g}_k^*$, for large $k$. Hence, for large $k$, the local $i$-strong stable manifolds $W^{+,i}(\tilde{g}_k^*),W^{+,i}(f_k^*)\subset W^+(f_k^*)$ are therefore two $C^1$ $i$-disks transverse to the boundary $\partial W^+(f_k^*)$ and their boundaries $\partial W^{+,i}(\tilde{g}_k^*)$ and  $\partial W^{+,i}(f_k^*)$ lie inside $\partial W^+(f_k^*)$. Under those conditions, \cref{e.fpzi} implies \cref{e.fg}.

For the exact same reason, \cref{e.fpzix} implies \cref{e.fgx}. 
\end{proof}
\end{itemize}
In both cases, $W^+(f_k^*)$ is a sequence of local strong stable manifolds that converges to $W^+(f)$ for the $C^r$-topology and $W^{+,i}(f_k^*)=W^{+,i}(\tilde{g}_k^*)$, for all $i\in I^*$.
Finally, \cref{c.aoeif} gives:
$$W^{+,i_0}_{<0}(f_k^*)=W^{+,i_0}_{<0}(h_k^*).$$
This ends the proof of \cref{p.starsequence}.
\end{proof}

\subsection{Proof of \cref{l.chi}}\label{s.chi}

\begin{proof}[Idea of the proof:] we will compose $\tilde{g}_k^*$ by a fibrewise translation $\hat {\chi}_k$ of $\mathbf{\hat{T}}=\mathbf{A}\times\RR^{d-i_0}$, that is, a transformation that restricts into a translation of each fibre $\{\alpha\}\times\RR^{d-i_0}$ by a vector $v_\alpha\in \RR^{d-i_0}$, such that the sequence of diffeomorphisms $\hat {\chi}_k$ converges to the identity map, and so that $\bigr(\hat{\chi}_k\circ \tilde{g}_k^*\bigl)^2(\cD_{-1,h_k^*})$\footnote{This is an abuse of notation since $\hat {\chi}_k$ and $\tilde{g}_k^*$ are not diffeomorphisms of the same space. However, it will make sense, as each restricts to a diffeomorphism from some open set of $\mathbf{T}$ to another, and as for large $k$, $\cD_{-1,h_k^*}$ will indeed be in the domain of definition of the composition $\bigr(\hat{\chi}_k\circ \tilde{g}_k^*\bigl)^2$ of those restricted diffeomorphisms.} is a subset of $W^{+,i_0}_{\geq 1/2}(\tilde{g}_k^*)$. Then by a partition of unity one gets the sequence of diffeomorphisms $\chi_k$ of $M$ that we are looking for.
\end{proof}

\bigskip
\medskip

Recall that $f\bigl(\mathbf{T}_{[0,1)}\bigr)=\mathbf{T}_{[1,2)}$. Hence, there exists $0<a<1$ such that 
\begin{align}f(\mathbf{T}_{[0,a]})\subset \mathbf{T}_{[1,4/3]}\label{e.reggr}
\end{align}
For simplicity, rename the objects given by \cref{l.before and after}: 
\begin{align*}G_k=F^{i_0}_{\tilde{g}_k^*}, \quad H_k=F^{i_0}_{h_k^*}.
\end{align*}
The $C^r$ maps $G_k,H_k\colon \mathbf{A} \to [-1,1]^{d-i_0}$ tend to $0$ for the $C^r$ topology, and their graphs are $W^{+,i_0}(\tilde{g}^*_k)\cap \mathbf{T}$ and $W^{+,i_0}(h^*_k)\cap \mathbf{T}$, respectively. By \cref{e.qonqornqov}, if $i\in I^*$, then the maps $G_k,H_k$ take value in $[-1,1]^{i-i_0}\times \{0\}^{d-i}$.

By a partition of unity, we build a sequence of $C^r$-maps $\hat{\phi}_k\colon \mathbf{A}\to \RR^{d-i_0}$ that converges to $0$, and such that for large $k$, 
\begin{itemize}
\item $\phi_k = 0$ outside $\mathbf{A}_{[0,4/3)}$,
\item $\phi_k = G_k-H_K$ on $\mathbf{A}_{[a,5/4)}$, where $a$ is such that \cref{e.reggr} holds.
\item if $i\in I^*$, then $\phi_k$  takes value in $[-1,1]^{i-i_0}\times \{0\}^{d-i}$.
\end{itemize}

Write $\mathbf{\hat{\mathbf{T}}}= \mathbf{A}\times \RR^{d-i_0}$.
Let $\hat{\Phi}_k\in \Diff^r(\mathbf{\hat{\mathbf{T}}})$ be the diffeomorphism such that  the restriction of $\hat{\Phi}_k$ to each fibre $\{\alpha\}\times \RR^{d-i_0}$ is a translation by the vector $\phi_k(\alpha)\in \RR^{d-i_0}$. This is a sequence of diffeomorphisms that converges $C^r$-uniformly to $\Id_{\hat{\mathbf{T}}}$, and
\begin{align}
\hat{\Phi}_k(\mathbf{\hat{T}^i})=\mathbf{\hat{T}^i}, \quad \mbox{ for all $i\in I^*$,}\label{e.Ti}
\end{align}
where $\mathbf{\hat{T}^i}=\mathbf{A}\times \RR^{i-i_0}\times \{0\}^{d-i}$. 

Let $U_k\subset \mathbf{T}$ be the set of points $x$ such that $\tilde{g}^*_k(x)$ and $\hat{\Phi}_k\circ \tilde{g}^*_k(x)$ are in $\mathbf{T}$. We have then a well-defined diffeomorphism 
\begin{align*}
\hat{g}_k\colon \begin{cases}U_k &\to V_k\subset \mathbf{T}\\
x&\mapsto \hat{\Phi}_k\circ \tilde{g}^*_k(x)
\end{cases}.
\end{align*} 
The diffeomorphisms $f$ and $f^2$ send the fundamental domain $\cD_{-1,f}$ in the interior of $\mathbf{T}$. Hence, for large $k$, the diffeomorphisms $\tilde{g}^*_k$ and $\tilde{g}^{*2}_k$ send the fundamental domain $\cD_{-1,h_k^*}$  in the interior of $\mathbf{T}$. As the sequence  $\hat{\Phi}_k$ tends to identity, for large $k$ the set $\cD_{-1,h_k^*}$ is in the domain of definition of $\hat{g}_k$ and the composition $\hat{g}^2_k$. 

By \cref{r.conv}, $\cD_{-1,h_k^*}$ converges to $\cD_{-1,f}=\mathbf{A}_{[-1,0)}\subset \mathbf{T}$, which does not intersect the closed set $\tilde{U}_P$. 
With \cref{e.egalitezef}, we get $\tilde{g}^*_k=h_k^*$ on $\cD_{-1,h_k^*}$, for large $k$. Therefore $\cD_{-1,h_k^*}\sqcup \tilde{g}^*_k(\cD_{-1,h_k^*})=\cD_{-1,h_k^*}\sqcup \cD_{0,h_k^*}$ is a sequence of manifolds that converges to $A_{[-1,1)}$, for the $C^r$ topology. Take the image by $\hat{\Phi}_k$ and compose by $\hat{g}_k$, and we have that the sequence  $\hat{g}_k(\cD_{-1,h_k^*})\sqcup \hat{g}_k^2(\cD_{-1,h_k^*})$ converges to $A_{[0,2)}$, for the $C^r$ topology.

\begin{figure}[hbt]
\ifx\JPicScale\undefined\def\JPicScale{1}\fi
\psset{linewidth=10,dotsep=1,hatchwidth=0.3,hatchsep=1.5,shadowsize=0,dimen=middle}
\psset{dotsize=0.7 2.5,dotscale=1 1,fillcolor=black}
\psset{arrowsize=0.1 2,arrowlength=1,arrowinset=0.25,tbarsize=0.7 5,bracketlength=0.15,rbracketlength=0.15}
 \psset{xunit=.3pt,yunit=.3pt,runit=.3pt}
\begin{pspicture}(30,280)(700,860)
{\pscustom[linewidth=2,linecolor=black]
{\newpath
\moveto(680,579)
\curveto(635,529)(456,488)(280,488)
\curveto(103,488)(-4,529)(40,579)
\curveto(84,629)(263,669)(440,669)
\curveto(616,669)(724,629)(680,579)
\closepath}}

{\pscustom[linewidth=1,fillstyle=solid,fillcolor=gray,opacity=1]
{\newpath
\moveto(660,579)
\curveto(618,532)(450,494)(285,494)
\curveto(119,494)(18,532)(60,579)
\curveto(101,626)(269,664)(435,664)
\curveto(600,664)(701,626)(660,579)
\closepath}}

{\pscustom[linewidth=1,fillstyle=solid,fillcolor=white,opacity=1]
{\newpath
\moveto(170,590)
\curveto(540,737)(770,500)(245,529)
\curveto(139,535)(110,567)(170,590)
\closepath}}

{\pscustom[linewidth=1,fillstyle=solid,fillcolor=gray,opacity=1]
{\newpath
\moveto(520,579)
\curveto(497,554)(408,534)(320,534)
\curveto(231,534)(177,554)(200,579)
\curveto(222,604)(311,624)(400,624)
\curveto(488,624)(542,604)(520,579)
\closepath}}

{\pscustom[linewidth=1,fillstyle=solid,fillcolor=white,opacity=1]
{\newpath
\moveto(372,604)
\curveto(112,582)(381,523)(435,584)
\curveto(447,599)(419,608)(372,604)
\closepath}}

{\pscustom[linewidth=1,linecolor=black,dash=2 4,linestyle=dashed]
{\newpath
\moveto(560,579)
\curveto(532,548)(420,522)(310,522)
\curveto(199,522)(132,548)(160,579)
\curveto(187,610)(299,635)(410,635)
\curveto(520,635)(587,610)(560,579)
\closepath}}

{\pscustom[linewidth=1,linecolor=black,dash=2 4,linestyle=dashed]
{\newpath
\moveto(440,579)
\curveto(428,566)(384,556)(340,556)
\curveto(295,556)(268,566)(280,579)
\curveto(291,591)(335,601)(380,601)
\curveto(424,601)(451,591)(440,579)
\closepath}}

{\pscustom[linewidth=2,linecolor=black]
{\newpath
\moveto(380,579)
\curveto(377,576)(366,574)(355,574)
\curveto(343,574)(337,576)(340,579)
\curveto(342,582)(353,585)(365,585)
\curveto(376,585)(382,582)(380,579)
\closepath}}

{\pscustom[linewidth=1,linecolor=black]
{\newpath
\moveto(680,772)
\curveto(635,722)(456,682)(280,682)
\curveto(103,682)(-4,722)(40,772)
\curveto(84,822)(263,863)(440,863)
\curveto(616,863)(724,822)(680,772)
\closepath}}

{\pscustom[linewidth=1,linecolor=black]
{\newpath
\moveto(680,390)
\curveto(635,340)(456,300)(280,300)
\curveto(103,300)(-4,340)(40,390)
\curveto(84,440)(263,480)(440,480)
\curveto(616,480)(724,440)(680,390)
\closepath}}

{\pscustom[linewidth=0.1,linecolor=black]
{\newpath
\moveto(380,774)
\curveto(377,771)(366,768)(355,768)
\curveto(343,768)(337,771)(340,774)
\curveto(342,777)(353,779)(365,779)
\curveto(376,779)(382,777)(380,774)
\closepath}}

{\pscustom[linewidth=0.26,linecolor=black]
{\newpath
\moveto(380,394)
\curveto(377,391)(366,388)(355,388)
\curveto(343,388)(337,391)(340,394)
\curveto(342,397)(353,399)(365,399)
\curveto(376,399)(382,397)(380,394)
\closepath}}

{\pscustom[linewidth=0.8,linecolor=black]
{\newpath
\moveto(30,750)
\lineto(30,368)}}

{\pscustom[linewidth=0.85,linecolor=black]
{\newpath
\moveto(690,794)
\lineto(690,412)}}

{\pscustom[linewidth=0.85,linecolor=black,dash=2 4,linestyle=dashed]
{\newpath
\moveto(340,772)
\lineto(340,390)}}

{\pscustom[linewidth=0.85,linecolor=black,dash=2 4,linestyle=dashed]
{\newpath
\moveto(380,772)
\lineto(380,397)}}
\rput(85,630){$\overbrace{\quad\;\qquad}^{\mbox{\normalsize $\mathbf{A}_k^0$}}$}
\rput(202,630){$\overbrace{\quad\;\:\qquad}^{\mbox{\normalsize $\mathbf{A}_k^1$}}$}
\rput(600,615){\SMALL $\mathbf{A}_k^0\!\!\setminus \!\!\mathbf{A}_{[0,a)}$}
\rput(230,765){$\mathbf{T}_{[0,3)}$}

{\pscustom[linewidth=0.5,linecolor=black]
{\newpath
\moveto(830,440)
\lineto(480,570)}}
\rput(900,415){\small $\mathbf{A}_k^1\!\!\setminus \!\!\mathbf{A}_{[0,4/3)}$}
{\pscustom[linewidth=0.5,linecolor=black]
{\newpath
\moveto(695,550)
\lineto(850,530)}}
\rput(900,520){\small $\mathbf{T}_{\{0\}}$}
{\pscustom[linewidth=0.5,linecolor=black]
{\newpath
\moveto(385,715)
\lineto(850,755)}}
\rput(900,755){\small $\mathbf{T}_{\{3\}}$}

\end{pspicture}
\caption{$\hat{g}_k(\cD_{-1,h_k^*})\sqcup \hat{g}_k^2(\cD_{-1,h_k^*})$ is the graph of a $C^r$-map $$\hat{G}_k\colon \mathbf{A}_k^0\sqcup \mathbf{A}_k^1  \to [-1,1]^{d-i_0}.$$ That map coincides with $G_k$ on the gray area.}
\end{figure}

Let $\pi\colon \mathbf{A}\times \RR^{d-i_0}\to \mathbf{A}$ be the canonical projection.
Denote by $\mathbf{A}_k^0$ and $\mathbf{A}_k^1$ the images by $\pi$ of the sets $\hat{g}_k(\cD_{-1,h_k^*})$ and $\hat{g}^2_k(\cD_{-1,h_k^*})$, respectively.
For large $k$, those are disjoint sets and  $\hat{g}_k(\cD_{-1,h_k^*})\sqcup \hat{g}_k^2(\cD_{-1,h_k^*})$ is the graph of a map
\begin{align*}
\hat{G}_k\colon \mathbf{A}_k^0\sqcup \mathbf{A}_k^1  \to [-1,1]^{d-i_0},
\end{align*}
with $\hat{G}_k$ tending to $0$, as $k\to\infty$. 

For large $k$ $\tilde{g}^*_k(\cD_{-1,h_k^*})=h_k^*(\cD_{-1,h_k^*})=\cD_{0,h_k^*}$, and by \cref{e.qonqornqov}, is in $\mathbf{T}^i$, for all $i\in I^*$. 
As $\cD_{0,h_k^*}\to \mathbf{A}_{[0,1)}$ and $\hat{\Phi}_k\to \Id$, \cref{e.Ti} gives  $$\hat{g}_k(\cD_{-1,h_k^*})\in\mathbf{T}^i$$ for large $k$ and  all $i\in I^*$. We repeat the same arguments to prove that  $\hat{g}_k^2(\cD_{-1,h_k^*})$ is in $\mathbf{T}^i$,  for large $k$ and for all $i\in I^*$. In other words, if $i\in I^*$, then $\hat{G}_k$ takes value in $[-1,1]^{i-i_0}\times \{0\}^{d-i}$.

By construction of $\hat{\Phi}_k$, for large $k$, one has 
\begin{align}
\hat{G}_k=G_k\quad \mbox{ on $\mathbf{A}_k^0\cap \mathbf{A}_{[a,5/4)}=\mathbf{A}^0_k\setminus \mathbf{A}_{[0,a)}$. }\label{e.eziofzfozei}
\end{align}
 In other words, $\hat{g}_k(\cD_{-1,h_k^*})\setminus \mathbf{T}_{[0,a)}\subset W^{+,i_0}(\tilde{g}_k^*)$. This implies that 
\begin{align*}
\tilde{g}^*_k\bigl[\hat{g}_k(\cD_{-1,h_k^*})\setminus \mathbf{T}_{[0,a)}\bigr]&\subset W^{+,i_0}(\tilde{g}_k^*)\\
\tilde{g}_k^*\circ \hat{g}_k(\cD_{-1,h_k^*}) \setminus \mathbf{T}_{[0,4/3)}&\subset W^{+,i_0}(\tilde{g}_k^*)\quad \mbox{ for large $k$}.
\end{align*}
This last line come from $f$ sending $\mathbf{T}_{[0,a)}$ in the interior of $\mathbf{T}_{[0,4/3)}$. As $\hat{\Psi}_k=\Id$ outside $\mathbf{T}_{[0,4/3)}$, we get 
 \begin{align*}
\hat{g}^2_k(\cD_{-1,h_k^*}) \setminus \mathbf{T}_{[0,4/3)}&\subset W^{+,i_0}(\tilde{g}_k^*)\quad \mbox{ for large $k$},
\end{align*}
that is, 
\begin{align}
\hat{G}_k=G_k \quad \mbox{ on $\mathbf{A}_k^1\setminus \mathbf{A}_{[0,4/3)}$.}
\end{align}

We want now to force $\hat{g}^2_k(\cD_{-1,h_k^*})$ to lie entirely in $W^{+,i_0}(\tilde{g}_k^*)$. For this, we need to compose $\hat{g}_k$ by another diffeomorphism.

By a partition of unity, one builds a sequence of maps $\psi_k\colon \mathbf{A}\to \RR^{d-i_0}$ such that 
\begin{itemize}
\item $\psi_k=0$ outside $(\mathbf{A}_k^0\sqcup \mathbf{A}_k^1)\setminus  \mathbf{A}_{[0,a)}$,
 \item $\psi_k=G_k-\hat{G}_k$ on $(\mathbf{A}_k^0\sqcup \mathbf{A}_k^1)\setminus  \mathbf{A}_{[0,a)}$.
\item if $i\in I^*$, then $\phi_k$  takes value in $[-1,1]^{i-i_0}\times \{0\}^{d-i}$.
 \end{itemize}
Thie same way as we built $\hat{\Phi}_k$ from $\phi_k$, we build a sequence of diffeomorphisms $\hat{\Psi}_k\in \Diff^r(\hat{\mathbf{T}})$ from the sequence $\psi_k$.

Let $\hat{\chi}_k=\hat{\Psi}_k\circ \hat{\Phi}_k$. That diffeomorphism leaves invariant the sets $\mathbf{\hat{T}^i}=\mathbf{A}\times \RR^{i-i_0}\times \{0\}^{d-i}$, for all $i\in I^*$. Define
\begin{align*}
\hat{f}_k&=\hat{\chi}_k\circ \tilde{g}_k^*\\
&=\hat{\Psi}_k\circ \hat{g}_k.
\end{align*}
By its definition and by \cref{e.eziofzfozei}, $\psi_k=0$ on $\mathbf{A}_k^0$. Hence $\hat{\Psi}_k=\Id$ on $\pi^{-1}(\mathbf{A}_k^0)$, and in particular on $\hat{g}_k(\cD_{-1,h_k^*})$, by definition of  $\mathbf{A}_k^0$. Therefore, 
\begin{align}
\hat{f}_k(\cD_{-1,h_k^*})=\hat{g}_k(\cD_{-1,h_k^*}).
\end{align}
By construction, $\hat{\Psi}_k\bigl[\hat{g}^2_k(\cD_{-1,h_k^*})\bigr]\subset W^{+,i_0}(\tilde{g}_k^*)$. Hence 
$$\hat{f}^2_k(\cD_{-1,h_k^*})\subset W^{+,i_0}(\tilde{g}_k^*).$$
Note that $\hat{\chi}_k=\Id$ outside $\mathbf{A}_{[0,4/3)}\times \RR^{d-i_0}$ and that the sequence of diffeomorphisms $\hat{\chi}_k$ $C^r$-converges to $\Id$.

By considering a suitable partition of unity $1_{\mathbf{T}}=\eta+\theta$ on $\mathbf{T}$, one builds a sequence 
\begin{align*}
\chi_k\colon \begin{cases}
\mathbf{T} &\to \mathbf{T}\\
x&\mapsto \bigl[\eta(x),\theta(x)\bigr]\mbox{-barycenter of $x$ and $\hat{\chi}_k(x)$}
\end{cases}
\end{align*}
 of diffeomorphisms\footnote{these maps are diffeomorphisms for large $k$} of $T$  such that 
\begin{itemize}
\item the sequence $\chi_k$ tends to $\Id$, for the $C^r$-topology
\item $\chi_k$ coincices with $\hat{\chi}_k$ on $^{1/4}\mathbf{T}=^{1/4}\!\!\mathbf{T}_{[-2,3)}$,
\item it coincides with $\Id$ outside $^{1/2}\mathbf{T}_{[0,4/3)}$,
\item it leaves invariant the sets $\mathbf{T}^i$, for all $i\in I^*$.
\end{itemize}
Those diffeomorphisms naturally extend by $\Id$ to diffeomorphisms of $M$. For large $k$, we will have 
$$f^{*2}_k(\cD_{-1,h_k^*})=\hat{f}^2_k(\cD_{-1,h_k^*})\subset W^{+,i_0}(\tilde{g}_k^*),$$
where $f^{*}_k=\chi_k\circ \tilde{g}^*_k$. As $f^{*}_k$ tends to $f$ and $f^2(\cD_{-1,f})$ is included in the interior of $\mathbf{T}_{[1/2,3)}$, we finally get that, for large $k$, 
$$f^{*2}_k(\cD_{-1,h_k^*})\subset W^{+,i_0}_{\geq 1/2}(\tilde{g}_k^*).$$
This ends the proof of \cref{l.chi}.

\section{Examples of applications.}\label{s.consequences}

We give in this section a few consequences of  Theorem~\ref{t.mainsimplestatement}. We prove Theorem~\ref{t.realeigen} which asserts that one can perturb a saddle of large period in order to turn its eigenvalues real, while preserving its invariant manifolds semi-locally.

Wen~\cite{W1} showed that the absence of a dominated splitting of index $i$ on limit sets of periodic orbits of same index allows to create homoclinic tangencies by small perturbations. To prove it, he showed that one obtains new saddles with small stable/unstable angles by $C^1$-pertubations, but a priori without knowledge of the homoclinic class to which the new saddles belong. Here we prove Theorem~\ref{t.dichotomysimple}, which gives a dichotomy between small angles and dominated splittings within homoclinic classes. Through that result, we find another way to the main theorem of~\cite{Gou}, and the more result \cref{t.homoclinictang}.

\subsection{Dichotomy between small angles and dominated splittings.}\label{s.dichotomy}

We recall that a saddle point $P$ is {\em homoclinically related} to another $Q$ if and only if the unstable manifold of each meets somewhere transversally the stable manifold of the other. The {\em homoclinic class} of a saddle $P$ is the closure of the saddles that are homoclinically related to $P$.  The {\em eigenvalues} of a saddle $P$ are the eigenvalues of the derivative of the first return map at $P$.

\begin{definition}\label{d.dominatedsplitting}
Let $f$ be a diffeomorphism of $M$ and $K$ be a compact invariant set. A splitting $TM_{|K}=E\oplus F$ of the tangent bundle above $K$ into two $Df$-invariant vector subbundles of constant dimensions is a {\em dominated splitting} if there exists an integer $N\in \NN$ such that, for any point $x\in K$, for any unit vectors $u\in E_x$ and $v\in F_x$ in the fibers of $E$ and $F$ above $x$, respectively, one has:
$$\|Df^N(u)\|<1/2.\|Df^N(v)\|.$$  
\end{definition}
In that case, we say the splitting is {\em $N$-dominated}. The smaller the number $N$, the stronger the domination. 

Theorem~\ref{t.dichotomysimple} is a generic consequence of the following proposition (see section~\ref{s.dichotomysimple}).

\begin{proposition}
\label{p.smallanglediffeo}
Let $f$ be a diffeomorphism of $M$ and $\epsilon>0$ be a real number. There exists an integer $N\in \NN$ such that for any
\begin{itemize}
\item saddle periodic point $P$ of period $p\geq N$ such that the corresponding stable/unstable splitting is not $N$-dominated,,
\item neighbourhood $U_P$ of the orbit $\Orb_P$ of $P$, 
\item number $\varrho>0$ and families of compact sets 
\begin{align*}K_i&\subset W^{i,ss}_\varrho(P,f)\setminus \{\Orb_P\}, \quad \mbox{ for all }  i\in I\\
 L_j&\subset W^{j,uu}_\varrho(P,f)\setminus\{\Orb_P\},  \quad  \mbox{ for all } j\in J,
 \end{align*} 
 where $I$ and $J$  are the sets of the strong stable and unstable dimensions,
 \end{itemize}
there is a $C^1$-$\epsilon$-perturbation $g$ of $f$ such that
\begin{itemize}
\item  $f^{\pm 1}=g^{\pm 1}$ throughout $\Orb_P$ and outside $U_P$, 
\item the minimum stable/unstable angle for $g$ of some iterate $g^k(P)$ is less than $\epsilon$,
\item the eigenvalues of the first return map $Dg^p(P)$ are real, pairwise distinct and each of them has modulus less than $\epsilon$ or greater than $\epsilon^{-1}$,
\item  for all $(i,j)\in I\times J$, we have
 $$K_i\subset W^{ss,i}_\varrho(P,g)\mbox{ and }L_j\subset W^{uu,j}_\varrho(P,g).$$
\end{itemize}
\end{proposition}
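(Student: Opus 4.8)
The plan is to derive \cref{p.smallanglediffeo} by combining \cref{t.mainsimplestatement} with a purely linear-algebraic statement about perturbations of periodic cocycles, which is what \cref{s.pathcocycles} is meant to provide (it is the analogue of the results of \cite{BoCro,BDV,BGV} made "isotopic"). First I would fix $\epsilon>0$, and let $\epsilon_0>0$ be a much smaller constant (to be tuned at the end). The key linear input is the following: there exists $N\in\NN$ such that if $P$ is a saddle of period $p\geq N$ whose stable/unstable splitting is not $N$-dominated, then the tuple $\cA_0=(A_{1,0},\ldots,A_{p,0})$ of derivatives of $f$ along $\Orb_P$ can be joined, by a path $\{\cA_t\}_{t\in[0,1]}$ of radius less than $\epsilon_0$, to a tuple $\cA_1$ such that the product $B_1=A_{p,1}\circ\cdots\circ A_{1,1}$ has all eigenvalues real, pairwise distinct, with moduli $<\epsilon$ or $>\epsilon^{-1}$, \emph{and} such that some iterate has a stable/unstable angle $<\epsilon$. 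Crucially, the path must be chosen inside $\mathfrak{A}_{I,J}$, i.e. keeping the $i$-strong stable direction for all $i\in I$ and the $j$-strong unstable direction for all $j\in J$ alive at every time $t$; this is exactly the extra property that makes \cref{t.mainsimplestatement} applicable and is the content of (the cocycle version of) \cite[Theorem 4.1]{BoBo} quoted in the introduction. The mechanism behind it is the classical one: lack of $N$-domination over a long period lets one rotate two consecutive "weak" eigendirections into near-alignment and then spread the eigenvalue moduli, all by small steps, without disturbing the strong stable/unstable flags of dimensions in $I$ and $J$ since those correspond to eigenvalues well-separated in modulus from the ones being manipulated.

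Once this path is in hand, the proof of \cref{p.smallanglediffeo} is essentially an invocation of \cref{t.mainsimplestatement}: with $\delta=\epsilon_0+$(a little) $>$ the radius of the path, with the neighbourhood $U_P$, with $\varrho$ and the families $\{K_i\}_{i\in I}$, $\{L_j\}_{j\in J}$ as given, the theorem produces a $\delta$-perturbation $g$ of $f$ for the $C^1$ topology with $f^{\pm1}=g^{\pm1}$ throughout $\Orb_P$ and outside $U_P$, with $Dg^p(P)$ conjugate to $B_1$ (hence real, pairwise distinct eigenvalues of the prescribed moduli, and a small stable/unstable angle at the iterate coming from the construction of $\cA_1$), and with $K_i\subset W^{i,ss}_\varrho(P,g)$, $L_j\subset W^{j,uu}_\varrho(P,g)$. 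The only mismatch is that \cref{t.mainsimplestatement} measures the perturbation size via the $\mathfrak{A}$-radius of the path while \cref{p.smallanglediffeo} wants a $C^1$-$\epsilon$-perturbation of the diffeomorphism; but a $\delta$-perturbation of the derivative tuple produced on a small neighbourhood $U_P$ is a $C^1$-perturbation of controlled size, and choosing $\epsilon_0$ (hence $\delta$) small enough in terms of $\epsilon$ makes $g$ a genuine $C^1$-$\epsilon$-perturbation of $f$. One also needs the angle statement to survive the passage from the linear model to $g$: since on a neighbourhood of $\Orb_P$ the diffeomorphism $g$ is (after the linearisation step in the proof of \cref{t.mainsimplestatement}) conjugate to its linear part $\cA_1$, the stable and unstable directions of $g$ at the relevant iterate are exactly the strong stable/unstable eigenspaces of $B_1$, so the small-angle property is inherited verbatim.

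I would organise the write-up as: (1) state and prove (or cite from \cref{s.pathcocycles}) the linear lemma producing the path $\{\cA_t\}\subset\mathfrak{A}_{I,J}$ of small radius with the desired endpoint $\cA_1$, for periods $p\geq N$ without $N$-domination; (2) apply \cref{t.mainsimplestatement} to this path with $\delta$ slightly larger than its radius; (3) check that the conclusions of \cref{t.mainsimplestatement} translate into the four bullet points of \cref{p.smallanglediffeo}, adjusting $\epsilon_0$ so that the $\mathfrak{A}$-$\delta$-perturbation on $U_P$ is a $C^1$-$\epsilon$-perturbation, and reading off the angle and eigenvalue data from $\cA_1$ via the local conjugacy to the linear part. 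The main obstacle is step (1): one must carry out the long-period non-domination $\Rightarrow$ small-angle-and-spread-eigenvalues perturbation (a Bochi--Bonatti--Ma\~n\'e-type argument) while simultaneously keeping \emph{all} the strong stable/unstable directions indexed by $I$ and $J$ present throughout the isotopy, which is the delicate point that distinguishes the isotopic statement from the classical Franks-type one; everything else is bookkeeping and a careful choice of constants. I expect this to be handled by induction on the number of "eigenvalue blocks" one needs to modify, treating at each step only the pair of consecutive moduli closest together (which, by failure of $N$-domination, can be brought arbitrarily close by a short path), and noting that the indices in $I\cup J$ separate the blocks being touched from the strong flags being preserved.
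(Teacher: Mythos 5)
Your plan is essentially the paper's own proof: build a small-radius path in $\mathfrak{A}_{I,J}$ — in the paper obtained by concatenating the paths of Propositions~\ref{p.realeigen}, \ref{p.hugeeigen} and \ref{p.smallangle}, which preserve the relevant moduli gaps (hence keep the strong stable/unstable directions of all dimensions in $I\cup J$ alive at every time, and keep the non-domination needed for the angle step) — and then apply \cref{t.mainsimplestatement}, whose conclusions directly give the $C^1$-$\epsilon$ bound, the prescribed derivative along $\Orb_P$ (hence the eigenvalue data and the small stable/unstable angle, which depend only on $Dg$ along the orbit, no local conjugacy needed), and the inclusion of the $K_i$ and $L_j$ in the size-$\varrho$ invariant manifolds. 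The only caveat is that the small-angle part of your linear step is not covered by \cite[Theorem 4.1]{BoBo} (which concerns moduli/Lyapunov data only); it is exactly what \cref{p.smallangle} of \cref{s.pathcocycles} supplies, via the non-domination induction you sketch.
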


The proof of Proposition~\ref{p.smallanglediffeo} is postponed until section~\ref{s.pathcocycles}.

Theorem 4.3 in~\cite{Gou} states that if the stable/unstable dominated splitting along a saddle is weak enough, then one may find a $C^1$-perturbation that creates a homoclinic tangency related to that saddle, while preserving a finite number of points in the strong stable/unstable manifolds of that saddle. During the process, the derivative of that saddle may have been modified. The technique introduced in this paper allows to create a tangency while preserving the derivative.

Indeed, under the hypothesis that there is a weak stable/unstable dominated splitting for some saddle $P$, one creates a small stable/unstable angle and pairwise distinct real eigenvalues of moduli less than $1/2$ or greater than $2$, after changing the derivative by  application of Theorem~\ref{t.mainsimplestatement} with some path $\cA_t$ of derivatives (see the proof of Proposition~\ref{p.smallanglediffeo} in section~\ref{s.pathcocycles}). Applying the techniques of the proof of ~\cite[Proposition~5.1]{Gou}, one finds another small $C^1$-perturbation on an arbitrarily small neighbourhood of $P$ that creates a tangency between its stable and unstable manifolds, without modifying the dynamics on a (smaller) neighbourhood of the orbit of $P$. That perturbation can be done preserving any preliminarily fixed finite set inside the strong stable or unstable manifolds of $P$. Then one may come back to the initial derivative applying again Theorem~\ref{t.mainsimplestatement} with the backwards path $\cA_{1-t}$. This sums up into:
 
\begin{theorem}
\label{t.homoclinictang}
Let $f$ be a diffeomorphism of $M$ and $\epsilon>0$ be a real number. There exists an integer $N\in \NN$ such that  if $P$ is a saddle point of period greater than $N$ and its corresponding stable/unstable splitting is not $N$-dominated, if $U_P$ is a neighbourhood of the orbit of $\Orb_P$ and $\Gamma\subset M$ is a finite set, then
\begin{itemize}
\item  there is a $C^1$ $\epsilon$-perturbation $g$ of $f$ such that $f^{\pm 1}=g^{\pm 1}$ throughout $\Orb_P$ and outside $U_P$, and such that the saddle $P$ admits a homoclinic tangency inside $U$ for $g$.
\item  the derivatives $Df$ and $Dg$ coincide along the orbit of $P$, 
\item for each $x\in \Gamma$, if $x$ is in the strong stable (resp. unstable) manifold of dimension $i$ of $\Orb_P$ for $f$, then $x$ is also the strong stable (resp. unstable) manifold of dimension $i$ of $\Orb_P$ for $g$.
\end{itemize}
\end{theorem}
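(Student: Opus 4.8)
\textbf{Plan of proof of \cref{t.homoclinictang}.} The statement combines three ingredients: (1) the isotopic Franks' lemma \cref{t.mainsimplestatement} to move the derivative along $\Orb_P$ to a convenient tuple while keeping the prescribed finite set in the appropriate strong invariant manifolds; (2) the angle/eigenvalue perturbation \cref{p.smallanglediffeo} (equivalently its path version used in \cref{s.pathcocycles}) to arrange a small stable/unstable angle together with pairwise distinct real eigenvalues whose moduli are very small or very large; and (3) the tangency‑creating perturbation of \cite[Proposition~5.1]{Gou}, which produces a homoclinic tangency inside an arbitrarily small neighborhood of $\Orb_P$ without changing the dynamics near $\Orb_P$ and while preserving any preassigned finite subset of the strong stable/unstable manifolds. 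The idea is to sandwich the tangency‑creating step between a forward and a backward application of \cref{t.mainsimplestatement} along a suitable path $\cA_t$, so that the net effect on $Df$ along $\Orb_P$ is trivial.

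First I would fix $\epsilon>0$ and choose $N$ large, depending only on $f$ and $\epsilon$, so that \cref{p.smallanglediffeo} (more precisely the path‑of‑cocycles construction of \cref{s.pathcocycles}) applies with tolerance $\epsilon/3$: if $P$ has period $p\geq N$ and its stable/unstable splitting is not $N$‑dominated, there is a path $\{\cA_t\}_{t\in[0,1]}$ of $p$‑tuples of linear maps, starting at $\cA_0=Df|_{\Orb_P}$, of radius less than $\epsilon/3$, such that $\cA_1$ has pairwise distinct real eigenvalues of moduli $<\epsilon$ or $>\epsilon^{-1}$ and a small stable/unstable angle, and such that along the whole path the sets $I,J$ of strong stable/unstable dimensions of the first‑return product are constant (this is exactly what makes the path usable in \cref{t.mainsimplestatement}). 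Split the given finite set $\Gamma$ into the part $\Gamma_i$ lying in each $W^{i,ss}(P,f)$ and the part in each $W^{j,uu}(P,f)$; choosing $\varrho>0$ large enough, each such piece sits in $W^{i,ss}_\varrho(P,f)\setminus\{\Orb_P\}$, resp. $W^{j,uu}_\varrho(P,f)\setminus\{\Orb_P\}$, and these play the roles of the compact sets $K_i,L_j$ in \cref{t.mainsimplestatement}. Apply \cref{t.mainsimplestatement} to the path $\cA_t$, with neighborhood a small $U_1\subset U_P$ whose closure misses $\Gamma$, obtaining an $\epsilon/3$‑perturbation $g_1$ with $g_1^{\pm1}=f^{\pm1}$ on $\Orb_P\cup(M\setminus U_1)$, with $Dg_1|_{\Orb_P}$ corresponding to $\cA_1$, and with each piece of $\Gamma$ still in the corresponding $W^{i,ss}_\varrho(P,g_1)$ or $W^{j,uu}_\varrho(P,g_1)$.

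Next, since $g_1$ now has a saddle at $P$ with small stable/unstable angle and real eigenvalues of extreme moduli, I would invoke the construction in the proof of \cite[Proposition~5.1]{Gou}: on an arbitrarily small neighborhood $U_2$ of $\Orb_P$ (chosen inside $U_P$, with $\cl(U_2)$ disjoint from $\Gamma$ and small enough that the dynamics on a still smaller neighborhood of $\Orb_P$ is untouched), there is a $C^1$ $\epsilon/3$‑perturbation $g_2$ of $g_1$, equal to $g_1$ on $\Orb_P$ and outside $U_2$ and \emph{on a neighborhood of $\Orb_P$} (so $Dg_2|_{\Orb_P}=Dg_1|_{\Orb_P}=\cA_1$), creating a homoclinic tangency of $P$ inside $U_P$, and preserving the finite set $\Gamma$ inside the strong stable/unstable manifolds of the prescribed dimensions — this preservation clause is the content one imports from \cite{Gou}, using that near $\Orb_P$ the maps are linear and the fundamental domains containing $\Gamma$ lie outside $U_2$. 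Finally I would undo the derivative change: apply \cref{t.mainsimplestatement} once more, now to $g_2$ with the reversed path $t\mapsto\cA_{1-t}$ (which has the same radius $<\epsilon/3$, the same $I,J$, and starts at $Dg_2|_{\Orb_P}=\cA_1$), with a neighborhood $U_3$ of $\Orb_P$ so small that $\cl(U_3)$ misses both $\Gamma$ and the (finitely many) points realizing the tangency, obtaining $g=g_3$ with $Dg|_{\Orb_P}=\cA_0=Df|_{\Orb_P}$, with $g^{\pm1}=g_2^{\pm1}$ on $\Orb_P\cup(M\setminus U_3)$ — hence the tangency survives and $g^{\pm1}=f^{\pm1}$ on $\Orb_P\cup(M\setminus U_P)$ — and with $\Gamma$ still in the right strong invariant manifolds. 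The three perturbations being each of size $<\epsilon/3$ and supported in $U_P$, $g$ is the desired $\epsilon$‑perturbation.

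The main obstacle is bookkeeping rather than any new idea: one must make sure (a) the three supporting neighborhoods $U_1,U_2,U_3$ are nested and chosen in an order that keeps $\Gamma$ and the tangency orbit outside whichever set is currently being perturbed, so that each preservation clause genuinely transfers from one step to the next; and (b) that the tangency produced at the $g_2$‑stage actually persists under the final derivative‑restoring perturbation, which requires knowing the tangency occurs at points whose orbits stay away from $\Orb_P$ — this is where one needs the precise statement that the perturbation of \cite[Proposition~5.1]{Gou} leaves the dynamics unchanged on a small neighborhood of the orbit, together with the freedom in \cref{t.mainsimplestatement} to take $U_P$ (here $U_3$) as small as desired. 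A secondary subtlety is checking that the path $\cA_t$ furnished by \cref{s.pathcocycles} really does have constant strong stable/unstable dimension sets along $[0,1]$, so that \cref{t.mainsimplestatement} applies with the full $\Gamma$; this is guaranteed by the construction there (and, a posteriori, by \cite[Theorem~4.1]{BoBo}), but it should be cited explicitly.
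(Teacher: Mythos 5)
Your proposal follows essentially the same route as the paper's own (sketched) argument: apply \cref{t.mainsimplestatement} along the path of cocycles produced in \cref{s.pathcocycles} to get real eigenvalues of extreme moduli and a small stable/unstable angle, create the tangency by the techniques of the proof of \cite[Proposition~5.1]{Gou} on a small neighbourhood without touching the dynamics near $\Orb_P$ and preserving the finite set, and then apply \cref{t.mainsimplestatement} again with the backwards path $\cA_{1-t}$ to restore the derivative. The only quibble is that the radius of the reversed path is measured from $\cA_1$, so it is only bounded by twice the radius of the original path rather than equal to it -- a harmless adjustment of constants in your $\epsilon/3$ bookkeeping.
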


\subsection{Proof of Theorem~\ref{t.dichotomysimple}}\label{s.dichotomysimple}
Fix $p\in \NN\setminus{0}$ and $\epsilon>0$. 
Let $\cS_{p,\epsilon}$ be the set of diffeomorphisms $f$ such that for any periodic saddle point $P$ of period $p$, if the homoclinic class of $P$ has no dominated splitting of same index as $P$, then there is a saddle $Q$ in the homoclinic class of $P$ with same index as $P$ that has a minimum stable/unstable angle less than $\epsilon$ and pairwise distinct real eigenvalues of moduli less than $\epsilon$ or greater than $\epsilon^{-1}$.

\begin{lemma}\label{l.smallangles2}
For all $p\in \NN\setminus{0}$ and $\epsilon>0$, the set $\cS_{p,\epsilon}$ contains an open and dense set in $\Diff^1(M)$. 
\end{lemma}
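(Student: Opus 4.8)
The goal is to show that $\cS_{p,\epsilon}$ contains an open and dense subset of $\Diff^1(M)$. The strategy is the standard "local perturbation + semicontinuity" machine for $C^1$-generic dynamics: one first identifies why $\cS_{p,\epsilon}$ contains a \emph{dense} set, using Proposition~\ref{p.smallanglediffeo} together with the standard tools (closing lemma, connecting lemma, and the fact that $C^1$-generically index-$p$ periodic saddles abound), and then one shows that the \emph{interior} of $\cS_{p,\epsilon}$ is itself dense by an "open dense within dense" argument. Concretely, I would first pass to a convenient residual (or at least dense $G_\delta$) set of Kupka--Smale diffeomorphisms where all periodic orbits are hyperbolic and the dimension of the manifold is large enough that homoclinic classes and their dominated splittings behave well; on this set, for a fixed period $p$, there are only finitely many periodic orbits of period $p$, and each depends continuously on $f$ over a small neighborhood.

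\textbf{Density.} Fix $f$ and a periodic saddle $P$ of period $p$ whose homoclinic class $H(P,f)$ carries no dominated splitting of index $\mathrm{ind}(P)$. The absence of a dominated splitting of index $\mathrm{ind}(P)$ on $H(P,f)$ means (by the standard characterization of dominated splittings on periodic orbits, e.g.\ via Bochi--Bonatti--D\'iaz type arguments, or directly: a dominated splitting on $H(P,f)$ would restrict to one on the periodic orbits it contains with uniform constant) that there exist periodic saddles $Q$ homoclinically related to $P$, with the same index, of arbitrarily large period and with arbitrarily weak stable/unstable dominated splitting along their orbit. This is exactly the input needed to apply Proposition~\ref{p.smallanglediffeo}: choosing such a $Q$ of period $\geq N$ whose stable/unstable splitting is not $N$-dominated, we obtain a $C^1$-$\epsilon'$-small perturbation $g$, supported in an arbitrarily small neighborhood $U_Q$ of $\Orb_Q$ disjoint from $P$ and from chosen compact pieces of the stable/unstable manifolds realizing the transverse intersections $W^u(P)\pitchfork W^s(Q)$ and $W^u(Q)\pitchfork W^s(P)$, such that $g$ coincides with $f$ on $\Orb_Q$ and outside $U_Q$, the minimum stable/unstable angle at some iterate of $Q$ is $<\epsilon$, and the eigenvalues of $Dg^{\mathrm{per}(Q)}$ are real, pairwise distinct, of modulus $<\epsilon$ or $>\epsilon^{-1}$. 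The clause "$f^{\pm1}=g^{\pm1}$ throughout $\Orb_Q$ and outside $U_P$", together with the fact that the perturbation preserves the relevant compact sets inside the strong stable/unstable manifolds (hence inside $W^s(P,g)$, $W^u(P,g)$, $W^s(Q,g)$, $W^u(Q,g)$), guarantees that the transverse homoclinic intersections between $P$ and $Q$ survive, so $Q$ remains homoclinically related to $P$ for $g$ and thus $Q\in H(P,g)$ with $\mathrm{ind}(Q)=\mathrm{ind}(P)$. Since also the orbit of $P$ and its transverse homoclinic orbit are unaffected, $P$ is still a saddle of period $p$ for $g$ with $H(P,g)\supseteq H(P,f)$ still lacking a dominated splitting of index $\mathrm{ind}(P)$ (domination is a closed condition and would descend). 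One repeats this for each of the finitely many period-$p$ saddles $P$ of $f$ whose class is non-dominated, with pairwise disjoint supports, and obtains $g\in\cS_{p,\epsilon}$ arbitrarily $C^1$-close to $f$. Hence $\cS_{p,\epsilon}$ is dense.

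\textbf{Openness of a dense subset.} The conclusion one extracts for each $P$ — "there is a saddle $Q$ of index $\mathrm{ind}(P)$, homoclinically related to $P$, with minimum stable/unstable angle $<\epsilon$ and with eigenvalues real, pairwise distinct, of modulus $<\epsilon$ or $>\epsilon^{-1}$" — is a \emph{robust} (open) property of the pair $(f,P)$: a hyperbolic periodic saddle $Q$ has a hyperbolic continuation $Q_g$ for $g$ near $f$, the transverse homoclinic intersections with $P_g$ persist by transversality, and the angle and eigenvalue conditions are open conditions satisfied by $Q_g$ for $g$ near $f$. The only subtlety is the quantifier "\emph{for any} period-$p$ saddle $P$ such that $H(P,f)$ has no dominated splitting of index $\mathrm{ind}(P)$": as $f$ varies, $H(P_g,g)$ may \emph{acquire} a dominated splitting (domination is not an open-dense property but it is "lower-semicontinuous" in the right sense — the set of $f$ for which $H(P_f,f)$ \emph{does} have an $N$-dominated splitting of index $\mathrm{ind}(P)$ is open for each fixed $N$). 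This is precisely what makes the argument work: if $H(P_f,f)$ has no dominated splitting of index $\mathrm{ind}(P)$ of any $N$, then either $H(P_g,g)$ acquires such a splitting for nearby $g$ — in which case the dichotomy's first alternative holds and there is nothing to prove — or it does not, in which case $Q_g$ continues to witness the second alternative. More carefully: I would define $\cU_{p,\epsilon}$ as the set of $f$ such that for every period-$p$ saddle $P$ of $f$, \emph{either} $H(P,f)$ admits a dominated splitting of index $\mathrm{ind}(P)$ \emph{or} there exists a saddle $Q$ as above; one checks $\cU_{p,\epsilon}$ is open (finiteness of period-$p$ orbits on the Kupka--Smale set, continuation, openness of both alternatives after possibly passing to a dominated splitting of a fixed finite order $N$ that appears robustly), and $\cU_{p,\epsilon}\subseteq\cS_{p,\epsilon}$, and $\cU_{p,\epsilon}$ is dense by the density argument above applied inside the open dense Kupka--Smale set.

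\textbf{Main obstacle.} The delicate point is the interplay between the openness of "having a dominated splitting of index $\mathrm{ind}(P)$ on $H(P,\cdot)$" and the density argument: one must phrase the dichotomy so that acquiring a dominated splitting is harmless, and one must be careful that $H(P,g)\supseteq H(P,f)$ after the perturbation (so that non-domination persists when it is supposed to) and that the newly created saddle $Q$ genuinely lies in the homoclinic class — i.e.\ that the perturbation of Proposition~\ref{p.smallanglediffeo} did not break the homoclinic relation. This is exactly where the semi-local preservation of strong stable/unstable manifolds (the compact sets $K_i,L_j$ controlling the pieces that realize the transverse homoclinic orbits) is essential, and is the technical heart of the argument; the rest is routine $C^1$-genericity bookkeeping. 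Finally, once Lemma~\ref{l.smallangles2} is in hand, Theorem~\ref{t.dichotomysimple} follows by taking $\cR=\bigcap_{p,n}\mathrm{Int}(\cS_{p,1/n})$ over all $p\in\NN\setminus\{0\}$ and $n\in\NN$, intersected with the Kupka--Smale residual set, which is residual by Baire.
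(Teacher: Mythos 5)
Your density mechanism is essentially the paper's: on the Kupka--Smale open set where the finitely many period-$p$ saddles admit continuations, if $H(P_k(g),g)$ has no dominated splitting of index $\mathrm{ind}(P_k)$, then for every $N$ it contains a saddle of the same index, homoclinically related to $P_k$, of large period and with non-$N$-dominated stable/unstable splitting, and \cref{p.smallanglediffeo} yields an arbitrarily small perturbation creating the small angle and the eigenvalue conditions while the preserved compact sets $K_i,L_j$ keep the homoclinic relation to $P_k$. Where your argument breaks down is the openness step. You rely on the claim that, for fixed $N$, the set of $g$ such that $H(P_k(g),g)$ admits an $N$-dominated splitting of index $\mathrm{ind}(P_k)$ is open, and you use it to argue that the set $\cU_{p,\epsilon}$ (which is essentially $\cS_{p,\epsilon}$ itself restricted to Kupka--Smale diffeomorphisms) is open. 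This claim is unjustified and false in general: homoclinic classes vary only lower semicontinuously with the diffeomorphism, so an arbitrarily small $C^1$-perturbation can make the class explode (for instance merge with another class), and the enlarged class need not carry any dominated splitting; hence ``the class of $P_k$ is dominated'' is not a robust property, and $\cS_{p,\epsilon}$ itself need not be open --- which is precisely why the lemma only asserts that it \emph{contains} an open and dense set.

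The paper avoids any semicontinuity statement about domination by a purely topological device, carried out saddle by saddle. Fix $P_k$ and let $\Delta\subset\cU_f$ be the set of $g$ for which $H(P_k(g),g)$ has no dominated splitting of the right index, and $\Delta_\epsilon$ the set where a witness saddle $Q$ (related to $P_k$, small angle, real pairwise distinct eigenvalues of moduli $<\epsilon$ or $>\epsilon^{-1}$) exists; $\Delta_\epsilon$ is open because the witness, its transverse homoclinic intersections with $P_k$, and the strict inequalities persist under perturbation. The perturbation argument shows exactly that $\Delta\subset\cl(\Delta_\epsilon)$, so $\cV_k=\Delta_\epsilon\cup\bigl(\cU_f\setminus\cl(\Delta_\epsilon)\bigr)$ is open and dense, and it is contained in the dichotomy set because its second piece lies in $\cU_f\setminus\Delta$, i.e.\ where the dominated splitting does exist; no openness of the domination alternative is ever needed. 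Intersecting the finitely many $\cV_k$ and taking the union over $f$ in the Kupka--Smale residual set gives the lemma. A secondary weakness of your write-up is the density step itself: you perturb simultaneously near several witnesses and claim the resulting $g$ lies in $\cS_{p,\epsilon}$, but a class that was dominated for $f$ could lose domination for $g$ and then have no witness; the paper's per-saddle formulation (density of each $\cV_k$ separately, followed by a finite intersection of open dense sets) removes this difficulty as well.
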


\begin{proof}[Proof of Theorem~\ref{t.dichotomysimple}:]
Take the residual set $\displaystyle \cR=\bigcap_{p,n\in \NN}\cS_{p,\frac{1}{n+1}}.$
\end{proof}

\begin{proof}[Proof of Lemma~\ref{l.smallangles2}:]
By the Kupka-Smale Theorem, there is a residual set $\cR$ of diffeomorphism whose periodic points are all hyperbolic, and consequently that have a finite number of periodic points of period $p$. 
Let $f\in \cR$.   Let $P_1,...,P_l$ be the saddle points of period $p$ for $f$. For all $g$ in some neighborhood $\cU_f$ of $f$, the saddle points of period $p$ for $g$ are the continuations $P_1(g),...,P_l(g)$ of the saddles $P_1,...,P_l$. 

\begin{claim}
For all $1\leq k\leq l$, there is an open and dense subset $\cV_k$ of $\cU_f$ such that, for all $g\in \cV_k$, the homoclinic class of the continuation of $P_k(g)$ either admits a dominated splitting of same index as $P_k$, or contains a saddle of same index as $P_k$ that has a minimum stable/unstable angle less than $\epsilon$ and pairwise distinct real eigenvalues of moduli less than $\epsilon$ or greater than $\epsilon^{-1}$.
\end{claim}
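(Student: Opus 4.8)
The strategy is a standard "open $=$ interior of dense, then take the interior" argument, feeding off the perturbative input of Proposition~\ref{p.smallanglediffeo} (equivalently Theorem~\ref{t.dichotomysimple}'s Proposition) together with the $C^1$-generic theory of homoclinic classes. Fix $k$ and work inside $\cU_f$, where the saddle $P_k(g)$ varies continuously and has a well-defined homoclinic class $H(P_k(g),g)$. First I would reduce to producing a \emph{dense} subset of $\cU_f$ with the stated alternative, and then show the alternative is robust enough that one may pass to the interior to get $\cV_k$ open and dense; the "dominated splitting" branch is open by the standard fact that an $N$-dominated splitting on a compact invariant set persists under $C^1$-perturbation and the homoclinic class varies lower-semicontinuously, while the "small angle $+$ real distinct eigenvalues of extreme moduli" branch is open because all three conditions are open conditions on the periodic point $Q$, and $Q$ remaining homoclinically related to $P_k$ is open (transverse intersections persist).

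For density, take $g\in \cU_f$ arbitrary and suppose $H(P_k(g),g)$ does \emph{not} admit a dominated splitting of index $\dim W^s(P_k)$; I must then $C^1$-perturb to land in the second branch. Here I would invoke the $C^1$-generic machinery: by Bonatti--Crovisier type results (or directly via Hayashi's connecting lemma and the results of~\cite{BoCro,Cr1}), the homoclinic class of $P_k$ is the closure of the saddles homoclinically related to it, and the absence of a dominated splitting of the relevant index forces, after an arbitrarily small perturbation, the existence of periodic saddles homoclinically related to $P_k$ with arbitrarily long period whose stable/unstable splitting along the orbit is \emph{not} $N$-dominated, for the threshold $N=N(f,\epsilon)$ furnished by Proposition~\ref{p.smallanglediffeo}. (This is essentially Wen's argument~\cite{W1}: non-domination on the set of periodic orbits of index $i$ homoclinically related to $P_k$ yields long weak periodic orbits.) Pick such a saddle $Q_0$ of period $p_0\geq N$ with non-$N$-dominated stable/unstable splitting; apply Proposition~\ref{p.smallanglediffeo} to $Q_0$ with a neighborhood $U_{Q_0}$ chosen small enough that the perturbation $g'$ it produces does not destroy a fixed finite collection of transverse homoclinic intersections realizing the homoclinic relation between $Q_0$ and $P_k$. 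The output $g'$ is $\epsilon$-close to $g$, has $Q_0$ still homoclinically related to $P_k(g')$ (since $g'=g$ outside $U_{Q_0}$ and the chosen transverse intersections survive), and the first-return derivative at $Q_0$ now has a minimum stable/unstable angle $<\epsilon$ and pairwise distinct real eigenvalues each of modulus $<\epsilon$ or $>\epsilon^{-1}$. Thus $g'\in \cS_{p,\epsilon}$-locally at $P_k$, so density holds.

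Finally I would assemble $\cV_k$ as the interior of this dense set, using the two openness observations above, and then note $\cV_1\cap\cdots\cap\cV_l$ is open and dense in $\cU_f$ and contained in $\cS_{p,\epsilon}$, and that $\cU_f$ ranges over a dense open family as $f$ ranges over the Kupka--Smale residual set, which gives the lemma. \textbf{The main obstacle} I expect is the density step: one must verify carefully that the absence of a dominated splitting of the \emph{precise} index $\dim W^s(P_k)$ on the \emph{homoclinic class} (not merely on a single orbit) can be converted, by a small perturbation and the connecting lemma, into an actual long periodic saddle homoclinically related to $P_k$ with a weak stable/unstable splitting to which Proposition~\ref{p.smallanglediffeo} applies — keeping track that the index is preserved and that the homoclinic relation is not broken by the subsequent perturbation at $Q_0$. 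Controlling the locality of the perturbation at $Q_0$ (so that a preassigned finite set of transverse homoclinic points is untouched) is exactly what the "$f^{\pm1}=g^{\pm1}$ outside $U_P$" clause of Proposition~\ref{p.smallanglediffeo} is designed to give, so this is mechanical once the generic non-domination statement is in hand.
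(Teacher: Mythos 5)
Your density step is essentially the paper's argument: pick, for the threshold $N$ of Proposition~\ref{p.smallanglediffeo}, a saddle of large period homoclinically related to $P_k(g)$ whose stable/unstable splitting is not $N$-dominated, and apply the proposition with a support $U_{Q}$ so small that a finite set of transverse homoclinic intersections with $W^{s/u}(P_k(g))$ survives, so the new saddle with small angle and extreme real eigenvalues stays in the class. (One remark: no connecting-lemma perturbation is needed to produce $Q$ — since a uniform $N$-dominated splitting over the saddles homoclinically related to $P_k(g)$ would extend to their closure, i.e.\ to the homoclinic class, the absence of domination on the class already yields, for every $N$, a related saddle whose orbit is not $N$-dominated; only the large-period requirement needs a small extra argument, which the paper also glosses over.)

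The genuine gap is in your openness step for the first branch. The condition ``$H(P_k(g),g)$ admits a dominated splitting of index $\dim W^s(P_k)$'' is \emph{not} open: robustness of dominated splittings only applies to invariant sets that stay in a fixed neighborhood of the original class, so you would need $g\mapsto H(P_k(g),g)$ to be \emph{upper} semicontinuous, whereas what holds (persistence of transverse homoclinic points) is \emph{lower} semicontinuity — exactly the wrong direction. Homoclinic classes can explode under arbitrarily small $C^1$-perturbations, and the enlarged class may carry no dominated splitting of that index. Hence the set $D$ of diffeomorphisms satisfying the dichotomy need not be open, and with $\cV_k=\Int(D)$ you can no longer justify density. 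The paper's proof is designed to avoid this: it sets $\Delta=\{g:\ \mbox{no dominated splitting of that index on } H(P_k(g),g)\}$ and lets $\Delta_\epsilon$ be the open set where the second alternative holds with a saddle homoclinically related to $P_k(g)$; your density argument shows $\Delta\subset\cl(\Delta_\epsilon)$, i.e.\ $\cU_f=\Delta^c\cup\cl(\Delta_\epsilon)$, and then one takes $\cV_k=\bigl[\Delta^c\setminus\cl(\Delta_\epsilon)\bigr]\cup\Delta_\epsilon=\bigl[\cU_f\setminus\cl(\Delta_\epsilon)\bigr]\cup\Delta_\epsilon$, which is open and dense for purely topological reasons and is contained in $D$ — no openness of the dominated-splitting branch is ever needed. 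You should replace your interior-of-$D$ construction by this (or an equivalent) argument.
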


\begin{proof} Let $\Delta\subset \cU_f$ be the set of diffeomorphisms such that the homoclinic class of the continuation $P_k(g)$ does not admit a dominated splitting of same index as $P_k$, and let $\Delta_\epsilon\subset \cU_f$ be the open set of diffeomorphisms such that that homoclinic class contains a saddle of same index as $P_k$ that has a stable/unstable angle strictly less than $\epsilon$ and pairwise distinct real eigenvalues of moduli less than $\epsilon$ or greater than $\epsilon^{-1}$. Let $f\in \Delta$. 

Obviously, the homoclinic class of $P_k(f)$ cannot be reduced to $P_k(f)$. 
For any $N\in \NN$, there is a periodic point $Q_N$ in that homoclinic class that has same index as $P_k$, that has period greater than $N$, and such that the stable/unstable splitting above the orbit of $Q_N$ is not $N$-dominated. 
By Proposition~\ref{p.smallanglediffeo}, there is an arbitrarily small perturbation of $g$  that turns  
 the minimum stable/unstable angle of some iterate of some $Q_N$ to be strictly less than $\epsilon$, and that turns  the eigenvalues of that $Q_N$ to be real, with pairwise distinct with moduli less than $\epsilon$ or greater than $\epsilon^{-1}$, while preserving the dynamics and preserving any previously fixed pair of compact sets $K^u, K^s$ (that do not intersect $\Orb_{Q_N}$) in the stable and unstable manifolds of $Q_N$. In particular, one can do that perturbation preserving the homoclinic relation between $Q_N$ and $P_k(g)$: one finds an arbitrartily small perturbation of $g\in \Delta$ in $\Delta_\epsilon$.

Thus $\Delta^c\cup \cl(\Delta_\epsilon)=\cU_f$, where $\Delta^c=\cU_f\setminus \Delta$. As a consequence, $\Delta^c\setminus \cl(\Delta_\epsilon)$ is open and $$\cV_k=\bigl[\Delta^c\setminus \cl(\Delta_\epsilon)\bigr]\cup \Delta_\epsilon$$ satisfies all the conclusions of the claim.   
\end{proof}
The intersection $\cV_f=\cap_{1\leq k\leq  l} \cV_k$ is an open and dense subset of $\cU_f$ and is included in $\cS_{p,\epsilon}$. The union of such $\cV_f$ is an open and dense subset of $\Diff^1(M)$ contained in $\cS_{p,\epsilon}$. This ends the proof of the Lemma.
\end{proof}

\subsection{Linear cocycles and dominated splittings.} Here we recall notations and tools from~\cite{BDP} and~\cite{BGV}.
Let $\pi\colon E \to \cB$ be a vector bundle of dimension $d$ above a compact base $\cB$ such that, for any point $x\in \cB$, the fiber $E_x$ above $x$ is a $d$-dimensional vector space endowed with a Euclidean metric $\|.\|$. One identifies each $x\in\cB$ with the zero of the corresponding fiber $E_x$. A {\em linear cocycle $\cA$ } on $E$ is a bijection of $E$ that sends each fiber $E_x$ on a fiber by a linear isomorphism. We say that $\cA$ is {\em bounded} by $C>1$, if for any unit vector $v\in E$, we have $C^{-1}<\|\cA(v)\|<C$.

In the following, a {\em subbundle} $F\subset E$, is a vector bundle with same base $\cB$ as $E$ such that, for all $x,y\in \cB$, the fibers $F_x$ and $F_y$ have same dimension. One defines then the {\em quotient vector bundle} $E/F$  as the bundle of base $\cB$ such that the fiber $\left(E/F\right)_x$ above $x$ is the set $\{e_x+F_x,e_x\in E_x\}$ of affine subspaces of $E_x$ directed by $F_x$. The bundle $F$ is endowed with the restricted metric $\|.\|_F$ and the norm of any element $e_x+F_x$ of $E/F$ is defined by the minimum of the norms of the vectors of  $e_x+F_x$. If $G$ is another subbundle of $E$, then one defines the vector subbundle $G/F\subset E/F$ as the image of $G$ by the canonical projection $E \to E/F$.

If $F$ is a subbundle invariant for the linear cocycle $\cA$ (that is, $\cA(F)=F$), 
 then $\cA$ induces canonically a {\em restricted cocycle} $\cA_{|F}$, and a {\em quotient cocycle} $\cA_{/F}$ defined on the quotient $E/F$ by $\cA_{/F}(e_x+F_x)=\cA(e_x+F_x)=\cA(e_x)+F_{\cA(x)}$. If $G$ is another invariant subbundle, then $G/F$ is an invariant subbundle for $\cA_{/F}$.
 
 \begin{remark}
 If $\cA$ is bounded by some constant $C>1$, then so are the restriction $\cA_{|F}$ and the quotient $\cA_{/F}$.
 \end{remark}

We use the natural notions of transverse subbundles and direct sum of transverse subbundles.
The following definition generalizes the definition given in the previous section for diffeomorphisms.
Let $\cA$ be a linear cocycle on a bundle $E$, and let $E=F\oplus G$ a splitting into two subbundles invariant by $\cA$. It is a  {\em dominated splitting} if and only if  there exists $N$ such that, for any point $x \in \cB$, for any unit vectors $u \in F_x$, $v\in G_x$ in the tangent fiber above $x$, we have $$\|\cA^N(u)\|<1/2.\|\cA^N(v)\|.$$
Given such $N$, one says that the splitting  $F\oplus G$ is {\em $N$-dominated}. The strength of a dominated splitting is given by the minimum of such $N$. The bigger that minimum, the weaker the domination. 

\subsection{Isotopic perturbation results on cocycles.}\label{s.pathcocycles}

A few perturbation results on cocycles are proved in~\cite{BGV} and~\cite{Gou}. Here we want to show that these perturbations can actually be reached through isotopies of cocycles that satisfy good properties, namely properties that will put us under the assumptions of Theorem~\ref{t.mainsimplestatement}.
 
To any tuple $(A_1,...,A_p)$ of matrices of $GL(d,\RR)$ one canonically associates the linear cocycle $\cA$ on the bundle $\cE=\{1,...,p\}\times \RR^d$ that sends the $i$-th fiber on the $(i+1)$-th fiber by the linear map of matrix $A_i$, and that sends the $p$-th fiber on the first by $A_p$.  The we say that $\cA$ is a {\em saddle cocycle} if and only if all the moduli of the eigenvalues of the product $A_p  ... A_1$ are different from $1$, and if there are some that are greater than $1$ and others that are less than $1$.  The splitting $\cE=E^s\oplus E^u$ into the stable bundle $E^s$ and the unstable one $E^u$ is called the {\em stable/unstable splitting}.

\medskip

Notice that Theorem~\ref{t.realeigen} is a straightforward consequence of Theorem~\ref{t.mainsimplestatement} and the following proposition about getting real eigenvalues:
\begin{proposition}\label{p.realeigen}
Let $\epsilon>0$, $C>1$ and $d\in \NN$. There exists an integer $N\in \NN$ such that,  for  any $p\geq N$ and any tuple $(A_1,...,A_p)$ of matrices in $GL(d,\RR)$, all bounded by $C$ (i.e. $\|A_i\|,\|A_i^{-1}\|<C$), it holds: 

\medskip

there is a path $\bigl\{\cA_{t}=(A_{1,t},\ldots,A_{p,t})\bigr\}_{t\in [0,1]}$ in $GL(d,\RR)^p$ such that
\begin{itemize}
\item $\cA_0=(A_1,...,A_p)$.
\item The radius of the path $\cA_t$ is less than $\epsilon$, that is,
$$\max_{1\leq n\leq p\atop t\in[0,1]}\left\{\|A_{n,t}-A_{n,0}\|, \|A^{-1}_{n,t}-A^{-1}_{n,0}\|\right\}<\epsilon.$$ 
\item  For all $t\in [0,1]$, the moduli of the eigenvalues of the product $B_t=A_{p,t}A_{p-1,t}...A_{1,t}$ (counted with multiplicity) coincide with the moduli of those of $B_0$ and the eigenvalues of $B_1$ are real.
\end{itemize}
 \end{proposition}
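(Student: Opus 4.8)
The idea is to realize the passage from $\cA_0$ to a cocycle with real eigenvalues through a \emph{finite concatenation of elementary isotopies}, each of bounded radius, arranged so that the cumulative radius stays below $\epsilon$ provided the period $p$ is large enough. The two standard ingredients are: first, that any product $B_0=A_p\cdots A_1$ of $C$-bounded matrices can have its complex eigenvalues "rotated into real ones" two at a time by a rotation localized on a short block of consecutive indices; second, that such a localized modification, when spread over a block of length $L$, costs a per-step perturbation of size $O(1/L)$ (after a preliminary bounded change of coordinates making the relevant two-dimensional invariant subspace coordinate-adapted). Since the number of complex-conjugate pairs to eliminate is at most $d/2$, and each can be given its own disjoint block of length $\sim p/d$, the total radius is $O(d/p)$, which is $<\epsilon$ for $p\geq N=N(\epsilon,C,d)$. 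The fact that we produce an actual \emph{path} (not just a perturbation) is automatic: each elementary step is itself defined by a continuous one-parameter family (turn on the localized rotation continuously), and concatenating finitely many paths gives a path.

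\textbf{Key steps, in order.} First I would recall the linear-algebra normal form: after a bounded-cost base change, the product $B_0$ is block-triangular with $1\times 1$ real blocks and $2\times 2$ blocks corresponding to complex-conjugate pairs $\rho e^{\pm i\theta}$; this base change is realized by modifying, say, only $A_1$, with a controlled norm cost depending only on $C$ and $d$ (the eigenvalue data of a $C$-bounded product lives in a compact set, so the conjugating matrices and their inverses are uniformly bounded). Second, for one such $2\times 2$ block I would build a one-parameter family of cocycles supported on a block $\{n_0,\ldots,n_0+L-1\}$ of consecutive indices that multiplies, in that plane, the product by a rotation of angle $-s\theta$ for $s\in[0,1]$, distributing the rotation as $L$ equal small rotations of angle $s\theta/L$ inserted between the $A_n$'s; the per-matrix perturbation is then $\|A_{n,s}-A_n\|=O(|\theta|\,C/L)=O(C/L)$ uniformly in $s$ and $n$, and the modulus $\rho$ of the eigenvalue pair is preserved throughout (a pure rotation changes neither determinant restricted to that plane nor the modulus of the eigenvalues), while at $s=1$ the pair has become a double real eigenvalue $\rho$, which a further arbitrarily small perturbation can split into two distinct reals $\rho(1\pm\eta)$ without changing the moduli "in aggregate" — or more cleanly, one arranges the rotation to land slightly past $\theta$ on a block where the two real exponents are already separated, so distinctness is built in. Third, I would assign pairwise disjoint blocks to the (at most $\lfloor d/2\rfloor$) complex pairs — possible since $p\geq N$ gives each block length $L\geq \lfloor p/d\rfloor$ — so the elementary isotopies have disjoint supports, commute, and their concatenation is a single path from $\cA_0$ to a cocycle $\cA_1$ whose product $B_1$ is conjugate (via the fixed bounded base change) to a matrix with all-real eigenvalues; finally undo the base change by the reverse bounded path or simply absorb it, checking that the stated radius bound $\max\{\|A_{n,t}-A_{n,0}\|,\|A_{n,t}^{-1}-A_{n,0}^{-1}\|\}<\epsilon$ holds for $N$ chosen so that $C\cdot d/N$ and the base-change cost spread over enough indices are both $<\epsilon$.

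\textbf{Main obstacle.} The genuinely delicate point is the \emph{uniformity} of all constants in $C$ and $d$ only, independent of the particular tuple and of $p$. In particular: (i) the base-change matrices that put $B_0$ in real normal form must be uniformly bounded, which requires knowing the eigenvalues of a $C$-bounded product stay in a fixed compact annulus of $\CC^\times$ and that one can choose the conjugation to depend semicontinuously (hence boundedly) on $B_0$ — one must be slightly careful near coincidences of eigenvalue moduli, where the normal form is not continuous, and handle these by a preliminary arbitrarily small generic perturbation separating all moduli; (ii) the bounded base change, realized by altering one matrix $A_1$ to $SA_1$ (or $A_1S^{-1}$) with $\|S^{\pm1}\|\leq C'(C,d)$, contributes a \emph{non-small} perturbation at that single index unless one also spreads it — so in fact one should distribute the conjugation as well, over its own block of length $\sim p/d$, which again costs $O(C'/L)$ per step. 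Once one accepts that every "bounded cost" can be traded for a long-block "$O(1/L)$ cost", the argument is a bookkeeping of finitely many disjoint blocks; writing that bookkeeping cleanly, and stating precisely the elementary "rotation lemma" with its norm estimate, is where the real work lies. I expect the elementary $2\times 2$ rotation-spreading lemma and the compactness argument for the normal form to be the two technical lemmas that must be proved in detail; everything else is concatenation.
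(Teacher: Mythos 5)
Your reduction to $2\times 2$ blocks via a real normal form of $B_0$ is close in spirit to the paper's reduction (the paper passes to a $2$-dimensional invariant subbundle of the cocycle, writes the cocycle block-triangularly in adapted orthonormal frames, perturbs only the diagonal $2\times 2$ blocks, and iterates at most $d/2$ times). The genuine gap is your elementary ``rotation-spreading lemma'', which carries the whole argument and is false as stated. If you replace $A_n$ by $R_{s\theta/L}A_n$ (in coordinates adapted to the invariant plane) for $n$ in a block of length $L$, the new product is \emph{not} the old product composed with a rotation of angle $s\theta$: the inserted rotations do not commute with the $A_n$'s, and rewriting each insertion as a correction of the total product conjugates it by the partial products $A_{n}\cdots A_1$, whose condition numbers can grow like $C^{n}$. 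Hence a per-step perturbation of size $O(1/L)$ does not yield a prescribed $O(1)$ correction of the product, and the same objection defeats your plan of spreading the bounded base change over a block at cost $O(C'/L)$ per index. This non-commutativity/distortion issue is exactly the real difficulty; the paper does not fight it by bookkeeping but reduces (after normalizing by isometries and scalings, and discarding the trivial negative-determinant case) to the $SL(2,\RR)$ setting and invokes the Bonatti--Crovisier rotation lemma (\cite[Lemme 6.6]{BoCro}, restated as \cref{l.dim2}): for $p$ large, composing \emph{each} of the $p$ matrices with a rotation of angle less than $\epsilon$ can force the product to have real eigenvalues. That lemma is a genuinely dynamical statement, not a distribute-a-rotation computation, and it is the actual content behind the largeness of $N$.

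There is also a problem with modulus preservation along your path. The paper gets it for free by working in $SL(2,\RR)$ (while the eigenvalues are non-real they are complex conjugate, of constant modulus equal to the square root of the determinant) and by stopping the isotopy $A_{i,t}=R_{t\alpha_i}\circ A_i$ at the \emph{first} time $t_0$ at which the product has real eigenvalues, where they form a double real eigenvalue of the same modulus. Your final step of splitting the double eigenvalue into $\rho(1\pm\eta)$ is not permitted: the statement requires the moduli of the eigenvalues of $B_t$, counted with multiplicity, to coincide with those of $B_0$ for \emph{every} $t$, including $t=1$; distinctness of the moduli is not part of this proposition (it is arranged later, by a separate arbitrarily small path, before applying \cref{p.hugeeigen}). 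Moreover, your justification that the modulus is preserved ``because a pure rotation preserves it'' presupposes the incorrect net-rotation computation above, so even the complex-phase part of your path is not actually controlled.
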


We state a second Proposition about reaching through an isotopy eigenvalues that all have moduli less than $\epsilon$ or more than $\epsilon^{-1}$. 

\begin{proposition}\label{p.hugeeigen}
Let $\epsilon>0$, $C>1$ and $d\in \NN$. There exists an integer $N\in \NN$ such that, for  any $p\geq N$ and any tuple $(A_1,...,A_p)$ of matrices in $GL(d,\RR)$, all bounded by $C$, if the moduli of the eigenvalues of the product $\prod A_k$ are pairwise distinct, then it holds: 

\medskip

there is a path $\bigl\{\cA_{t}=(A_{1,t},\ldots,A_{p,t})\bigr\}_{t\in [0,1]}$ in $GL(d,\RR)^p$ such that
\begin{itemize}
\item $\cA_0=(A_1,...,A_p)$.
\item The radius of the path $\cA_t$ is less than $\epsilon$, 
\item For all $t\in [0,1]$, the moduli of the eigenvalues of  $B_t=A_{p,t}...A_{1,t}$ are pairwise distinct and different from $1$ and the eigenvalues of $B_1$ have moduli less than $\epsilon$ or greater than $\epsilon^{-1}$.
\end{itemize}
 \end{proposition}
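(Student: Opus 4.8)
\textbf{Plan of proof of \cref{p.hugeeigen}.}

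The plan is to reduce the statement to a reachability assertion about the tuple of log-moduli of the eigenvalues of the return map, and then to realise the required motion of that tuple by an explicit concatenation of ``block rescalings''. First I would record that, since the $d$ eigenvalues of $B_0=A_p\cdots A_1$ have pairwise distinct moduli, none is complex (a non-real eigenvalue shares its modulus with its conjugate); thus they are real and simple, $\mu_1,\dots,\mu_d$ with $|\mu_1|<\dots<|\mu_d|$, and at most one of them — equal to $+1$ or $-1$ — has modulus $1$. An $\epsilon/3$-small perturbation of $A_1$ (a scalar close to $1$ on the relevant eigenline) removes it from the unit circle, after which $|\mu_1|<\dots<|\mu_\ell|<1<|\mu_{\ell+1}|<\dots<|\mu_d|$ for some $\ell$. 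Put $s_i=\log|\mu_i|$ and fix the target vector $s'\in\RR^d$ with $s'_i=2(\ell-i+1)\log\epsilon$ for $i\le\ell$ and $s'_i=-2(i-\ell)\log\epsilon$ for $i>\ell$: it satisfies $s'_1<\dots<s'_\ell<0<s'_{\ell+1}<\dots<s'_d$ and $|s'_i|\ge 2\log(1/\epsilon)$, so a cocycle with these log-moduli meets the final-time requirement of \cref{p.hugeeigen}. Crucially, both $s$ and $s'$ lie in the open convex cone $\cC=\{x\in\RR^d: x_1<\dots<x_\ell<0<x_{\ell+1}<\dots<x_d\}$, so I will connect them by the straight segment, which stays in $\cC$. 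On a radius-$\epsilon$ path every $A_{n,t}$ stays bounded by $C+\epsilon$, hence $\|B_t\|,\|B_t^{-1}\|\le(C+\epsilon)^p$; this forces $p$ to be of order at least $2d\log(1/\epsilon)/\log(C+\epsilon)$, and, together with the extra slack discussed below, determines the integer $N$.

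Next I would describe the elementary move. Because $B_0$ has simple real eigenvalues, its eigenlines transport around the periodic orbit to an $\cA_0$-invariant splitting of the bundle into line bundles $L_1,\dots,L_d$, with $L_i$ carrying $\mu_i$. Given a block $S$ of indices and small $\eta$, post-composing $\cA_0$ at each of the $p$ transitions with the linear map that is multiplication by $1+t\eta$ on the fibre of $\bigoplus_{i\in S}L_i$ and the identity on a complementary fibre, $t\in[0,1]$, gives a path of cocycles along which every $L_i$ stays invariant, the flag is unchanged, and the return eigenvalue on $L_i$ is multiplied by $(1+t\eta)^p$ if $i\in S$ and left equal to $\mu_i$ otherwise. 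Letting $S$ run over the initial blocks $\{1,\dots,m\}$ and the final blocks $\{m,\dots,d\}$, the resulting increments of $s$ span $\RR^d$, so by concatenating many such small moves I can make the tuple of return log-moduli follow any prescribed piecewise-linear path; running this along the segment from $s$ to $s'$, which lies in $\cC$, produces a path of cocycles whose $d$ return eigenvalues are at all times real with log-moduli in $\cC$ — hence pairwise distinct and all $\neq1$ — and end at $s'$. At every time the cocycle differs from $\cA_0$ only by a rescaling in the $L_i$-directions with ratios within $\exp(\pm 2d\log(1/\epsilon)/p)$ of $1$, so its operator distance to $\cA_0$ is $O(\log(1/\epsilon)/(p\,\alpha^{c}))$ for a constant $c=c(d)$, where $\alpha$ is the least angle, over the orbit, between the sub-bundles $\bigoplus_{i\le m}L_i$ and $\bigoplus_{i>m}L_i$; thus, if $p$ is large and $\alpha$ is bounded below by a constant depending only on $C,d$, the whole path has radius $<\epsilon$ while the displacement $s\to s'$ of size $O(d\log(1/\epsilon))$ is realised. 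Concatenated with the unit-circle-clearing step this settles that case, and I would note that in the application \cref{p.realeigen} is applied first, so entering this proposition with distinct real moduli is legitimate.

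The hard part will be controlling the angle $\alpha$ when the transported flag of $\cA_0$ is \emph{not} uniformly transverse — which may happen when $\cA_0$ has no dominated splitting of controlled strength, and then makes a single fixed-effect rescaling fail to be $C^0$-small. This is precisely the difficulty analysed in \cite{BGV} (see also \cite{BDP,Gou}), and the plan is to invoke it: by a preliminary path of radius tending to $0$ as $N\to\infty$ — opening the finitely many ``pinched'' angles by small perturbations that only create or strengthen domination and never reverse an order of moduli, or equivalently conjugating the periodic cocycle by a coordinate change orthonormalising the transported flag and hiding a bounded number of bad steps in a short interior segment — one reaches a cocycle whose transported flag is uniformly transverse, at the cost of a controlled extra drift of $s$ that I simply add to the budget. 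After that the construction of the second paragraph applies (with $\alpha$ now a fixed positive constant, since the block rescalings do not move the $L_i$). It then remains only to choose $N$ large enough that this preliminary path and all the elementary moves have radius $<\epsilon$ and that $s'$ lies well inside $[-p\log(C+\epsilon),\,p\log(C+\epsilon)]^d\cap\cC$. The whole delicacy is in that preliminary angle-correction, and this is exactly where the largeness of the period is indispensable.
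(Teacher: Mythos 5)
Your construction hinges on rescaling along the transported eigenline bundles $L_1,\dots,L_d$, and, as you yourself note, the operator norm of such a fibred rescaling is governed by the angles between $\bigoplus_{i\in S}L_i$ and its complement at \emph{every} point of the orbit. The step you propose to dispose of this — a ``preliminary angle-correction'' path of radius tending to $0$ with $N$, after which the transported flag is uniformly transverse with $\alpha$ bounded below by a constant depending only on $C$ and $d$ — is not proved, and it is not something you can simply invoke from \cite{BGV} or \cite{BDP}: those references perturb to \emph{create} small angles or to detect lack of domination, not to produce uniform lower bounds on the angles of an invariant flag along the whole orbit. Producing such bounds by small isotopic perturbations while keeping track of the moduli is essentially the (substantially harder) content of Bochi--Bonatti type arguments, not a remark; without it your estimate $O\bigl(\log(1/\epsilon)/(p\,\alpha^{c})\bigr)$ has no force, and the proof has a genuine gap exactly at the point you flag as ``the hard part''. (A minor additional point: if $B_0$ has an eigenvalue of modulus $1$, no path can satisfy the stated conclusions, since they are required at $t=0$; so this case is excluded by hypothesis rather than repaired by a preliminary perturbation.)

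The difficulty is in fact avoidable, and this is what the paper does: it never uses the eigenline decomposition, only the stable/unstable splitting $\cE=E^s\oplus E^u$, and it writes each matrix in an \emph{orthonormal} basis adapted to $E^s$, completed by $E^{s\perp}$ (not by $E^u$), so that the cocycle becomes block upper-triangular $\left(\begin{smallmatrix}A_{|E^s,i}& B\\ 0& A^{\perp}_{E^s,i}\end{smallmatrix}\right)$. Multiplying the diagonal block $A_{|E^s,i}$ by $t^{1/p}$ is then a perturbation of size of order $C\,|t^{1/p}-1|$ (and similarly for the inverses), \emph{independently of any angle} between $E^s$ and $E^u$; it keeps $E^s$ invariant, multiplies the stable eigenvalues by $t$ and leaves the unstable moduli (read off the quotient block) untouched, so the moduli stay pairwise distinct and away from $1$ along the path. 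Running $t$ from $1$ to $\epsilon$, then doing the symmetric rescaling on the unstable block, and taking $p\geq N(\epsilon,C)$ so that $|\epsilon^{\pm 1/p}-1|$ is small enough, gives the whole statement in two lines — no transversalization, no log-moduli bookkeeping, and no need for the eigenvalues to be followed individually.
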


The third one is about obtaining a small angle in the absence of dominated splitting:

\begin{proposition}\label{p.smallangle}
Let $\epsilon>0$, $C>1$ and $d\in \NN$. There exists an integer $N\in \NN$ such that, for any $p\geq N$ and any tuple $(A_1,...,A_p)$ of matrices in $GL(d,\RR)$, all bounded by $C$, it holds:
\begin{itemize}
\item if the linear cocycle associated to it is a saddle cocycle such that its stable/unstable splitting is not $N$-dominated, 
\item if the eigenvalues of the product $A_p\times ...\times A_1$ are all real, 
\end{itemize}
there is a path $\bigl\{\cA_{t}=(A_{1,t},\ldots,A_{p,t})\bigr\}_{t\in [0,1]}$ in $GL(d,\RR)^p$ such that
\begin{itemize}
\item $\cA_0=(A_1,...,A_p)$.
\item The radius of the path $\cA_t$ is less than $\epsilon$, 
\item  For all $t\in [0,1]$, the eigenvalues of $B_t=A_{p,t}...A_{1,t}$ (counted with multiplicity) are equal to those of $B_0$.
\item The stable/unstable splitting of the cocycle associated to  $\cA_1$ has a minimum angle less than $\epsilon$.
\end{itemize}
\end{proposition}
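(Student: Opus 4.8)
The plan is to reduce Proposition~\ref{p.smallangle} to a statement about the quotient cocycle on the two-dimensional bundle corresponding to the weakest pair of stable/unstable exponents, and there to rotate that weak plane so that the stable and unstable directions get close. First I would recall the standard fact (from \cite{BDP,BGV}) that a saddle cocycle whose stable/unstable splitting is not $N$-dominated with $N$ large and whose period $p\geq N$ contains, after passing to a suitable power/return, a "weakly dominated" two-dimensional sub-cocycle: concretely, there is an $\cA$-invariant flag $E^s=E^{s,-}\oplus E^{s,1}$ and $E^u=E^{u,1}\oplus E^{u,+}$ where $E^{s,1}$ (resp. $E^{u,1}$) is the one-dimensional space of the weakest contracting (resp. expanding) exponent, and the splitting $E^{s,1}\oplus E^{u,1}$ inside the rank-$2$ quotient $E^{s,1}\oplus E^{u,1}$ is \emph{not} $N'$-dominated, for some $N'$ that tends to infinity with $N$. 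The point is that along a long orbit segment the ratio of the norms of the weak stable and weak unstable vectors fails to expand, so there is a time $n_0$ where the two directions $\cA^{n_0}(E^{s,1})$ and $\cA^{n_0}(E^{u,1})$ are "comparable in size" in the sense needed below.

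Next I would perform the perturbation fiberwise at that time $n_0$. Working in a Euclidean chart of the fiber $\cE_{n_0}$ where $\cE_{n_0}=E^{s,1}_{n_0}\oplus E^{u,1}_{n_0}\oplus(\text{rest})$, I would compose $A_{n_0}$ with a rotation $R_\theta$ in the weak plane that tilts the image of $E^{u,1}$ towards $E^{s,1}$, interpolating $R_{t\theta}$, $t\in[0,1]$, to get the path. The size of $R_\theta-\Id$ can be made $<\epsilon$ because, by the lack of domination, $\theta$ need only be polynomially small (bounded in terms of $d$ and $C$ but \emph{not} in terms of $p$) to force the resulting angle between the propagated directions to drop below $\epsilon$ — this is the key quantitative estimate, essentially the "small angles from no domination" mechanism of \cite{PuSam,W1}, and it is exactly the place where $N$ large is used. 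Throughout the interpolation the product $B_t=A_{p,t}\cdots A_{1,t}$ is conjugate to $B_0$ (a rotation conjugates the return map by an orthogonal transformation only if it fixes the remaining splitting; more carefully, one arranges $R_{t\theta}$ to be supported in the plane and checks that the eigenvalues, counted with multiplicity, are unchanged because the perturbation amounts to pre/post-composing the product by something that does not alter the characteristic polynomial — this requires a short linear-algebra argument, e.g. choosing the rotation so that it is a commutator-type conjugacy, or directly tracking that the weak $2\times2$ block keeps the same determinant and trace).

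The main obstacle I expect is the simultaneous control of two constraints: keeping \emph{all} eigenvalues of $B_t$ equal to those of $B_0$ for every $t$ (not merely their moduli), while genuinely decreasing the geometric angle. A pure rotation of the weak plane generally changes the return map's eigenvalues, so the honest construction is to conjugate the entire cocycle by a path of rotations $Q_t$ supported near time $n_0$ — i.e. replace $A_{n_0,t}=Q_t A_{n_0}$ and $A_{n_0+1,t}=A_{n_0+1}Q_t^{-1}$ — which leaves $B_t$ \emph{equal} to $B_0$ (not just conjugate, if $n_0\neq p$) and hence trivially preserves eigenvalues, but then the decrease in angle must come from the fact that conjugation by $Q_t$ moves the \emph{fixed} reference splitting while the invariant stable/unstable bundles move differently; unwinding this correctly is the delicate bookkeeping. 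One then has to verify that the radius of the combined path stays below $\epsilon$, using again that $\|Q_t-\Id\|$ need only be small independently of $p$, and that the two matrices $A_{n_0,t}$, $A_{n_0+1,t}$ that are modified are the only ones affected, all others being bounded by $C$ so that $\|A_{n,t}-A_{n,0}\|=\|A_{n,t}^{-1}-A_{n,0}^{-1}\|=0$ for $n\notin\{n_0,n_0+1\}$.

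Finally I would assemble the pieces: fix $\epsilon,C,d$; choose $N$ large enough that the quantitative "no $N$-domination $\Rightarrow$ existence of a time $n_0$ with comparable weak directions and a size-$<\epsilon$ rotation closing the angle to $<\epsilon$" estimate holds; given any $p\geq N$ and any tuple of $C$-bounded matrices with real eigenvalues forming a non-$N$-dominated saddle cocycle, extract $n_0$ and $Q_t$ as above, set $\cA_t$ to be the cocycle conjugated by $Q_t$ near time $n_0$, and check the four bullet points directly. The hypothesis that the eigenvalues of $A_p\cdots A_1$ are all real is used to guarantee that the stable and unstable bundles are genuinely spanned by eigendirections, so that "weakest exponent" picks out honest invariant lines $E^{s,1},E^{u,1}$ and the angle between them is the quantity we are shrinking.
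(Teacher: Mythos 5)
Your second step fails: the eigenvalue-preserving mechanism you settle on cannot shrink the angle at all. If you set $A_{n_0,t}=Q_tA_{n_0}$ and $A_{n_0+1,t}=A_{n_0+1}Q_t^{-1}$, the cocycle $\cA_t$ is globally conjugate to $\cA_0$ by the bundle automorphism equal to $Q_t$ on the fiber at time $n_0+1$ and to the identity on every other fiber. The stable/unstable bundles of $\cA_t$ are then exactly the images of those of $\cA_0$ under this automorphism; since $Q_t$ is an isometry, the angle between them at the modified fiber is unchanged, and at all other fibers (in particular at the base fiber, where $B_t=B_0$ acts) nothing moves. So the minimum stable/unstable angle of $\cA_1$ equals that of $\cA_0$: there is no "delicate bookkeeping" that makes the invariant bundles move differently from the reference splitting, because under conjugation they move by precisely the conjugacy. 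Your first variant (a single rotation $R_\theta$ in the weak plane, not compensated) does tilt the bundles, but then the eigenvalues of $B_t$ are genuinely altered, and the hoped-for trace/determinant argument does not keep the full characteristic polynomial fixed. The paper resolves exactly this tension in the opposite way: it runs the angle-shrinking perturbation of~\cite[Lemma~7.10]{BDV} as an isotopy along which the two invariant lines persist, accepts that each eigenvalue drifts slightly, and restores it at every time $t$ by dilating or contracting normally to the other eigendirection (which preserves that other eigenvalue), rather than insisting that $B_t$ be literally constant.

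The reduction step is also not available. It is not a standard fact, and it is false in general, that failure of $N$-domination of $E^s\oplus E^u$ forces the two-dimensional cocycle spanned by the weakest stable and weakest unstable eigendirections to be non-$N'$-dominated: the defect of domination can be carried by strong stable or strong unstable vectors with bad finite-time behavior while the two weak lines dominate each other perfectly well. This is precisely why the paper does not localize the non-domination in a preferred plane but argues by induction on $d$ via \cref{l.bdp4.4} (\cite[Lemma~4.4]{BDP}): choosing a proper invariant subbundle $F\subset E^s$ (this is where the real-eigenvalue hypothesis enters), the lack of domination passes either to the restriction to $F\oplus E^u$ or to the quotient $\cA_{/F}$ --- one does not get to choose which --- and the induction hypothesis is applied there, the resulting path being extended to the full cocycle in block-triangular coordinates, with the angle controlled by monotonicity under restriction and by the inequality $\angle(\Delta,\Lambda)\leq\angle(\Delta/\Gamma,\Lambda/\Gamma)$ in the quotient case. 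Without this dichotomy (or an equivalent substitute) and without a genuinely non-conjugating perturbation in the two-dimensional base case, the proposal does not prove the proposition.
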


\begin{proof}[Proof of Proposition~\ref{p.smallanglediffeo}:] Since it poses no difficulty, we only sketch it. One first applies Proposition~\ref{p.realeigen} to obtain a path that joins the cocycle corresponding to the derivative $Df_{|\Orb_P}$ along the orbit $\Orb_P$ of $P$ to a cocycle such that its eigenvalues are all real. Then adding an arbitrarily small path, one may suppose that the moduli of these eigenvalues are pairwise distinct. With Proposition~\ref{p.hugeeigen}, we prolong that path to obtain eigenvalues that have moduli less than $\epsilon$ or greater than $\epsilon^{-1}$. Remember that a weak dominated splitting remains a weak dominated splitting after perturbation, if it still exists. 
Hence, we can use Proposition~\ref{p.smallangle} to get a small angle. This provides us a path of small radius that joins the initial derivative to a cocycle that has all wanted properties. One finally applies Theorem~\ref{t.mainsimplestatement} to conclude the proof.
\end{proof}

\subsubsection{Proof of Proposition~\ref{p.realeigen}.} 

\begin{proof}[The dimension $d=2$ case: ]
First notice that, if the determinant of the product $A_{p}...A_{1}$ is negative, then the eigenvalues are already real and we are done. 

If not, one finds a $p$-periodic sequence of isometries $J_n$ of $\RR^2$, and a sequence of integers $C^{-1}\leq \lambda_n<C$, such that the matrix $\hat{A}_n=\lambda_n.J_nA_nJ_{n+1}^{-1}$ has determinant $1$. Note that the product $\hat{A}_{p}...\hat{A}_{1}$ has real eigenvalues if and only if the product $A_{p}...A_{1}$ has real eigenvalues. 

Assume we have a path $\hat{\cA}_t=(\hat{A}_{1,t},...,\hat{A}_{p,t})$ of diameter less than $\hat{\epsilon}=C^{-1}\epsilon$, such that it holds
\begin{itemize}
\item  $\hat{\cA}_0=(\hat{A}_1,...,\hat{A}_p)$,
\item for all $t\in [0,1]$, the moduli of the eigenvalues of the product $\hat{B}_t=\hat{A}_{p,t}\hat{A}_{p-1,t}...\hat{A}_{1,t}$ coincide with the moduli of those of $\hat{B}_0$,
\item the eigenvalues of $\hat{B}_1$ are real.
\end{itemize}
Then the path $\cA_t=(A_{1,t},...,A_{p,t})$, where $A_{n,t}=\lambda_n^{-1}.J_n^{-1}\hat{A}_{n,t}J_{n+1}$, clearly satisfies all the conclusions of \cref{p.realeigen}. 
Therefore, it is enough to solve \cref{p.realeigen} for the $A_n\in SL(2,\RR)$ case. \cite[lemme 6.6]{BoCro} easily answers that case:

\begin{lemma}[Bonatti, Crovisier]\label{l.dim2}
For any $\varepsilon>0$, there exists $N(\varepsilon) \geq 1$ such that, for any integer $p \geq N(\varepsilon)$ and any finite sequence $A_1,...,A_p$ of elements in $SL(2,\RR)$, there exists a sequence $\alpha_1,...,\alpha_p$ in $]-\varepsilon,\varepsilon[$ such that the following assertion holds:

for any $i \in \{1,...,p\}$ if we denote by $B_i=R_{\alpha_i} \circ A_i$ the composition of $A_i$ with the rotation $R_{\alpha_i}$ of angle $\alpha_i$, then the matrix $B_p \circ B_{p-1} \circ \cdots \circ B_1$ has real eigenvalues.
\end{lemma}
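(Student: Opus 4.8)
The plan is to reduce the statement to the dynamics of the projective action of $SL(2,\RR)$ on the circle. Recall that a matrix $A\in SL(2,\RR)$ has real eigenvalues if and only if $|\operatorname{tr}A|\geq 2$, if and only if its M\"obius action $\Phi_A$ on the projective line $\RR P^{1}\simeq\RR/\pi\ZZ$ has a fixed point, if and only if the rotation number of $\Phi_A$ (a well-defined element of $\RR/\pi\ZZ$, as $\Phi_A$ is an orientation-preserving circle homeomorphism) vanishes. Since the rotation $R_\alpha$ acts on $\RR P^{1}$ as the translation by $\alpha$, it will suffice to find $\alpha_1,\dots,\alpha_p\in\,]-\varepsilon,\varepsilon[\,$ such that the circle map $\Psi_\alpha=T_{\alpha_p}\Phi_{A_p}\circ\cdots\circ T_{\alpha_1}\Phi_{A_1}$, where $T_t$ is the translation by $t$, has rotation number $0$.

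I would then lift everything to the universal cover $\RR$ of $\RR P^{1}$: fix lifts $\widetilde\Phi_i\colon\RR\to\RR$ of $\Phi_{A_i}$, so $\widetilde\Phi_i(x+\pi)=\widetilde\Phi_i(x)+\pi$, and for $\alpha=(\alpha_1,\dots,\alpha_p)$ put $\widetilde\Psi_\alpha=(\widetilde\Phi_p+\alpha_p)\circ\cdots\circ(\widetilde\Phi_1+\alpha_1)$, with translation number $\tau(\alpha)\in\RR$. Three soft facts about $\tau$ are available: it is continuous; it is non-decreasing in each $\alpha_i$ (raising $\alpha_i$ raises $\widetilde\Psi_\alpha$ pointwise); and, since each $\widetilde\Phi_j$ commutes with $x\mapsto x+\pi$, adding $\pi$ to one coordinate $\alpha_i$ adds the constant $\pi$ to $\widetilde\Psi_\alpha$, whence $\tau(\alpha+\pi e_i)=\tau(\alpha)+\pi$ for each $i$, and $\tau(\alpha+\pi\mathbf 1)=\tau(\alpha)+p\pi$. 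The goal becomes: produce $\alpha^{\ast}\in\,]-\varepsilon,\varepsilon[^{\,p}$ with $\tau(\alpha^{\ast})\in\pi\ZZ$, since such an $\alpha^{\ast}$ makes $\Psi_{\alpha^{\ast}}$ have rotation number $0$, hence a fixed direction, hence $B_p\cdots B_1$ has real eigenvalues.

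The hard part is to show that for $p$ at least some $N(\varepsilon)$ the interval $\tau\big(\,]-\tfrac\varepsilon2,\tfrac\varepsilon2[^{\,p}\big)$ has length at least $\pi$ — for then, being an interval of length $\geq\pi$, it meets $\pi\ZZ$ and one recovers $\alpha^{\ast}$ in the open cube. Equivalently, one must check that shifting all $p$ coordinates by $\varepsilon$ raises $\tau$ by at least $\pi$. For this I would follow a single orbit: pick a lift $x_0$, set $x_i(\alpha)=(\widetilde\Phi_i+\alpha_i)\big(x_{i-1}(\alpha)\big)$, note that $[x_0]$ is fixed by $\Psi_\alpha$ as soon as $x_p(\alpha)-x_0\in\pi\ZZ$, and — choosing $x_0$ so that the finite orbit visits enough of $\RR P^{1}$ — argue that $x_p(\alpha)-x_0$ sweeps an interval of length $\geq\pi$ as $\alpha$ runs over the small cube. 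This sweeping estimate is the only non-formal step and the place where $\|A_i^{\pm1}\|\leq C$ enters (the projective derivatives then lie in $[C^{-2},C^{2}]$, so the increments $+\alpha_i$ cannot be uniformly damped as $i$ varies); quantifying it is precisely the argument of Bonatti and Crovisier, and the most economical course — the one adopted here — is to invoke \cite[Lemme 6.6]{BoCro} rather than to reproduce it.
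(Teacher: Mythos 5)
You are in an odd position here: the paper does not prove this lemma at all. It is quoted with attribution to Bonatti--Crovisier and used as a black box, the citation \cite[lemme 6.6]{BoCro} appearing immediately before the statement. So deferring to \cite{BoCro} is in itself consistent with the paper's treatment. The difficulty is that, read as a proof of \cref{l.dim2}, your text is circular: after the (correct, and standard) reductions --- real eigenvalues of $B_p\cdots B_1$ $\Leftrightarrow$ fixed point of the projective action $\Leftrightarrow$ translation number of the lifted composition in $\pi\ZZ$, together with continuity, coordinatewise monotonicity and the $\pi$-periodicity identity $\tau(\alpha+\pi e_i)=\tau(\alpha)+\pi$ --- the entire content of the lemma is concentrated in your ``sweeping estimate'', and for that step you invoke \cite[Lemme 6.6]{BoCro}, which is precisely the statement being proved. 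The easy part is carried out correctly; the hard part is not proved but cited back to itself.

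Moreover, the justification you sketch for the sweeping estimate would not work. First, the bound $\|A_i^{\pm1}\|\leq C$ is not a hypothesis of \cref{l.dim2}: $N(\varepsilon)$ must depend on $\varepsilon$ only (the constant $C$ belongs to \cref{p.realeigen}, where the lemma is applied, and a weaker $(\varepsilon,C)$-dependent version would indeed suffice there, but it is not the stated lemma). Second, even granting the bound, knowing that each projective derivative lies in $[C^{-2},C^{2}]$ does not prevent damping: an increment $\alpha_i$ inserted at step $i$ is transported to the endpoint by the remaining $p-i$ projective maps, whose derivative along the relevant orbit can be as small as $C^{-2(p-i)}$, so the guaranteed total sweep from all $p$ increments is only of order $\varepsilon\cdot C^{2}/(C^{2}-1)$, bounded independently of $p$ and in general far below $\pi$. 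The genuine argument must exploit the global structure of the projective dynamics (e.g.\ that persistent contraction along the orbit forces hyperbolicity, hence real eigenvalues already), not a pointwise derivative bound; this is exactly the content of Bonatti--Crovisier's proof, which your proposal does not reproduce.
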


Under the hypothesis of the lemma, let $\alpha_1,\ldots,\alpha_p$ be a corresponding sequence. For all $1\leq i\leq p$, define $A_{t,i}=R_{t.\alpha_i} \circ A_i$, and let $t_0$ be the least positive number such that the matrix $A_{t,p}  \circ \cdots \circ A_{t,1}$ has real eigenvalues. Then the path  $\left\{(A_{t,1},...,A_{t,p})\right\}_{t\in [0,t_0]}$ satisfies the conclusions of Proposition~\ref{p.realeigen}. 

This ends the proof of the dimension $2$ case. \end{proof}

\begin{proof}[Proof of Proposition~\ref{p.realeigen} in any dimensions] Consider the linear cocycle $\cA$ associated to the sequence $A_1,...,A_p$ on the bundle $\cE=\{1,...,p\}\times \RR^d$.
 If some eigenvalue of the product $A_p\ldots A_1$, that is the first return map, is not real, there is a dimension $2$ invariant subbundle $F$ of $\cE$ that corresponds to the corresponding pair of complex conjugated eigenvalues. Choosing orthonormal basis in each fibre of $F$ and completing by a basis of the orthonormal bundle $F^\perp$, the linear cocycle $\cA$ writes in those bases as a sequence of matrices of the form:
$$
\left(
\begin{array}{cc}
A_{|F,i}& B\\
0& A^\perp_{F,i}
\end{array}
\right).
$$
Using the proposition in dimension $2$, one may choose a path $\cA_{|F,t}$ of automorphisms  of $\cF$ ending at $\cA_{|F}$ such that the first return map of $\cA_{|F,0}$ has real eigenvalues. Denote by $\cA_t$ the linear cocycle corresponding to the sequences of the matrices
$$
\left(
\begin{array}{cc}
A_{|F,t,i}& B\\
0& A^\perp_{F,i}
\end{array}
\right).
$$
This defines a path of small radius that joins the initial automorphism to an automorphism where two of the eigenvalues have turned real. The other eigenvalues are given by the product of the blocks $A^\perp_{F,i} $, therefore did not change.
One may need to iterate that process at most $d/2$ times to turn all eigenvalues real, by concatenation of small paths. This ends the proof of the proposition. 
\end{proof}

\subsubsection{Proof of Proposition~\ref{p.hugeeigen}.}
As in the previous proof, one considers the linear cocycle $\cA$ associated to the sequence $A_1,...,A_p$ on the bundle $\cE=\{1,...,p\}\times \RR^d$. Let $\cE=E^s\oplus E^u$ be the stable/unstable splitting for the cocycle $\cA$. Choosing an orthonormal basis in each fibre of $E^s$ and completing by a basis of the orthonormal bundle $E^{s\perp}$, the linear cocycle $\cA$ writes in those bases as a sequence of matrices of the form:
$$
\left(
\begin{array}{cc}
A_{|E^s,i}& B\\
0& A^\perp_{E^s,i}
\end{array}
\right).
$$
Let $0<t\leq1$. Let $\cA_t$ be the cocycle obtained from $\cA$  multiplying each matrix $A_{|E^s,i}$ by $t^{1/p}$. One easily checks that the stable eigenvalues of $\cA_t$ are those of  $\cA$ multiplied by $t$, while the unstable eigenvalues remain unchanged. All stable eigenvalues for $\cA_{\epsilon}$ are less than $\epsilon$ and, for $p$ big, the path $\{\cA_t\}_{t\in[\epsilon,1]}$ is small. One can do the same for the unstable eigenvalues of $\cA_{\epsilon}$ and obtain another path. The concatenation of both paths ends the proof of the proposition.

\subsubsection{Proof of Proposition~\ref{p.smallangle}.}
We show it by induction on the dimension $d$. We first restate ~\cite[Lemma 4.4]{BDP}:

\begin{lemma}[Bonatti, D\'iaz, Pujals] \label{l.bdp4.4}
Let $C>1$ and $d\in \RR$. There exists a mapping $\phi_{C,d}\colon \NN \to \NN$ such that, for any linear cocycle $\cA$ bounded by $C$ on a $d$-dimensional bundle $E$, the following holds for all $N\in \NN$: if an invariant splitting $E=F\oplus G$ is not $\phi_{C,d}(N)$-dominated for $\cA$, and if $H\subset F$ (resp. $H\subset G$) is an invariant subbundle, then 

\begin{itemize}
\item either the splitting $H\oplus G$ (resp. $F\oplus H$) is not $N$-dominated for the restriction $\cA_{|H\oplus G}$ (resp. $\cA_{|F \oplus H})$,
\item or $F/H\oplus G/H$ is not  $N$-dominated for the quotient $\cA_{/H}$.
\end{itemize}
\end{lemma}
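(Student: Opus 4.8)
The statement to prove is Lemma~\ref{l.bdp4.4} (Bonatti--D\'iaz--Pujals). Let me sketch a proof plan.

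\medskip

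The plan is to prove the lemma by a quantitative compactness/contradiction argument, controlling how domination constants propagate through restrictions and quotients. First I would fix the bundle $E$, the cocycle $\cA$ bounded by $C$, the invariant splitting $E = F \oplus G$, and the invariant subbundle $H \subset F$ (the case $H \subset G$ being symmetric, passing to $\cA^{-1}$). The key structural observation is that for vectors $u \in F_x$, the norm growth $\|\cA^n(u)\|$ is comparable, up to a bounded multiplicative factor depending only on $C$ and $d$, to a combination of the growth inside $H$ and the growth in the quotient $F/H$; more precisely, for any $u \in F_x$ one has $\|\cA^n(u)\| \geq \|\cA^n_{/H}(u + H_x)\|$ and, choosing $u$ appropriately, one can also realize (up to bounded distortion) the growth inside $H$. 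The point is that the ``worst contraction'' in $F$ is achieved either by a vector essentially in $H$ or by a vector whose class in $F/H$ is small-growing.

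\medskip

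With that observation in hand, the argument runs as follows. Suppose, for contradiction, that neither alternative in the conclusion holds for a given $N$: the splitting $H \oplus G$ \emph{is} $N$-dominated for $\cA_{|H\oplus G}$ \emph{and} $F/H \oplus G/H$ \emph{is} $N$-dominated for $\cA_{/H}$. I want to deduce that $F \oplus G$ is $\phi_{C,d}(N)$-dominated for $\cA$, for a suitable function $\phi_{C,d}$ depending only on $C$ and $d$, which contradicts the hypothesis. To do this, take unit vectors $u \in F_x$ and $v \in G_x$. Decompose $u$ relative to $H$: write $u = u_H + u_\perp$ where $u_H \in H_x$ and $u_\perp$ is in a bounded-angle complement. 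If the $H$-component dominates (in the sense that $\|\cA^n(u)\|$ is comparable to $\|\cA^n(u_H)\|$ over the relevant time scale), then $N$-domination of $H \oplus G$ forces the needed contraction of $\|\cA^n(u)\|$ against $\|\cA^n(v)\|$; if instead the class $u + H_x$ in $F/H$ carries the growth, then $N$-domination of $F/H \oplus G/H$ applies, using that the norm of $v + H_x$ in $G/H$ is comparable (with bounded distortion, since $G$ and $H$ are transverse) to $\|v\|$. Iterating these estimates over $\lceil \phi_{C,d}(N)/N \rceil$ blocks of length $N$ absorbs all the bounded multiplicative constants (which are of the form $C^{\mathrm{const}(d)}$) and yields the factor $1/2$ after enough blocks; this defines $\phi_{C,d}(N) = N \cdot (\text{number of blocks needed})$, a function of $C$, $d$, $N$ only.

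\medskip

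The main obstacle, I expect, is the careful bookkeeping of the ``bounded distortion'' constants: one must check that the comparison between $\|\cA^n(u)\|$ and the maximum of the growth in $H$ and in $F/H$ holds with a constant depending only on $C$ and $d$ and \emph{not} on $n$ or on the point $x$. This requires controlling the angle between $H$ and a chosen invariant (or merely measurable, then averaged) complement of $H$ inside $F$ under iteration --- a priori this angle could degenerate. The standard fix, which I would follow, is to not ask for an invariant complement but to observe that for \emph{each} time $n$ and each starting vector, there is \emph{some} decomposition realizing the two growth rates up to a factor $C^{2d}$ (say), because the operator norm and conorm of $\cA^n$ restricted to the $2$-plane spanned by $u$ and any vector of $H$ are pinched within $C^{2n}$ of each other only over bounded time --- hence one works block by block, re-choosing the decomposition at the start of each length-$N$ block, and the transition cost between blocks is again bounded by $C^{\mathrm{const}(d)}$. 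Once this local-in-time comparison is set up cleanly, the iteration and the definition of $\phi_{C,d}$ are routine.
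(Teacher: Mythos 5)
The paper itself gives no proof of \cref{l.bdp4.4}: it is quoted from [BDP, Lemma 4.4], so I can only judge your sketch on its own terms. Your overall strategy is the right one: prove the contrapositive, namely that $N$-domination of $H\oplus G$ for $\cA_{|H\oplus G}$ together with $N$-domination of $F/H\oplus G/H$ for $\cA_{/H}$ forces $\phi_{C,d}(N)$-domination of $F\oplus G$, absorbing all distortion constants by taking $\phi_{C,d}(N)$ large. The genuine gap is the step where you compare the quotient norm on $G/H$ with the norm on $G$: you assert that $\|v+H_x\|$ is comparable to $\|v\|$ for $v\in G_x$ ``with bounded distortion, since $G$ and $H$ are transverse''. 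Transversality gives no uniform constant: the ratio $\|v+H_x\|/\|v\|$ is governed by $\sin\angle(G_x,H_x)$, and for a cocycle merely bounded by $C$ the angle between two invariant subbundles can be arbitrarily small, so no constant depending only on $C$ and $d$ exists; the same objection applies to your stated aim of distortion constants depending only on $C$ and $d$, uniformly in $x$ and $n$. The repair uses a hypothesis you have but never invoke: since $H\oplus G$ is assumed $N$-dominated and $\cA$ is bounded by $C$, the angle between $H_x$ and $G_x$ is bounded below by a constant of order $C^{-2N}$ (if $u\in H_x$, $v\in G_x$ are unit vectors with $\|u-v\|=\epsilon$, then $C^N\epsilon\geq\|\cA^N(u-v)\|\geq\|\cA^Nv\|-\|\cA^Nu\|\geq\tfrac12\|\cA^Nv\|\geq\tfrac12 C^{-N}$). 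This makes the comparison between the minimal expansion of $\cA^k$ on $G$ and that of $(\cA_{/H})^k$ on $G/H$ uniform with a constant depending on $C$ and $N$ --- which is perfectly admissible, since $\phi_{C,d}$ is allowed to depend on $N$ --- but not on $C$ and $d$ alone.

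A secondary weakness is the bookkeeping of the off-diagonal (shear) part of $\cA_{|F}$ relative to $H$. As described, you re-choose the decomposition at the start of each length-$N$ block and pay a transition cost $C^{\mathrm{const}(d)}$ per block; a per-block multiplicative loss larger than $2$ exactly cancels the per-block gain of $\tfrac12$ coming from $N$-domination, so the iteration as written does not close. The robust route is the standard triangular estimate: writing $F=H\oplus(F\cap H^{\perp})$ and using that the off-diagonal block of $\cA_{|F}$ has norm at most $C$, one gets
\begin{align*}
\bigl\|\cA^n_{|F}\bigr\|\;\leq\;\bigl\|\cA^n_{|H}\bigr\|+\bigl\|(\cA_{/H})^n_{|F/H}\bigr\|+C\sum_{j+k=n-1}\bigl\|\cA^j_{|H}\bigr\|\cdot\bigl\|(\cA_{/H})^k_{|G\text{-free part}}\bigr\|,
\end{align*}
with the last factor meaning the norm of $(\cA_{/H})^k$ on $F/H$ at the appropriate base points; each of the roughly $n$ terms is at most $\mathrm{const}(C,N)\,2^{-n/N}$ times the minimal expansion of $\cA^n$ on $G_x$, by the two $N$-dominations, supermultiplicativity of conorms, and the angle bound above, and the linear factor $n$ is then beaten by $2^{-n/N}$, which yields an explicit $\phi_{C,d}(N)$. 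With these two corrections (the angle bound supplying the uniform quotient comparison, and the convolution-type estimate replacing the per-block cost accounting), your plan goes through.
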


\begin{proof}[Proof in dimension 2:] This is basically~\cite[Lemma~7.10]{BDV} by isotopy. Notice that the perturbations done in the proof of that lemma can be obtained by an isotopy such that at each time, two invariant bundles exist. The eigenvalues may be slightly modified along that isotopy, however each eigenvalue may be retrieved by dilating or contracting normally to the other eigendirection (which preserves the other eigenvalue).
\end{proof}

\begin{proof}[Proof in any dimension:]
Fix $d>2$, and assume that the proposition in proved in all dimensions less than $d$. Let $C>1$ and $\cA$ be a saddle cocycle bounded by $C$ associated  to a sequence $A_1,...,A_p$ on the bundle $\cE=\{1,...,p\}\times \RR^d$ and let $\cE=E^s\oplus E^u$ be the stable/unstable splitting. One of these two bundles has dimension greater or equal to $2$, we assume it is $E^s$ (the other case is symmetrical).
Since the eigenvalues of $\cA$ are real, there is a proper invariant subbundle $F\subset E^s$. 
For all $N\in \NN$, if the stable/unstable splitting $E^s\oplus E^u$ is not $\phi_{C,d}(N)$-dominated, by Lemma~\ref{l.bdp4.4}, either $H=F\oplus E^u$ is not $N$-dominated for the restriction $\cA_{|H}$, or $E^s/F\oplus E^u/F$ is not $N$-dominated for $\cA_{/F}$. 
Let $\epsilon>0$. By the induction hypothesis, one can find $N_{d'}\in \NN$ such that the conclusions of \cref{p.smallangle} are satisfied with respect to $\epsilon, C$ and any $2\leq d'<d$. 

Note that for any $N$ greater than some $\tilde{N}_{d'}$ it holds: if a $d'$-dimensionnal saddle cocycle is bounded by $C$ and not $N$-dominated, then it is not $N_{d'}$-dominated. Let $$N_0=\max_{2\leq d'<d}\{\tilde{N}_{d'}\}.$$ 

Then, if $\cA$ is not $\phi_{C,d}(N_0)$-dominated, by \cref{l.bdp4.4} and the induction hypothesis, one has either:

\begin{itemize}
\item a path $\cA_{|H,t}$ of radius  $\leq\epsilon$ that joins $\cA_{|H}$ to a saddle cocycle that has a minimum stable/unstable angle less than $\epsilon$, and such that the eigenvalues are preserved all along the path. One may extend that path to a path $\cA_t$ of saddle cocycles on $\cE$, the same way as we extended the path $\cA_{|F}$ in the proof of Proposition~\ref{p.realeigen}. That extended path has the same radius as $\cA_{|H,t}$. The minimum stable/unstable angle of $\cA_t$  is less or equal to that of $\cA_{|H,t}$, for all $t$, in particular that of $\cA_1$ is less than $\epsilon$. Finally, for all $t$, the eigenvalues of  $\cA_t$ are the same as those of $\cA$.   

\medskip
\item or a path $\cA_{/F,t}$  of radius $\leq\epsilon$ that joins $\cA_{/F}$ to a saddle cocycle that has a minimum stable/unstable angle less than $\epsilon$, and such that the eigenvalues of the first return map are preserved all along the path. Choosing an orthonormal basis in each fibre of $F$ and completing by a basis of the orthonormal bundle $F^\perp$, the linear automorphism $\cA$ writes in those bases as a sequence of matrices of the form:
$$
\left(
\begin{array}{cc}
A_{|F,i}& B\\
0& A^\perp_{F,i}
\end{array}
\right),
$$
where the sequence of matrices $A^\perp_{F,i}$ identifies with the quotient $\cA_{/F}$. We define a path $\cA_t$ replacing the sequence $A^\perp_{F,i}$ by the sequence $A^\perp_{F,t,i}$ that corresponds to the cocycle $\cA_{/F,t}$. As both $\cA_{|F}$ and $\cA_{/F,t}$ are saddle cocycles, for all $t$, $\cA_t$ is also a saddle cocycle. 

Let $\cE=E^s_t\oplus E^u_t$ be the stable/unstable splitting for $\cA_t$. By construction $F$ is a subbundle of $E^s_t$ and is invariant by $\cA_t$. The stable/unstable splitting of ${\cA_{t}}_{/F}$, which identifies to $\cA_{/F,t}$, is $\cE/F={E^s_t}/F\oplus {E^u_t}/F$. Note that, given three vector subspaces $\Gamma \subset \Delta$ and $\Lambda$ of $\RR^d$, one has the following relation on minimum angles: $$\angle(\Delta,\Lambda)\leq \angle(\Delta/\Gamma,\Lambda/\Gamma).$$ 
Therefore, the minimum stable/unstable angle of each $\cA_t$ is less than that of $\cA_{/F,t}$, in particular, that of $\cA_1$ is less than $\epsilon$. The path $\cA_t$ has same radius as the quotient path  $\cA_{/F,t}$, in particular it is less than $\epsilon$. The eigenvalues are the same for $\cA=\cA_0$ and $\cA_t$.
\end{itemize}
We are done in both case, which ends the proof of Proposition~\ref{p.smallangle}.
\end{proof}

\section{Further results and announcements}\label{s.furtherresults}

In this paper, we assume that some $i$-strong stable/unstable directions exist at any time $t$ of the homotopy, and we obtain a perturbation lemma that preserves the corresponding local invariant manifolds entirely, outside small neighbourhoods.

We announce a 'manifolds prescribing pathwise Franks Lemma', that is, a generalisation of Theorem~\ref{t.mainsimplestatement} that allows to prescribe the strong stable/unstable manifolds within any 'admissible' flag of stable/unstable manifolds. That generalisation implies for instance that if the $i$-strong stable direction exists for all the cocycles  $\gamma_t$, for $0\leq t\leq 1$, and if, for some time $t_0$, all the eigenvalues inside the $i$-strong stable direction have same moduli, then one can do the pathwise Franks' lemma, prescribing the $j$-strong stable manifolds, for all $j\leq i$, inside arbitrarily large annuli of fundamental domains of $i$-strong stable manifold.

Let us formally define these objects. Let $f$ be a $C^1$-diffeomorphism and $P$ be a periodic saddle point for $f$. To simplify the statement, we assume that $P$ is a fixed point. Given a fundamental domain of the stable/unstable manifold of $P$ identified diffeomorphically to $\SS^{i_s-1}\times[0,1[$, an {\em annulus} $A(f,P)$ is a subset of the form $\SS^{i_s-1}\times[0,\rho[$, where $0<\rho<1$. We denote by $W^{s,i}(f)$ the $i$-strong stable manifold of $f$.
An {\em $i$-admissible flag of manifolds for $f$} is a flag $W^{s,1}\subset ... \subset W^{s,i}=W^{s,i}(f)$ of $f$-invariant manifolds such that each $W^{s,k}$ is an immersed boundaryless $k$-dimensional manifold that contains $P$, and that is smooth at all points, but possibly $P$. A particular case (and simple case) of the announced Franks'  Lemma that prescribes manifolds can be stated as follows:

\begin{othertheorem}
Assume that $(\cA_t)_{t\in[0,1]}$ is a path that starts at the sequence of matrices $\cA_0$ corresponding to the derivative of $f$. Assume that, for all $t$, the corresponding first return map has an $i$-strong stable direction. Assume also that there is some time $t_0$ such that the $i$ strongest stable eigenvalues $\lambda_1(t_0),...\lambda_i(t_0)$ of $\cA_{t_0}$, counted with multiplicity, have same moduli. Then, for any $i$-admissible flag $W^{s,1}\subset ... \subset W^{s,i}$ for $f$, for any annulus $A(f,P)$, for any neighbourhood $U_P$ of the orbit of $P$, there is a diffeomorphism $g$ such that it holds:
\begin{itemize}
\item $g$ is a perturbation of $f$ whose size can be taken arbitrarily close to the radius of the path $\cA_t$,
\item $g^{\pm 1}=f^{\pm 1}$ on the orbit $\Orb_P$ of $P$ and outside $U_P$,
\item the sequence of matrices $\cA_1$ corresponds to the derivative $Dg_{|\Orb_P}$,
\item for all $1\leq j\leq i$, if $g$ has a $j$-strong stable manifold, then it coincides with $\cW^{s,j}$ by restriction to the annulus $A(f,P)$.
\end{itemize}
\end{othertheorem}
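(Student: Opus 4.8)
The plan is to reduce this "manifolds-prescribing" statement to the already-proven \cref{t.mainsimplestatement}, by first using the extra hypothesis (equal moduli of the $i$ strongest stable eigenvalues at time $t_0$) to arrange, by a small auxiliary perturbation, that all $j$-strong stable directions for $j<i$ are "free" to be placed wherever we like inside a large annulus of fundamental domains of the $i$-strong stable manifold. The point is that when $\lambda_1(t_0),\dots,\lambda_i(t_0)$ have the same modulus $\mu$, the restricted cocycle $\cA_{t_0,|E^{i,ss}}$ is, after a small perturbation and a conformal rescaling by $\mu^{-1}$, arbitrarily close to an isometric cocycle; for an isometry there is no canonical $j$-strong stable subspace, so an arbitrarily small further perturbation can rotate the flag $W^{s,1}\subset\dots\subset W^{s,i}$ (restricted to the chosen annulus and pushed into $W^{s,i}(f)$ via the tangency at $P$) into \emph{any} prescribed $i$-admissible flag. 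This is the same mechanism that underlies Proposition~\ref{p.smallangle}, and the bookkeeping is of the same flavour as in \cref{s.pathcocycles}.

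Concretely, the steps I would carry out, in order, are: (1) Apply \cref{t.mainsimplestatement} along the given path $(\cA_t)_{t\in[0,1]}$, which already gives a diffeomorphism realizing the derivative $\cA_1$, of size arbitrarily close to the radius of the path, agreeing with $f$ on $\Orb_P$ and outside $U_P$, and preserving the $i$-strong stable and $j$-strong stable/unstable semi-local manifolds for all dimensions that persist along the path — but with the \emph{given} (i.e. $f$'s) strong stable flag, not the prescribed one. (2) Insert a short "detour loop" into the path at the parameter $t_0$: replace $\cA_{t_0}$ by a small closed sub-loop $\cA_{t_0}\rightsquigarrow \cB \rightsquigarrow \cA_{t_0}$ staying in $\mathfrak{A}_{I,J}$, where along the first half one conjugates the restricted cocycle toward a conformal-to-isometric normal form and rotates the internal flag, and along the second half one comes back. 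Because the loop is closed at $\cA_{t_0}$ and has small radius, concatenating it with the original path changes neither the endpoints nor (up to $\epsilon$) the radius; but the \emph{internal} strong stable flag of the cocycle at the moment of maximal rotation is now an arbitrary admissible one. (3) Use the fact (cf. \cref{s.pathcocycles} and the remark that strong directions persist along an isotopy once they exist at both ends, \cite[Theorem 4.1]{BoBo}) that the $j$-strong stable directions exist all along this augmented path, so \cref{t.mainsimplestatement} applies to it. (4) Finally, transport the prescribed flag from the linear-algebraic picture to the manifold picture via the $C^1$-linearization \cref{c.linearisationC1}: linearize $f$ near $P$, realize the rotated flag as actual invariant manifolds of the linearized dynamics restricted to $A(f,P)$, then paste as in the proof of \cref{t.mainsimplestatement} (Proposition~\ref{p.zpeij} and the concatenation Corollary~\ref{c.concat�nation}), choosing the homothety ratio $\lambda$ small enough that the connection is supported well inside the region where the flag has already been prescribed on the annulus.

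I would organize the write-up so that the only genuinely new ingredient is a linear-algebra lemma: \emph{if $\gamma$ is a $p$-periodic cocycle on $\RR^i$ whose first-return map has all eigenvalue moduli equal to $\mu$, then for every $\epsilon>0$ and every complete flag $V_1\subset\dots\subset V_i$ in the relevant fibre there is an $\epsilon$-small path from $\gamma$ to itself along which some intermediate cocycle has $V_1\subset\dots\subset V_i$ as its flag of strong stable directions.} Everything else is then a matter of splicing this loop into the hypothesis path and quoting \cref{t.mainsimplestatement}. The annulus condition $A(f,P)=\SS^{i_s-1}\times[0,\rho[$ with $\rho<1$ is exactly what makes this work: the prescribed flag need only agree with the $j$-strong stable manifolds on one (possibly large) bundle of fundamental domains, not globally, so there is room to interpolate back to whatever the manifolds do deeper inside, which is precisely the "semi-local, outside a small neighbourhood" character of the conclusions of \cref{t.mainsimplestatement}.

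\textbf{Main obstacle.} The hard part will be step (2)–(3): verifying that the detour loop can be taken \emph{small} (radius $<\epsilon$) \emph{while} sweeping out an arbitrary admissible flag, and simultaneously keeping all the required $j$-strong stable directions (for $j<i$) well-defined at \emph{every} time of the loop — otherwise \cref{t.mainsimplestatement} does not apply and the internal manifolds are not controlled. The equal-moduli hypothesis at $t_0$ is what buys the smallness (the restricted cocycle is a bounded $\epsilon$-perturbation away from a conformal one, and rotations of an isometry are small perturbations of it), but one must check that the intermediate cocycles in the rotation actually split off $1$-,\dots,$(i-1)$-strong stable directions rather than degenerating to equal-modulus blocks with no splitting; this is handled by passing through cocycles whose internal eigenvalues are made pairwise distinct with tiny gaps before rotating, in the spirit of Proposition~\ref{p.hugeeigen}. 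Reconciling "tiny gaps so the splitting exists" with "small enough rotation so the radius stays $<\epsilon$" is the delicate quantitative balance, exactly analogous to the balance between weak domination and small angle in Proposition~\ref{p.smallangle}.
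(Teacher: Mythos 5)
First, a point of comparison: the paper does not actually prove this statement. It appears in \cref{s.furtherresults} only as an announcement (``We announce a `manifolds prescribing pathwise Franks Lemma'\,\dots''), with the proof deferred to a forthcoming generalisation of \cref{t.mainsimplestatement}; so there is no proof in the paper to measure your proposal against, and your attempt has to stand on its own.

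On its own terms, your plan has a genuine gap at its core: you try to reduce the statement to \cref{t.mainsimplestatement}, but that theorem can only \emph{preserve} (compact pieces of) the strong stable manifolds that $f$ already has --- its conclusion is $K_j\subset W^{j,ss}_\varrho(P,g)$ for compacts $K_j\subset W^{j,ss}_\varrho(P,f)$ --- whereas the announced theorem must \emph{prescribe} the $j$-strong stable manifolds of $g$ along an arbitrary admissible flag, which in general differs from $f$'s own flag (and $f$ may not even admit $j$-strong stable manifolds for $j<i$). Your detour-loop device is caught in a dilemma: if the loop keeps the $j$-strong stable directions alive at every time, then $j$ belongs to the set $I$ of \cref{t.mainsimplestatement} and $f$ itself has a $j$-strong stable manifold, so applying the theorem with nonempty $K_j$ pins $W^{ss,j}(g)$ to $f$'s manifolds, contradicting the prescription of a different flag on the annulus; if instead you take $K_j=\emptyset$ (or let the $j$-direction degenerate at the equal-moduli moment), the theorem says nothing at all about where $W^{ss,j}(g)$ sits, so nothing forces it onto $\cW^{s,j}$. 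Your linear-algebra loop lemma only controls the derivative cocycle along the orbit, and the derivative does not determine the semi-local invariant manifolds --- that is precisely the phenomenon this whole paper is about. What is missing is the genuinely new perturbative ingredient: a construction, supported in $U_P$, that steers the $j$-strong stable manifolds of the perturbed map onto the prescribed (merely immersed, possibly non-smooth at $P$, $f$-invariant) flag over the annulus --- an analogue of the connections of \cref{d.connectionstable} and of the pushing perturbation of \cref{p.starsequence}, but adapted to flags and to target manifolds other than $f$'s own. The equal-moduli hypothesis at $t_0$ is indeed what removes the domination obstruction inside $E^{ss,i}$ and makes such a construction conceivable, but invoking it only to build a small loop of cocycles, and then quoting \cref{t.mainsimplestatement} and \cref{c.linearisationC1}, does not produce that construction; step (4) of your plan is where the real proof would have to live, and as written it is a gesture rather than an argument.
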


The perturbation techniques for linear cocycles as developed in~\cite{M1,BDP,BGV} successively, can be easily rewritten in order to take into account the need of a good path between the initial cocycle and the pertubation. The perturbations of cocycles obtained by the techniques of~\cite{BGV} can indeed be done along paths whose size are small (R. Potrie actually wrote a proof of it in~\cite{Po}). A general description of the vectors of Lyapunov vectors that can be reached by a perturbation of a linear cocycle has been recently given by Bochi and Bonatti~\cite{BoBo}; moreover, those perturbations are built so that they can be reached from the initial cocycle by a isotopy. 
These isotopic perturbation lemmas for cocycles and the theorem announced above lead to easy and systematic ways to create strong connections and heterodimensional cycles whenever there is some lack of domination within a homoclinic class.

We claim that with some hypotheses on the signs of the eigenvalues of the first return map of $\cA_1$, the theorem above can be adapted to prescribe the entire semi-local flag of strong stable manifolds outside $U_P$ within an isotopy class of $i$-admissible flags determined by the isotopy class of the path of eigenvalues $\bigl(\lambda_1(t),...\lambda_i(t)\bigr)$ (here $\lambda_j(t)$ is the $j$-th eigenvalue of $\cA_t$, counting with multiplicity).
In a work in progress, Bonatti and Shinohara used an adapted version of this argument in dimension $2$, in order to build their new examples of wild $C^1$-generic dynamics.

Finally, we claim that these results, with some more work and excluding the codimension one manifolds\footnote{it is possible to preserve {\em annuli} of codimension $1$ stable or unstable manifolds by conservative perturbations, however there seems to be an obstruction to preserving them semi-locally}, can be adapted to hold in the volume preserving and symplectic settings. They can also clearly be adapted to the flows case, but here again technical work is needed.

\end{document}